\newcommand{\Q}{\mathcal{Q}}
\renewcommand{\P}{\mathcal{P}}
\newcommand{\bC}{\mathbf{C}}
\newcommand{\bA}{\mathbf{A}}
\newcommand{\Ld}{\Lambda}
\newcommand{\al}{\alpha}
\newcommand{\ld}{\lambda}
\newcommand{\veps}{\varepsilon}
\newcommand{\wto}{\rightharpoonup}
\providecommand{\norm}[1]{\lVert#1\rVert}
\newcommand{\Norm}[1]{\left\lVert#1\right\rVert}
\providecommand{\inn}[1]{\langle#1\rangle}
\providecommand{\Int}[1]{\text{Int}(#1)}
\newcommand\nor[2]{\left\|#1\right\|_{#2}}
\newcommand\FL{\mathcal{F}_{n}}
\DeclarePairedDelimiter\floor{\lfloor}{\rfloor}
\def\today{\ifcase\month\or
  January\or February\or March\or April\or May\or June\or
  July\or August\or September\or October\or November\or December\fi
  \space\number\day, \number\year}
\DeclareMathOperator{\supp}{\mathrm{supp}}
\newtheorem{theorem}{Theorem}[section]
\newtheorem{lemma}[theorem]{Lemma}
\newtheorem{proposition}[theorem]{Proposition}
\newtheorem{corollary}[theorem]{Corollary}
\theoremstyle{definition}
\newtheorem{definition}[theorem]{Definition}
\theoremstyle{remark}
\newtheorem{remark}{Remark}[section]
\newtheorem{assum}{Assumption} 
\newenvironment{assump}[2][]
  {\begin{assum}[#1]}
  {\end{assum}}
\newcommand{\mc}{\mathcal}
\newcommand{\A}{\mc{A}}
\newcommand{\B}{\mc{B}}
\newcommand{\E}{\mc{E}}
\def\H{\mathcal H}
\newcommand{\J}{\mc{J}}
\newcommand{\M}{\mc{M}}
\newcommand{\T}{\mc{T}}
\renewcommand{\O}{\mc{O}}
\newcommand{\X}{\mc{X}}
\newcommand{\Y}{\mc{Y}}
\newcommand{\C}{\boldsymbol{C}}
\newcommand{\R}{\mathbb{R}}
\newcommand{\N}{\mathbb{N}}
\newcommand{\intav}[1]{\mathchoice {\mathop{\vrule width 6pt height 3 pt depth  -2.5pt
\kern -8pt \intop}\nolimits_{\kern -6pt#1}} {\mathop{\vrule width
5pt height 3  pt depth -2.6pt \kern -6pt \intop}\nolimits_{#1}}
{\mathop{\vrule width 5pt height 3 pt depth -2.6pt \kern -6pt
\intop}\nolimits_{#1}} {\mathop{\vrule width 5pt height 3 pt depth
-2.6pt \kern -6pt \intop}\nolimits_{#1}}}
\begin{document}

\title{Global propagation of analyticity and unique continuation for semilinear waves}
\author[]{Camille Laurent, Crist\'obal Loyola}
\date{\today}
\address{CNRS UMR 7598 \& Sorbonne Universit\'e \\
Laboratoire Jacques-Louis Lions, F-75005, Paris, France}
\email{camille.laurent@sorbonne-universite.fr, cristobal.loyola@sorbonne-universite.fr}
\subjclass[2020]{35A20, 35B60,93B07,35L71,35L75} %
\keywords{propagation of analyticity, unique continuation, semilinear wave equation, semilinear plate equation}

\allowdisplaybreaks
\numberwithin{equation}{section}

\begin{abstract}
In this article, we develop a new method to prove both global propagation of analyticity and unique continuation in finite time for solutions of semilinear wave-type equations with analytic nonlinearity. It combines control theory techniques and Galerkin approximation, inspired by Hale-Raugel \cite{HR03}, to prove that analyticity in time can be propagated for the nonlinear equation from a zone where linear observability holds towards the full space.

For semilinear wave equations with Dirichlet boundary condition on a bounded domain, this implies that analyticity can be propagated to the entire domain from a subset $\omega$ that satisfies the geometric control condition. It also implies the unique continuation when the solution is assumed to be zero on $\omega$. When the nonlinearity is assumed to be subcritical and defocusing, we also obtain observability estimates in the optimal time of the geometric control condition.

For semilinear plate equations, similar propagation of analyticity is achieved by assuming the controllability of the linear Schr\"odinger equation.

\end{abstract}

\maketitle
\tableofcontents

\section{Introduction and main results}
 The aim of this article is to study how, for a certain class of evolution PDEs, properties observed from a subset $\omega\subset \M$ through time $(0, T)$ are propagated to the whole solution on $(0, T)\times \M$. Here, $\M$ can be, for instance, a compact Riemannian manifold with boundary. More precisely, we will study the following three properties:
\begin{enumerate}
    \item \textbf{Propagation of analyticity}: if the solution is analytic in time on $(0,T)\times \omega$, is the full solution  analytic in time on $(0,T)\times\M$?
    \item \textbf{Unique continuation}: if the solution is zero on $[0, T]\times \omega$, is the solution identically zero in $[0, T]\times\M$?
     \item \textbf{Observability inequality}: can we quantify the previous unique continuation property?
\end{enumerate}
The method that we develop is quite general for conservative equations and relies on observability estimates. In this paper, we will mainly give applications to nonlinear wave and nonlinear plate equations. We also provide an abstract result. We believe that it could be applied to several other systems and is amenable to generalizations. We begin by describing the results for nonlinear wave equations.
\subsection{Main results on semilinear wave equation} In this section we consider the semilinear wave equation
\begin{align}\label{eq:nlw-1}
    \left\{\begin{array}{rl}
        \partial_t^2 u-\Delta_g u+f(u)=0  &\ (t, x)\in [0, T]\times \text{Int}(\M),\\
        u_{|_{\partial\M}}=0            &\ (t, x)\in [0, T]\times\partial\M,\\
        (u, \partial_t u)(0)=(u_0, u_1) &\ x\in \M,
    \end{array}\right.
\end{align}
where $(\M, g)$ is a smooth compact connected Riemannian manifold with boundary of dimension $d$ less or equal to $3$\footnote{The restriction on the dimension is technical. The same results would likely hold in any dimension. One might modify the definition of subcriticality and require $f^{(k)}(0)=0$ for $0\leq k\leq k_d$ with $k_d$ chosen so that $f$ sends $H^{1+\sigma}_D$ to $H^{\sigma}_D$ for $1+\sigma>d/2$. Here, $H^{\sigma}_D$ is the Sobolev space adapted to the Laplace operator with Dirichlet boundary conditions.} and $\Delta_g$ is the Laplace-Beltrami operator equipped with Dirichlet boundary conditions. The nonlinearity $f: \R\to \R$ is assumed to be \emph{analytic} and  to satisfy $f(0)=0$.

We will make two kinds of assumptions concerning the regularity and the nonlinearity:
\begin{enumerate}
\item if there is no more assumption, we will choose the regularity index $\sigma\in (1/2,1]$ if $d=3$ and $\sigma\in (0,1/2)$ if $d\leq 2$ to ensure $H^{1+\sigma}\hookrightarrow L^{\infty}$.
    \item $f$ is energy subcritical: $f$ is assumed to be of polynomial type, in the sense that there exists $C>0$ such that
\begin{align}\label{hip:nonlinearity-hyp-1}
  \ |f(s)|\leq C(1+|s|)^p\ \text{and}\ |f'(s)|\leq C(1+|s|)^{p-1},
\end{align}
with $1\leq p<+\infty$ if $d\leq 2$ and $1\leq p<5$ for $d=3$. In that case, we choose $\sigma=0$ in what follows, corresponding to finite energy solutions.
\end{enumerate}
 
The observation set $\omega\subset \M$ will also always be assumed to be open. A key hypothesis will be that $\omega$ satisfies the Geometric Control Condition at time $T>0$:
\begin{assump}{GCC}\label{assumGCC}
    Every generalized geodesic of $\M$ traveling at speed $1$ meets $\omega$ in time $t\in (0, T)$.
\end{assump}
We will always assume that the Hamiltonian vector field of the wave operator does not have contact of infinite order with $\partial \M$ (see \cite{MS:78}). This ensures that the broken bicharacteristic flow is uniquely defined. We refer to Section \ref{s:geom} for details and references.

Our first main result concerns the propagation of analyticity in time.
\begin{theorem}\label{thm:analytic-prop}
    Let $\sigma\in (1/2, 1]$ if $d=3$ and $\sigma\in (0,1/2)$ if $d\leq 2$. Let $(u,\partial_t u)\in C^0([0, T], H^{1+\sigma}\cap H^1_0\times H^{\sigma}_0(\M))$ be a solution of \eqref{eq:nlw-1}. Assume that the above setting holds and assume moreover that:
    \begin{enumerate}
        \item $(\omega, T)$ satisfies the \ref{assumGCC}.
        \item $t\in (0, T)\mapsto \chi u(t,\cdot)\in H^{1+\sigma}(\M)\cap H_0^1(\M)$ is analytic for any cutoff function $\chi\in C_c^\infty(\M)$ whose support is contained in $\omega$.
        \item $s\mapsto f( s)$ is real analytic.
    \end{enumerate}
    Then $t\mapsto u(t, \cdot)$ is analytic from $(0, T)$ to $H^{2}(\M)\cap H_0^1(\M)$.
\end{theorem}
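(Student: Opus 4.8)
The plan is to establish, on every compact $K\subset(0,T)$, Cauchy estimates of the form
\[
\sup_{t\in K}\|\partial_t^n u(t)\|_{H^{1+\sigma}(\M)}\ \leq\ C_K\,A_K^{\,n}\,n!\qquad(n\geq 0),
\]
which give the analyticity of $t\mapsto u(t)$ into $H^{1+\sigma}\cap H^1_0$; a final elliptic bootstrap then upgrades the target space to $H^2\cap H^1_0$. The only nonlinear-PDE input is the linear observability inequality furnished by \ref{assumGCC}: by the Bardos--Lebeau--Rauch theorem (valid on manifolds with boundary under the no-infinite-contact assumption), there is $C_0>0$ such that every solution $v$ of $\partial_t^2 v-\Delta_g v=h$ with $v|_{\partial\M}=0$ obeys
\[
\sup_{t\in[0,T]}\|(v,\partial_t v)(t)\|_{H^{1+\sigma}\times H^{\sigma}}\ \leq\ C_0\Big(\|\chi\,\partial_t v\|_{L^2(0,T;H^{\sigma})}+\|h\|_{L^1(0,T;H^{\sigma})}\Big),
\]
the source term being absorbed via Duhamel on top of the homogeneous estimate, the index $\sigma$ being reached by commuting with fractional powers of $-\Delta_g$, and the supremum over the closed interval following from the energy identity once the energy is controlled at one interior time. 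I would also record the elementary fact that, by compactness of the set of generalized geodesics and openness of $\omega$, \ref{assumGCC} at time $T$ forces every generalized geodesic to meet $\omega$ already within $(\varepsilon',T-\varepsilon')$ for every $\varepsilon'\in(0,\varepsilon_0]$; hence the inequality above holds verbatim with $(0,T)$ replaced by any such subinterval, which is what makes the conclusion available on the full open interval $(0,T)$, with a radius of analyticity allowed to degenerate at the endpoints.

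The heart of the argument is a differentiated-equation estimate. For $v=\partial_t^n u$ one has $\partial_t^2 v-\Delta_g v=-\partial_t^n f(u)$ with Dirichlet data, and by the Fa\`a di Bruno formula $\partial_t^n f(u)$ is a finite sum of terms $f^{(k)}(u)\prod_i\partial_t^{n_i}u$ with $k\geq1$, $n_i\geq1$, $\sum_i n_i=n$; crucially, the highest time-derivative occurring is $\partial_t^n u$ itself, and linearly, in the term $f'(u)\,\partial_t^n u$. Using that $H^{1+\sigma}\hookrightarrow L^\infty$ is a Banach algebra and that $f$ is analytic (so $\|f^{(k)}(u(t))\|_{H^{1+\sigma}}\leq C\,k!\,\rho^{-k}$ for a radius $\rho>0$ controlled by $\sup_t\|u(t)\|_{L^\infty}$), the classical composition estimate for the algebra of analytic functions bounds $\|\partial_t^n f(u)\|_{L^1(0,T;H^{\sigma})}$ in terms of the lower-order quantities $\|\partial_t^m u(t)\|_{H^{1+\sigma}}$, $m\leq n$. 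The observation term is supplied directly by hypothesis (2): $\chi\,\partial_t^{n+1}u=\partial_t^{n+1}(\chi u)$, and analyticity of $t\mapsto\chi u(t)$ into $H^{1+\sigma}$ yields $\|\chi\,\partial_t^{n+1}u\|_{L^2(0,T;H^{\sigma})}\leq C\,A^{n+1}(n+1)!$ on the relevant subinterval. Feeding both into the observability inequality — applied to $v=\partial_t^n u$, and once more to $v=\partial_t^{n+1}u$ to recover the top derivative at full $H^{1+\sigma}$-regularity — and carrying out the standard majorant bookkeeping, conveniently organized through a time-analytic norm such as $\sum_{n\geq0}\frac{r^n}{n!}\,\|\partial_t^n u(t)\|_{H^{1+\sigma}}$, one arrives at a uniform radius of analyticity, with $r$ depending only on $C_0$, the algebra constant, $\rho$, $\sup_t\|u\|_{H^{1+\sigma}}$, $T$ and the constants in hypothesis (2).

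The point that genuinely requires care — and where the Galerkin scheme inspired by Hale--Raugel enters — is that the given solution is only assumed continuous into $H^{1+\sigma}$, so for large $n$ the quantity $\partial_t^n u$ is not a priori meaningful, and all of the above must be carried out on finite-dimensional approximations. Concretely, one runs it on the solutions $u_N$ of the equation truncated by the spectral projector $P_N$ of $-\Delta_g$: these satisfy a finite-dimensional ODE with analytic right-hand side, hence are real-analytic in time, so every step is rigorous for them; one proves the estimates for $u_N$ with constants independent of $N$ and then passes to the limit, using the convergence $u_N\to u$. The main obstacle is the uniform-in-$N$ control of the observation term $\|\chi\,\partial_t^{n+1}u_N\|_{L^2(0,T;H^{\sigma})}$: it cannot be extracted from the estimate for $u_N$ itself (that would be circular, it is a quantity of order $n+1$) and must be tied back to hypothesis (2) for $u$ via a quantitative comparison of $\chi u_N$ with $\chi u$ on $\omega$, itself obtained by applying the observability inequality to $u_N-u$ and bootstrapping. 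Once the $N$-uniform estimates are in hand, the limit $t\mapsto u(t)$ is analytic into $H^{1+\sigma}\cap H^1_0$; then $\Delta_g u=\partial_t^2 u+f(u)$ has right-hand side analytic in time into $H^{1+\sigma}$ (analyticity of $t\mapsto f(u(t))$ following from that of $t\mapsto u(t)$ and of $f$ on the algebra $H^{1+\sigma}$), and elliptic regularity with Dirichlet boundary conditions promotes $u$ to an analytic curve into $H^2\cap H^1_0$, as claimed.
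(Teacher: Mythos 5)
Your plan is a genuinely different strategy from the paper's: you aim for direct Cauchy estimates on $\partial_t^n u$ via the differentiated wave equation plus the observability inequality with source term, rather than the paper's route of splitting the given solution into $\P_n U+\Q_n U$, reconstructing $\Q_n U$ as a holomorphic fixed point of an observability Cauchy problem (Lemma \ref{lmCauchyobs}, Theorem \ref{thmabstract}), and then treating $\P_n U$ as the solution of an ODE in the Banach space $C^0([0,T],\P_n X^\sigma)$ with analytic right-hand side. Unfortunately there is a gap in your argument that the paper's decomposition is specifically designed to avoid, and I do not see how to close it within your framework.

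The obstruction is the top-order term $f'(u)\,\partial_t^n u$ in the Fa\`a di Bruno expansion of $\partial_t^n f(u)$. When you feed $h=-\partial_t^n f(u)$ into your observability-with-source inequality for $v=\partial_t^n u$, the contribution $C_0\|f'(u)\partial_t^n u\|_{L^1(0,T;H^\sigma)}$ on the right is of the same order $n$ as the quantity $\sup_t\|(\partial_t^n u,\partial_t^{n+1}u)(t)\|_{H^{1+\sigma}\times H^\sigma}$ you are trying to control on the left, and the observability constant $C_0$ (hence $C_0\|f'(u)\|_{L^\infty}$) is a fixed number, not small. No majorant or time-analytic norm bookkeeping can absorb a same-order term with a non-perturbative coefficient; the induction on $n$ simply does not close. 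The alternative of moving $f'(u)v$ to the left and invoking observability for the operator $\partial_t^2-\Delta_g+f'(u)$ is circular: the compactness-uniqueness step needed to prove such observability would require unique continuation for a wave operator with the time-dependent potential $f'(u)$, and under \ref{assumGCC} alone (without multiplier-type hypotheses) the only available unique continuation result (Theorem \ref{thm:qucp}) requires $f'(u)$ to be \emph{analytic in time} — which is exactly what you are trying to prove.

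The paper resolves this via the compactness of the nonlinearity (Assumption \ref{assumFholom}: $F$ maps bounded sets of $X^\sigma$ into $X^{\sigma+\veps}$) combined with the high-frequency cutoff. In the fixed-point problem \eqref{eqWG} for $W=\Q_n U$, the term $\Q_n F(W+\cdot)$ carries the factor $\inn{\lambda_n}^{-\veps}$ precisely because $\Q_n$ is applied after the gain of $\veps$ derivatives; choosing $n$ large makes this contractive (see \eqref{bounddiffW}) regardless of the size of $\|f'\|$ or $C_0$. This is the smallness that your direct scheme lacks. A secondary, more structural difference: your Galerkin approximants $u_N$ are solutions of a \emph{truncated} equation, and comparing $\chi u_N$ with $\chi u$ on $\omega$ uniformly in $N$ (which you correctly flag as delicate) introduces a further difficulty that the paper sidesteps entirely, since it never solves an approximating PDE — it decomposes the \emph{given} solution $U=\P_nU+\Q_n U$ and only solves a fixed-point problem for $\Q_nU$ and a genuine ODE in Banach space for $\P_n U$.
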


Note that, once the solution is known to be analytic in time, it is possible to get more regularity in space. For instance, this occurs when the metric is analytic.

\begin{corollary}
\label{coranalytspacetime}
Under the same assumptions as before, if, moreover, the metric and the boundary are analytic in $x$, then $u$ is analytic in all variables.
\end{corollary}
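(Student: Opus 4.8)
The plan is to pass to imaginary time, turning the hyperbolic equation into a semilinear \emph{elliptic} one with real-analytic coefficients, and then to invoke classical interior and boundary analytic regularity. Since analyticity is local, fix $t_0\in(0,T)$ and $x_0\in\M$; it suffices to find a neighbourhood of $(t_0,x_0)$ on which $u$ is jointly real-analytic. By Theorem~\ref{thm:analytic-prop}, $t\mapsto u(t,\cdot)$ is analytic from $(0,T)$ into $H^2\cap H^1_0$, so it extends to a holomorphic map $z\mapsto u(z,\cdot)$ on a disc $\{|z-t_0|<\rho\}\subset\mathbb C$ with values in the complexification of $H^2\cap H^1_0$. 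Because $d\le3$ gives $H^2\hookrightarrow L^\infty$, the superposition operator $w\mapsto f(w)$ is real-analytic on $H^2$ and extends holomorphically to a complex neighbourhood of it (using that $f$ is real-analytic on $\R$), so $z\mapsto f(u(z,\cdot))$ is holomorphic into $L^2$, after shrinking $\rho$. As $\Delta_g:H^2\to L^2$ is bounded and linear, both sides of $\partial_t^2u=\Delta_g u-f(u)$ extend holomorphically in $t$ and agree on the real segment, hence on the whole disc: $\partial_z^2 u=\Delta_g u-f(u)$, together with $u(z,\cdot)|_{\partial\M}=0$.

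Next, set $v(s,x):=u(t_0+\mathrm{i}s,x)$ for $s\in(-\rho,\rho)$. Differentiating twice in $s$ and using the identity above,
\[
\partial_s^2 v+\Delta_g v=f(v)\quad\text{in }(-\rho,\rho)\times\Int{\M},\qquad v=0\ \text{on }(-\rho,\rho)\times\partial\M .
\]
The operator $\partial_s^2+\Delta_g$ is elliptic in the variables $(s,x)$, with real-analytic coefficients since $g$ is analytic, the nonlinearity $f$ is real-analytic, and $v\in C^\infty\big((-\rho,\rho);H^2\cap H^1_0\big)$, so in particular $v\in H^2_{\mathrm{loc}}$ of $(-\rho,\rho)\times\Int{\M}$ with smooth vanishing boundary trace.

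Now run elliptic regularity. A standard linear bootstrap—treating $f(v)$ as a forcing term, using the Sobolev embeddings available in dimension $d\le3$ together with the smoothness of $\partial\M$ and the homogeneous Dirichlet condition—promotes $v$ to $C^\infty$ up to $(-\rho,\rho)\times\partial\M$. Then the Morrey--Nirenberg analyticity theorem for (semilinear) elliptic equations with real-analytic coefficients, applied in the interior, together with its boundary version—applied after flattening the analytic hypersurface $\partial\M$ by an analytic chart and checking that the transformed equation and boundary condition remain analytic—shows that $v$ is real-analytic in $(s,x)$ in a neighbourhood of $\{0\}\times\M$, up to the boundary. Finally, real-analyticity of $v$ near $(0,x_0)$ yields a joint holomorphic extension $V(\zeta,x)$ of $(s,x)\mapsto v(s,x)$ to a complex neighbourhood; since $u(t,x)=V(-\mathrm{i}(t-t_0),x)$ near $(t_0,x_0)$ and precomposition by the affine map $t\mapsto-\mathrm{i}(t-t_0)$ preserves holomorphy, $u$ is real-analytic in $(t,x)$ there. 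As $t_0\in(0,T)$ and $x_0\in\M$ were arbitrary, $u$ is analytic in all variables.

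The only genuinely delicate point is boundary analyticity in the last step: one must first secure ordinary smoothness up to $\partial\M$ by linear elliptic regularity, then flatten $\partial\M$ analytically, verify that analyticity of the coefficients and of the nonlinearity survives the change of variables, and invoke the boundary form of the Morrey--Nirenberg theorem. The imaginary-time reduction and the interior analyticity are routine.
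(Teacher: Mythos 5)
Your proposal is correct and follows essentially the same route as the paper's proof: pass to imaginary time $s$ via the holomorphic extension of $t\mapsto u(t,\cdot)$, observe that $v(s,x)=u(t_0+is,x)$ satisfies the semilinear elliptic equation $\partial_s^2 v+\Delta_g v=f(v)$ with analytic data and Dirichlet boundary condition, and invoke analytic elliptic regularity (the paper cites Friedman~\cite{Friedman:58}, which is the source you implicitly refer to via Morrey--Nirenberg). The only cosmetic differences are that the paper works directly with the $H^{1+\sigma}\cap H^1_0$ extension (rather than first invoking $H^2$) and keeps the final identification of derivatives of $u$ at the level of Cauchy estimates $|\partial_t^\alpha\partial_x^\beta u|\le CR^{|\alpha|+|\beta|}\alpha!\beta!$ instead of speaking of a joint holomorphic extension, but neither choice changes the substance.
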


Note that the second assumption is written as an analyticity with a Banach valued application, see Section \ref{s:appcomplex}, but it can be satisfied if we have, for instance, the pointwise estimates $|\partial_t^{\alpha}\partial_x^{\beta}u(t,x)|\leq CR^{|\alpha|}\alpha!$ for $(t,x)\in (0,T)\times \omega$, $\alpha\in \N$, $|\beta|\leq 2$ together with the Dirichlet boundary condition.

These propagation results have applications to unique continuation problems. For this property, the shape of the nonlinearity dictates what type of equilibrium we will obtain.

\begin{theorem}{(Unique continuation)}\label{thm:unique-continuation-nlw}
    Assume that $(\omega, T)$ satisfies the \ref{assumGCC} and that $f$ is energy subcritical and real analytic. If one solution $U=(u,\partial_t u)\in C^0([0, T], H^1_0\times L^2(\M))$  with finite Strichartz norms \footnote{That means one of the admissible Strichartz exponent ensuring uniqueness is finite. We refer to Theorem \ref{thmStrichartz} for more precisions.} of \eqref{eq:nlw-1} satisfies $\partial_t u=0$ in $[0, T]\times \omega$, then $\partial_t u=0$ in $[0, T]\times \M$ and $u$ is an equilibrium point of \eqref{eq:nlw-1}, that is, solution of 
            \begin{align}\label{thm:eq:nlw-equilibrium}
                \left\{\begin{array}{rl}
                -\Delta_g u+f(u)=0  &\ x\in\textnormal{Int}(\M),\\
                u=0          &\ x\in \partial\M.
                \end{array}\right.
            \end{align}
    If, moreover, the nonlinearity satisfies 
    \begin{align}
    \label{defdefocusing}
        sf(s)\geq 0\quad  \textnormal{ if }\partial \M\neq \emptyset,\\
    \nonumber    sf(s)\geq \gamma s^2\quad \textnormal{ if }\partial \M= \emptyset,
    \end{align}
    for some $\gamma>0$ and for all $s\in \R$, then $u\equiv 0$.
\end{theorem}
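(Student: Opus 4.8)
The plan is to deduce Theorem~\ref{thm:unique-continuation-nlw} from the propagation of analyticity, Theorem~\ref{thm:analytic-prop}, together with an energy argument. The first step is to upgrade the regularity of the solution $U=(u,\partial_t u)$: starting from the finite-energy class with finite Strichartz norms, one uses the equation and a bootstrap (propagation of regularity for the linear wave equation with the nonlinear term as a source, controlled by the Strichartz estimates of Theorem~\ref{thmStrichartz}) to place $U(t)$ in $H^{1+\sigma}\cap H^1_0\times H^\sigma_0$ for times away from $0$, or at least locally in time; the point is that the hypotheses of Theorem~\ref{thm:analytic-prop} require this higher Sobolev regularity. Next, on $[0,T]\times\omega$ we are given $\partial_t u=0$, hence $u(t,x)=u(0,x)=:v(x)$ is independent of $t$ on $\omega$, so the map $t\mapsto \chi u(t,\cdot)$ is constant, in particular real analytic, for every $\chi\in C_c^\infty(\omega)$. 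Thus assumption (2) of Theorem~\ref{thm:analytic-prop} holds (assumption (1) is GCC, assumption (3) is the analyticity of $f$), and we conclude that $t\mapsto u(t,\cdot)$ is analytic from $(0,T)$ into $H^2\cap H^1_0$.

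The second step exploits this global analyticity. Since $\partial_t u(t,x)=0$ on the open set $(0,T)\times\omega$ and $t\mapsto \partial_t u(t,\cdot)\in H^1_0$ is real analytic on $(0,T)$, all its time-derivatives at any point vanish on $\omega$; equivalently, for each fixed $t$, the function $\partial_t^k u(t,\cdot)$ vanishes on $\omega$ for all $k\geq 1$. Now I invoke the linear unique continuation / observability encoded in the GCC hypothesis: differentiating the equation \eqref{eq:nlw-1} in time, $w:=\partial_t u$ solves the \emph{linear} wave equation $\partial_t^2 w-\Delta_g w + f'(u)w=0$ with Dirichlet boundary conditions, and $w=0$ on $[0,T]\times\omega$ with $(\omega,T)$ satisfying GCC. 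By the quantitative unique continuation for the linear wave equation with a bounded (here $L^\infty$, thanks to $H^{1+\sigma}\hookrightarrow L^\infty$) potential — exactly the observability statement that underlies the whole method — one gets $w=\partial_t u\equiv 0$ on $[0,T]\times\M$. Feeding this back into \eqref{eq:nlw-1} shows $-\Delta_g u+f(u)=0$ with $u_{|\partial\M}=0$, i.e. $u$ is a stationary solution solving \eqref{thm:eq:nlw-equilibrium}.

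The final step is the rigidity under the sign condition \eqref{defdefocusing}. Here $u\in H^1_0(\M)$ (indeed $H^2\cap H^1_0$) is a weak solution of $-\Delta_g u+f(u)=0$. Testing the equation against $u$ itself and integrating by parts, using $u_{|\partial\M}=0$, gives
\begin{align}
\int_\M |\nabla_g u|^2\, dx + \int_\M u\, f(u)\, dx = 0.
\end{align}
When $\partial\M\neq\emptyset$ the hypothesis $sf(s)\geq 0$ forces both nonnegative terms to vanish, so $\nabla_g u\equiv 0$, and the Dirichlet condition (Poincaré inequality) then yields $u\equiv 0$. When $\partial\M=\emptyset$ the stronger hypothesis $sf(s)\geq\gamma s^2$ gives $\gamma\int_\M u^2\leq \int_\M u f(u)\leq 0$, hence again $u\equiv 0$. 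This closes the proof.

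The main obstacle I expect is the first step, namely rigorously justifying that a finite-energy solution with merely finite Strichartz norm can be placed in the higher-regularity class $C^0((0,T),H^{1+\sigma}\cap H^1_0\times H^\sigma_0)$ required to apply Theorem~\ref{thm:analytic-prop}: one must propagate $H^{1+\sigma}$-regularity through the nonlinear flow, which in the energy-subcritical range is delivered by Strichartz estimates but requires care with the admissible exponents, the endpoint issues in dimension $3$, and the fact that we only have regularity of the \emph{given} datum at $t=0$ versus needing it on an open time interval (regularization in positive time, or an a priori assumption, may be needed). A secondary technical point is ensuring the version of the linear unique continuation theorem used in step two applies with an $L^\infty_{t,x}$ (rather than analytic or smooth) potential $f'(u)$ — this is standard for second-order hyperbolic operators under GCC but should be cited precisely.
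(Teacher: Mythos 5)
Your overall architecture matches the paper — propagate regularity, propagate analyticity in time via Theorem~\ref{thm:analytic-prop}, apply a linear unique continuation result to $w=\partial_t u$, then conclude by the elliptic energy identity — and steps 1, 2 and 4 are correct and essentially identical to the paper's argument. The difficulty is entirely in step 3.

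There you invoke ``quantitative unique continuation for the linear wave equation with a bounded $(L^\infty)$ potential'' under GCC, and in your closing paragraph you describe this as ``standard for second-order hyperbolic operators under GCC.'' This is precisely the statement the paper is at pains to explain is \emph{not} available. The Bardos--Lebeau--Rauch observability from GCC is for the free wave operator; promoting it to a wave operator with a potential $V$ requires a separate unique continuation theorem for $\partial_t^2 - \Delta_g + V$, and for a potential that is merely bounded (or even $C^\infty$ or Gevrey) across a non-pseudoconvex hypersurface, the Alinhac--Bahouendi/H\"ormander counterexamples show local unique continuation can fail. GCC alone does not imply the kind of pseudoconvexity that Carleman/H\"ormander-type arguments need. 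The reason the paper works so hard to establish propagation of \emph{analyticity in time} (rather than just $H^s$ regularity) is exactly so that the potential $f'(u)$ becomes analytic in $t$ and bounded/smooth in $x$; at that point the Tataru--Robbiano--Zuily--H\"ormander theorem on unique continuation with \emph{partially analytic} coefficients (Theorem~\ref{thm:qucp}) applies, in the optimal time $T > 2\mathcal{L}(\M,\omega)$, which is what $T \geq T_{GCC}(\omega) \geq T_{UC}(\omega)$ guarantees. You do notice that $t\mapsto\partial_t u$ is real analytic, but you only use it to observe that time derivatives vanish on $\omega$; the essential use is that $f'(u)$ is analytic in $t$, enabling Theorem~\ref{thm:qucp}. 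As written, your step 3 would either require an unavailable theorem or stronger geometric/pseudoconvexity hypotheses on $\omega$ than GCC. Replacing the $L^\infty$-potential unique continuation claim by the partial-analyticity theorem (with the remark that $T>2\mathcal{L}(\M,\omega)$ holds since $T\geq T_{GCC}\geq T_{UC}$) repairs the argument and makes it coincide with the paper's proof.
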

Note that when unique continuation is not possible, we can still establish a result of finite determining modes, see Proposition \ref{propfinitedetwave}. In the first part of the theorem, $f$ can be focusing. Therefore, \eqref{eq:nlw-1} is not always globally well-posed and can lead to blow-up, that is why we make the a priori assumption of boundedness. In the case of defocusing nonlinearity, we are able to prove a quantitative version of the unique continuation, namely, an observability inequality.
\begin{theorem}
\label{thmobserintro}
 Assume that $(\omega, T)$ satisfies the \ref{assumGCC}. Assume that $f$ is analytic, energy subcritical and defocusing, that is, satisfying \eqref{hip:nonlinearity-hyp-1}, and \eqref{defdefocusing}. Then, for any $R_0>0$, there exists $C>0$ so that for any $(u_0,u_1)\in H^1_0\times L^2(\M)$, with $\nor{(u_0,u_1)}{H^1_0\times L^2}\leq R_0$, the unique solution of \eqref{eq:nlw-1} with finite Strichartz norms satisfies
    \begin{align}\label{obsevNLintro}
        C\norm{(u_0, u_1)}_{H_0^1(\M)\times L^2(\M)}^2\leq \int_0^T \norm{\mathbbm{1}_\omega\partial_t u(t)}_{L^2(\M)}^2 dt.
    \end{align}
\end{theorem}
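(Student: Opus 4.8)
The plan is to run a compactness--uniqueness argument by contradiction (in the spirit of Dehman--Lebeau--Zuazua and Joly--Laurent), the decisive new input being that Theorem~\ref{thm:unique-continuation-nlw} now provides unique continuation in the \emph{sharp} time $T$ given by \ref{assumGCC}. Suppose \eqref{obsevNLintro} fails for some $R_0>0$: there exist data $(u_0^n,u_1^n)$ with $\norm{(u_0^n,u_1^n)}_{H^1_0\times L^2}\le R_0$ such that, setting $\alpha_n=\norm{(u_0^n,u_1^n)}_{H^1_0\times L^2}$ and letting $u_n$ be the global solution of \eqref{eq:nlw-1} with finite Strichartz norms (global well-posedness and a uniform-in-$n$ bound on the Strichartz norms follow from the defocusing energy estimate together with subcriticality), one has $\alpha_n^{-2}\int_0^T\norm{\mathbbm{1}_\omega\partial_t u_n(t)}_{L^2}^2\,dt\to 0$. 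After extracting a subsequence, either $\alpha_n\to\alpha>0$ or $\alpha_n\to 0$; the two cases are treated in parallel, the only structural difference being which limiting equation appears.

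\emph{Large-data regime $\alpha_n\to\alpha>0$.} The uniform energy and Strichartz bounds let me extract $(u_n,\partial_t u_n)\weto(u,\partial_t u)$ in $L^\infty((0,T),H^1_0\times L^2)$ with bounded Strichartz norms. Subcriticality ($1\le p<5$ when $d=3$, any $p$ when $d\le 2$) gives $f(u_n)\to f(u)$ in, say, $L^1((0,T),L^2)$, so $u$ is itself a Strichartz solution of \eqref{eq:nlw-1}. Since $\int_0^T\norm{\mathbbm{1}_\omega\partial_t u_n}_{L^2}^2\to 0$, passing to the weak limit yields $\partial_t u=0$ on $(0,T)\times\omega$, and Theorem~\ref{thm:unique-continuation-nlw} forces $u$ to be a stationary solution; the defocusing condition \eqref{defdefocusing} then gives $u\equiv 0$. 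It remains to upgrade weak to strong convergence. Writing $v_n=u_n-u=u_n$ and $f(u_n)-f(u)=V_n v_n$ with $V_n=\int_0^1 f'(u+s(u_n-u))\,ds$ bounded in the relevant space, $v_n$ solves $\partial_t^2 v_n-\Delta_g v_n+V_n v_n=0$ with $v_n\wto 0$ and $\mathbbm{1}_\omega\partial_t v_n=\mathbbm{1}_\omega\partial_t u_n\to 0$ in $L^2((0,T)\times\omega)$. By Rellich, $u_n\to u$ strongly in $L^q_{loc}$ for $q<6$, so $V_n v_n\to 0$ in $L^1((0,T),L^2)$; hence the microlocal defect measure of $(v_n)$ is carried by the characteristic set, is invariant under the generalized bicharacteristic flow, and vanishes over $(0,T)\times\omega$ since $\partial_t v_n\to 0$ there. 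Invariance together with \ref{assumGCC} forces it to vanish on all of $(0,T)\times\M$, so $v_n\to 0$ strongly in the energy space, contradicting $\alpha_n\to\alpha>0$. (Equivalently, combine the linear observability inequality valid under \ref{assumGCC} with the compactness of the lower-order term $V_n v_n$.)

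\emph{Small-data regime $\alpha_n\to 0$.} Rescale $w_n=\alpha_n^{-1}u_n$, so $\norm{(w_n(0),\partial_t w_n(0))}_{H^1_0\times L^2}=1$, $\int_0^T\norm{\mathbbm{1}_\omega\partial_t w_n}_{L^2}^2\to 0$, and $\partial_t^2 w_n-\Delta_g w_n+\alpha_n^{-1}f(\alpha_n w_n)=0$. Analyticity and $f(0)=0$ give $\alpha_n^{-1}f(\alpha_n w_n)=f'(0)w_n+g_n$ where, using $\alpha_n\to 0$, subcriticality and the uniform Strichartz bounds on $w_n$, $g_n\to 0$ in $L^1((0,T),L^2)$. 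A weak-$*$ limit $w$ thus solves the linear equation $\partial_t^2 w-\Delta_g w+f'(0)w=0$ with $\partial_t w=0$ on $(0,T)\times\omega$; differentiating in $t$ and invoking linear unique continuation under \ref{assumGCC} for the wave operator with constant potential $f'(0)$ gives $\partial_t w\equiv 0$, so $w$ solves $-\Delta_g w+f'(0)w=0$, $w|_{\partial\M}=0$. Testing against $w$, $\int_\M|\nabla_g w|^2+f'(0)\int_\M w^2=0$; since \eqref{defdefocusing} forces $f'(0)\ge 0$ (indeed $f'(0)\ge\gamma>0$ when $\partial\M=\emptyset$) and the Dirichlet eigenvalues of $-\Delta_g$ are positive when $\partial\M\neq\emptyset$, we conclude $w\equiv 0$. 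The same defect-measure (or linear-observability-plus-compactness) argument then yields $\norm{(w_n(0),\partial_t w_n(0))}_{H^1_0\times L^2}\to 0$, contradicting the normalization.

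\emph{Main obstacle.} The delicate step is the passage from weak to strong convergence, i.e.\ showing that the nonlinear term is a \emph{compact} perturbation of the free wave dynamics; this is exactly where energy subcriticality (notably $p<5$ for $d=3$) is needed, after which \ref{assumGCC} is used either through propagation and vanishing of the microlocal defect measure or through the linear observability inequality with the lower-order term absorbed by a further compactness--uniqueness step. The other key ingredient, responsible for the optimality of the time, is that Theorem~\ref{thm:unique-continuation-nlw} supplies unique continuation precisely in the GCC time $T$, so no enlargement of $T$ is incurred in the large-data regime.
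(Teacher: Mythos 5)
Your proposal is correct and follows essentially the same compactness--uniqueness strategy as the paper, with Theorem~\ref{thm:unique-continuation-nlw} supplying unique continuation at the sharp time and the linear observability under \ref{assumGCC} closing the argument. The paper streamlines your case split $\alpha_n\to\alpha>0$ vs.\ $\alpha_n\to0$ by first proving directly that $\alpha_n\to0$ always (weak limit is a nonlinear solution vanishing on $\omega$, hence zero by Theorem~\ref{thm:unique-continuation-nlw}, and then linear observability forces the norms to vanish), after which it rescales $w_n=u_n/\alpha_n$ and concludes with the observability inequality applied to the linear part (controlling the nonlinear Duhamel term via a bootstrap in Strichartz norms), choosing the observability route rather than microlocal defect measures throughout, which you flag as an equivalent alternative.
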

For linear equations, this is the typical observability estimate of \cite{BLR92} that is equivalent to the controllability. Here, however, it is obtained for nonlinear equations. This type of inequality already appeared in the literature, often associated with stabilization results, but under stronger assumptions on $\omega$ and $T$ to ensure unique continuation. Indeed, the stabilization property for the damped equation is often proved thanks to an observability inequality as \eqref{obsevNLintro}. This topic has been studied extensively; for instance, see \cite{H:85}, \cite{Z:91} and \cite{D:01} for $p<3$, and for $p\in[3,5)$, our main reference is the work of Dehman, Lebeau and Zuazua \cite{DLZ03}. This work mainly addressed the stabilization problem previously described, in the Euclidean space $\R^3$ with flat metric and active damping outside of a ball, or on a bounded domain, but with damping close to the full boundary.

Our main purpose here is to extend observability to more general geometries where multiplier methods cannot be used or do not yield optimal results with respect to the geometry and optimal time. Other stabilization results for the nonlinear wave equation can be found in \cite{AIN:11} or \cite{P:24}, as well as the references therein. Some articles have addressed the more difficult critical case $p=5$; see \cite{L:11} for details. An observability estimate similar to \eqref{obsevNLintro} was also obtained by Joly and the first author in \cite{JL13} under the same Geometric Control Condition, but with a non-uniform time depending on the size of the initial data $R_0$. Their proof relied on some asymptotic regularization result of Hale-Raugel \cite{HR03}. Our scheme of proof here is inspired by their work, with the substantial modification that we always work in finite time, which requires adapting the original method.

We should also notice that, although the results of \cite{DLZ03} concern a specific geometry, by following their proof carefully, we can extract a rough statement of the type:
\begin{center}
 ``geometric control condition'' + ``unique continuation'' $\Longrightarrow$ ``observability''. 
\end{center}
Therefore, our proof will follow that line and will mainly focus on the unique continuation property.

\medskip

Note that we have assumed from the beginning that $\M$ is compact. Yet, several results in unbounded domains can be deduced from our result. We can, for example, get similar results for a compact perturbation of $\R^3$.

\begin{proposition}
\label{propR3obstacle}
Let $\Omega=\R^3\setminus \mathcal{O}$ where $\mathcal{O}$ is a bounded smooth domain (not necessarily connected). Assume that there exists $R>0$ so that $\R^3\setminus B(0,R)\subset \omega$ and that $(\omega,T)$ satisfies \ref{assumGCC}. Then, the same conclusion as Theorem \ref{thm:unique-continuation-nlw} holds.
\end{proposition}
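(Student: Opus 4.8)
The plan is to reduce Proposition \ref{propR3obstacle} to the compact case, Theorem \ref{thm:unique-continuation-nlw}, by a finite-speed-of-propagation argument that confines the relevant dynamics to a large ball. First I would observe that, by finite speed of propagation for the wave operator $\partial_t^2-\Delta$, the solution $u$ on the exterior domain $\Omega=\R^3\setminus\mathcal{O}$ restricted to $[0,T]\times\Omega$ depends, on a fixed compact region, only on the data and the equation inside a slightly larger compact region. Concretely, fix $R'>R+T$ (say $R'=R+2T$) and let $\M'$ be the compact manifold-with-boundary $\overline{B(0,R')}\setminus\mathcal{O}$, equipped with the flat metric; its boundary is $\partial\mathcal{O}\cup\{|x|=R'\}$. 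The assumption $\R^3\setminus B(0,R)\subset\omega$ and the hypothesis that $(\omega,T)$ satisfies \ref{assumGCC} on $\Omega$ will be used to show that $(\omega\cap\M',T)$ satisfies \ref{assumGCC} on $\M'$: any generalized geodesic in $\M'$ that stays away from $\omega$ must, in particular, never enter $\{|x|>R\}$, hence it is also a generalized geodesic of $\Omega$ (the artificial boundary $\{|x|=R'\}$ is never reached because $R'-R>T$ and the speed is $1$), and therefore it meets $\omega$ in time $<T$ by hypothesis on $\Omega$ — a contradiction.

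Next I would transfer the solution itself. Because $\partial_t u=0$ on $[0,T]\times\omega$ and $\R^3\setminus B(0,R)\subset\omega$, the function $u$ is time-independent on $[0,T]\times(\R^3\setminus B(0,R))$; in particular it vanishes there after subtracting a stationary profile, but more importantly the trace of $u$ and its normal derivative on $\{|x|=R'\}\times[0,T]$ are stationary, so finite speed of propagation (applied to the difference of $u$ with its own time-translate, which solves the linear wave equation with the analytic nonlinearity's Lipschitz remainder as a potential, or more simply applied directly using that $u(t)-u(0)$ has data and source vanishing in $\{|x|>R\}$) shows that $u|_{[0,T]\times\M'}$ coincides with a solution of the very same semilinear equation \eqref{eq:nlw-1} posed on $\M'$ with Dirichlet condition on all of $\partial\M'$, with finite Strichartz norms, and still satisfying $\partial_t u=0$ on $[0,T]\times(\omega\cap\M')$. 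One must check the Strichartz bookkeeping — Strichartz estimates on the compact manifold-with-boundary $\M'$ hold in the relevant range for $d=3$ — and that the energy-subcritical, defocusing structure of $f$ is preserved verbatim since $f$ is unchanged.

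Then Theorem \ref{thm:unique-continuation-nlw} applied on $\M'$ gives $\partial_t u\equiv 0$ on $[0,T]\times\M'$, and $u|_{\M'}$ is an equilibrium solving \eqref{thm:eq:nlw-equilibrium} on $\M'$; combined with the already-known stationarity outside $B(0,R)$, this yields $\partial_t u\equiv 0$ on all of $[0,T]\times\Omega$ and $u$ an equilibrium of the exterior problem. For the second assertion, under \eqref{defdefocusing} with $\partial\M\neq\emptyset$ (which holds here since $\partial\mathcal{O}\neq\emptyset$, and also the exterior geometry has a "boundary at infinity" in the sense that finite-energy equilibria must decay), one pairs the stationary equation $-\Delta u+f(u)=0$ with $u$ and integrates by parts over $\Omega$: the boundary term on $\partial\mathcal{O}$ vanishes by the Dirichlet condition and the boundary term at infinity vanishes because $u\in H^1_0(\Omega)$ and $u$ is stationary outside $B(0,R)$ hence harmonic there and decaying, giving $\int_\Omega|\nabla u|^2+\int_\Omega sf(s)|_{s=u}=0$, whence $\nabla u\equiv 0$ and thus $u\equiv 0$.

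The main obstacle I anticipate is the rigorous transfer step: justifying that the restriction of the exterior solution to $[0,T]\times\M'$ is genuinely a solution of the semilinear problem on the compact $\M'$ with the artificial Dirichlet condition on $\{|x|=R'\}$, in the low-regularity energy-plus-Strichartz class, rather than merely a distributional one. This requires a careful finite-speed-of-propagation / domain-of-dependence argument at the level of the $C^0_t H^1_x\cap(\text{Strichartz})$ well-posedness theory — e.g. using that on $\{|x|\in(R,R')\}$ the solution is stationary so its trace on the artificial boundary is automatically the correct (zero, after normalization) Dirichlet datum — and matching the two notions of solution via uniqueness in the Strichartz class (Theorem \ref{thmStrichartz}). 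A secondary but routine point is verifying \ref{assumGCC} passes to $\M'$, including the handling of generalized geodesics that graze the artificial boundary, which the choice $R'-R>T$ makes vacuous.
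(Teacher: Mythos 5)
Your reduction to a compact domain is the right general idea, and the GCC transfer argument (a geodesic avoiding $\omega$ never enters $\{|x|>R\}$, hence never sees the artificial sphere) is correct. But there is a genuine gap in the central transfer step, and it is not merely a technical bookkeeping issue as you suggest. Your claim that ``$u|_{[0,T]\times\M'}$ coincides with a solution of the very same semilinear equation \eqref{eq:nlw-1} posed on $\M'$ with Dirichlet condition on all of $\partial\M'$'' is false: nothing forces the exterior solution $u$ to vanish on the artificial sphere $\{|x|=R'\}$. The trace of $u$ there is time-independent (since $\partial_t u=0$ on $\{|x|>R\}$), but it is a nonzero stationary profile, not zero. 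Consequently Theorem \ref{thm:unique-continuation-nlw}, which is stated for solutions of \eqref{eq:nlw-1} with homogeneous Dirichlet data, simply does not apply to $u|_{\M'}$. The ``normalization'' you gesture at (subtracting the stationary profile, e.g.\ $w=u-u(0,\cdot)$) does make the artificial boundary datum zero, but $w$ no longer solves \eqref{eq:nlw-1}: it solves $\partial_t^2 w-\Delta w+f(w+u_0)-\Delta u_0=0$, an equation with a translation parameter $u_0$ and a source $\Delta u_0$, and the latter is only in $H^{-1}$ given the assumed regularity of $u_0$, so it does not fit the required function spaces. Thus you cannot simply cite the compact-domain unique continuation theorem.

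The way the paper resolves this is to abandon the idea of applying Theorem \ref{thm:unique-continuation-nlw} on the truncated domain, and instead to apply the abstract Theorem \ref{thmabstractanalyticintro}, which was deliberately formulated to tolerate source terms $H_1,H_2$. One first observes that $u$ is stationary (hence $C^{4,\alpha}$ by elliptic bootstrap) in $\{|x|>R\}$, then chooses a cutoff $\chi$ equal to $1$ on $B(0,R)\setminus\O$, supported in $B(0,R_1)$, with $\partial_{\vec n}\chi=0$ on $\partial\O$ and $\supp\nabla\chi$ contained in the stationary annulus. Setting $z=\chi u$ forces $z$ to satisfy the Dirichlet condition on the whole boundary of the truncated manifold, and the commutator terms $[\Delta,\chi]u$, $(1-\chi)u$ land in the stationary region where $u$ is smooth and time-independent, so they supply source terms that are analytic in $t$ with values in $H^2_D$ and $H^{\sigma+\veps}_D$ --- first order in $u$, not second order, precisely because one commutes with $\chi$ rather than subtracting $u_0$. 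This yields analyticity in time of $z$, hence of $u$, and one then applies the Tataru--Robbiano--Zuily--H\"ormander unique continuation in the unbounded setting (\cite[Corollary 3.12]{JL13}) rather than the compact-domain Theorem \ref{thm:qucp}. Your plan would need to be restructured along these lines; as written, the artificial Dirichlet condition cannot be obtained for $u$ itself, and the normalization you propose changes the equation outside the class covered by Theorem \ref{thm:unique-continuation-nlw}.
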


The proof of the previous results are consequences of abstract results described in Section \ref{s:abstrIntro}. Our methods are inspired by Dynamical Systems techniques from Hale-Raugel \cite{HR03}, originally designed to study the regularity of "ancient solutions" for dissipative systems. It was noted in \cite{JL13} that such results could imply a unique continuation property with Geometric Control Condition on $\omega$, although, in infinite time or depending on the size of the data. To achieve finite-time results, the method has to be modified. The main idea in this article is to rely on observability properties instead of the decay of semigroup. This approach enables us to obtain some propagation results in finite time for solutions with zero observation. Additionally, the method is flexible enough to allow some source terms, leading to a full propagation result with optimal assumption on $\omega$ and the time $T$, from an observation analytic in time. For more details, we refer to Section \ref{s:abstrIntro}.

\subsubsection*{Literature overview on unique continuation and propagation} Let us now review some already known results and why our unique continuation result seems difficult to obtain from the already known methods and results in the literature. We also refer, for instance, to the survey \cite{LL:23Lecture} and the reference therein that contains an introduction to the unique continuation for wave type operators. For a more complete treatment of Carleman estimates and unique continuation, we also refer to \cite{LernerBook19}.

One initial approach for proving Theorem \ref{thm:unique-continuation-nlw} could involve using the general theory of unique continuation of H\"ormander \cite{H:63}. In that context, this would require considering the nonlinearity $f(u)$ as a potential term $Vu$ where $V$ has, at most, the same regularity as $u$. Achieving global unique continuation would then require iterating some local unique continuation results across a hypersurface $\{\Psi=0\}$. Yet, for a general configuration, the global geometric assumptions resulting from the use of  H\"ormander theorem for the unique continuation are not very natural and are stronger than \ref{assumGCC}. For instance, for a flat metric, the pseudoconvexity condition of a hypersurface $\{\Psi=0\}$ for the wave operator writes
\begin{align*}
X_{t}^{2}=|X_{x}|^{2} \quad \text{and}\quad d\Psi(x_0)(X)=0 \Longrightarrow Hess \Psi(x_0) (X , X) >0 \quad \text{for all } X=(X_{t},X_{x}) \in  \R^{1+d} \setminus \{0\}  .
\end{align*}
and imply a kind of convexity of the hypersurface. Typical geometric assumptions are often of ''multiplier type'' (or Morawetz type), meaning $\omega$ is a neighborhood of $\left\{x\in \partial\Omega \left|(x-x_0)\cdot n(x)>0\right.\right\}$ which are known to be stronger than the geometric control condition (see \cite{M:03} for a discussion about the links between these assumptions). Moreover, on curved spaces, this type of condition often needs to be checked by hand in each situation, which is mostly impossible in general. Additionally, concerning the unique continuation with even smooth $V$, the classical counterexamples of Alinhac-Bahouendi~\cite{AB:95}, refined by H\"ormander \cite{H:00}, are quite striking. They show that H\"ormander's pseudoconvexity condition is not far from being optimal for local unique continuation. For any $s>1$ and $d\geq 2$, they construct some $u$ and $V \in \mathcal{G}^s(B_{\R^{1+d}}(0,1), \mathbb{C})$ (Gevrey functions) so that 
\begin{align*}
\partial_{t}^{2}u-\Delta u&=V u \textnormal{ on }B_{\R^{1+d}}(0,1),\\
\supp(u)&=\left\{(t,x_{1},\dots, x_{d})\ |\  x_{1}\geq 0\right\}\cap B_{\R^{1+d}}(0,1).
\end{align*}
This suggests that in geometrical situations where the strong pseudoconvexity of the hypersurface is not satisfied, we cannot expect local unique continuation for potential a $V$ that is not analytic.

Note that quite surprisingly, even in $1$-dimensional linear hyperbolic systems of order $1$, a similar dichotomy seems to exist. It has been shown in this context by Coron-Nguyen \cite{CN:21} that for time-dependent coupling matrices, it is possible to construct counterexamples to unique continuation at some natural time while the property is true for coupling depending analytically on time.

Another counterexample that undermines a possible strategy to prove unique continuation in Theorem \ref{thm:unique-continuation-nlw} is provided by M\'etivier in \cite{M:93}. He proved that a nonlinear version of the Holmgren theorem fails in general. The operators for which it applies are not wave operators, but a nonlinear Holmgren theorem, even for a more specific class of operators has, up to our knowledge, never been obtained so far, except for scalar operators of order $1$.

Regarding the propagation of analyticity for nonlinear equations, several results date back to the 1980s and 1990s. Alinhac-M\'etivier proved in \cite{AM:84,AM:84b} that if $u$ is a regular enough solution of a general nonlinear PDE, the analyticity of $u$ propagates along any hypersurface for which the real characteristics of the linearized operator cross the hypersurface transversally. Subsequently, there has been an intense activity to understand what kind of singularities propagate for nonlinear waves. It was found that the situation is quite complicated since microlocal analytic singularities do not remain confined to bicharacteristics as in the linear case, but can give rise to nonlinear interactions. For more details, see Godin \cite{G:86} and G\'erard \cite{G:88}. 

Yet, in our geometric context, obtaining a global result from local propagation of singularities typically involves propagation from hypersurfaces of the form $S=\{\psi=0\}$ with $\psi(t,x)=\psi(x)$. In such cases, the operator is never hyperbolic with respect to $S$, and there can be some bicharacteristics transverse to $S$ as soon as $d\geq 2$. In particular, the results from \cite{AM:84,AM:84b} do not seem to apply.

The problem is better understood in the $C^{\infty}$ or $H^s$ context. For the linear equation, the propagation of $H^s$ regularity along rays for the Dirichlet problem is well understood since the work of Melrose-Sj\"ostrand \cite{MS:78}; see \cite{H:07} for a complete historical overview on the internal and boundary problem. Concerning the $H^s$ regularity of nonlinear equations, the microlocal propagation has been the object of several studies since the work of Bony \cite{B:81}. The global propagation from a set satisfying \ref{assumGCC} is proved in \cite{DLZ03} using a bootstrap argument and propagation of $H^s$ regularity for smoother source terms. It is unclear how to adapt such arguments in the analytic context. Even in the linear case $f=0$, local propagation of analytic regularity can be quite complicated, especially for glancing rays; see \cite{RS:81} for details. The propagation of analyticity that we prove may also be of interest in this context. It seems that, to achieve global propagation of analytic regularity using the propagation of the wavefront set, one would need to take into account analytic rays. They include rays arriving at the boundary in a glancing direction and potentially staying "stuck" at the boundary for a certain amount of time, even at diffractive points. It is uncertain to us whether the global propagation result can be obtained under the same assumptions using this analysis for the linear case.

Beyond H\"ormander's general result \cite{H:63} under pseudoconvexity, unique continuation for wave-type operators have been extensively studied. While it is impossible to cover them all here, we aim to present the variety of results and motivations that appear. A more precise historical overview can be found in \cite[Section 4]{LL:23Lecture}.

We begin with the unique continuation with partial analyticity, which will be crucial for obtaining Theorem \ref{thm:unique-continuation-nlw} from Theorem \ref{thm:analytic-prop}, see Section \ref{s:UCPword}. The history of this theory is quite long, with several breakthroughs and improvements that we do not detail here (see \cite[Section 4]{LL:23Lecture}). The proof of local uniqueness results across any noncharacteristic hypersurface for $\partial_t^2 - \Delta_g$ was achieved by Tataru in \cite{1995:tataru:ucp-general}, leading to the global unique continuation result in optimal time. Tataru's result is not restricted to the wave operator;
it holds for operators with coefficients that are analytic in part of the variables, interpolating between Holmgren's theorem and the H\"ormander's theorem. Technical assumptions of this article were successively removed by Robbiano-Zuily~\cite{RZ:98}, H\"ormander~\cite{Hor:97} and Tataru~\cite{Tataru:99}, leading to a very general local unique continuation result for operators with partially analytic coefficients (including as particular cases both Holmgren's and H\"ormander's theorems). This result (or some globalized version of it) will be used in our context; see Theorem \ref{thm:qucp} below. The key point in applying these results, which are linear, is that the coefficients need to be analytic in time. When applying this to nonlinear solutions such as \eqref{eq:nlw-1}, the nonlinearity $f(u)$ must be seen as a term $Vu$ with $V$ having the same regularity as $u$. This is why the propagation of analyticity in Theorem \ref{thm:analytic-prop} is crucial in order to apply this unique continuation result.

In regards to classical Carleman estimates, many authors explored the global assumptions needed to obtain them, as well as their consequences. Global Carleman estimates for waves were proved in~\cite[Chapter~4]{FI:96} and \cite{BDBE:13} with applications to controllability and inverse problems. Another line of investigations in a geometric context was taken in~\cite{DZZ:08,Shao:19,JS:21} to present geometric assumptions that would ensure the usual pseudoconvexity, with applications to observability estimates and null-controllability.

For treating nonlinear problems, admitting lower-order terms in unique continuation results is crucial. For instance, in the present work, a nonlinearity of the form $f(u)$ is treated as a term $V u$ with potential $V$. Here, a previous analysis allows to obtain that the nonlinear solution is actually very regular. Yet, having nonlinear problems in mind, it has sometimes been a goal to minimize the regularity of the admissible lower-order terms. A global unique continuation statement was proved in~\cite{Ruiz:92}, with application to energy decay for nonlinear waves. This led to some ``dispersive'' Carleman estimates with Strichartz-type spaces.  The literature is vast, and we refer, for instance, to~\cite{KRS:87,DDSF:05, KT:05}, and the references therein.

Unique continuation problems for wave operators also arise from mathematical general relativity. They have recently received a lot of attention, specifically in the context of the rigidity problem for stationary black holes. We refer to \cite{IK:15} for a precise overview of the problem and recent progress. Note also that the rigidity of stationary black holes is already known under the assumption of analyticity. So, obtaining some propagation of analyticity could be of great interest in this context.

\subsection{Main results on semilinear plate equations}
\label{s:plateintro}

As mentioned earlier, the results we obtain for the wave follow from more a general method and abstract framework that could apply to many other systems. Firstly, the result could be extended to systems of waves with the same leading order terms but coupled by lower order terms, provided that the observation is made across all components.  Additionally, we present a first application to nonlinear plate equations, focusing solely on the propagation of analyticity. However, this could represent a first step towards unique continuation in a more general setting. 
\begin{theorem}\label{thm:analytic-nlp}
    Let $(\M, g)$ be a compact connected manifold with (or without) boundary of dimension $d\leq 3$. Let $0<T<\widetilde{T}$ and $\omega\Subset \widetilde{\omega}$ two open subsets of $\M$ so that the Schr\"odinger equation is observable in $L^2$ from $\omega$ in time $T$ (see Section \ref{sssec:obs-schr} below for more precisions and examples). Assume that $f$ is real analytic with $f(0)=0$. Let $(u, \partial_t u)\in C^0([0, T], H^2\cap H_0^1\times L^2)$ be a solution of
    \begin{align}\label{eq:nlp-1intro}
        \left\{\begin{array}{rl}
            \partial_t^2 u+\Delta^2 u+f(u)=0  &\ (t, x)\in [0, T]\times \M,\\
            u_{|_{\partial\M}}=\Delta u_{|_{\partial\M}}=0  &\ (t, x)\in [0, T]\times\partial\M,
            \end{array}\right.
    \end{align}
so that for any cutoff function $\chi\in C_c^\infty(\M)$ whose support is contained in $\widetilde{\omega}$, then $t\in (0, T)\mapsto \chi u(t,\cdot)\in H^{3+\veps}(\M)\cap H_0^1(\M)$ is analytic for one $\veps>0$. Then $t\in (0, T)\mapsto \big(u(t, \cdot), \partial_t u(t,\cdot)\big)\in H^4\cap H_0^1\times H^2\cap H_0^1$ is analytic.
\end{theorem}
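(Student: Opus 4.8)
The plan is to reduce the semilinear plate equation \eqref{eq:nlp-1intro} to a semilinear Schr\"odinger equation with a \emph{smoothing} source term, and then to invoke the abstract propagation result of Section \ref{s:abstrIntro}, exactly as is done for the wave equation in Theorem \ref{thm:analytic-prop}. The hinged boundary conditions $u|_{\partial\M}=\Delta u|_{\partial\M}=0$ make $\Delta^2$ the square of $(-\Delta)$ with Dirichlet conditions (resp.\ of $-\Delta_g$ in the boundaryless case), so setting $w:=\partial_t u+i\Delta u$ and using \eqref{eq:nlp-1intro} yields
\begin{align*}
\partial_t w-i\Delta w=-f(u),\qquad w|_{\partial\M}=0,
\end{align*}
while $\partial_t u=\mathrm{Re}\,w$ and $\Delta u=\mathrm{Im}\,w$, so that $u$ is recovered from $w$ by inverting the Laplacian (on the orthogonal complement of the constants when $\partial\M=\emptyset$, the mean $\int_\M u$ being then obtained as a primitive of $\int_\M\mathrm{Re}\,w$, which does not affect time-analyticity). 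Under this correspondence $(u,\partial_t u)\in C^0([0,T],H^2\cap H^1_0\times L^2)$ becomes $w\in C^0([0,T],L^2)$, and the target regularity $(u,\partial_t u)\in H^4\cap H^1_0\times H^2\cap H^1_0$ becomes $w\in H^2\cap H^1_0$.

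Two points must be checked to fit the abstract framework. First, the nonlinearity is regularizing: since $d\le 3$ one has $H^2(\M)\hookrightarrow L^\infty(\M)$ and $H^2(\M)$ is an algebra, so, $f$ being analytic with $f(0)=0$, the superposition map $v\mapsto f(v)$ is real-analytic from $H^2$ to $H^2$; as $\Delta^{-1}$ gains two derivatives, the source term $w\mapsto F(w):=-f(\Delta^{-1}\mathrm{Im}\,w)$ is an analytic map from $L^2$ into $H^2$, hence a compact, regularizing perturbation of the linear Schr\"odinger flow. Second, the analyticity hypothesis transfers from $u$ to $w$: given $\chi\in C_c^\infty(\M)$ with support in a neighbourhood of $\overline\omega$ contained in $\widetilde\omega$, choose $\widetilde\chi\in C_c^\infty(\widetilde\omega)$ with $\widetilde\chi\equiv 1$ near $\mathrm{supp}\,\chi$; from $\Delta(\chi u)=\chi\Delta u+2\langle\nabla\chi,\nabla(\widetilde\chi u)\rangle+(\Delta\chi)(\widetilde\chi u)$ and the hypothesis that $t\mapsto\eta u(t)$ is analytic into $H^{3+\veps}$ for every $\eta\in C_c^\infty(\widetilde\omega)$, one gets that $t\mapsto\chi\Delta u(t)$, hence $t\mapsto\chi w(t)=\chi\partial_t u(t)+i\chi\Delta u(t)$, is analytic into $H^{1+\veps}$. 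The two-derivative loss, and the need to absorb the terms where derivatives fall on the cutoff into $\widetilde\chi u$, is precisely why the larger set $\widetilde\omega\supset\omega$ is assumed.

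Since the linear Schr\"odinger equation is observable in $L^2$ from $\omega$ in time $T$ by hypothesis, $F$ is an analytic regularizing nonlinearity, and $\chi w$ is analytic in time on $\omega$, the abstract propagation theorem of Section \ref{s:abstrIntro} gives that $t\mapsto w(t)$ is analytic from $(0,T)$ into $L^2$. A final bootstrap using the equation in the form $i\Delta w=\partial_t w+f(\Delta^{-1}\mathrm{Im}\,w)$ — whose right-hand side is analytic into $L^2$ because $\partial_t w$ is the derivative of a time-analytic function and $f(\Delta^{-1}\mathrm{Im}\,w)$ is the composition of $w$ (analytic into $L^2$) with the analytic smoothing map of the previous paragraph — upgrades $w$ to be analytic into $H^2\cap H^1_0$; translating back through the correspondence above yields the claimed analyticity of $t\mapsto(u(t),\partial_t u(t))$ into $H^4\cap H^1_0\times H^2\cap H^1_0$. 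I expect the main obstacle to be not any single estimate but making the reduction genuinely match the hypotheses of the abstract theorem: one must check that a \emph{nonlocal} source $w\mapsto f(\Delta^{-1}\mathrm{Im}\,w)$ is admissible there (it should be, since the abstract scheme only uses analyticity and compactness of the nonlinearity as a map between Banach spaces, together with observability of the \emph{linear} flow), and one must keep enough room in the cutoff bookkeeping so that the analyticity available on $\widetilde\omega$ produces analyticity of $\chi w$ on a set still containing the observability region $\omega$.
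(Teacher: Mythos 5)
Your proposal takes a genuinely different route from the paper. The paper works directly with the second-order plate system $\ddot z+\Delta^2 z=0$ and transfers the Schr\"odinger observability hypothesis to a plate observability inequality via Lebeau's spectral method (the factorization $\ddot z+A^2z=(\partial_t+iA)(\partial_t-iA)z$, the splitting $z=e^{itA}z_++e^{-itA}z_-$, the estimate of the cross term as arbitrarily smoothing, and a compactness--uniqueness argument to remove the compact remainder: Propositions \ref{prop:w-plates-schr}, \ref{prop:nt-plates}, \ref{prop:obs-schr-plates}). Having converted the observability, it then applies the abstract Theorem \ref{thmabstractanalyticintro} to the plate system with the local nonlinearity $(u,v)\mapsto(0,-\widetilde\chi f(u))$, exactly as in the wave case. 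You instead perform the factorization at the level of the equation itself, setting $w=\partial_t u+i\Delta u$ and obtaining $\partial_t w-i\Delta w=-f(u)$, so that the observability hypothesis for the Schr\"odinger group can be used verbatim, with no loss and without Lebeau's argument. That is a real simplification. The price is that the nonlinearity seen by the Schr\"odinger variable is $w\mapsto -f(\Delta^{-1}\operatorname{Im}w)$, which is nonlocal and only $\mathbb{R}$-linear in $w$; one therefore has to work on $L^2(\M;\mathbb{C})$ \emph{viewed as a real Hilbert space}, on which $i\Delta$ is skew-adjoint and $\operatorname{Im}$ is $\mathbb{R}$-linear, so that its canonical $\mathbb{C}$-linear extension to the complexification makes $F$ holomorphic. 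This is correct but needs to be said, whereas the paper's local choice of $F$ makes Assumption \ref{assumFholom} immediate from Proposition \ref{prop:f-assumptions-pl}.

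One point in your write-up is under-specified and, as stated, does not literally match the abstract theorem. Theorem \ref{thmabstractanalyticintro} requires $\bC U(t)=0$ for the observed solution, with the ``contamination'' from the analytic region entering only through the source terms $H_1$, $H_2$. It is not enough to know that ``$\chi w$ is analytic in time on $\omega$''. As in the proof of Theorem \ref{thm:analytic-prop}, you must set $W=(1-\chi)w$ with $\chi\equiv1$ on a neighbourhood of $\operatorname{supp}b_\omega$, so that $b_\omega W\equiv0$, and derive
\begin{align*}
\partial_t W-i\Delta W=-(1-\chi)f\bigl(\Delta^{-1}\operatorname{Im}(W+\chi w)\bigr)+[i\Delta,\chi]w,
\end{align*}
which fits \eqref{UCabstractT*intro} with $H_1=\chi w$ (analytic into $L^2$, indeed into $H^{1+\veps}$) and $H_2=[i\Delta,\chi]\widetilde\chi w$ (analytic into $H^{\veps}$ after absorbing the first-order commutator into a slightly larger cutoff $\widetilde\chi\in C^\infty_c(\widetilde\omega)$ equal to one on $\operatorname{supp}\nabla\chi$). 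With this decomposition spelled out, and after checking Assumption \ref{assumcommu} for $A=i\Delta$, $\bC=b_\omega\cdot$ (here $(A^*A)^{1/2}=-\Delta$ so the commutator is the same first-order operator $2\nabla b_\omega\cdot\nabla+\Delta b_\omega$ as in Proposition \ref{propcommutwave}, and $\partial_{\vec n}b_\omega=0$ preserves the Dirichlet condition) and the observability at the second level of regularity $X^{\sigma+\veps}$ (obtained by a commutator/interpolation argument parallel to the one used for the wave), your argument does go through. The bootstrap you describe, $i\Delta w=\partial_t w+f(\Delta^{-1}\operatorname{Im}w)$, then upgrades $L^2$-analyticity of $w$ to $H^2\cap H^1_0$-analyticity, which translates into the claimed $H^4\cap H^1_0\times H^2\cap H^1_0$-analyticity of $(u,\partial_t u)$.
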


Note that the condition $\Delta u_{|_{\partial\M}}=0$ which does not make sense at this level of regularity is meant as an extension of the related semigroup. The nonlinear equation \eqref{eq:nlp-1intro} is meant in the sense of the Duhamel formula.

The sharp geometric condition on $\omega$ necessary for the observability of the Schr\"odinger equation remains an open question. Yet, it has been the object of many investigations. In the following situations, the observability of the Schr\"odinger equation is known, thereby allowing us to apply the previous theorem: 
\begin{enumerate}
    \item $(\M, g)$ is a compact Riemannian manifold with or without boundary and $\omega$ satisfies the \ref{assumGCC}. See Lebeau \cite{1992:lebeau:schrodinger-controle}.
    \item $(\M,g)=((0,1)^d,\text{Euclid})$ with $d\leq 3$, $\omega$ is any nonempty open set. This was first proved by Jaffard for $d=2$ and Komornik \cite{K:92} for other dimensions (actually directly for the beam equation). Other proofs have also been given later by Burq-Zworski \cite{BZ:04},  Anantharaman-Maci\`a \cite{AM:14}. Note here that the proofs are given for the torus $\mathbb{T}^d$, but an easy argument of symmetrization allows to recover the same result for the Dirichlet boundary condition.
    \item $(\M, g)=(\mathbb{D}, \text{Euclid})$ is the Euclidean closed disk in $\R^2$ and $\omega\cap \partial \M\neq\emptyset$. See Anantharaman, L\'eautaud, Maci\`a \cite[Theorem 1.2]{2016:anantharaman-leautaud-macia:wigner-schrodinger-disk},
    \item $(\M, g)$ is a compact, boundaryless connected Riemannian surface whose flow has the Anosov property, $\omega$ is any nonempty open set. See Dyatlov-Jin-Nonnenmacher \cite[Theorem 5]{DJN:22}.
    \item $(\M, g)$ is a compact, boundaryless Riemannian manifold of dimension $d$ and constant curvature $\equiv -1$, and the observation $\mc{C}\psi= a\psi$ is made through a smooth function $a$ on $M$ such that the set
    $\{\rho\in S^*\M\ |\ a^2(\phi_t(\rho))=0,\ \forall t\in \R\}$
    has Hausdorff dimension $<d$. Here $\phi_t$ is the bicharacteristic flow on $T^*\M$. See Anantharaman-Rivi\`ere \cite[Theorem 2.5]{AR:12}.
\end{enumerate}
We also provide an abstract result following Lebeau \cite{1992:lebeau:schrodinger-controle}, establishing a connection between the observability of the Schr\"odinger equation and the plate equation.

We expect that the unique continuation for the situation in Theorem \ref{thm:analytic-nlp} is true. Yet, we would need the unique continuation for the plate equation with lower order coefficients analytic in time. It is very likely to be true, but is not yet proved. We refer to \cite{Tataru:99,RZ:98,Hor:97} in analytic regularity and \cite{FLL:24} in Gevrey spaces for the closely related Schr\"odinger equation.

Systems of nonlinear plate equations as \eqref{eq:nlp-1intro} have been considered by Eller-Toundykov \cite{ET:15} for $d=2$ with $\M$ a bounded open subset of $\R^2$, where they have addressed the question of (semi-global) exact controllability for such system. It is known that establishing a unique continuation property for PDEs is a crucial step to achieve controllability results. They proved using Carleman estimates that unique continuation holds for \eqref{eq:nlp-1}, when $\omega$ is a subset of a collar neighborhood of the boundary $\partial \M$ (namely, a multiplier-type condition) and without an analyticity assumption on the coefficients, see \cite[Lemma 4.1]{ET:15}. We also refer to some recent results on nonlinear plates due to Bournissou-Ervedoza-Tucsnak \cite{TBE24}, where they show that the nonlinear system described by the von K\'arm\'an plate equation is locally exactly controllable around any stationary state defined by a real analytic function.

\begin{remark}
        Actually, in \cite{ET:15}, more general models than the one presented here have been considered. However, it was stressed by the authors that the study of \eqref{eq:nlp-1intro} presents a stepping stone to a further study of control-related questions for systems with even more complicated nonlinearities.
\end{remark}

\subsection{An abstract result}\label{s:abstrIntro} Even though we have presented several results related to the semilinear wave and plate equations, at the core of all of them lies an abstract result. This result essentially states that solutions to a nonlinear problem, with a skew-adjoint linear part, and a compact nonlinearity which is also analytic, are analytic in time, provided that the observed solution is zero.

We first need to introduce some notation and assumptions in order to state the aforementioned result. Let $T>0$ and let us consider the nonlinear observability system
\begin{align}\label{intro:eq-obs}
    \left\{\begin{array}{cc}
        \partial_t U=AU+F(U+H_1)+H_2, &\ t\in (0, T), \\
        \bC U(t)=0, &\ t\in (0, T),
    \end{array}\right.
\end{align}
on a suitable real Hilbert space $X$, where $A$ is a skew-adjoint operator on $X$, $F$ is a mapping on $X$, $H_1$ and $H_2$ are some parameters, and $\bC$ is a bounded observation operator in $X$. The first assumption dictates the class of PDEs we will be working with.
\begin{assump}{1}
\label{assumAA}$A$ is a skew-adjoint operator with domain $D(A)$ on a real separable Hilbert space $X$, so that $A^*A=-A^2$ has a compact resolvent. 
\end{assump}
This assumption allows us to introduce $X^\sigma$ as the interpolation space in between $D(A)$ and $X$ for $\sigma\in [0, 1]$, with $X^0=X$ and $X^1=D(A)$. From now on, we will work at a fixed level of regularity, for which we fix $\sigma\in [0, 1]$ and instead consider \eqref{intro:eq-obs} in $X^\sigma$. We will need to exert some control on the linear semigroup $t\in [0, T]\mapsto e^{At}\in \mc{L}(X^\sigma)$ generated by $A$ and even more, we will need some extra control at a slightly higher regularity. This will be accomplished through the following assumption.

\begin{assump}{2}
\label{assumCC}Let $\bC\in\mc{L}(X^\sigma, X^\sigma)$ be an observation operator. We assume that $t\mapsto e^{tA}$ is observable on $[0,T]$, namely, there exists a constant $\mathfrak{C}_{obs}>0$ such that
\begin{align}\label{observabstractintro}
    \norm{W_0}_{X^\sigma}^2\leq \mathfrak{C}_{\text{obs}}^2\int_0^T \norm{\bC e^{tA}W_0}_{X^\sigma}^2dt,\quad \forall W_0\in X^\sigma.
\end{align}
Furthermore, we assume that $\bC\in\mc{L}(X^{\sigma+\veps}, X^{\sigma+\veps})$ for some $\veps>0$ and that there exists (another) constant $\mathfrak{C}_{obs}>0$ such that
\begin{align}\label{observabstractvepsintro}
    \norm{W_0}_{X^{\sigma+\veps}}^2\leq \mathfrak{C}_{\text{obs}}^2\int_0^T \norm{\bC e^{tA}W_0}_{X^{\sigma+\veps}}^2dt,\quad \forall W_0\in X^{\sigma+\veps}.
\end{align}
\end{assump}

We now make the last assumption regarding the nonlinearity. For a given real Banach space $Y$, we denote the ball centered at $0$ of radius $M$ by
\begin{align*}
    \mathbb{B}_M(Y):=\{y\in Y\ |\ \norm{y}_Y\leq M\}.
\end{align*}
For a given interval $I\subset \R$, we denote the ball of radius $M$ of $C^0([0, T], Y)$ by
\begin{align*}
    \B_{M}^I(Y)=\{U\in C^0([0, T], Y)\ |\ \forall t\in I,\ \norm{U(t)}_Y\leq M\},
\end{align*}
Moreover, we introduce the canonical complexification $Y_\mathbb{C}$, defined as the set of elements $y_1+iy_2$, $y_j\in Y$, see \cite[Section 2]{BS:71-polynomials} for more details. We then introduce the notation for the \emph{cylinder} on $Y_\mathbb{C}$
\begin{align*}
    \mathbb{B}_{M, \delta}(Y)=\{y\in Y_\mathbb{C}\ |\ \norm{\Re(y)}_Y\leq M\ \text{and}\ \norm{\Im(y)}_Y\leq \delta\},
\end{align*}
and similarly on $C^0(I, Y_\mathbb{C})$
\begin{align*}
    \B_{M, \delta}^I(Y)=\{U\in C^0(I, Y_\mathbb{C})\ |\ \forall t\in I,\ \norm{\Re(U(t))}_Y\leq M\ \text{and}\ \norm{\Im(U(t))}_Y\leq \delta\}.
\end{align*}
The latter space is naturally endowed with the $L^\infty(I,Y_\mathbb{C})$-norm. When working on the complex plane $\mathbb{C}$, we will simply denote by $B_\mathbb{C}(z_0, r)$ a complex ball of center $z_0$ and radius $r>0$. With the previous notations, we will make the following assumption on $F$.
\begin{assump}{3}
\label{assumFholom}
$F$ is a nonlinear Lipschitz and bounded operator from $\mathbb{B}_{4R_{0}}(X^{\sigma})$ to $X^{\sigma+\veps}$ for some $R_{0}>0$, $\sigma>0$ and $\veps>0$\footnote{Here, we mean that $F$ is holomorphic on $\Int{\mathbb{B}_{4R_{0}}(X^{\sigma}})$ with uniform estimates and continuous extension up to the boundary. We will often make the same slight abuse of notation in the rest of the article.}. Furthermore, there exists $\delta>0$ so that $F$ has a holomorphic extension from $\mathbb{B}_{4R_{0},2\delta}(X^{\sigma})$ to $X^{\sigma+\veps}_{\mathbb{C}}$. Moreover, there exists $C>0$ so that 
\begin{align}
\label{boundFanalyticintro}
\nor{F(U_{0})}{X^{\sigma+\veps}_{\mathbb{C}}}\leq C,\quad \nor{F(U_{0})-F(V_{0})}{X^{\sigma+\veps}_{\mathbb{C}}}\leq C \nor{U_{0}-V_{0}}{X^{\sigma}_{\mathbb{C}}}
\end{align}
for any $U_{0},~V_{0}\in \mathbb{B}_{4R_{0},2\delta}(X^{\sigma})$.
\end{assump}
We will also need a technical assumption on the pair $(A, \bC)$ related to commutator estimates useful for the regularization.
\begin{assump}{4}
\label{assumcommu}
There exists $s>0$ so that $[(A^*A)^s,\bC]\in \mc{L}(X^{\sigma+2s}, X^{\sigma+\veps})$.
\end{assump}

We now state the main abstract result.

\begin{theorem}\label{thmabstractanalyticintro}
Let $R_{0}>0$ and $T>0$. Assume that Assumptions \ref{assumAA}, \ref{assumCC} (with $T$), \ref{assumFholom} (with $R_{0}$) and \ref{assumcommu} hold. Let $T^{*}>T$. Let $H_{1}\in \B_{R_{0}}^{[0,T^{*}]}(X^{\sigma})$ and $H_{2}\in C^{0}([0,T^{*}],X^{\sigma+\veps})$ that admit some extension in $C^{0}([0,T^{*}]+i[-\mu,\mu],X^{\sigma}_{\mathbb{C}})$, resp. $C^{0}([0,T^{*}]+i[-\mu,\mu],X^{\sigma+\veps}_{\mathbb{C}})$, with $\mu>0$, so that the application
\begin{align*}
    \left\{\begin{array}{rcl}
(0,T^{*})+i(-\mu,\mu) & \longrightarrow&X^{\sigma}_{\mathbb{C}}\\
  z&\longmapsto& H_{1}(z)
    \end{array}\right.
\end{align*}
is holomorphic. We assume the same for $H_{2}$ with value in $X^{\sigma+\veps}_{\mathbb{C}}$. We assume moreover that $\Re H_{1}(z)\in \mathbb{B}_{R_0}(X^{\sigma})$ for any $z\in [0,T^{*}]+i[-\mu,\mu]$.

Then, any solution $U\in C^{0}([0,T^{*}],X^{\sigma})$ and satisfying 
\begin{align}\label{UCabstractT*intro}
    \left\{\begin{array}{cr}
         \partial_t U=AU+F(U+H_{1})+H_{2}& \textnormal{ on } [0,T^{*}], \\
         \bC U(t)=0 &\textnormal{ for }t\in [0,T^{*}],
    \end{array}\right.
\end{align}
is real analytic in $t$ in $(0,T^*)$ with value in $X^{\sigma}$.
\end{theorem}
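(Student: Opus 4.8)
The strategy is to establish real analyticity in time through a Galerkin approximation coupled with an inductive control of complex-time extensions, in the spirit of Hale--Raugel \cite{HR03} and Joly--Laurent \cite{JL13}, but with the decisive feature that the exponential decay of a dissipative semigroup is replaced throughout by the observability estimates \eqref{observabstractintro}--\eqref{observabstractvepsintro} of Assumption \ref{assumCC}, which is what keeps everything in finite time.

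\emph{Step 1: Reduction and bootstrap of regularity.} I would first observe that since $U$ solves \eqref{UCabstractT*intro} in the Duhamel sense with $A$ skew-adjoint, $F(U+H_1)+H_2 \in C^0([0,T^*],X^{\sigma+\veps})$ by Assumption \ref{assumFholom} and the hypothesis on $H_2$; hence $U$ is in fact continuous with values in a slightly smoother space on any interval compactly contained in $(0,T^*)$, and one can localize the problem to such a subinterval. The constraint $\bC U\equiv 0$ will then be differentiated in time: applying $\bC$ to the equation and using $\bC U=0$ gives $\bC A U = -\bC(F(U+H_1)+H_2)$, an identity that, iterated, couples time-derivatives of $U$ to the (smoothing) nonlinearity. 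This is where Assumption \ref{assumcommu} enters, allowing one to move powers of $A^*A$ past $\bC$ at the cost of landing in $X^{\sigma+\veps}$.

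\emph{Step 2: Galerkin approximation and complex-time extension.} Let $P_N$ be the spectral projector of $A^*A$ onto frequencies $\le N$. The finite-dimensional system $\partial_t U_N = A U_N + P_N F(U_N + H_1) + P_N H_2$ with the same initial data (projected) is an ODE with holomorphic right-hand side in $(U_N,z)$ on the cylinder $\mathbb{B}_{4R_0,2\delta}(X^\sigma)$, by Assumption \ref{assumFholom} and the holomorphy of $H_1,H_2$ in $z$. Hence $U_N$ extends holomorphically to a complex-time strip, and the core of the argument is to show that the width of this strip does \emph{not} shrink to zero as $N\to\infty$, and that $U_N \to U$ in $C^0$ on real times. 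The uniform-in-$N$ a priori bound comes from the observability inequality: writing $W_N = U_N$ and using that $\bC U=0$ while $\bC U_N$ is small (of order controlled by $\|(1-P_N)(\cdot)\|$ and the Lipschitz constant of $F$), one propagates smallness of $U_N$ in $X^\sigma$ — and, crucially, at level $X^{\sigma+\veps}$ via \eqref{observabstractvepsintro} and the $X^{\sigma+\veps}$-smoothing of $F$ — on the full interval $[0,T^*]$, i.e. \emph{without} losing time, because the observability constant $\mathfrak{C}_{obs}$ is fixed. The extra $\veps$ of regularity, fed back through a fixed-point/continuity argument on the cylinder $\B^{[0,T^*]}_{4R_0,2\delta}(X^\sigma)$, gives a uniform lower bound $\mu_0>0$ on the imaginary width for all $N$.

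\emph{Step 3: Passage to the limit and conclusion.} Having $\{U_N\}$ bounded and equi-holomorphic on $(0,T^*)+i(-\mu_0,\mu_0)$ with values in $X^\sigma_{\mathbb{C}}$, a normal-families / Montel-type argument (for Banach-valued holomorphic maps, as in Section \ref{s:appcomplex}) extracts a locally uniform limit which is holomorphic in $z$; identifying it with $U$ on real times (using $U_N\to U$ there, established from the contraction estimate \eqref{boundFanalyticintro} and the difference of Duhamel formulas) shows $U$ itself extends holomorphically to a complex-time strip about $(0,T^*)$, which is exactly real analyticity in $t$ with values in $X^\sigma$. One then upgrades the target space using the smoothing, as in the wave corollary, but the statement only claims $X^\sigma$.

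\emph{Main obstacle.} The delicate point is Step 2: obtaining the \emph{uniform in $N$} lower bound on the width of the complex-time strip. The observability estimate is a purely linear statement about $e^{tA}$, whereas here one must control $\bC U_N$ where $U_N$ solves the nonlinear, Galerkin-truncated, \emph{complex-time} equation and only $\bC U$ (not $\bC U_N$) vanishes exactly. Closing the estimate requires carefully tracking how the defect $\bC(U_N-U)$ and the imaginary parts interact — the nonlinearity's holomorphic extension is only available on the cylinder of half-width $2\delta$, so one must ensure the iteration stays inside $\B^{[0,T^*]}_{4R_0,2\delta}(X^\sigma)$ uniformly in $N$ while simultaneously exploiting the $\veps$-gain to run a fixed-point argument in the strip. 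Balancing the loss coming from $(1-P_N)$, the Lipschitz constant of $F$, and the observability constant, without any of them forcing the strip width or the time interval to degenerate, is the heart of the proof.
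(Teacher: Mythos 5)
Your proposal takes a genuinely different route from the paper's, and it contains a gap that you in fact partially acknowledge in your ``Main obstacle'' paragraph.

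The paper does not pass to a limit of Galerkin approximants $U_N$. It decomposes the \emph{actual solution} $U = \P_n U + \Q_n U =: V + W$ at a single, fixed, large $n$, and the central construction is a \emph{reconstruction operator}: because $\bC U \equiv 0$ forces $\Pi_n \bC W = -\Pi_n \bC V$, the high-frequency part $W$ solves an over-determined ``observability Cauchy problem'' (Lemma \ref{lmCauchyobs}), and the observability estimate \eqref{observabstractintro} is what makes the reconstruction map $\O_n^{-1}$ bounded. The nonlinear term becomes a small perturbation in this fixed point precisely because $\Q_n$ picks up a $\langle\lambda_n\rangle^{-\veps}$ factor on the smoothing output $F: X^\sigma \to X^{\sigma+\veps}$; that is how Theorem \ref{thmabstract} produces $W = \mathcal{R}(V,H_1,H_2)$ with $\mathcal{R}$ Lipschitz and, under Assumption \ref{assumFholom}, holomorphic. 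The low-frequency $V$ then satisfies a genuine Banach-space ODE (in $C^0([0,T],\P_n X^\sigma)$, with bounded $\P_n A$) whose right-hand side $\P_n F(\cdot+\mathcal{R}(\cdot,\cdot,\cdot)+\cdot)+\P_n H_2$ is analytic, and classical analytic ODE theory, combined with the translation trick $s\mapsto U^s$, gives analyticity of $V^s$, hence of $U^s = V^s + \mathcal{R}(V^s, H_1^s, H_2^s)$. At no point is a sequence $U_N \to U$ constructed or a Montel/normal-families argument invoked.

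Your Step 2 is where your plan does not close. You define $U_N$ as an independent solution of the truncated equation (same projected data) and then want to use observability to bound $U_N$ on the strip. But observability applies only to the linear group $e^{tA}$, and what you know is $\bC U = 0$, not $\bC U_N = 0$. To assert $\bC U_N$ is ``small'' you must already control $U_N - U$ in $X^\sigma$, but producing that bound is exactly what the observability argument was supposed to supply; the estimate is circular. The paper sidesteps this entirely: it never forms a separate approximating solution, so there is never a defect $\bC U_N$ to control. The two further mechanisms your plan lacks are (i) the $\langle\lambda_n\rangle^{-\veps}$ contraction factor from projecting the smoothing nonlinearity to high frequencies, which is the source of the fixed point (and hence of the uniform-in-$n$ strip width you are worried about), and (ii) the time-translation parameter $s$ and the ODE in $s$ for the low-frequency block, which is how the paper turns the holomorphy of the nonlinearity and of $\mathcal{R}$ into holomorphy in time without any limiting procedure.
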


The previous result can still hold with some variants of the aforementioned assumptions (for instance, considering different assumptions on $A$). However, we have chosen to keep enough generality to showcase that the technique can be applied to other PDEs. This result can be seen as a sort of finite-time adaptation of an abstract result due to Hale-Raugel \cite[Theorem 2.5]{HR03} with the flexibility of adding analytic source terms.

Hale-Raugel were concerned with the regularity properties (including analyticity) of evolutionary equations whose solutions are defined on $t\in\R$ and lie on a compact invariant set. They prove that such solutions are as smooth (in time) as the nonlinearity, encompassing a wide range of PDEs, including dissipative hyperbolic equations, which, unlike parabolic ones, do not have smoothing properties. Their proof relies upon a generalized Galerkin procedure, already used by dynamicians to study the regularity of attractors in different contexts, see for instance \cite{FT:89,Go:00-torus,Go:18} and the references therein. The approach of Hale-Raugel was to find the high-frequency component as a fixed point of an associate adequate mapping, depending on the low-frequency component of the solution. Then, the problem is reduced to study the system associated with the low-frequency component and the corresponding fixed point map parameterized by it.  We can roughly say that there are two key hypotheses for the technique to work: the \emph{exponential decay} of the linear semigroup and some sort of \emph{compactness} on the nonlinearity. To adapt Hale-Raugel's technique to a finite-time setting, we replace the decay of the linear semigroup by its finite-time counterpart: the \emph{observability} of the linear semigroup. We are then led to solve a nonlinear observability system to find the high-frequency component. To this end, we will heavily rely on the observability properties of the linear semigroup generated by $A$ and the compactness of the nonlinearity $F$ to set up an appropriate fixed point. It is worth mentioning that, to stabilize some semilinear damped wave equations without \ref{assumGCC}, Joly and the first author \cite{2020:joly-laurent:decay-nlw-no-gcc} successfully adapted this technique when a weaker decay of the linear semigroup is assumed.

\subsection{Outline of the article}Section \ref{sec:abstract-construction} is devoted to the proof of the abstract Theorem \ref{thmabstractanalyticintro}. It also contains several preliminaries as the property of finite determining modes and the abstract propagation of regularity. Section \ref{SEC:wave-eq} contains the applications to the nonlinear wave equation. It contains the verification that the abstract Theorem \ref{thmabstractanalyticintro} can be applied for a sufficiently high regularity index $\sigma$. It also contains some propagation of regularity arguments that allow to reach this regularity $\sigma$ starting from the energy space. Section \ref{s:Plate} contains the applications to the plate equation. In the Appendix, we gathered some results about complex analysis in Banach spaces and some geometric facts about the generalized geodesic flow that are used in the rest of the article.

\subsection{Acknowledgement} The present article is certainly a consequence of all the earliest discussions between Romain Joly and both authors. We warmly thank him for everything it brought to this work. 

The second author has received funding from the European Union's Horizon 2020 research and innovation programme under the Marie Sk\l{}odowska-Curie grant agreement No 945332.

\section{Analytic reconstruction for nonlinear observability systems}\label{sec:abstract-construction}

The purpose of this section is to prove Theorem \ref{thmabstractanalyticintro}. For the reader's convenience, we will briefly outline its proof, following the Galerkin decomposition introduced in Hale-Raugel \cite{HR03}. The key replacement will be the observability estimate that allows to reconstruct the state from the observation, at least for the high-frequency part.

Let $T>0$ and fix $\sigma\in [0, 1]$. From Assumption \ref{assumAA}, we introduce the low and high-frequency projections $\P_n=\mathbbm{1}_{[0, n]}\big((AA^*)^{1/2}\big)$ and $\Q_n=I-\P_n$, respectively. Let $U=U(t)$ be a mild solution of \eqref{UCabstractT*intro} in $C^0([0, T], X^\sigma)$ and suppose $H_1=0$, $H_2=0$ for simplicity. Let us consider the splitting
\begin{align*}
    U(t)=\P_n U(t)+\Q_nU(t)=V(t)+W(t),
\end{align*}
where $\big(V(t), W(t)\big)$ solves the following system
\begin{align*}
    \left\{\begin{array}{rl}
         \partial_t V(t)&=AV(t)+\P_n F(V+W),  \\
         \partial_t W(t)&=AW(t)+\Q_n F(V+W), \\
         \bC V(t)&=-\bC W(t).
    \end{array}\right.
\end{align*}
By Duhamel's formula, the high-frequency component $W$ can be written as
\begin{align*}
    W(t)=e^{tA}W(0)+\int_0^t e^{A(t-s)}\Q_n F\big(V(s)+W(s)\big)ds.
\end{align*}
The observation condition $\bC V=-\bC W$ suggests that given $V$, we can reconstruct $W$ by considering the corresponding nonlinear observability system. Indeed, according to Assumption \ref{assumCC}, the observability of the linear semigroup $t\in [0, T]\mapsto e^{tA}$ enables us to construct an initial condition $W(0)$ solely in terms of an observation. Forgetting first about the source term given by the nonlinearity, that would allow to reconstruct $W(0)$ in terms of the observation of $W$, which is $\bC W=-\bC V$. Lemma \ref{lmCauchyobs} below provides a generalization of this reconstruction problem when source terms are present. In this context, the first part of Assumption \ref{assumFholom}, namely, that $F$ is a nonlinear map from bounded sets of $X^\sigma$ into $X^{\sigma+\veps}$, will allow, at frequency sufficiently large, to consider the nonlinearity as a perturbation and to complete this reconstruction procedure.  More precisely, this will imply the existence of a nonlinear map $\mc{N}$ such that $W(0)=\mc{N}(V)$. Consequently, we have the formula $W=\Phi_V(W)$, where $\Phi_V: C^0([0, T], \Q_nX^\sigma)\to C^0([0, T], \Q_nX^\sigma)$ is given by
\begin{align*}
    \Phi_V(W)(t)=\mc{N}V(\cdot)+\int_0^t e^{A(t-s)}\Q_n F\big(V(s)+W(s)\big)ds,\ t\in [0, T].
\end{align*}
This suggests that we can find the high-frequency component $W^*$ as a fixed point with the low-frequency component $V$ as an input. 

At this point, the solution $U$ can be represented as $U(t)=V(t)+W^*(V)(t)$, where $V$ solves
\begin{align}\label{intro:eq-low-split}
    \partial_t V(t)=AV+\P_n F(V+W^*(V)).
\end{align}
To demonstrate that $t\in (0, T)\mapsto U(t)\in X^\sigma$ is analytic, the second part of Assumption \ref{assumFholom}, namely, that $F$ admits a holomorphic extension, is essential. This will be achieved by establishing that $t\mapsto V(t)$ and $V\mapsto W^*(V)$ are both analytic maps. If instead, we consider \eqref{intro:eq-low-split} as a differential equation on the space Banach space $C^0([0, T], \P_n X^\sigma)$, classical ODEs theory imply that $t\mapsto V(t)$ is as smooth as $F$, and therefore analytic. The uniform contraction principle further ensures that $W^*$ depends analytically on $V$, from which the result follows.

In what follows, we develop these ideas towards the proof of the main theorem. Throughout the section, we will prove several intermediate results, some of which are of independent interest. For instance, we mention Proposition \ref{prop:finite-det-modes} (Finite determining modes) and Proposition \ref{prop:abs-prop-reg} (Propagation of regularity) below.

\subsection{Linear reconstruction, determining modes and propagation of regularity} In this section, we will revisit the assumptions outlined in the introduction, organizing them according to the different results we aim to establish in order to prove our main result.

From now on, we work under Assumption \ref{assumAA}. We will now list some consequences of such an assumption. That $A^*A=-A^2$ is non-negative self-adjoint, allows us to define the Hilbert space $X^{\sigma}=D((A^*A)^{\sigma/2})$ for any $\sigma\in\R$. Note that the assumptions imply $X^{\sigma+\veps}\hookrightarrow X^{\sigma} $ for any $\veps>0$. Unless specifically noticed, we will often omit the embedding $\iota: X^{\sigma+\veps}\to X^{\sigma}$. 

By the spectral theorem, and since $A^*A$ has a compact resolvent, thus, the spectrum of $A^*A$ is real and discrete, allowing us to construct an orthonormal basis of eigenvectors of $A^*A$ in $\mathcal{H}$, denoted by $(E_j)_{j\in\N}$ and associated to the nonnegative eigenvalues $(\lambda_j)_{j\in \N}$ (ranged increasingly) with $\lambda_j \underset{j\to +\infty}{\longrightarrow}+\infty$. We introduce the high-frequency projectors $\Q_n$ on the space $\overline{\text{Span}\{E_j\}_{j\geq n}}$ and then we set the low-frequency projection $\P_n=I-\Q_n$. Note that $A$ commutes with $\P_n$ and $A\P_n$ is a bounded operator of $X^{\sigma}$ to itself with norm $\inn{\ld_n}$.

The parameter $\sigma$ will be fixed from now on. We will use the notation $\P_n X^{\sigma}$ or $\Q_n X^{\sigma}$ for that related image of the Hilbert space endowed with the topology of $X^{\sigma}$. The restriction of the embedding $\iota$ to $\Q_n X^{\sigma+\veps}$(denoted with the same name) $\iota: \Q_n X^{\sigma+\veps}\to \Q_nX^{\sigma}$ has norm $\inn{\lambda_n}^{-\veps}$. 

By the spectral theorem, since the spectrum of $A$ is purely imaginary, we can define $e^{tA}$ for any $t\in \R$, together with the estimate $\norm{e^{tA}U_0}_{X^{\sigma}}=\norm{U_0}_{X^{\sigma}}$. Also, $e^{tA}$ commutes with $\P_n$ and $\Q_n$.

\subsubsection{Linear reconstruction} We will now introduce the following assumption regarding the observability of the semigroup generated by $A$.
\begin{assump}{2a}
\label{assumC}Let $\bC\in\mc{L}(X^\sigma, X^\sigma)$ be an observation operator. We assume that $t\mapsto e^{tA}$ is observable on $[0,T]$, namely, there exists a constant $\mathfrak{C}_{obs}>0$ such that
\begin{align}\label{observabstract}
    \norm{W_0}_{X^\sigma}^2\leq \mathfrak{C}_{\text{obs}}^2\int_0^T \norm{\bC e^{tA}W_0}_{X^\sigma}^2dt,\quad \forall W_0\in X^\sigma.
\end{align}
\end{assump}
Let $\mc{O}\in \mc{L}(X^\sigma, L^2([0, T], X^\sigma))$, defined by $\mc{O}:=\bC e^{\cdot A}$, be the observation operator of linear waves and $\mc{O}_n:=\mc{O}|_{\Q_nX^\sigma}$ the \emph{high-frequency} observation operator. The observability inequality \eqref{observabstract} can be written
\begin{align}\label{observabstractO}
    \norm{W_0}_{X^\sigma}&\leq \mathfrak{C}_{\text{obs}}\nor{\O W_{0}}{L^{2}([0,T],X^\sigma)},\quad \forall W_0\in X^\sigma,\\
 \label{observabstractOn}    \norm{W_0}_{X^\sigma}&\leq \mathfrak{C}_{\text{obs}}\nor{\O_{n} W_{0}}{L^{2}([0,T],X^\sigma)},\quad \forall W_0\in \Q_n X^\sigma,
    \end{align}
where the last inequality is uniform in $n\in \N$.
It implies that $\mc{O}$ is injective and it has a closed range. Moreover, since $\Q_nX^\sigma$ is closed in $X^\sigma$, then the \emph{high-frequency} observation operator $\mc{O}_n:=\mc{O}|_{\Q_nX^\sigma}$ has closed range as well, which allows us to define $\Pi_n$ as the orthogonal (according to the natural scalar product in $L^2([0, T], X^\sigma)$) projection onto its image $\text{Im}(\mc{O}_n)\subset L^2([0, T], X^\sigma)$. From now on, we equip $\Y_n:=\text{Im}(\O_n)$ with the induced topology from $L^2([0, T], X^\sigma)$ which makes it a Banach space. By \eqref{observabstractOn}, we know that $\O_n: \Q_n X^\sigma\to \Y_n$ is a bijection, and hence, $\Y_n$ is closed, a bounded reconstruction operator $\O_n^{-1}: \Y_n\to \Q_n X^\sigma$ exists.

By applying the observability inequality \eqref{observabstractOn}, consequence of Assumption \ref{assumC}, we get for any $y\in \Y_n \subset L^2([0, T], X^\sigma)$,
\begin{align}
\label{boundOn-1}
    \norm{\O_n^{-1}y}_{X^\sigma}&\leq \mathfrak{C}_{\text{obs}}\norm{\O_{n}\O_n^{-1}y}_{L^2([0, T], X^\sigma)}=\mathfrak{C}_{\text{obs}}\norm{y}_{L^2([0, T], X^\sigma)},
\end{align}
where again, the inequality is uniform in $n\in \N$.

To ease notation, we consider the operator $\mc{I}(t): g\mapsto \int_0^t e^{A(t-s)}g(s)ds$ and we denote $\mc{I}(\cdot)$ when the operator is seen with value in $C^{0}([0,T],Y)$ for a suitable Banach space. The above construction will enable us to solve an \emph{observability Cauchy problem}, which is the content of the following Lemma.

\begin{lemma}
\label{lmCauchyobs}
There exists $C(T,\mathfrak{C}_{\text{obs}},\nor{\bC}{\mc{L}(X^\sigma)})>0$ so that for any $n\in\N$, $H\in L^{1}([0,T],X^{\sigma})$ and $G\in L^{2}([0,T],X^{\sigma})$, there exists a unique $W\in C^{0}([0,T],\Q_{n}X^{\sigma})$ solution of 
\begin{align}\label{solprojectlin}
    \left\{\begin{array}{rl}
        \partial_t W(t)&=AW(t)+\Q_{n}H,  \\
   \Pi_n\bC W    &=\Pi_{n}G. 
    \end{array}\right.
\end{align}
It satisfies $W(0)= W_0:=\O_n^{-1}\Pi_n\left[G-\bC\mc{I}(\cdot)\Q_n H\right]$ and is given by $W(t)=e^{tA}W_{0}+\mc{I}(t)\Q_nH$. We denote $W:=\FL(G,H)$ the associated linear operator. Moreover, we have the estimate
\begin{align}
\label{estimFL}
\nor{\FL(G,H)}{C^{0}([0,T],\Q_{n}X^{\sigma})}\leq C \nor{\Pi_{n}G}{L^{2}([0,T],X^{\sigma})}+C\nor{\Q_{n} H}{L^{1}([0,T],X^{\sigma})}
\end{align}
\end{lemma}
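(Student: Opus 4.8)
\textbf{Proof plan for Lemma \ref{lmCauchyobs}.}

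The plan is to \emph{guess the answer and verify it}. I would start by writing down what any solution $W\in C^0([0,T],\Q_n X^\sigma)$ of the evolution equation $\partial_t W = AW + \Q_n H$ must look like: by Duhamel's formula (valid since $A$ generates the unitary group $e^{tA}$ and $\Q_n H\in L^1([0,T],X^\sigma)$), we necessarily have $W(t)=e^{tA}W(0)+\mc{I}(t)\Q_n H$ with $W(0)=W_0\in \Q_n X^\sigma$ the unknown initial datum. So the whole problem reduces to determining $W_0$ from the observation constraint $\Pi_n \bC W = \Pi_n G$. Applying $\bC e^{\cdot A}$ is exactly the operator $\O$, so $\bC W(\cdot) = \O_n W_0 + \bC\,\mc{I}(\cdot)\Q_n H$ (here I use that $W_0\in\Q_n X^\sigma$ so $\bC e^{\cdot A}W_0 = \O_n W_0\in \Y_n$). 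Projecting with $\Pi_n$ and using $\Pi_n \O_n = \O_n$ (since $\mathrm{Im}(\O_n)=\Y_n$), the constraint becomes $\O_n W_0 = \Pi_n\big[G - \bC\,\mc{I}(\cdot)\Q_n H\big]$. The right-hand side lies in $\Y_n$ by construction of $\Pi_n$, and $\O_n:\Q_n X^\sigma\to\Y_n$ is a bijection by \eqref{observabstractOn}, so there is a unique admissible $W_0$, namely $W_0=\O_n^{-1}\Pi_n[G-\bC\,\mc{I}(\cdot)\Q_n H]$. This proves both existence and uniqueness, and gives the stated formulas for $W_0$ and $W(t)$.

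For the estimate \eqref{estimFL}, I would estimate $W_0$ first: by \eqref{boundOn-1},
\begin{align*}
\norm{W_0}_{X^\sigma}\leq \mathfrak{C}_{\mathrm{obs}}\norm{\Pi_n[G-\bC\,\mc{I}(\cdot)\Q_n H]}_{L^2([0,T],X^\sigma)}\leq \mathfrak{C}_{\mathrm{obs}}\norm{\Pi_n G}_{L^2([0,T],X^\sigma)}+\mathfrak{C}_{\mathrm{obs}}\norm{\bC\,\mc{I}(\cdot)\Q_n H}_{L^2([0,T],X^\sigma)},
\end{align*}
using that $\Pi_n$ is an orthogonal projection and hence a contraction on $L^2([0,T],X^\sigma)$. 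Then I bound $\norm{\bC\,\mc{I}(t)\Q_n H}_{X^\sigma}\leq \norm{\bC}_{\mc{L}(X^\sigma)}\int_0^t\norm{e^{A(t-s)}\Q_n H(s)}_{X^\sigma}\,ds\leq \norm{\bC}_{\mc{L}(X^\sigma)}\norm{\Q_n H}_{L^1([0,T],X^\sigma)}$, using that $e^{tA}$ is an isometry on $X^\sigma$; taking the $L^2([0,T])$-norm in $t$ over the bounded interval $[0,T]$ contributes a factor $T^{1/2}$. Finally I bound $W(t)$: $\norm{W(t)}_{X^\sigma}\leq \norm{e^{tA}W_0}_{X^\sigma}+\norm{\mc{I}(t)\Q_n H}_{X^\sigma}=\norm{W_0}_{X^\sigma}+\norm{\mc{I}(t)\Q_n H}_{X^\sigma}$, and the second term is again $\leq \norm{\Q_n H}_{L^1([0,T],X^\sigma)}$. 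Combining these gives \eqref{estimFL} with a constant $C$ depending only on $T$, $\mathfrak{C}_{\mathrm{obs}}$ and $\norm{\bC}_{\mc{L}(X^\sigma)}$; I should also note that $W$ indeed lies in $C^0([0,T],\Q_n X^\sigma)$ since $W_0\in\Q_n X^\sigma$, $e^{tA}$ commutes with $\Q_n$, and $\Q_n H$ takes values in $\Q_n X^\sigma$ so $\mc{I}(t)\Q_n H$ does too, and both terms are continuous in $t$.

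There is no serious obstacle here; this is essentially bookkeeping built on the already-established facts that $\O_n:\Q_n X^\sigma\to\Y_n$ is a bounded bijection with bounded inverse (uniformly in $n$) and that $e^{tA}$ is unitary. The only point requiring a small amount of care is making sure the projection $\Pi_n$ is correctly placed so that the equation $\O_n W_0 = \Pi_n[\cdots]$ is actually solvable in $\Y_n$ — i.e. that we only ask the observation to match $G$ after projecting onto $\mathrm{Im}(\O_n)$, which is precisely why the constraint in \eqref{solprojectlin} is written with $\Pi_n$ on both sides. I would make explicit at the start that $\Pi_n\bC W = \Pi_n\O_n W_0 + \Pi_n\bC\,\mc{I}(\cdot)\Q_n H = \O_n W_0 + \Pi_n \bC\,\mc{I}(\cdot)\Q_n H$, which is the identity that drives the whole argument.
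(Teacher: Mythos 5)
Your proposal follows the same line as the paper's proof: reduce to Duhamel, use the identity $\Pi_n \bC e^{\cdot A}W_0 = \Pi_n\O_n W_0 = \O_n W_0$ for $W_0\in\Q_n X^\sigma$, invert $\O_n$ to solve for $W_0$, and then run the same chain of estimates (contraction of $\Pi_n$, the bound \eqref{boundOn-1}, unitarity of $e^{tA}$, H\"older in time). The only cosmetic difference is that the paper spells out uniqueness by considering the difference of two solutions, whereas you deduce it directly from the unique determination of $W_0$; the paper itself remarks that this shortcut would have sufficed.
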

\begin{proof}
To be solution of the first line of \eqref{solprojectlin}, it is equivalent to be written as a Duhamel formula $W(t)=e^{tA}W_{0}+\mc{I}(t)\Q_nH$ for some $W_{0}\in \Q_{n}X^{\sigma}$. So, we only need to compute $W_{0}$. With the previous formula, we have $W\in C^{0}([0,T],\Q_{n}X^{\sigma})\subset L^{2}([0,T],\Q_{n}X^{\sigma})$ and we can compute
 \begin{align*}
    \Pi_n\bC W=   \Pi_n\bC \left[e^{\cdot A}W_{0}+\mc{I}(\cdot)\Q_nH \right]=\Pi_n\bC \left[e^{\cdot A}W_{0} \right]+\Pi_n\bC \left[\mc{I}(\cdot)\Q_nH \right].
 \end{align*}
 Note that if $W_{0}\in \Q_{n}X^{\sigma}$, then $\bC\left[e^{\cdot A}W_{0}\right]=\O W_{0}=\O_{n} W_{0}$ and therefore, $\Pi_{n}\bC\left[e^{\cdot A}W_{0}\right]=\Pi_{n}\O_{n} W_{0}=\O_{n} W_{0}$ by definition of $\Pi_{n}$. 
 
 In particular, since both belong to $\mathcal{Y}_n$, we want $ \Pi_n\bC W=\Pi_{n}G$, we should have 
 \begin{align*}
\O_n^{-1}\Pi_{n}G= \O_n^{-1} \Pi_n\bC W =\O_n^{-1}\O_{n} W_{0}+\O_n^{-1}\Pi_n\bC \left[\mc{I}(\cdot)\Q_nH \right]=W_{0}+\O_n^{-1}\Pi_n\bC \left[\mc{I}(\cdot)\Q_nH \right].
 \end{align*}
 This gives the $W_{0}$ given. It indeed belongs to $\Q_{n}X^{\sigma}$ and therefore $W$, as defined, satisfies the second line of \eqref{solprojectlin} by reproducing the same computation backward, that is 
  \begin{align*}
\Pi_n\bC W&=\Pi_n\bC\left[e^{\cdot A}W_{0}+\mc{I}(\cdot)\Q_nH\right] =\Pi_{n} \O_{n}W_{0}+\Pi_n\bC\mc{I}(\cdot)\Q_nH\\
&=\Pi_{n} \O_{n}\O_n^{-1}\Pi_n\left[G-\bC\mc{I}(\cdot)\Q_n H\right]+\Pi_n\bC\mc{I}(\cdot)\Q_nH\\
&=\Pi_n\left[G-\bC\mc{I}(\cdot)\Q_n H\right]+\Pi_n\bC\mc{I}(\cdot)\Q_nH=\Pi_{n}G.
\end{align*}
 The uniqueness could actually be obtained from the unique definition of $W_{0}$ that we obtained, but we prefer to give a precise proof. We consider the difference $R=W_{1}-W_{2}\in C^{0}([0,T],\Q_{n}X^{\sigma})$ between two such solutions $W_{1}$ and $W_{2}$. It satisfies
 \begin{align*}
    \left\{\begin{array}{rl}
        \partial_t R(t)&=AR(t)   \\
   \Pi_n\bC R    &=0.  
    \end{array}\right.
\end{align*}
That is $R(t)=e^{tA}R(0)$ and $\Pi_n\bC R=\Pi_{n}\O R(0)=\Pi_{n}\O_{n}R(0)=\O_{n}R(0)$. In particular, $R(0)=0$ by injectivity of $\O_{n}$.

Concerning the estimates, since $A$ is skew-adjoint on $X^{\sigma}$, standard semigroup estimates give
\begin{align}
\nor{W}{C^{0}([0,T],\Q_{n}X^{\sigma})}\leq \nor{W_{0}}{X^{\sigma}}+\nor{\Q_{n} H}{L^{1}([0,T],X^{\sigma})}.
\end{align}
So, we need to estimate $W_{0}$. For any $\widetilde{G}\in L^2([0, T], X^\sigma)$, applying \eqref{boundOn-1} to $\Pi_n \widetilde{G}\in \Y_n$, we have 
\begin{align*}
    \norm{\O_n^{-1}\Pi_n \widetilde{G}}_{X^\sigma}&\leq \mathfrak{C}_{\text{obs}}\norm{\Pi_n\widetilde{G}}_{L^2([0, T], X^\sigma)}. 
\end{align*}
In particular, we can estimate $W_{0}$ by
\begin{align*}
\nor{W_{0}}{X^{\sigma}}= \nor{\O_n^{-1}\Pi_n\left[G-\bC\mc{I}(\cdot)\Q_n H\right]}{X^{\sigma}}\leq \mathfrak{C}_{\text{obs}}\norm{\Pi_nG}_{L^2([0, T], X^\sigma)}+\mathfrak{C}_{\text{obs}}\norm{\Pi_n\bC\mc{I}(\cdot)\Q_n H}_{L^2([0, T], X^\sigma)}.
\end{align*}
We can finally estimate by the unitarity of $\Pi_{n}$ and H\"older inequality in time
\begin{align*}
\norm{\Pi_n\bC\mc{I}(\cdot)\Q_n H}_{L^2([0, T], X^\sigma)}&\leq \norm{\bC\mc{I}(\cdot)\Q_n H}_{L^2([0, T], X^\sigma)}\leq \nor{\bC}{\mc{L}(X^\sigma)}\norm{\mc{I}(\cdot)\Q_n H}_{L^2([0, T], X^\sigma)}\\
&\leq T^{1/2}\nor{\bC}{\mc{L}(X^\sigma)}\norm{\mc{I}(\cdot)\Q_n H}_{L^{\infty}([0, T], X^\sigma)}\leq T^{1/2}\nor{\bC}{\mc{L}(X^\sigma)}\norm{\Q_n H}_{L^{1}([0, T], X^\sigma)}.
\end{align*}
Recollecting the previous estimates, we have finally proved
\begin{align*}
\nor{W}{C^{0}([0,T],\Q_{n}X^{\sigma})}\leq \mathfrak{C}_{\text{obs}} \nor{\Pi_{n}G}{L^{2}([0,T],X^{\sigma})}+\left(1+T^{1/2}\mathfrak{C}_{\text{obs}}\nor{\bC}{\mc{L}(X^\sigma)}\right)\nor{\Q_{n} H}{L^{1}([0,T],X^{\sigma})}.
\end{align*}
\end{proof}
\begin{remark}
It will be very important for what follows that the constant $C$ involved in the previous Lemma is independent of $n\in\N$.
\end{remark}

\subsubsection{Finite determining modes}
As a first direct consequence of the previous result, we can get a finite determining mode result: two solutions of a nonlinear equation with the same observation and the same low frequency are the same. This result will not be used directly later, but can be considered as an easier version of what will follow where we will actually construct the reconstruction operator and study its regularity. 

We make the following assumption on the nonlinearity $F$, akin to a compactness property.

\begin{assump}{3a}
\label{assumF}
$F$ is a nonlinear operator from $\mathbb{B}_{4R_{0}}(X^{\sigma})$ to $X^{\sigma+\veps}$ for some $R_{0}>0$, $\sigma>0$ and $\veps>0$. Moreover, there exists $C>0$ so that 
\begin{align}
\label{boundF}
\nor{F(U_{0})}{X^{\sigma+\veps}}\leq C,\quad \nor{F(U_{0})-F(V_{0})}{X^{\sigma+\veps}}\leq C \nor{U_{0}-V_{0}}{X^{\sigma}}
\end{align}
for any $U_{0},~V_{0}\in \mathbb{B}_{4R_{0}}(X^{\sigma})$.
\end{assump}

Note that the second bound implies the first one with another constant. We show the following property of finite determining modes.

\begin{proposition}\label{prop:finite-det-modes}
Let $R_{0}>0$. Under Assumptions \ref{assumAA}, \ref{assumC} and \ref{assumF} (with $R_{0}$), there exists $n\in \N$ such that the following holds. Let $H\in L^1([0,T],X^{\sigma})$ and $G\in L^2([0, T], X^\sigma)$. 

Let $U(t)$ and $\widetilde{U}(t)$ be two solutions on $(0,T)$ of
\begin{align*}
    \left\{\begin{array}{cl}
        \partial_t U=AU+F(U)+H, &\ \text{on } (0, T), \\
        \bC U(t)=G(t), & \text{for } t\in (0, T),
    \end{array}\right.
\end{align*}
such that $\norm{U(t)}_{X^\sigma}\leq R_{0}$ and $\norm{\widetilde{U}(t)}_{X^\sigma}\leq R_{0}$ for all $t\in [0, T]$. If $\P_n U(t)=\P_n \widetilde{U}(t)$ for all times $t\in [0, T]$, then $U(t)\equiv \widetilde{U}(t)$ for all $t\in [0, T]$.
\end{proposition}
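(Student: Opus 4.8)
The idea is to show that the difference $R := U - \widetilde U$ is zero by exploiting the fact that its low-frequency part vanishes, its observation vanishes, and at high frequency the nonlinearity is a small perturbation of an observable linear dynamics. First I would set $R = U - \widetilde U$ and note $\P_n R \equiv 0$ by hypothesis, so $R = \Q_n R$. Subtracting the two equations, $R$ solves
\begin{align*}
    \left\{\begin{array}{rl}
        \partial_t R &= AR + \Q_n\big(F(U) - F(\widetilde U)\big), \\
        \Pi_n \bC R &= 0,
    \end{array}\right.
\end{align*}
where the second line follows from $\bC R = \bC U - \bC \widetilde U = G - G = 0$. Here I have used that $R = \Q_n R$, so that $\Q_n$ may be inserted in front of $F(U)-F(\widetilde U)$ without changing the equation, and that $\Q_n F(U) = \Q_n F(U)$ trivially; more precisely, applying $\Q_n$ to the difference of the two full equations gives exactly this system because $A$ commutes with $\Q_n$ and $\partial_t \Q_n R = \Q_n \partial_t R$.

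Next I would invoke Lemma \ref{lmCauchyobs} with $G$ replaced by $0$ and $H$ replaced by $F(U) - F(\widetilde U) \in L^\infty([0,T], X^{\sigma+\veps}) \subset L^1([0,T], X^\sigma)$ (this makes sense since both $U(t), \widetilde U(t) \in \mathbb{B}_{4R_0}(X^\sigma)$, indeed in $\mathbb{B}_{R_0}$, so Assumption \ref{assumF} applies). By the uniqueness part of that lemma, $R = \FL\big(0, F(U)-F(\widetilde U)\big)$, and by the estimate \eqref{estimFL},
\begin{align*}
    \norm{R}_{C^0([0,T],\Q_n X^\sigma)} \leq C \norm{\Q_n\big(F(U)-F(\widetilde U)\big)}_{L^1([0,T],X^\sigma)}.
\end{align*}
Now comes the gain: the embedding $\iota : \Q_n X^{\sigma+\veps} \to \Q_n X^\sigma$ has norm $\inn{\ld_n}^{-\veps}$, so combining this with the Lipschitz bound \eqref{boundF} of Assumption \ref{assumF},
\begin{align*}
    \norm{\Q_n\big(F(U)-F(\widetilde U)\big)}_{L^1([0,T],X^\sigma)} \leq \inn{\ld_n}^{-\veps} \, T \, C \, \norm{R}_{C^0([0,T],X^\sigma)}.
\end{align*}
Putting the two displays together yields $\norm{R}_{C^0([0,T],\Q_n X^\sigma)} \leq C^2 T \inn{\ld_n}^{-\veps} \norm{R}_{C^0([0,T],X^\sigma)}$, and since $R = \Q_n R$ the two norms on the left and right coincide. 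Choosing $n$ large enough that $C^2 T \inn{\ld_n}^{-\veps} < 1$ (possible because $\ld_n \to \infty$, and the constant $C$ from Lemma \ref{lmCauchyobs} is independent of $n$ — this is exactly the point of the Remark) forces $\norm{R}_{C^0([0,T],X^\sigma)} = 0$, i.e. $U \equiv \widetilde U$.

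\textbf{Main obstacle.} The plan is essentially a fixed-point/absorption argument, and there is no deep difficulty; the only real subtlety — and the reason the choice of $n$ works at all — is the $n$-uniformity of the constant $C$ in Lemma \ref{lmCauchyobs}, which in turn rests on the fact that the observability constant $\mathfrak{C}_{\text{obs}}$ in \eqref{observabstractOn} is uniform over the high-frequency projections $\Q_n$. One must also be slightly careful that $U(t), \widetilde U(t)$ stay inside the domain $\mathbb{B}_{4R_0}(X^\sigma)$ where $F$ is defined and Lipschitz (they do, with room to spare, since $\norm{U(t)}_{X^\sigma}, \norm{\widetilde U(t)}_{X^\sigma} \leq R_0$), and that a mild solution of the first line of the reduced system is automatically of Duhamel form so that the uniqueness clause of Lemma \ref{lmCauchyobs} genuinely applies.
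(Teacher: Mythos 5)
Your proof is correct and follows essentially the same route as the paper's: form the difference $R = U - \widetilde U$, note it is purely high-frequency with vanishing observation, apply Lemma~\ref{lmCauchyobs} to identify $R = \FL(0, F(U)-F(\widetilde U))$, use the $n$-uniform estimate~\eqref{estimFL} together with the gain $\inn{\ld_n}^{-\veps}$ from the embedding $\Q_n X^{\sigma+\veps}\hookrightarrow \Q_n X^\sigma$ and the Lipschitz bound~\eqref{boundF}, then absorb by choosing $n$ large. Your explicit remark about the uniformity of the constant in Lemma~\ref{lmCauchyobs} is precisely the point the paper emphasizes as well.
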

\begin{proof}
    By assumption $\P_{n}U=\P_{n}\widetilde{U}$ as applications in $\B_{R_{0}}^{[0,T]}(X^{\sigma})$. Let us consider the difference of solutions $Z=U-\widetilde{U}$. It satisfies 
    \begin{align*}
    \left\{\begin{array}{cc}
        \partial_t Z=AZ+F(U)-F(\widetilde{U}), &  \\
        \bC Z=0. &
    \end{array}\right.
    \end{align*}
 Moreover, we have $\P_n Z=0$, that is $Z\in C^{0}([0,T],\Q_{n}X^{\sigma})$   and therefore, applying $\Q_n$, it also satisfies 
        \begin{align*}
    \left\{\begin{array}{cc}
        \partial_t Z=AZ+\Q_n \left(F(U)-F(\widetilde{U})\right), &  \\
       \Pi_{n}\bC Z=0. &
    \end{array}\right.
    \end{align*}
In particular, we are in the situation of Lemma \ref{lmCauchyobs} and $Z=\FL(0,\Q_n \big(F(U)-F(\widetilde{U})\big))$ so that estimate \eqref{estimFL} gives
\begin{align}
\nor{Z}{C^{0}([0,T],\Q_{n}X^{\sigma})}\leq C \nor{\Q_n \left(F(U)-F(\widetilde{U})\right)}{L^{1}([0,T],X^{\sigma})}\leq \dfrac{C}{\inn{\ld_n}^\veps} \nor{F(U)-F(\widetilde{U})}{L^{1}([0,T],X^{\sigma+\veps})}.
\end{align}
    
    By hypothesis $U$, $\widetilde{U}\in \B_{R_0}^{[0, T]}(X^\sigma)$ and Assumption \ref{assumF} implies that $F(U)$, $F(\widetilde{U})\in \B_{C}^{[0,T]}(X^{\sigma+\veps})$ with the following Lipschitz estimate 
\begin{align}
\nor{Z}{C^{0}([0,T],X^{\sigma})}\leq \dfrac{C}{\inn{\ld_n}^\veps} \nor{U-\widetilde{U}}{L^{1}([0,T],X^{\sigma})}\leq \dfrac{CT}{\inn{\ld_n}^\veps} \nor{Z}{C^{0}([0,T],X^{\sigma})}.
\end{align}
Adjusting $n$ if necessary so that $\tfrac{CT}{\inn{\ld_n}^\veps}<1$, we conclude that $Z=0$, which is $U= \widetilde{U}$.
\end{proof}

\subsubsection{Propagation of regularity} We now turn our attention to the regularity of the nonlinear system, for which we prove a Propagation of Regularity result under suitable assumptions. This result will later be related to a sort of uniformity for the splitting in the Galerkin procedure. 

Building upon Assumption \ref{assumC}, we make the following assumption, which will allow us to control the semigroup generated by $A$ in a slightly more regular space:
\begin{align}\label{observabstractveps}
    \norm{W_0}_{X^{\sigma+\veps}}^2\leq \mathfrak{C}_{\text{obs}}^2\int_0^T \norm{\bC e^{tA}W_0}_{X^{\sigma+\veps}}^2dt,\quad \forall W_0\in X^{\sigma+\veps}.
\end{align}
In that context Assumption \ref{assumCC} is the conjunction of Assumption \ref{assumC} and  \eqref{observabstractveps}, that is the observability at both levels or regularity $X^{\sigma}$ and $X^{\sigma+\veps}$.

\begin{proposition}\label{prop:abs-prop-reg}
Let $R_{0}>0$, $R_{1}>0$ and $A$, $\bC$ and $F$ satisfying Assumptions \ref{assumAA}, \ref{assumCC} and \ref{assumF}, respectively. Moreover, assume that the pair $(A, \bC)$ satisfies Assumption \ref{assumcommu}. Then, there exists $R_{2}>0$ so that for any 
$U\in \B_{R_{0}}^{[0,T]}(X^{\sigma})$, $H_{1}\in \B_{R_{0}}^{[0,T]}(X^{\sigma})$ and $H_{2}\in \B_{R_{1}}^{[0,T]}(X^{\sigma+\veps})$ that satisfy \eqref{UCabstractT*intro} on $[0,T]$, we have $U\in  \B_{R_{2}}^{[0,T]}(X^{\sigma+\veps})$.
\end{proposition}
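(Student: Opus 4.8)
\textbf{Proof plan for Proposition \ref{prop:abs-prop-reg}.}

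The plan is to upgrade the regularity of $U$ from $X^{\sigma}$ to $X^{\sigma+\veps}$ by running essentially the same Galerkin/observability reconstruction as in Lemma \ref{lmCauchyobs}, but now at the higher regularity level, using the observability estimate \eqref{observabstractveps} built into Assumption \ref{assumCC}. The point is that the nonlinearity $F(U+H_1)$ lands in $X^{\sigma+\veps}$ by Assumption \ref{assumF}, and $H_2\in\B_{R_1}^{[0,T]}(X^{\sigma+\veps})$ by hypothesis, so the source terms in the equation for $U$ already live at the target regularity; the only genuine obstruction is that $A$ does not map $X^{\sigma+\veps}$ to itself, which is exactly what Assumption \ref{assumcommu} (the commutator $[(A^*A)^s,\bC]\in\mc{L}(X^{\sigma+2s},X^{\sigma+\veps})$) is designed to handle. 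Concretely, I would fix the smoothing exponent $s>0$ from Assumption \ref{assumcommu} and first prove the intermediate gain $U\in\B^{[0,T]}_{R'}(X^{\sigma+\min(2s,\veps)})$, then iterate finitely many times to reach $X^{\sigma+\veps}$ (replacing $\veps$ by $2s,4s,\dots$ as needed, each step only using $F:X^{\cdot}\to X^{\cdot+\veps}$ and the commutator bound), so without loss of generality one may assume $2s\geq\veps$ and do a single step.

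The key steps, in order. First, choose $n\in\N$ large (depending only on $R_0,R_1,T,\mathfrak{C}_{obs},\norm{\bC}$ and the constants from Assumptions \ref{assumF}, \ref{assumcommu}) and split $U=V+W$ with $V=\P_nU$, $W=\Q_nU$. The low-frequency part $V$ lives in $\P_nX^{\sigma+\veps}$ automatically since $\P_n$ has finite rank, with $\norm{V(t)}_{X^{\sigma+\veps}}\leq \inn{\ld_n}^{\veps}\norm{V(t)}_{X^{\sigma}}\leq \inn{\ld_n}^{\veps}R_0$; so all the work is to bound $W$ in $X^{\sigma+\veps}$. Second, observe that $W$ solves
\begin{align*}
    \left\{\begin{array}{rl}
        \partial_t W&=AW+\Q_n\big(F(U+H_1)+H_2\big),\\
        \bC W&=-\bC V,
    \end{array}\right.
\end{align*}
hence $\Pi_n\bC W=\Pi_n(-\bC V)$ (since $W\in C^0([0,T],\Q_nX^\sigma)$), so $W=\FL\big(-\bC V,\ F(U+H_1)+H_2\big)$ in the notation of Lemma \ref{lmCauchyobs}. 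Third — and this is the crux — rerun the reconstruction of Lemma \ref{lmCauchyobs} in $X^{\sigma+\veps}$: the Duhamel formula $W(t)=e^{tA}W_0+\mc{I}(t)\Q_n(F(U+H_1)+H_2)$ gives $W$ in terms of $W_0=\O_n^{-1}\Pi_n[-\bC V-\bC\mc{I}(\cdot)\Q_n(F(U+H_1)+H_2)]$, where now one must use \eqref{observabstractveps} to bound $\norm{W_0}_{X^{\sigma+\veps}}$. This is where the commutator hypothesis enters: to estimate $\O_n^{-1}$ acting in the $X^{\sigma+\veps}$-norm, one applies $(A^*A)^{s}$ and commutes it past $\bC$ using Assumption \ref{assumcommu} (paying the price of the $X^{\sigma+2s}$-norm, which after our reduction $2s\geq\veps$ controls the $X^{\sigma+\veps}$-norm on the relevant quantities, or one iterates), together with the fact that $(A^*A)^s$ commutes with $e^{tA}$, $\P_n$, $\Q_n$ and $\mc{I}(\cdot)$. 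The source term $F(U+H_1)$ is bounded in $X^{\sigma+\veps}$ uniformly by Assumption \ref{assumF} since $U+H_1\in\B_{2R_0}(X^\sigma)\subset\B_{4R_0}(X^\sigma)$, and $H_2$ by hypothesis; the boundary term $\bC V$ is controlled in $X^{\sigma+\veps}$ because $\bC\in\mc{L}(X^{\sigma+\veps})$ (part of Assumption \ref{assumCC}/\ref{assumFholom}) and $V$ is finite-frequency. Collecting, one obtains a bound of the form $\norm{W}_{C^0([0,T],\Q_nX^{\sigma+\veps})}\leq C(n,R_0,R_1,T)$, whence $U=V+W\in\B_{R_2}^{[0,T]}(X^{\sigma+\veps})$ with $R_2$ depending only on the stated data.

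The main obstacle I expect is the bookkeeping around Assumption \ref{assumcommu}: the commutator only gains $\veps$ of regularity starting from $X^{\sigma+2s}$, so if $2s<\veps$ a single application of the reconstruction does not immediately reach $X^{\sigma+\veps}$, and one has to set up a finite induction, raising the regularity by steps of size controlled by $s$ (or $\veps$), checking at each stage that $F$ still maps the current space into a space with an $\veps$-gain and that the commutator estimate still applies — this requires carefully tracking which Sobolev exponents $\bC$, $(A^*A)^s$ and $F$ are bounded between, and verifying that the constants at each step remain uniform in $n$. A secondary, more routine subtlety is justifying that $U\in C^0([0,T],X^\sigma)$ together with the equation actually gives enough a priori regularity (e.g. $W\in C^0([0,T],\Q_nX^\sigma)$ with the Duhamel representation valid) to legitimately apply $\O_n^{-1}$ and the commutator in the higher norm — i.e. a density/approximation argument to rigorously commute $(A^*A)^s$ with the integral operators — but this is standard given the spectral picture from Assumption \ref{assumAA}.
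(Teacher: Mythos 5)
Your approach is genuinely different from the paper's, and as written it has a gap. The paper never uses the Galerkin splitting $U=\P_n U+\Q_n U$ nor the operators $\FL$, $\O_n^{-1}$, $\Pi_n$ for this proposition; instead it introduces the mollifier $\J_n=(I+\tfrac{1}{n}(A^*A)^s)^{-1}$, sets $U^n=\J_n U$, bounds the Duhamel part of $U^n$ in $X^{\sigma+\veps}$ using Assumption \ref{assumF}, applies the observability at level $X^{\sigma+\veps}$ to the linear part $e^{tA}\J_n U_0$ — which is legitimate because $\J_n U_0\in X^{\sigma+2s}$ a priori — controls the observation $\bC U^n=[\J_n,\bC]U$ uniformly in $n$ via $[\J_n,\bC]=-\tfrac{1}{n}\J_n[(A^*A)^s,\bC]\J_n$ and Assumption \ref{assumcommu}, and finally passes to the limit $n\to\infty$ by weak lower semicontinuity of the norm. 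That is precisely what your plan does not deliver: the observability inequality \eqref{observabstractveps} is stated only for vectors that are \emph{already} in $X^{\sigma+\veps}$, and your $W_0=\Q_n U(0)$ is known only to lie in $X^\sigma$; the Galerkin projection $\Q_n$ is not smoothing. Applying \eqref{observabstractveps} to $W_0$ is exactly the step that needs justification, and the "density/approximation argument" you flag dismissively at the end is not a secondary subtlety — it is the heart of the proof, and the paper resolves it by working with $\J_n U$ rather than $\Q_n U$.

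There is also a structural problem with the route through $W_0=\O_n^{-1}\Pi_n[\cdots]$: the operator genuinely blocking you is not $\bC$ but $\Pi_n$. It is the $L^2([0,T],X^\sigma)$-orthogonal projection onto $\mathrm{Im}(\O_n)$; it does not commute with $(A^*A)^{\veps/2}$, and there is no reason it should be bounded on $L^2([0,T],X^{\sigma+\veps})$, because changing the inner product from $X^\sigma$ to $X^{\sigma+\veps}$ changes the orthogonal projection. Assumption \ref{assumcommu} controls $[(A^*A)^s,\bC]$ and says nothing about $\Pi_n$; your invocation of it in this place is used differently than in the paper, where the commutator appears only to compute $\bC\J_n U=[\J_n,\bC]U$ from $\bC U=0$. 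Once you carry out the mollification correctly you will find that $\O_n^{-1}$, $\Pi_n$, $\P_n$, $\Q_n$ all drop out of this proposition, and the iteration over exponents $2s,4s,\dots$ becomes unnecessary: the bounds $\norm{\tfrac{1}{n}\J_n}_{\mc{L}(X^\sigma,X^{\sigma+2s})}\leq 1$ and $\norm{\J_n}_{\mc{L}(X^{\sigma+\veps})}\leq 1$ combine with $[(A^*A)^s,\bC]\in\mc{L}(X^{\sigma+2s},X^{\sigma+\veps})$ to give the full $\veps$-gain in a single pass, regardless of whether $2s\geq\veps$.
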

\begin{proof}

We know that $(AA^*)^{1/2}: X^1\to X$ admits a unique restriction so that $(AA^*)^{1/2}: X^{1+\sigma}\to X^{\sigma}$ is a linear continuous operator. Let us call such extension $\A_\sigma$. Furthermore:
\begin{itemize}
    \item From Assumption \ref{assumAA}, $\A_\sigma$ has compact resolvent.
    \item $\A_\sigma$ and $(AA^*)^{1/2}$ have the same spectrum, hence the same resolvent set.
\end{itemize}
For simplicity, we keep the same notation $\A$ for the same operator acting on different spaces. Since $\A$ is non-negative, its resolvent set contains $\R_-$ and for $n\in \N^*$, for any $s>0$, we have a well defined smoothing operator $\mc{J}_n=(I+\frac{1}{n}\A^s)^{-1}\in \mc{L}(X^\sigma, X^{\sigma+s})$ with the uniform bound $\norm{\mc{J}_n}_{\mc{L}(X^\sigma, X^{\sigma+s})}\leq n$. We also have the uniform bound $\norm{\mc{J}_n}_{\mc{L}(X^\sigma)}\leq 1$ and the same estimate holds in $\mc{L}(X^{\sigma+\veps})$. Following Proposition \cite[Proposition 2.3.4]{TW:09}, we can see that $\J_n \phi\underset{n\to+\infty}{\longrightarrow} \phi$ for any $\phi\in X^\sigma$.

Let us consider $U^n(t)=\J_nU(t)$ and observe that $U^n$ is uniformly bounded in $X^\sigma$ by some constant $C>0$, which is independent of $n\in \N$.  By Duhamel's formula, let us split the solution into its linear and nonlinear part as follows
    \begin{align*}
        U^n(t)=e^{tA}U_0^n+\int_0^t e^{A(t-s)}\big(\J_nF(U(s)+H_1(s))+\J_nH_2(s)\big)ds:=U_{lin}^n(t)+U_{Nlin}^n(t),
    \end{align*}
where $U_0^n:=\J_n U_0$. Observe that we have used that $e^{tA}$ and $\J_n$ commute. Since $H_2\in \B_{R_{1}}^{[0, T]}(X^{\sigma+\veps})$ and $F$ satisfies Assumption \ref{assumF}, we readily get that $U_{Nlin}^n$ is uniformly bounded in $L^\infty([0, T], X^{\sigma+\veps})$ by some constants depending on $R_{1}$ and the constant $C$ in Assumption \ref{assumF}. The latter property and Assumption \ref{assumCC}, allows us to treat the linear part by employing the observability inequality followed by the triangle inequality
    \begin{align*}
        \norm{U_{lin}^n(t)}_{X^{\sigma+\veps}}^2&\leq \norm{e^{tA}}_{\mc{L}(X^{\sigma+\veps})}^2 \norm{U_0^n}_{X^{\sigma+\veps}}^2\\
        &\leq \mathfrak{C}_{\text{obs}}^2\int_0^T \norm{\bC U_{lin}^n(t)}_{X^{\sigma+\veps}}^2dt\\
        &\leq 2\mathfrak{C}_{\text{obs}}^2\int_0^T \norm{\bC U^n(t)}_{X^{\sigma+\veps}}^2dt+2\mathfrak{C}_{\text{obs}}^2\int_0^T \norm{\bC U_{Nlin}^n(t)}_{X^{\sigma+\veps}}^2dt.
    \end{align*}
  Since $\bC U\equiv 0$ on $[0, T]\times \M$, we get that $\bC U^n=\bC\mc{J}_n U=\mc{J}_n\bC U+[\mc{J}_n,\bC]U=[\mc{J}_n,\bC]U$. Moreover, we have $[\mc{J}_n,\bC]=\frac{1}{n}\mc{J}_n[\A,\bC]\mc{J}_n$. We have seen that $\norm{\mc{J}_n}_{\mc{L}(X^\sigma)}\leq 1$ and $\norm{\frac{1}{n}\mc{J}_n}_{\mc{L}(X^{\sigma}, X^{\sigma+s})}\leq 1$, so we get, uniformly in $n$
    \begin{align*}
      \norm{\bC U^n}_{L^\infty([0, T], X^{\sigma+\veps})}= \norm{\mc{J}_n[\A^s,\bC]\frac{\mc{J}_n}{n} U}_{L^\infty([0, T], X^{\sigma+\veps})}\leq R_0\norm{[\A^s,\bC]}_{\mc{L}(X^{\sigma+s}, X^{\sigma+\veps})}.
    \end{align*}
     Therefore, in view of Assumption \ref{assumcommu}, the term $\norm{[\A^s,\bC]}_{\mc{L}(X^{\sigma+s}, X^{\sigma+\veps})}$ is bounded and so $U_{lin}^n$ is uniformly bounded in $L^\infty([0, T], X^{\sigma+\veps})$. It follows that $U^n$ is uniformly bounded in $L^\infty([0, T], X^{\sigma+\veps})$ by some constant $C>0$. Moreover, due to the fact that $\norm{U-U^n}_{L^\infty([0, T], X^\sigma)}\to 0$ and leveraging that $X^\sigma$ is a Hilbert space, we get that $U(t)\in X^{\sigma+\veps}$ and 
    \begin{align*}
        \norm{U(t)}_{X^{\sigma+\veps}}\leq \liminf \norm{U^n(t)}_{X^{\sigma+\veps}}\leq C,
    \end{align*}
    for any $t\in [0, T]$, showing that $U$ is uniformly bounded in $L^\infty([0, T], X^{\sigma+\veps})$.
\end{proof}
\begin{remark}\label{rk:abs-prop-reg}
    Looking at the above proof, in regards to the nonlinearity, we only need to ensure that $t\mapsto \int_0^t e^{A(t-s)}F(Z(s))ds$ defines a bounded map in $L^\infty([0, T], X^{\sigma+\veps})$. For instance, this was achieved here through the sole hypothesis that $F(Z)$ is bounded in $L^\infty([0, T], X^{\sigma+\veps})$ for $Z$ bounded in $L^\infty([0, T], X^\sigma)$.
\end{remark}

\subsection{An abstract frequency-based reconstruction operator} The objective of this section is to prove that it is possible to reconstruct the high-frequency component for the solutions of our nonlinear system, with the low-frequency component as an input. Furthermore, this reconstruction can be made in a holomorphic way.

Building upon Assumption \ref{assumF}, we further assume that there exists $\delta>0$ so that $F$ has a holomorphic extension from $\mathbb{B}_{4R_{0},2\delta}(X^{\sigma})$ to $X^{\sigma+\veps}_{\mathbb{C}}$ for some $R_{0}>0$, $\sigma\geq 0$ and $\delta>0$. Moreover, there exists $C>0$ so that 
\begin{align}
\label{boundFanalytic}
\nor{F(U_{0})}{X^{\sigma+\veps}_{\mathbb{C}}}\leq C,\quad \nor{F(U_{0})-F(V_{0})}{X^{\sigma+\veps}_{\mathbb{C}}}\leq C \nor{U_{0}-V_{0}}{X^{\sigma}_{\mathbb{C}}}
\end{align}
for any $U_{0},~V_{0}\in \mathbb{B}_{4R_{0},2\delta}(X^{\sigma})$. In this context, Assumption \ref{assumFholom} is the conjunction of Assumption \ref{assumF} and the previous assumption.

The main result of this section is the following.

\begin{theorem}
\label{thmabstract}Let $R_{0}$ and $R_{1}>0$. Under Assumptions \ref{assumAA}, \ref{assumCC},\ref{assumF} (with $R_{0}$) and \ref{assumcommu}, there exists $n\in \N$ and a nonlinear Lipschitz (reconstruction) operator $\mathcal{R}$ 
\begin{align}
\label{Rspace}
\mathcal{R}:  \quad \B_{R_{0}}^{[0,T]}(\P_n X^{\sigma}) \times  \B_{R_{0}}^{[0,T]}(X^{\sigma}) \times  \B_{R_{1}}^{[0,T]}(X^{\sigma+\veps})  \longrightarrow \B_{R_{0}}^{[0,T]}(\Q_n X^{\sigma})
\end{align}
 so that for any $U\in \B_{R_{0}}^{[0,T]}(X^{\sigma})$, $H_{1}\in \B_{R_{0}}^{[0,T]}(X^{\sigma})$ and $H_{2}\in \B_{R_{1}}^{[0,T]}(X^{\sigma+\veps})$ that satisfy
\begin{align}\label{UCabstract}
    \left\{\begin{array}{cl}
         \partial_t U=AU+F(U+H_{1})+H_{2}, &\textnormal{ on } [0,T], \\
         \bC U(t)=0, &\textnormal{ for }t\in [0,T],
    \end{array}\right.
\end{align}
then, $\Q_n  U=\mathcal{R}(\P_{n}U,H_{1},H_{2})$.

Moreover, if additionally, $F$ satisfies Assumption \ref{assumFholom} then, there exist $\eta,\eta_1>0$ so that for any $U\in \B_{R_{0}}^{[0,T]}(X^{\sigma})$ that satisfies \eqref{UCabstract}, then $\mathcal{R}$ extends holomorphically as 
\begin{align*}
\mathcal{R}: \left( \P_nU+ \B_{\eta,\eta}^{[0,T]}(\P_n X^{\sigma})\right) \times  \B_{R_{0},\eta}^{[0,T]}(X^{\sigma}) \times  \B_{R_{1},R_{1}}^{[0,T]}(X^{\sigma+\veps})  \longrightarrow \B_{R_{0},\eta_1}^{[0,T]}(\Q_n X^{\sigma}).
\end{align*}
\end{theorem}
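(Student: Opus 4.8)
The plan is to realize $\Q_n U$ as the unique fixed point of the map $\Phi_V$ described in the outline of Section \ref{sec:abstract-construction}, for a fixed but sufficiently large frequency cutoff $n$, and then to invoke the uniform contraction principle in its analytic form (see Section \ref{s:appcomplex}) to obtain both the Lipschitz dependence and the holomorphic extension. More precisely, given $V\in \B_{R_{0}}^{[0,T]}(\P_n X^{\sigma})$, $H_1\in \B_{R_{0}}^{[0,T]}(X^{\sigma})$ and $H_2\in\B_{R_1}^{[0,T]}(X^{\sigma+\veps})$, I would define
\begin{align*}
\Phi(W;V,H_1,H_2)(t)=\FL\Big(-\bC V,\ \Q_n\big(F(V+W+H_1)+H_2\big)\Big)(t),
\end{align*}
using the linear reconstruction operator $\FL$ of Lemma \ref{lmCauchyobs}. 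The key point is that, by \eqref{estimFL} together with the smoothing of $F$ built into Assumption \ref{assumF} (namely $F$ maps bounded sets of $X^{\sigma}$ into $X^{\sigma+\veps}$ with values of size $\leq C$, which after applying $\Q_n$ gains a factor $\inn{\ld_n}^{-\veps}$), the $W$-dependent part of $\Phi$ has operator norm $\leq C T\inn{\ld_n}^{-\veps}$; choosing $n$ with $CT\inn{\ld_n}^{-\veps}<\tfrac12$ makes $\Phi(\cdot;V,H_1,H_2)$ a contraction on $\B_{R_0}^{[0,T]}(\Q_n X^{\sigma})$, provided $n$ is also large enough that the constant term $C\nor{\bC V}{L^2}\,\mathfrak{C}_{\mathrm{obs}}+CT\inn{\ld_n}^{-\veps}$ lands inside the ball of radius $R_0$. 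This defines $\mathcal{R}(V,H_1,H_2)$ as the fixed point, and the contraction estimate immediately gives the Lipschitz property in all three arguments, since $\Phi$ depends Lipschitz-continuously on $(V,H_1,H_2)$ through the first slot of $\FL$ and through $F$.

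\textbf{Identifying $\Q_n U$ with the fixed point.} If $U$ solves \eqref{UCabstract} with the stated bounds, set $V=\P_n U$ and $W=\Q_n U$. Applying $\P_n$ and $\Q_n$ to the equation and using $\bC U=0$, hence $\Pi_n\bC W=-\Pi_n\bC V$, one checks that $W$ solves exactly the system \eqref{solprojectlin} of Lemma \ref{lmCauchyobs} with $G=-\bC V$ and $H=F(U+H_1)+H_2=F(V+W+H_1)+H_2$; by the uniqueness part of that lemma, $W=\FL(-\bC V, \Q_n(F(V+W+H_1)+H_2))=\Phi(W;V,H_1,H_2)$, so $W$ is \emph{a} fixed point, hence \emph{the} fixed point, i.e. $\Q_n U=\mathcal{R}(\P_n U,H_1,H_2)$. (One should first note that $U$, being bounded by $R_0<4R_0$ in $X^\sigma$ and with $H_1$ bounded by $R_0$, keeps $U+H_1$ inside the domain $\mathbb{B}_{4R_0}(X^\sigma)$ where $F$ is defined, so everything is well-posed.)

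\textbf{Holomorphic extension.} For the second part, I would complexify: under Assumption \ref{assumFholom}, $F$ extends holomorphically on the cylinder $\mathbb{B}_{4R_0,2\delta}(X^\sigma)$ with the uniform bounds \eqref{boundFanalytic}, so $\Phi$ extends to a map holomorphic in $W$ and in the complexified parameters, as long as the arguments stay in the cylinder. Fix a solution $U$ of \eqref{UCabstract}; I would look for the extended fixed point near $\mathcal{R}(\P_n U,H_1,H_2)$, on a small complex ball $\B_{R_0,\eta_1}^{[0,T]}(\Q_n X^\sigma)$ around it, with $V$ ranging over $\P_n U+\B_{\eta,\eta}^{[0,T]}(\P_n X^\sigma)$, $H_1$ over $\B_{R_0,\eta}^{[0,T]}(X^\sigma)$ and $H_2$ over $\B_{R_1,R_1}^{[0,T]}(X^{\sigma+\veps})$. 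The contraction factor is still $\leq CT\inn{\ld_n}^{-\veps}<\tfrac12$ (it only depends on $n$ and the Lipschitz constant of $F$, which is unchanged on the cylinder), so one needs only to check stability: choosing $\eta,\eta_1$ small enough (and keeping $\delta$ from Assumption \ref{assumFholom} fixed so that $\Re$ and $\Im$ parts of $V+W+H_1$ stay within $4R_0$ and $2\delta$ respectively, using $\nor{\Re H_1}{X^\sigma}\le R_0$ and the a priori bound $\nor{\Re U}{X^\sigma}\le R_0$), the ball $\B_{R_0,\eta_1}^{[0,T]}(\Q_n X^\sigma)$ is mapped into itself. The analytic uniform contraction principle (holomorphy of fixed points of holomorphic contractions, Section \ref{s:appcomplex}) then yields that $\mathcal{R}$ so extended is holomorphic in $(V,H_1,H_2)$, with the target bound $\B_{R_0,\eta_1}^{[0,T]}(\Q_n X^\sigma)$.

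\textbf{Main obstacle.} The routine part is the contraction estimate; the delicate bookkeeping is the \emph{stability of the cylinder}: one must pick the various radii ($\eta$, $\eta_1$, and the allowed imaginary widths) consistently so that (i) the complexified argument $V+W+H_1$ of $F$ stays inside $\mathbb{B}_{4R_0,2\delta}(X^\sigma)$ — this is where the a priori bound $\nor{U(t)}{X^\sigma}\le R_0$ and $\nor{\Re H_1}{X^\sigma}\le R_0$ leaving room up to $4R_0$ is used — and (ii) the output of $\Phi$, whose size is controlled by $\mathfrak{C}_{\mathrm{obs}}\nor{\bC}{}\,(\text{width of }V)+CT\inn{\ld_n}^{-\veps}$ in imaginary part, remains below $\eta_1$; this forces $\eta$ to be chosen after $n$ and $\eta_1$, in the right order. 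Note that here, unlike in the real statement, $H_2$ is allowed imaginary width $R_1$ (not a small $\eta$), which is harmless because $H_2$ enters $\Phi$ only linearly through $\FL$ and contributes $\le C T R_1 \inn{\ld_n}^{-\veps}$, absorbed by enlarging nothing since the relevant ball already has radius $R_0$ and $\eta_1$ can be taken to accommodate it — but one should double-check that this linear $H_2$-contribution does not violate (ii), which again just pins down how small $\inn{\ld_n}^{-\veps}$, equivalently how large $n$, must be.
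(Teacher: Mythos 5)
Your overall architecture — realize $\Q_n U$ as the fixed point of $W\mapsto \FL\bigl(-\bC V,\ \Q_n(F(V+W+H_1)+H_2)\bigr)$, with $n$ large enough that the $W$‑dependence contracts, then invoke the analytic uniform contraction principle — is the right one and matches the paper's strategy for Proposition \ref{propabstract}. But there is a genuine gap in the domain bookkeeping that your proposal glosses over.

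The term $C\mathfrak{C}_{\text{obs}}\nor{\bC V}{L^2([0,T],X^\sigma)}$ in the norm of $\Phi(W;V,H_1,H_2)$ does \emph{not} carry any factor of $\inn{\ld_n}^{-\veps}$: it comes from inverting the high-frequency observation operator and is independent of $n$. For an arbitrary $V\in \B_{R_0}^{[0,T]}(\P_n X^\sigma)$ one only has $\nor{\bC V}{L^2}\le T^{1/2}\nor{\bC}{\mc{L}(X^\sigma)}R_0$, so this contribution can be of order $\mathfrak{C}_{\text{obs}}T^{1/2}\nor{\bC}{}R_0\gg R_0$ and the map need not preserve $\B_{R_0}^{[0,T]}(\Q_n X^\sigma)$, no matter how large $n$ is. Thus the operator you construct is only well-defined on the subset of $V$'s with $\nor{\bC V}{L^2}$ small, whereas the theorem claims $\mathcal R$ is defined on the full ball $\B_{R_0}^{[0,T]}(\P_n X^\sigma)$ (this is used downstream when $\P_n U + \mathcal{R}(\cdot,H_1^s,H_2^s)$ is fed into an ODE on $C^0([0,T],\P_n X^\sigma)$, so a globally defined Lipschitz map is genuinely needed). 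The paper resolves this with a cutoff: it first builds a map $\widetilde{\mathcal R}(H_1,H_2,G)$ that requires $\nor{G}{L^2}\le\eta$, and then sets
$\mathcal R(V,H_1,H_2)=\widetilde{\mathcal R}\bigl(V+H_1,\,H_2,\,-\chi(\eta^{-1}\nor{\bC V}{L^2})\bC V\bigr)$
so that the third argument is always inside the admissible small ball.

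The companion gap is that you never establish that, for $V=\P_n U$ coming from an actual solution, $\nor{\bC\P_n U}{L^2}$ is in fact small — which is what makes the cutoff equal to $1$ on the objects you care about (and also what makes the holomorphic extension around $\P_n U$ legitimate). Writing $\bC\P_n U=-\bC\Q_n U$ only gives a bound of order $R_0$ unless you first upgrade $U$ to $X^{\sigma+\veps}$, whence $\nor{\Q_n U}{X^\sigma}\le R_2\inn{\ld_n}^{-\veps}\to 0$. This is exactly Proposition \ref{prop:abs-prop-reg} (propagation of regularity), which your proof does not invoke; without it, the claim that the constant part "lands inside the ball" for $V=\P_n U$ is unjustified. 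Once these two ingredients (the cutoff and Proposition \ref{prop:abs-prop-reg}) are added, the remainder of your argument — identification of $\Q_n U$ with the fixed point via uniqueness in Lemma \ref{lmCauchyobs}, and the holomorphic extension via Theorem \ref{app:thm:uniffixedpoint} on the cylinder $\mathbb{B}_{4R_0,2\delta}(X^\sigma)$ with the ordering of $\eta,\eta_1,n$ you describe — is essentially the paper's proof.
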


The main idea is to consider the splitting
\begin{align}\label{eq:freq-split}
    U=\P_n U+\Q_n U:=V+W
\end{align}
so the problem \eqref{UCabstract} translates into the following nonlinear observability system
\begin{align}\label{eq:freq-split-ode}
    \left\{\begin{array}{rl}
         \partial_t V(t)&=AV(t)+\P_n F( V+W+H_{1})+\P_{n}H_{2},  \\
         \partial_t W(t)&=AW(t)+\Q_n F(V+W+H_{1})+\Q_{n}H_{2}, \\
         \bC V(t)&=-\bC W(t).
    \end{array}\right.
\end{align}
For a given bounded function $V\in C^0([0, T], \P_n X^\sigma)$, we are interested in solving the nonlinear observability problem
\begin{align}\label{eq:high-freq-nlw-obs}
    \left\{\begin{array}{rl}
        \partial_t W(t)&=AW(t)+\Q_n F(V+W+H_{1})+\Q_{n}H_{2},   \\
   \Pi_n\bC W    &=- \Pi_n\bC V.  
    \end{array}\right.
\end{align}
Note that the operator $\Pi_{n}$ is nonlocal in time in $[0,T]$.
\begin{proposition}\label{propabstract} 
Let $R_{0}$ and $R_{1}>0$. Under Assumptions \ref{assumAA}, \ref{assumC} and \ref{assumF} (with $R_{0}$), there exists $n_{0}\in \N$ and $\eta>0$ so that for any $n\geq n_{0}$, for any $H_{1}\in  \B_{5R_{0}/2}^{[0,T]}(X^{\sigma})$, $H_{2}\in  \B_{R_{1}}^{[0,T]}(X^{\sigma+\veps})$ and $G\in \mathbb{B}_{\eta}(L^2([0, T], X^{\sigma}))$, there exists a unique solution $W\in \B_{R_{0}}^{[0,T]}(\Q_n X^{\sigma})$ to 
\begin{align}\label{eqWG}
    \left\{\begin{array}{rl}
        \partial_t W&=AW+\Q_n F(W+H_{1})+\Q_{n}H_{2}, \\
        \Pi_n\bC W(t)    &=\Pi_nG. 
    \end{array}\right.
\end{align}
This defines a nonlinear Lipschitz operator $\mathcal{\widetilde{R}}$ 
\begin{align}
    \left\{\begin{array}{rcl}
 \B_{5R_{0}/2}^{[0,T]}(X^{\sigma}) \times  \B_{R_{1}}^{[0,T]}(X^{\sigma+\veps}) \times  \mathbb{B}_{\eta}(L^{2}([0,T],X^{\sigma})) & \longrightarrow& C^0([0, T], \Q_{n }X^{\sigma})\\
    (H_{1},H_{2},G)&\longmapsto& W:=\mathcal{\widetilde{R}}(H_{1},H_{2},G).
    \end{array}\right.
\end{align}
Moreover, if additionally, $F$ satisfies Assumption \ref{assumFholom}, then, there exists $\eta>0$ so that $\mathcal{\widetilde{R}}$ extends holomorphically in 
\begin{align*}
\mathcal{\widetilde{R}}:  \B_{5R_{0}/2,\eta}^{[0,T]}(X^{\sigma})  \times \B_{R_{1}, R_{1}}^{[0,T]}(X^{\sigma+\veps})\times  \mathbb{B}_{\eta,\eta}(L^{2}([0,T],X^{\sigma}))  \longrightarrow C^0([0, T], \Q_{n }X^{\sigma}_{\mathbb{C}}).
\end{align*}
\end{proposition}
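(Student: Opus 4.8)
The strategy is a fixed-point argument for the map $W\mapsto \FL(-\Pi_n\bC V_{\text{fixed}},\ \Q_n F(W+H_1)+\Q_n H_2)$, but since here the left-hand datum $G$ is the input rather than $-\bC V$, we set up the map $\Phi=\Phi_{H_1,H_2,G}$ on $\B_{R_0}^{[0,T]}(\Q_n X^\sigma)$ by
\begin{align*}
\Phi(W):=\FL\big(G,\ \Q_n F(W+H_1)+\Q_n H_2\big),
\end{align*}
where $\FL$ is the linear solution operator from Lemma \ref{lmCauchyobs}. By construction any fixed point solves \eqref{eqWG}, and conversely any solution of \eqref{eqWG} lying in $\B_{R_0}^{[0,T]}(\Q_n X^\sigma)$ is such a fixed point by the uniqueness part of Lemma \ref{lmCauchyobs}. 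So the whole proposition reduces to showing $\Phi$ is a contraction that maps the ball $\B_{R_0}^{[0,T]}(\Q_n X^\sigma)$ into itself, for $n$ large and $\eta$ small, together with the corresponding statement in the complexified ball.

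First I would record the two quantitative inputs. From \eqref{estimFL},
\begin{align*}
\norm{\Phi(W)}_{C^0([0,T],\Q_nX^\sigma)}\leq C\norm{\Pi_n G}_{L^2}+C\norm{\Q_n\big(F(W+H_1)+H_2\big)}_{L^1([0,T],X^\sigma)},
\end{align*}
with $C$ independent of $n$. Here is where Assumption \ref{assumF} and the gain of $\veps$ derivatives enter: since $W+H_1\in\B_{R_0}^{[0,T]}(X^\sigma)+\B_{5R_0/2}^{[0,T]}(X^\sigma)\subset\B_{4R_0}^{[0,T]}(X^\sigma)$ (this is exactly why the proposition asks $H_1\in\B_{5R_0/2}$ and the fixed point ball has radius $R_0$, with some room to spare, recall $R_0+5R_0/2=7R_0/2<4R_0$), $F(W+H_1)$ is bounded in $X^{\sigma+\veps}$ by the constant $C$ of \eqref{boundF}, and then $\Q_n$ costs a factor $\inn{\ld_n}^{-\veps}$ when landing in $X^\sigma$. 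Thus
\begin{align*}
\norm{\Q_n F(W+H_1)}_{L^1([0,T],X^\sigma)}\leq \frac{CT}{\inn{\ld_n}^\veps},\qquad
\norm{\Q_n H_2}_{L^1([0,T],X^\sigma)}\leq \frac{T R_1}{\inn{\ld_n}^\veps}.
\end{align*}
Choosing $\eta$ so that $C\eta\leq R_0/2$ and then $n_0$ so that $CT\inn{\ld_{n_0}}^{-\veps}(C+R_1)\leq R_0/2$ gives $\norm{\Phi(W)}\leq R_0$, so $\Phi$ maps the ball into itself. For the contraction: for $W_1,W_2$ in the ball, $\Phi(W_1)-\Phi(W_2)=\FL\big(0,\Q_n(F(W_1+H_1)-F(W_2+H_1))\big)$, so by \eqref{estimFL}, the Lipschitz bound in \eqref{boundF} and again the $\inn{\ld_n}^{-\veps}$ from $\Q_n$,
\begin{align*}
\norm{\Phi(W_1)-\Phi(W_2)}_{C^0([0,T],X^\sigma)}\leq \frac{CT}{\inn{\ld_n}^\veps}\norm{W_1-W_2}_{C^0([0,T],X^\sigma)},
\end{align*}
which is a strict contraction after enlarging $n_0$ so that $CT\inn{\ld_{n_0}}^{-\veps}<1/2$. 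Banach's fixed point theorem then yields the unique $W=:\mathcal{\widetilde{R}}(H_1,H_2,G)$ in the ball. Lipschitz dependence on $(H_1,H_2,G)$ follows from the uniform-contraction principle: writing $\Phi$ as a function of all four arguments, the dependence of the right-hand side on $G$ is linear through $\FL$, on $H_2$ is linear through $\FL$, and on $H_1$ is Lipschitz through $F$ and \eqref{boundF}–\eqref{estimFL}; differentiating the fixed-point identity (or just subtracting two fixed-point identities and using the contraction) gives the Lipschitz estimate for $\mathcal{\widetilde{R}}$ with constant controlled by $C$, $T$ and $\inn{\ld_{n_0}}^{-\veps}$.

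For the holomorphic extension, I would repeat the same fixed-point argument verbatim but in the complexified cylinder $\B_{R_0,\eta_1}^{[0,T]}(\Q_n X^\sigma_{\mathbb{C}})$, using Assumption \ref{assumFholom}: $F$ extends holomorphically on $\mathbb{B}_{4R_0,2\delta}(X^\sigma)$ with the same bounds \eqref{boundFanalytic}, and $\FL$ is a (complex-)linear operator so it passes to the complexification unchanged, with the same $n$-uniform bound \eqref{estimFL}. The real-part constraint $\norm{\Re(W+H_1)}\leq 4R_0$ holds because $\Re H_1\in\mathbb{B}_{5R_0/2}$ and $\Re W$ stays in the real ball of radius $R_0$; the imaginary-part constraint needs $\delta$ to dominate: we need $\norm{\Im(W+H_1)}\leq 2\delta$, so we pick the common radius $\eta$ (for the imaginary parts of $W$, $H_1$, $G$) small enough that $\eta\leq\delta$, and likewise impose $R_1\leq$ (a constant times) the stuff so that $\Q_n H_2$ contributes negligibly after the $\inn{\ld_n}^{-\veps}$ gain — this is the role of the cylinder $\B_{R_1,R_1}^{[0,T]}(X^{\sigma+\veps})$ being allowed full imaginary radius $R_1$, which is harmless because it only feeds into the $\inn{\ld_n}^{-\veps}$-small term. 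Holomorphy of the fixed point in $(H_1,G)$ (and the trivial affine dependence on $H_2$) follows from the analytic version of the uniform contraction principle (see the appendix on complex analysis in Banach spaces referenced in the excerpt): a fixed point of a map depending holomorphically on parameters, with a uniform contraction constant, depends holomorphically on those parameters. Shrinking $\eta$ once more if necessary produces the claimed extension with target $\B_{R_0,\eta_1}^{[0,T]}(\Q_n X^\sigma_{\mathbb{C}})$ for a suitable $\eta_1$.

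\textbf{Main obstacle.} The only genuinely delicate point is bookkeeping the radii so that every composition stays inside the domain where the relevant estimate is available — in particular making sure that the argument $W+H_1$ of $F$ never leaves $\mathbb{B}_{4R_0,2\delta}(X^\sigma)$ while simultaneously the output ball for $W$ has radius exactly $R_0$ (not $4R_0$), and that the "bad" terms ($\Q_n F$ and $\Q_n H_2$ landing in $X^\sigma$ from $X^{\sigma+\veps}$) are all absorbed by the single small factor $\inn{\ld_{n}}^{-\veps}$; the non-locality in time of $\Pi_n$ is not an obstacle here because Lemma \ref{lmCauchyobs} has already packaged it into the $n$-uniform estimate \eqref{estimFL}. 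Everything else is a routine application of the (analytic) Banach fixed point theorem.
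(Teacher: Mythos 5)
Your overall strategy coincides with the paper's: set up the fixed-point map $\Phi_{G,H_1,H_2}(W)=\FL\big(G,\,F(W+H_1)+H_2\big)$ (the $\Q_n$ you wrote explicitly is already applied inside $\FL$, so this is the same map), show it is a contraction that preserves the ball $\B_{R_0}^{[0,T]}(\Q_n X^\sigma)$ for $n$ large and $\eta$ small using the $n$-uniform bound \eqref{estimFL} together with the $\inn{\ld_n}^{-\veps}$ smallness of $\Q_n$ restricted to $X^{\sigma+\veps}\to X^\sigma$, then repeat in the complexified cylinder, and finally deduce Lipschitz and holomorphic dependence of the fixed point on $(H_1,H_2,G)$ from the (analytic) uniform contraction principle.

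There is one gap, and it is precisely the step the paper isolates as nontrivial. To invoke the analytic version of the uniform contraction principle (Theorem~\ref{app:thm:uniffixedpoint}), you must first show that $\widehat{\Phi}:(W,G,H_1,H_2)\mapsto\Phi_{G,H_1,H_2}(W)$ is itself holomorphic as a map between Banach spaces of the form $C^0([0,T],\cdot)$. Linearity in $G$ and $H_2$ is clear, but the dependence on $(W,H_1)$ runs through the superposition operator $(W,H_1)\mapsto F(W+H_1)$ acting on $C^0([0,T],X^\sigma_{\mathbb C})$. Assumption~\ref{assumFholom} only gives holomorphy of $F$ as a map between balls of $X^\sigma_{\mathbb C}$ and $X^{\sigma+\veps}_{\mathbb C}$; it does not immediately follow that the induced map on spaces of continuous time-dependent functions is holomorphic, and you assume this without comment. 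The paper handles it in Step 3 of its proof by writing a Taylor expansion of $F$, using the Cauchy estimates of Proposition~\ref{appendix:prop:cauchy-est} to control the remainder uniformly in $t\in[0,T]$ (see \eqref{prop:proof:frechet-1tfixed}--\eqref{prop:proof:frechet-1}), and concluding complex Fr\'echet differentiability of the superposition operator via Theorem~\ref{appendix:thm:equiv-holom}. You should supply this argument (or an equivalent one) before appealing to holomorphic dependence of the fixed point on parameters; the Lipschitz part of your argument, by contrast, is complete because there the pointwise estimate \eqref{boundF} transfers directly.
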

\begin{proof}[Proof of Theorem \ref{thmabstract} from Proposition \ref{propabstract}]Let $\chi\in C^{\infty}_{0}(\R,[0,1])$ supported in $[-1,1]$ so that $\chi(s)=1$ for $s\in [-1/2,1/2]$. We define the operator $\mathcal{R}$ by the formula
 \begin{align}\label{defR}
 \mathcal{R}(V,H_{1},H_{2})=\mathcal{\widetilde{R}}(V+H_{1},H_{2},- \chi(\eta^{-1} \nor{\bC V}{L^{2}([0,T],X^{\sigma})}) \bC V).
 \end{align}
where $\eta$ is the one in Proposition \ref{propabstract}.
 If $V$ is complex-valued, we mean $ \chi(\eta^{-1} \nor{\bC V}{L^{2}([0,T],X^{\sigma}_{\mathbb{C}})})$.
 
  We prove that it is well defined and satisfies the requirements of Theorem \ref{thmabstract}. 

We first notice that $\chi(\eta^{-1} \nor{\bC V}{L^{2}([0,T],X^{\sigma})}) =0$ if $\nor{ \bC V}{L^{2}([0,T],X^{\sigma})}\geq \eta$, so that we always have $\nor{ \chi(\eta^{-1} \nor{ \bC V}{L^{2}([0,T],X^{\sigma})}) \bC V}{L^{2}([0,T],X^{\sigma})}\leq \eta$. In particular, $\mathcal{R}$ is well defined from the spaces precised in \eqref{Rspace}. 
 
 Now, we check that it satisfies the required properties. Let $U$ be as in the Theorem and a solution of \eqref{UCabstract}. We want to check that for $n$ large enough (only depending on the parameters),  we can impose $\Q_n U$ to be small in $C^{0}([0,T],X^{\sigma})$ and $\bC \P_n U$ to be small in $X^{\sigma}$. We can apply Proposition \ref{prop:abs-prop-reg} to $U$ to propagate regularity, resulting in $\norm{U}_{C^0([0, T], X^{\sigma+\veps})}\leq R_{2}$ for some $R_{2}>0$. We have
 \begin{align}
 \label{unifboundU}
\nor{ \Q_n U}{C^{0}([0,T],X^{\sigma})} &\leq \frac{1}{\inn{\lambda_n}^{\veps}} \nor{ \Q_n U}{C^{0}([0,T],X^{\sigma+\veps})}\leq \frac{R_{2}}{\inn{\lambda_n}^{\veps}} .
 \end{align}
  For such a solution satisfying \eqref{UCabstract}, we have $\bC U=0$ and in particular, $\bC \Q_n U=-\bC \P_n U$, so that 
 \begin{align*}
  \nor{\bC \P_n U}{L^{2}([0,T],X^{\sigma})}&= \nor{\bC \Q_n U}{L^{2}([0,T],X^{\sigma})}\leq T^{1/2}\nor{\bC}{\mc{L}(X^\sigma)}\nor{ \Q_n U}{C^{0}([0,T],X^{\sigma})}\\
  &\leq \frac{T^{1/2}\nor{\bC}{\mc{L}(X^\sigma)}R_{2}}{\inn{\lambda_n}^{\veps}} .
 \end{align*}
  Since $\lambda_n\to +\infty$ as $n\to +\infty$, we can find $n\geq n_0$ so that it can be made smaller that $\eta/4$. In particular, defining $W:= \mathcal{R}(\P_{n}U,H_{1},H_{2})$, we have by definition of $\mathcal{R}$,
 \begin{align*}
W= \mathcal{\widetilde{R}}(\P_{n}U+H_{1},H_{2},- \bC \P_{n}U)
  \end{align*}
  is the unique solution in $\B_{R_{0}}^{[0,T]}(\Q_n X^{\sigma})$ of
  \begin{align*}
    \left\{\begin{array}{rl}
        \partial_t W&=AW+\Q_n F(W+\P_{n}U+H_{1})+\Q_{n}H_{2}   \\
   \Pi_n\bC W    &=-\Pi_n \bC \P_{n}U. 
    \end{array}\right.
\end{align*}
Also, since $\bC \P_{n}U=- \bC \Q_{n}U $ we notice that $\Q_n U$ is solution of 
 \begin{align*}
    \left\{\begin{array}{rl}
        \partial_t \Q_n U&=A\Q_n U+\Q_n F(\Q_n U+\P_n U+H_{1})+\Q_{n}H_{2}   \\
   \Pi_n\bC \Q_n U    &=-\Pi_n \bC \P_{n}U. 
    \end{array}\right.
\end{align*}
In particular, since $  \nor{\bC \P_n U}{L^{2}([0,T],X^{\sigma})}\leq \eta/4$ and by the various bounds, we have $\P_n U+H_{1}\in  \B_{2R_{0}}^{[0,T]}(X^{\sigma})$, $H_{2}\in  \B_{R_{1}}^{[0,T]}(X^{\sigma+\veps})$ and $-\bC \P_{n}U\in \mathbb{B}_{\eta}(L^2([0, T], X^{\sigma}))$. By definition of $\widetilde{R}$, this implies $\Q_n U= \widetilde{R}(\P_n U+H_{1},H_{2},- \bC \P_{n}U)$. So we conclude $\Q_{n}U=W=\mathcal{R}(\P_{n}U,H_{1},H_{2})$ as expected.

Concerning the holomorphic extension, since $\mathcal{\widetilde{R}}$ has a holomorphic extension, as proved in Proposition \ref{propabstract}, using formula \eqref{defR} and composition of function, it is enough to prove that $V\mapsto - \chi(\eta^{-1} \nor{\bC V}{L^{2}([0,T],X^{\sigma}_{\mathbb{C}})})$ is constant equal to $1$ and therefore holomorphic in some neighborhood $\P_nU+ \B_{\eta_{1},\eta_{1}}^{[0,T]}(\P_n X^{\sigma})$ that would be included in $\B_{3R_0/2,\eta/2}^{[0,T]}(X^{\sigma})$. This can be obtained for $\eta_{1}$ small enough since $  \nor{\bC \P_n U}{L^{2}([0,T],X^{\sigma})}\leq \eta/4$. This gives the result up to renaming $\eta_{1}$ by $\eta$.
\end{proof}

\begin{proof}[Proof of Proposition \ref{propabstract}]
Using Lemma \ref{lmCauchyobs} (recall that $\FL$ denotes a linear flow map), we are looking for $W$ solution of 
\begin{align*}
    W=\FL (G,F(W+H_{1})+H_{2}).
\end{align*}
So, it is natural to define the nonlinear operator $\Phi_{n,G,H_{1},H_{2}}$ defined by
\begin{align}\label{defPhi}
\Phi_{n,G,H_{1},H_{2}}(W):=\FL (G,F(W+H_{1})+H_{2}) 
\end{align}
 that we will prove later to be well defined from suitable balls of $C^0([0, T], \Q_n X^\sigma)$ to $C^0([0, T], \Q_n X^\sigma)$. To keep notations reasonable, we will write $\Phi=\Phi_{n,V,G,H_{1},H_{2}}$ keeping in mind all the dependence. The goal will be to find a fixed point of the operator $\Phi$ in a small ball that will also satisfy \eqref{eqWG}. 
 
Following Hale-Raugel \cite[Theorem 2.14]{HR03} and Joly-Laurent \cite[Theorem 10.1]{2020:joly-laurent:decay-nlw-no-gcc}, we divide the proof in three steps.\\

\noindent\emph{Step 1. High-frequency fixed point: real case.} For this part of the proof, we consider all the functions involved to be real-valued, in the sense that we consider the real vector space $X^{\sigma}$.

We fix $H_{1}\in  \B_{5R_{0}/2}^{[0,T]}(X^{\sigma})$, $H_{2}\in  \B_{R_{0}}^{[0,T]}(X^{\sigma+\veps})$ and $G\in \mathbb{B}_{\eta}(L^2([0, T], X^{\sigma}))$, but we will make estimates uniform when these functions are in these sets.

We want to find a fixed point in $W\in \B_{R_{0}}^{[0,T]}(\Q_n X^{\sigma})$ by the fixed point theorem of Banach. We prove that $\Phi$ is a contraction in this set.

Let $W\in \B_{R_{0}}^{[0,T]}(\Q_n X^{\sigma})$. Estimate \eqref{estimFL} of Lemma \ref{lmCauchyobs} gives for some constants uniform in $n\in \N$
\begin{align*}
\nor{\Phi(W)}{C^{0}([0,T],\Q_{n}X^{\sigma})}&=\nor{\FL(G,F(W+H_{1})+H_{2})}{C^{0}([0,T],\Q_{n}X^{\sigma})}\\
&\leq C \nor{\Pi_{n}G}{L^{2}([0,T],X^{\sigma})}+C\nor{\Q_{n}\left[ F(W+H_{1})+H_{2}\right]}{L^{1}([0,T],X^{\sigma})}.
\end{align*}
Since $\Pi_{n}$ is a projection on $L^2([0,T],X^{\sigma})$, we have $\nor{\Pi_{n}G}{L^{2}([0,T],X^{\sigma})}\leq \nor{G}{L^{2}([0,T],X^{\sigma})}\leq \eta$ by assumption. So, it remains to estimate the second term. Since $\nor{W+H_{1}}{C^{0}([0,T],X^{\sigma})}\leq 3R_{0}$, we can use \eqref{boundF} and get
\begin{align}
\label{estimFinPhi}
\nor{\Q_{n}\left[ F(W+H_{1})+H_{2}\right]}{L^{1}([0,T],X^{\sigma})}&\leq \frac{1}{\inn{\lambda_n}^{\veps}} \nor{F(W+H_{1})+H_{2}}{L^{1}([0,T],X^{\sigma+\veps})}\\
&\leq  \frac{T}{\inn{\lambda_n}^{\veps}} \left(C+R_{1}\right).
\end{align}
Summing up the previous estimates gives
\begin{align*}
\nor{\Phi(W)}{C^{0}([0,T],\Q_{n}X^{\sigma})}\leq C\eta+ \frac{1}{\inn{\lambda_n}^{\veps}} \left(C+R_{1}\right).
\end{align*}
Concerning the difference, similar estimates give for $W, W'\in \B_{R_{0}}^{[0,T]}(\Q_n X^{\sigma})$, using instead the second estimate of \eqref{boundF}
\begin{align}
\nonumber\nor{\Phi(W)-\Phi(W')}{C^{0}([0,T],\Q_{n}X^{\sigma})}&=\nor{\FL(0,F(W+H_{1})-F(W'+H_{1}))}{C^{0}([0,T],\Q_{n}X^{\sigma})}\\
\nonumber&\leq C\nor{\Q_{n}\left[ F(W+H_{1})-F(W'+H_{1}))\right]}{L^{1}([0,T],X^{\sigma})}\\
\nonumber&\leq  \frac{C}{\inn{\lambda_n}^{\veps}} \nor{F(W+H_{1})-F(W'+H_{1}))}{L^{1}([0,T],X^{\sigma+\veps})}\\
\label{bounddiffW}&\leq  \frac{CCT}{\inn{\lambda_n}^{\veps}} \nor{W-W'}{C^{0}([0,T],X^{\sigma})}.
\end{align}
In particular, if $\eta$ is chosen small enough and $n_{0}$ is large enough, then $\Phi$ reproduces $ \B_{R_{0}}^{[0,T]}(\Q_n X^{\sigma})$ and is contracting. 

\bigskip

\noindent\emph{Step 2. Extension to the complex case.} The proof is very similar to the real case. We define $\Phi$ with the same formula as \eqref{defPhi} but for 
\begin{align*}
W\in \B_{R_{0},\eta_{1}}^{[0,T]}(\Q_n X^{\sigma}), H_{1}\in  \B_{5R_{0}/2,\eta}^{[0,T]}(X^{\sigma}) , H_{2}\in   \B_{R_{1}, R_{1}}^{[0,T]}(X^{\sigma+\veps}), G\in   \mathbb{B}_{\eta,\eta}(L^{2}([0,T],X^{\sigma}))
\end{align*}
where $\eta_{1}>0$ small is to be fixed later on.

The flow map $\FL$ is linear, so the extension to complex-valued vectors is clear, see \cite[Theorem 3]{BS:71-polynomials}. So, we only need to check that the valuation $F(W+H_{1})$ makes sense and the contraction in $\B_{R_{0},\eta_{1}}^{[0,T]}(\Q_n X^{\sigma})$ of $\Phi$ is still true with these parameters, up to making $\eta$ small and $n$ large. 

If $W\in \B_{R_{0},\eta_{1}}^{[0,T]}(\Q_n X^{\sigma}), H_{1}\in  \B_{5R_{0}/2,\eta}^{[0,T]}(X^{\sigma})$, then $W+H_{1}\in \B_{7R_{0}/2,\eta_{1}+\eta}^{[0,T]}(X^{\sigma})\subset \B_{4R_{0},\delta}^{[0,T]}(X^{\sigma})$ if we have $\eta_{1}+\eta\leq \delta$, so that  $F(W+H_{1})$ is well defined and satisfies similar estimates as in the real case, thanks to \eqref{boundFanalytic} in Assumption \ref{assumFholom}. It remains to check $\Phi(W)\in \B_{R_{0},\eta}^{[0,T]}(\Q_n X^{\sigma})$. We still have
\begin{align*}
 \nor{\Pi_{n}G}{L^{2}([0,T],X_\mathbb{C}^{\sigma})}\leq 2\eta,
\end{align*}
when $G\in \mathbb{B}_{\eta,\eta}(L^{2}([0,T],X^{\sigma}))$, while a similar estimate as in \eqref{estimFinPhi} gives 
\begin{align*}
\nor{\Q_{n}\left[ F(W+H_{1})+H_{2}\right]}{L^{1}([0,T],X_\mathbb{C}^{\sigma})}&\leq  \frac{1}{\inn{\lambda_n}^{\veps}} \left(C+2R_{1}\right).
\end{align*}
Finally, the linear estimate \eqref{estimFL} of Lemma \ref{lmCauchyobs} still holds in the complex-valued case, so, we get 
\begin{align*}
\nor{\Phi(W)}{C^{0}([0,T],\Q_{n}X_\mathbb{C}^{\sigma})}\leq 2\eta+ \frac{1}{\inn{\lambda_n}^{\veps}} \left(C+2R_{1}\right).
\end{align*}
The bound \eqref{bounddiffW} holds in the complex case without modification. So, we obtained that if $\eta_{1}+\eta\leq \delta$, $4\eta\leq \eta_{1}\leq R_{0}$ and $n$ is large enough, $\Phi$ is a contraction on $\B_{R_{0},\eta_{1}}^{[0,T]}(\Q_n X^{\sigma})$. This gives a unique fixed point in the complex case. 

\bigskip

\noindent\emph{Step 3. Regularity of the fixed point.} We prove that for fixed $n$, the map $\widehat{\Phi}: (W, G, H_1, H_2)\mapsto \Phi_{n, G, H_1, H_2}(W)$ is Lispchitz of its arguments under Assumption \ref{assumF} and holomorphic when Assumption \ref{assumFholom} is made.
Recalling Definition \eqref{defPhi} and that $\FL$ is linear in its arguments, we see that, by composition, it will be enough to verify that $(W,H_1)\mapsto F(W+H_{1})$ is Lipschitz or holomorphic, depending on the assumption made.

First of all, observe that the same argument to get estimates \eqref{bounddiffW}, shows that $(W,H_1)\mapsto F(W+H_{1})$ is Lipschitz under Assumption \ref{assumF}, and by linearity of the flow map $\mc{F}_n$, so is the map 
\begin{align*}
    \widehat{\Phi}: \B_{R_0}^{[0, T]}(\Q_n X^{\sigma})\times \mathbb{B}_{\eta}(L^{2}([0,T],X^{\sigma}))\times \B_{2R_{0}}^{[0,T]}(X^{\sigma})  \times  \B_{R_{1}}^{[0,T]}(X^{\sigma+\veps}) \longrightarrow C^0([0, T], \Q_{n }X^{\sigma}).
\end{align*}
Now, under Assumption \ref{assumFholom}, the same argument as before shows that the complex extension, denoted by the same letter $\widehat{\Phi}$,
\begin{align*}
    \left\{\begin{array}{rcl}
    \B_{R_0, \eta}^{[0, T]}(\Q_n X^{\sigma})\times \mathbb{B}_{\eta,\eta}(L^{2}([0,T],X^{\sigma}))\times \B_{2R_{0},\eta}^{[0,T]}(X^{\sigma})  \times  \B_{R_{1}, R_{1}}^{[0,T]}(X^{\sigma+\veps})  & \longrightarrow& C^0([0, T], \Q_{n }X_\mathbb{C}^{\sigma})\\
    (W, G, H_1, H_2)&\longmapsto& \Phi_{n, G, H_1, H_2}(W)
    \end{array}\right.
\end{align*}
is Lipschitz, hence continuous. In view of Theorem \ref{appendix:thm:holom-several-variables} and that $\widehat{\Phi}$ is linear in the variables $G$ and $H_2$ separately, the main task is to show that the map 
\begin{align}
    \left\{\begin{array}{rcl}
    \B_{4R_{0}, \eta}^{[0,T]}(X_{\mathbb{C}}^{\sigma}) \times  \B_{2R_{0}, \eta}^{[0,T]}(X^{\sigma}) & \longrightarrow& C^0([0, T], X_\mathbb{C}^{\sigma+\veps})\\
    (W, H_1 )&\longmapsto& F(W+H_1)
    \end{array}\right.
\end{align}
is holomorphic on each variable separately. Since $F$ is holomorphic when defined from some balls to $X^\sigma$ to $X^{\sigma+\veps}$, the only point to check is that it implies the same result as an application on time-dependent functions in $C^0([0,T],X^\sigma)$. 

Fix $W\in \text{Int}(\B_{R_0, \eta}^{[0, T]}(\Q_nX^\sigma))$ and $H_1\in \text{Int}(\B_{5R_0/2, \eta}^{[0, T]}(X^\sigma))$ and $\ell>0$ so that $B_{C^0([0,T],X_{\C}^\sigma))}(W+H_1,\ell)\subset \B_{4R_0, 2\eta}^{[0, T]}(X^\sigma))$. Let $C$ be the bound on $F$ in Assumption \ref{boundFanalytic}. For all $K_W\in C^0([0, T], \Q_n X_\mathbb{C}^\sigma)$ and $K_H\in C^0([0, T], X_\mathbb{C}^\sigma)$ with $\norm{K_W+K_H}_{C^0([0, T], X_\mathbb{C}^\sigma)}\leq \ell/4$ and $t\in [0,T]$, we have $(W+H_1+K_W+K_H)(t) \in \mathbb{B}_{4R_{0},2\delta}(X^{\sigma})$ (recall $\eta<\delta$). Being $F$ holomorphic, by applying a Taylor expansion (see Theorem \ref{appendix:prop:cauchy-form}) together with Cauchy estimates (see Theorem \ref{appendix:prop:cauchy-est}), we have the bound, uniform in $t\in [0,T]$,
\begin{multline}\label{prop:proof:frechet-1tfixed}
    \norm{F(W(t)+H_1(t)+K_W(t)+K_H(t))-F(W(t)+H_1(t))-\delta F(W(t)+H_1(t); K_W(t)+K_H(t))}_{ X_\mathbb{C}^{\sigma+\veps}}\\
    \leq \dfrac{4C\norm{K_W(t)+K_H(t)}_{X_\mathbb{C}^\sigma}^2}{\ell(\ell-2\norm{K_W(t)+K_H(t)}_{X_\mathbb{C}^\sigma})}   \leq \dfrac{4C\norm{K_W+K_H}_{C^0([0, T], X_\mathbb{C}^\sigma)}^2}{\ell(\ell-2\norm{K_W+K_H}_{C^0([0, T], X_\mathbb{C}^\sigma)})}\leq \dfrac{8C\norm{K_W+K_H}_{C^0([0, T], X_\mathbb{C}^\sigma)}^2}{\ell^2},
\end{multline}
where
\begin{align*}
    \delta F(Z; K):=\lim_{s\to 0}\dfrac{1}{s}(F(Z+sK)-F(Z)).
\end{align*}
The differential of $F$ can be easily extended from $C^0([0, T], X_\mathbb{C}^\sigma)$ into $C^0([0, T], X_\mathbb{C}^{\sigma+\veps})$. Since, the previous estimate is uniform in $t\in [0,T]$, we can write
\begin{multline}\label{prop:proof:frechet-1}
    \norm{F(W+H_1+K_W+K_H)-F(W+H_1)-\delta F(W+H_1; K_W+K_H)}_{C^0([0, T], X_\mathbb{C}^{\sigma+\veps})}\\
\leq \dfrac{8C\norm{K_W+K_H}_{C^0([0, T], X_\mathbb{C}^\sigma)}^2}{\ell^2}.
\end{multline}

Actually, the previous estimate \eqref{prop:proof:frechet-1} holds uniformly in a ball around $(W, H_1)$ of radius $\ell/4$. Consequently, we establish that the map
\begin{align}
    \left\{\begin{array}{rcl}
    \B_{R_{0}, \eta}^{[0,T]}(X^{\sigma}) \times  \B_{5/2R_{0}, \eta}^{[0,T]}(X^{\sigma}) & \longrightarrow& C^0([0, T], X_\mathbb{C}^{\sigma+\veps})\\
    (W, H_1 )&\longmapsto& F(W+H_1)
    \end{array}\right.
\end{align}
is Fr\'echet differentiable (in the complex sense), hence holomorphic as an application of Theorem \ref{appendix:thm:equiv-holom}. The estimates in Lemma \ref{lmCauchyobs} show that the application $\FL$ is bilinear continuous with respect to each of its parameters and therefore has an obvious holomorphic extension. Since the embedding $\iota: C^0([0,T],X_{\mathbb{C}}^{\sigma+\veps})\to L^1([0,T],X_{\mathbb{C}}^{\sigma})$ is linear, an application of the chain rule (see Theorem \ref{appendix:thm:chain-rule}) shows that the map
\begin{align}
    \left\{\begin{array}{rcl}
\B_{R_{0}/2}^{[0,T]}(X^{\sigma}) \times  \mathbb{B}_{\eta}(L^{2}([0,T],X^{\sigma})) \times  \B_{5R_{0}/2}^{[0,T]}(X^{\sigma}) \times  \B_{R_{1}}^{[0,T]}(X^{\sigma+\veps})& \longrightarrow& C^0([0, T], \Q_{n }X^{\sigma})\\
  (W,G,H_1,H_2)&\longmapsto& \Phi_{n,G,H_{1},H_{2}}(W)
    \end{array}\right.
\end{align}
is holomorphic since it is defined by $\Phi_{n,G,H_{1},H_{2}}(W):=\FL (G,\iota\left(F(W+H_{1})+H_{2}\right)) $. The conclusion that the nonlinear reconstruction operator $\mathcal{\widetilde{R}}$ is holomorphic follows as a direct application of Theorem \ref{app:thm:uniffixedpoint} of regularity of fixed points with respect to parameters.
\end{proof}

\subsection{Analyticity in time of the observed solution} Here we prove the main theorem of this section, that is the abstract Theorem \ref{thmabstractanalyticintro} of analytic regularity stated in the introduction.
\begin{proof}[Proof of Theorem \ref{thmabstractanalyticintro}]
First, since $y\mapsto \max_{t\in [0,T^{*}]}\nor{\Im H_{1}(t+iy)}{X^{\sigma}}$ is a continuous function on $[-\mu,\mu]$ that is equal to zero at $y=0$, there exists $0<\mu'<\mu$ so that $\max_{t\in [0,T]}\left|\Im H_{1}(t+iy)\right| \leq \eta$ for $y\in [-\mu',\mu']$, where $\eta$ is given by Theorem \ref{thmabstract}. We can do the same for $H_{2}$. We denote $R_{1}=\max_{z\in [0,T^{*}]+i[-\mu,\mu]}\nor{H_{2}(z)}{X^{\sigma+\veps}}$. 

Let $n\in \N$, $\eta>0$ and $\mathcal{R}$ be given by Theorem \ref{thmabstract}, with the chosen $R_0$ and $R_{1}$. Let $0<\nu<T^*-T$. 

With these two choices, if we define for $s\in [-\nu, T^*-T-\nu]$, $H_{1}^{s}$ as the application $t\mapsto H_{1}(t+\nu+s)$, we have $H_{1}^{s} \in \B_{R_{0},\eta}^{[0,T]}(X^{\sigma})$ for any $s\in [-\nu, T^*-T-\nu]$. Moreover, the application $s\mapsto H_1^s$ is continuous. Indeed, since $t\in [0, T^*]\mapsto H_1(t)\in X^\sigma$ is a continuous map defined on a compact set, it is uniformly continuous. Hence, for every $\veps>0$ and any $t\in [0, T]$, there exists $\delta>0$ so that for any $s$, $s_0\in [-\nu, T^*-T-\nu]$ with $|(s+t)-(s_0+t)|\leq \delta$, then $\norm{H_1(t+\nu+s)-H_1(t+\nu+s_0)}_{X^\sigma}\leq \veps$. Since the later property does not depend on $t$, we can take $\sup_{t\in [0, T]}$ in the last inequality to obtain that $\norm{H_1^s-H_1^{s_0}}_{C^0([0, T], X^\sigma)}\leq \veps$. We do the same for $H_2$, so we have $H_2^s\in \B_{R_{1},R_{1}}^{[0,T]}(X^{\sigma+\veps})$ for any $s\in [-\nu, T^*-T-\nu]$ and moreover $s\mapsto H_2^s$ is continuous.

For $U$, we can define similarly $U^s\in \B_{R_{0},\eta}^{[0,T]}(X^{\sigma})$ for any $s\in [-\nu, T^*-T-\nu]$. We can also decompose $U=\P_n U+\Q_n U=:V+W$ and $U^s=V^s+W^s$. For any fixed $s\in [-\nu, T^*-T-\nu]$, the assumption \eqref{UCabstractT*intro} implies 
\begin{align}\label{UCabstractwiths}
    \left\{\begin{array}{cr}
         \partial_t U^s=AU^s+F(U^s+H_{1}^s)+H_{2}^s& \textnormal{ on } [0,T],\\
         \bC U^s(t)=0 &\textnormal{ for }t\in [0,T].
    \end{array}\right.
\end{align}
In particular, for any fixed $s\in [-\nu, T^*-T-\nu]$, Theorem \ref{thmabstract} gives 
\begin{align}
\label{equalWs}
W^s=\Q_n  U^s=\mathcal{R}(\P_{n}U^s,H_{1}^s,H_{2}^s)=\mathcal{R}(V^s,H_{1}^s,H_{2}^s),
\end{align} the equality being meant in $C^0([0,T], \Q_n X^{\sigma})$. 

But if we denote $G=\P_n F(U+H_{1})+\P_n H_{2}=\P_n F(V+W+H_{1})+\P_n H_{2}\in C^0([0,T^*],\P_n X^{\sigma}) $, then $V\in C^0([0,T^*],\P_n X^{\sigma})$ is solution of 
\begin{align*}
    \partial_t V=\P_n AV+G& \textnormal{ on } [0,T^{*}].
\end{align*}
With the related notations, Lemma \ref{lmtranslat} implies 
\begin{align}
\label{eqVG}
    \partial_s V^s=\P_n AV^s+G^s& \textnormal{ on } [-\nu, T^*-T-\nu].
\end{align}
But for any fixed $s\in [-\nu, T^*-T-\nu]$, we have, using \eqref{equalWs},
\begin{align*}
G^s=\P_n F(V^s+W^s+H_{1}^s)+\P_n H_{2}^s=\P_n F(V^s+\mathcal{R}(V^s,H_{1}^s,H_{2}^s)+H_{1}^s)+\P_n H_{2}^s
\end{align*}
with equality in $C^0([0,T], \Q_n X^{\sigma})$. 

It is then natural to define the application
\begin{align}
    \left\{\begin{array}{rcl}
    \Tilde{F}: [-\nu, T^*-T-\nu]\times \B_{R_{0}}^{[0,T]}(\P_n X^{\sigma})&\to& C^0([0, T], \P_n X^\sigma)\\
    (s,\Tilde{V})&\mapsto& \P_n F(\Tilde{V}+\mathcal{R}(\Tilde{V},H_{1}^s,H_{2}^s)+H_{1}^s)+\P_n H_{2}^s.
    \end{array}\right.
\end{align}
This application is well-defined, continuous and locally Lipschitz (with respect to the second variable). Observe that $H_1^s\in \B_{R_0}^{[0, T]}(X^\sigma)$, $H_2^s\in \B_{R_{1}}^{[0, T]}(X^{\sigma+\veps})$ for all $s\in [-\nu, T^*-T-\nu]$ and $\Tilde{V}\in \B_{R_{0}}^{[0,T]}(\P_n X^{\sigma})$ implies, by construction, that $\Tilde{V}+\mathcal{R}(\Tilde{V},H_{1}^s,H_{2}^s)+H_{1}^s\in \B_{3R_0}^{[0, T]}(X^\sigma)$. Therefore $F(\Tilde{V}+\mathcal{R}(\Tilde{V},H_{1}^s,H_{2}^s)+H_{1}^s)$ is well defined and so is $\Tilde{F}$. Recall that $\mc{R}$ is Lipschitz on its variables, that is, there exists $L>0$ such that
\begin{multline*}
    \norm{\mc{R}(\Tilde{V}, H_{1}^{s}, H_2^{s})-\mc{R}(\Tilde{V}^*, H_{1}^{s^*}, H_2^{s^*})}_{C^0([0, T], X^\sigma)}\leq L\big( \norm{\Tilde{V}-\Tilde{V}^*}_{C^0([0, T], X^\sigma)}\\+\norm{H_1^{s}-H_1^{s^*}}_{C^0([0, T], X^\sigma)}+\norm{H_2^{s}-H_2^{s^*}}_{C^0([0, T], X^{\sigma+\veps})} \big),
\end{multline*}
for all $\Tilde{V}$, $\Tilde{V}^*\in \B_{R_{0}}^{[0,T]}(\P_n X^{\sigma})$ and $s$, $s^*\in [-\nu, T^*-T-\nu]$. The continuity of $\Tilde{F}$ then follows by the continuity of $s\mapsto (H_1^s, H_2^s)$, Assumption \ref{assumFholom} and by algebra of continuous maps. The same inequality along with Assumption \ref{assumFholom} shows that
\begin{align*}
    \norm{\Tilde{F}(s, \Tilde{V})-\Tilde{F}(s, \Tilde{V}^*)}_{C^0([0, T], \P_n X^\sigma)}\leq C\norm{\Tilde{V}-\Tilde{V}^0}_{C^0([0, T], \P_n X^\sigma)},
\end{align*}
for any $\Tilde{V}$, $\Tilde{V}^*\in \B_{R_{0}}^{[0,T]}(\P_n X^{\sigma})$ and $s\in [-\nu, T^*-T-\nu]$.

Moreover, we claim that for any $s_0\in (-\nu,T^*-T-\nu)$, there exists $\rho>0$ so that it admits a holomorphic extension (in both variables)
\begin{align*}
    \left\{\begin{array}{rcl}
    \Tilde{F}: B_{\mathbb{C}}(s_0, \rho)\times \left[V^{s_0}+\B_{\eta,\eta}^{[0,T]}(\P_n X^{\sigma})\right]&\to& C^0([0, T], \P_n X_\mathbb{C}^\sigma)\\
    (z,\Tilde{V})&\mapsto& \P_n F(\Tilde{V}+\mathcal{R}(\Tilde{V},H_{1}^z,H_{2}^z)+H_{1}^z)+\P_n H_{2}^z.
    \end{array}\right.
\end{align*}
By Theorem \ref{thmabstract}, around $V^{s_0}=\P_n U^{s_0}$, the map $\mc{R}$ extends holomorphically as
\begin{align*}
\mathcal{R}: \left[ V^{s_0}+ \B_{\eta,\eta}^{[0,T]}(\P_n X^{\sigma})\right] \times  \B_{R_{0},\eta}^{[0,T]}(X^{\sigma}) \times  \B_{R_{1},R_{1}}^{[0,T]}(X^{\sigma+\veps})  \longrightarrow C^0([0, T], \Q_{n }X_\mathbb{C}^{\sigma}).
\end{align*}
We need to argue that, the application $z\mapsto H_1^z$ is holomorphic from $B_{\mathbb{C}}(s_0, \rho)$ to $C^0([0, T], X_\mathbb{C}^\sigma)$ for $\rho>0$ small enough, and that the same holds for $z\mapsto H_2^z$ from $B_{\mathbb{C}}(s_0, \rho)$ with values in $C^0([0, T], X_\mathbb{C}^{\sigma+\veps})$. Indeed, since
\begin{align*}
    z\in (0, T^*)+i(-\mu', \mu')\mapsto H_1(z)\in X_\mathbb{C}^\sigma
\end{align*}
is holomorphic, for each $s_0\in [-\nu, T^*-T-\nu]$ we can find $\rho>0$ such that, for any $t\in [0, T]$, the application
\begin{align*}
    z\in B_\mathbb{C}(s_0, \rho)\longmapsto H_1(z+t+\nu)\in X_\mathbb{C}^\sigma
\end{align*}
is holomorphic. As we did in the proof of Proposition \ref{propabstract}, by using Cauchy estimates and the bound on $H_1(z)$, for each $t\in [0, T]$ and any $z_0\in B_\mathbb{C}(s_0, \rho)$, we can find $\ell>0$ such that
\begin{align*}
    \norm{H_1(z+h+t+\nu)-H_1(z+t+\nu)-\delta H_1(z+t+\nu, h)}_{X_\mathbb{C}^\sigma}\leq \dfrac{4(R_0+\eta)|h|^2}{\ell(\ell-2|h|)}\leq \dfrac{8(R_0+\eta)|h|^2}{\ell^2} 
\end{align*}
holds uniformly for all $z\in B_\mathbb{C}(z_0, \ell/4)$ and $B_\mathbb{C}(0, \ell/4)$. Moreover, the last estimate is independent of $t\in [0, T]$, so we actually have
\begin{align*}
    \norm{H_1^{z+h}-H_1^z-\delta H_1(z+\cdot+\nu, h)}_{C^0([0, T], X_\mathbb{C}^\sigma)}\leq \dfrac{8(R_0+\eta)|h|^2}{\ell^2},
\end{align*}
uniformly for all $z\in B_\mathbb{C}(z_0, \ell/4)$ and $|h|\leq \ell/4$. The previous estimate shows that the application
\begin{align*}
    z\in B_\mathbb{C}(s_0, \rho)\longmapsto H_1^z\in \B_{R_0, \eta}^{[0, T]}(X^\sigma)
\end{align*}
is holomorphic. The argument works similarly to show that $z\mapsto H_2^z$ is holomorphic.

By composition of holomorphic functions and Theorem \ref{appendix:thm:holom-several-variables} we get that
\begin{align*}
    \left\{\begin{array}{rcl}
    B_{\mathbb{C}}(s_0, \rho)\times \left[V^{s_0}+\B_{\eta,\eta}^{[0,T]}(\P_n X^{\sigma})\right]&\to& C^0([0, T], \Q_n X_\mathbb{C}^\sigma)\\
    (z,\Tilde{V})&\mapsto& \mathcal{R}(\Tilde{V},H_{1}^z,H_{2}^z)
    \end{array}\right.
\end{align*}
is a well-defined and holomorphic map. That the extension $\Tilde{F}$ is holomorphic, follows from Assumption \ref{assumFholom}, algebra of holomorphic maps, and that $\P_n$ is a linear bounded operator.

Since we have proved that for any $s\in [-\nu, T^*-T-\nu]$, $G^s=\Tilde{F}(s,V^s)$, we obtain that the equation \eqref{eqVG} verified by $V^s$ can be written as
\begin{align*}
    \partial_s V^s=\P_n AV^s+\Tilde{F}(s,V^s)& \textnormal{ on } [-\nu, T^*-T-\nu].
\end{align*}
We can then consider the following ODE, defined in the Banach space $C^0([0, T], \P_n X_\mathbb{C}^\sigma)$,
\begin{align}\label{thm:proof:eq:low-freq-evo-nlw}
    \left\{\begin{array}{rl}
        \partial_s \xi(s)&=\P_nA\xi(s)+\Tilde{F}(s, \xi(s))   \\
        \xi(s_0)&=V^{s_0}.
    \end{array}\right.
\end{align}
Notice that $C^0([0,T], \P_n X^\sigma)$ is of infinite dimension, but we can check that $\P_n A$ is a linear bounded operator on it. Lemma \ref{lmDuhamelCauchy} shows that $V^s$, which is a solution of \eqref{thm:proof:eq:low-freq-evo-nlw} in the sense of semi-group, is also a solution in the usual sense of ODE in Banach space.

Notice that $\P_n A+\Tilde{F}$ is holomorphic from $ B_{\mathbb{C}}(s_0, \rho)\times \left[V^{s_0}+ \B_{\eta,\eta}^{[0,T]}(\P_n X^{\sigma})\right]$ into $C^0([0, T], \P_n X_{\mathbb{C}}^\sigma)$. Therefore, by the classical theory of ODEs in Banach spaces \cite[Theorem 10.4.5]{1969:dieudonne:modern-analysis}, we obtain a unique classical solution of \eqref{thm:proof:eq:low-freq-evo-nlw}
\begin{align*}
    \xi:B_{\mathbb{C}}(s_0, \rho')\longmapsto \Tilde{V}(z)\in C^0([0,T], \P_n X_{\mathbb{C}}^\sigma)
\end{align*}
for some $\rho'\in (0, \rho]$. Moreover, such a solution map inherits the regularity of the right-hand side of the ODE and so it is a holomorphic map. By uniqueness of the solutions, we have $\xi(s)=V^s$ for $s\in (s_0-\rho',s_0+\rho')$. 

Moreover, the maps $z\mapsto \xi(z)$ and 
\begin{align*}
    \left\{\begin{array}{rcl}
    B_{\mathbb{C}}(s_0, \rho)\times \left[V^{s_0}+ \B_{\eta,\eta}^{[0,T]}(\P_n X^{\sigma})\right]&\to& C^0([0, T], \P_n X_{\mathbb{C}}^\sigma)\\
    (z,\Tilde{V})&\mapsto&\Tilde{V}+ \mathcal{R}(\Tilde{V},H_{1}^z,H_{2}^z)
    \end{array}\right.
\end{align*}
are both holomorphic in their respective spaces, and so is its composition $z\mapsto \xi(z)+\mathcal{R}(\xi(z),H_{1}^z,H_{2}^z)$.

But for $s\in (s_0-\rho',s_0+\rho')$, $\xi(s)=V^s$, so that $\xi(s)+\mathcal{R}(\xi(s),H_{1}^s,H_{2}^s)=V^s+\mathcal{R}(V^s,H_{1}^s,H_{2}^s)=V^s+W^s=U^s$ where we have used \eqref{equalWs} and $(s_0-\rho',s_0+\rho')\subset (-\nu,T^*-T-\nu)$ for $\rho'$ small enough. In particular, the map $s\in (s_0-\rho',s_0+\rho')\mapsto U^s\in C^0([0, T], X^\sigma)$ is the restriction to a real interval of a holomorphic map, and hence is real analytic.

Since for any $t_0\in [0,T]$, the trace application $C^0([0, T], X^\sigma)\to X^\sigma$ is linear continuous, then, we obtain by composition that the application  
\begin{align*}
    \left\{\begin{array}{rcl}
   (s_0-\rho',s_0+\rho')&\to& X^\sigma\\
   s&\mapsto&U^s(t_0)=U(t_0+\nu+s)
    \end{array}\right.
\end{align*}
is real analytic. Recall that $\rho'$ depends on all the other parameters, but $\nu$ is an arbitrary number with $0<\nu<T^*-T$, $s_0$ is any number with $s_0\in (-\nu,T^*-T-\nu)$ while $t_0$ is arbitrary in $[0,T]$. It means that $t\mapsto U(t)$, (which is well defined for $t\in [0,T^*]$), is real analytic in a neighborhood of any $t_1$ of the form $t_1=t_0+\nu+s_0$ with $t_0$, $\nu$ and $s_0$ as before. It is not hard to see that it implies that $t\mapsto U(t)$ is real analytic on $(0,T^*)$, as expected.
\end{proof}

\section{Application to the wave equation}\label{SEC:wave-eq}

The aim of this section is to prove the main results of propagation of analyticity and unique continuation related to the semilinear wave equation.

\subsection{Notation and preliminaries}\label{s:not_wave} Let $\beta\geq 0$. Set $X=H_0^1(\M)\times L^2(\M)$ and introduce the operator 
\begin{align*}
    A=\begin{pmatrix}
    0 & I \\
    \Delta_g-\beta & 0
    \end{pmatrix}\ \text{ with }\ D(A)=\big(H^2(\M)\cap H_0^1(\M)\big)\times H_0^1(\M).
\end{align*}
For the standard scalar product on $X$ (with $\norm{u}_{H^1_0}^2=\norm{\nabla u}_{L^2}^2+\beta \norm{u}_{L^2}^2$), we can compute
\begin{align*}
    A^*=-A\ \text{ and }\ A^*A=-A^2=\begin{pmatrix}
    -\Delta_g+\beta& 0 \\
    0 & -\Delta_g+\beta
    \end{pmatrix},
\end{align*}
where we used the same notation for the Dirichlet Laplacian defined on $H^1_0(\M)$ and $L^2(\M)$ with their natural respective domains.
Observe that $A$ satisfies Assumption \ref{assumAA} on the real Hilbert space $X$. For $\sigma\geq 0$, $H^\sigma_D$ and  $X^\sigma$ denote the spaces 
\begin{align}
\label{defSob}
H^\sigma_D&=D((-\Delta_g)^{\sigma/2}),\\
 X^\sigma&= D((A^*A)^{\sigma/2})=  D((-\Delta_g)^{(1+\sigma)/2})\times D((-\Delta_g)^{\sigma/2})=H^{1+\sigma}_D\times H^{\sigma}_D.
\end{align}
Here, $\Delta_g$ denotes the Dirichlet Laplacian defined on $L^2$.
\begin{remark}
    For $\sigma \in [0,1/2)$, we have $ X^\sigma=(H^{1+\sigma}(\M)\cap H_0^1(\M))\times H^\sigma(\M)$, while for  $\sigma \in (1/2,1]$, we have $ X^\sigma=(H^{1+\sigma}(\M)\cap H_0^1(\M))\times H_0^\sigma(\M)$, see \cite{G:67}. Notice that $X^0=X$ and $X^1=D(A)$. When $\partial\M=\emptyset$, we still write $H^1_D=H_0^1=H^1$ and assume $\beta>0$ so that Poincar\'e-like inequalities hold.
\end{remark}
By the spectral theorem, $-\Delta_g$ has a compact resolvent and thus, we can construct an orthonormal basis of eigenfunctions of $-\Delta_g$ in $L^2(\M)$, denoted by $(e_j)_{j\in \N}$ and associated to the eigenvalues $(\lambda_j)_{j\in \N}$. We introduce the high-frequency projectors $Q_n$ on the space $\overline{\text{Span}\{e_j\}_{j\geq n}}$ and then we set the low-frequency projection $P_n=I-Q_n$. In the same direction we then set $\Q_n=(Q_n, Q_n)$ and $\P_n=(P_n, P_n)$ on $X$, fitting on the abstract framework given in Section \ref{sec:abstract-construction}.

In regards to the nonlinearity, for some $\widetilde{\chi}\in C^{\infty}(\M)$ to be chosen later, we use the notation
\begin{align}
\label{defFwave}
    F: (u, v)\longmapsto (0, -\widetilde{\chi} f(u)+\beta u),
\end{align}
where $f: \R \to \R$ will be either $C^4$ or analytic. By setting $U(t):=(u(t), \partial_t u(t))$, if $\widetilde{\chi}=1$, we can write the associated Cauchy problem to \eqref{eq:nlw-1} as
\begin{align}\label{eq:nlw-1-cauchy}
\left\{\begin{array}{lc}
    \partial_t U=AU+F(U) \\
    U(0)=U_0.
    \end{array}\right.
\end{align}
We will later consider some slightly different problems, which is why we introduce the cutoff $\widetilde{\chi}$. Since $A$ is skew-adjoint, by Stone's theorem it generates a unitary group $t\mapsto e^{tA}$ on $X$ and on $D(A)$. In particular,
\begin{align*}
    \forall t\in [0, T],\  \norm{e^{tA}}_{\mc{L}(X)}= 1\ \text{ and }\ \norm{e^{tA}}_{\mc{L}(D(A))}=1.
\end{align*}
By linear interpolation, the same holds on $X^\sigma$ for any $\sigma\in (0, 1)$.

Now we establish some regularity properties for $F$. First of all, we recall a version of a result that can be found in Alinhac-G\'erard \cite[Proposition 2.2]{1991:alinhac-gerard:psido-nash-moser}, in relation to the regularity of a composition.
\begin{lemma}\label{lem:comp-reg}
    Let $g: \R\to \R$ be a $C^3(\R, \R)$ function, with $g(0)=0$. If $u\in L^\infty(\M)\cap H^s(\M)$, with $s\in (0, 2)$, then $g(u)\in L^\infty(\M)\cap H^s(\M)$ and $\norm{g(u)}_{H^s}\leq C\norm{u}_{H^s}$, where $C$ only depends on $g$ and $\norm{u}_{L^{\infty}}$.
\end{lemma}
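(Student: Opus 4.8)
The plan is to prove Lemma \ref{lem:comp-reg} by reducing the fractional Sobolev estimate to two extreme cases, $s=0$ and $s$ close to $2$, and interpolating; alternatively, one can argue directly with the Gagliardo--Slobodeckij seminorm. I will present the direct approach since it keeps control of the dependence of the constant on $\norm{u}_{L^\infty}$ transparent. First I would recall that for $s\in(0,1)$ the space $H^s(\M)$ carries the equivalent norm $\norm{v}_{H^s}^2 \simeq \norm{v}_{L^2}^2 + [v]_{H^s}^2$ where $[v]_{H^s}^2 = \int\!\!\int_{\M\times\M} \frac{|v(x)-v(y)|^2}{d(x,y)^{d+2s}}\,dx\,dy$ (working in local charts and using a partition of unity subordinate to a finite atlas, which is legitimate on the compact manifold $\M$). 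Since $g$ is $C^1$ with $g(0)=0$, the mean value theorem gives $|g(u(x))-g(u(y))| \le \big(\sup_{|t|\le \norm{u}_{L^\infty}}|g'(t)|\big)\,|u(x)-u(y)|$ pointwise, and $|g(u(x))| \le \big(\sup_{|t|\le \norm{u}_{L^\infty}}|g'(t)|\big)\,|u(x)|$; plugging these into the Gagliardo seminorm and the $L^2$ norm respectively yields $\norm{g(u)}_{H^s}\le C(\norm{u}_{L^\infty},g)\norm{u}_{H^s}$ immediately, with no need for $g\in C^3$ in this range.

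For $s\in[1,2)$ the argument is more delicate because one must differentiate the composition. Write $s = 1+\theta$ with $\theta\in[0,1)$. The natural route is to show $g(u)\in H^1$ with $\nabla(g(u)) = g'(u)\nabla u$, and then to establish $g'(u)\nabla u \in H^\theta$, so that $g(u)\in H^{1+\theta}=H^s$. The $H^1$ bound is routine: $\norm{g(u)}_{L^2}\le C\norm{u}_{L^2}$ and $\norm{\nabla(g(u))}_{L^2} \le \norm{g'(u)}_{L^\infty}\norm{\nabla u}_{L^2}$, both controlled by $\norm{u}_{L^\infty}$ through $g'$. For the $H^\theta$ bound on the product $g'(u)\nabla u$, I would use a fractional Leibniz / product estimate of the form $\norm{ab}_{H^\theta} \le C\big(\norm{a}_{L^\infty}\norm{b}_{H^\theta} + \norm{a}_{W^{\theta,\infty}}\norm{b}_{L^2}\big)$ (valid for $\theta\in(0,1)$; when $\theta=0$ this is trivial). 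Taking $a=g'(u)$ and $b=\nabla u$: the term $\norm{g'(u)}_{L^\infty}\norm{\nabla u}_{H^\theta} \le C(\norm{u}_{L^\infty})\norm{u}_{H^{1+\theta}}$ is fine. The remaining term requires $\norm{g'(u)}_{W^{\theta,\infty}}$; since $g\in C^3$, $g''$ is $C^1$ hence locally Lipschitz, so $g'(u)$ has a $W^{\theta,\infty}$ (indeed Lipschitz, hence $W^{1,\infty}$) seminorm controlled by $\norm{g''}_{L^\infty(|t|\le\norm{u}_{L^\infty})}\norm{\nabla u}_{?}$ — and here lies the subtlety: $\norm{g'(u)}_{W^{\theta,\infty}}$ is not directly controlled by $\norm{u}_{L^\infty}$ alone. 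The cleaner alternative, which I would actually carry out, is to estimate $[g'(u)\nabla u]_{H^\theta}$ directly via the Gagliardo seminorm, splitting $g'(u(x))\nabla u(x) - g'(u(y))\nabla u(y) = g'(u(x))\big(\nabla u(x)-\nabla u(y)\big) + \big(g'(u(x))-g'(u(y))\big)\nabla u(y)$; the first piece gives $\norm{g'(u)}_{L^\infty}[\nabla u]_{H^\theta}$, and for the second piece one uses $|g'(u(x))-g'(u(y))| \le \norm{g''}_{L^\infty}|u(x)-u(y)|$ together with the embedding $H^s(\M)\hookrightarrow L^\infty(\M)$ (here $s>d/2$ is needed, so $d\le 3$ and $s$ close to $2$ suffices) and a Hölder argument in the double integral to absorb $|u(x)-u(y)|\,|\nabla u(y)|$ into $\norm{u}_{L^\infty}^{1-\alpha}[\ldots]$-type factors; this is the routine but somewhat technical computation I would not grind through here.

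The main obstacle is precisely this last product estimate in $H^\theta$ for $1\le s<2$: one must handle the low-regularity factor $\nabla u\in H^\theta$ (which need not be bounded) against the Lipschitz-type factor $g'(u)$, and keep the constant depending only on $g$ (through $\sup_{|t|\le\norm{u}_{L^\infty}}(|g'|+|g''|+|g'''|)$) and on $\norm{u}_{L^\infty}$, not on higher norms of $u$. This is where the hypothesis $g\in C^3$ is used (one derivative for $H^1$, a second for the Hölder continuity of $g'(u)$, and the third gives a little room, or is needed if one pushes to $s$ very close to $2$ and differentiates once more). I would finish by interpolating between the case $s\in(0,1)$ and the case, say, $s=2-\epsilon$ to cover all of $(0,2)$ uniformly, or simply note that the two arguments above already cover $(0,1)$ and $[1,2)$ respectively, which is all that is claimed. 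Throughout, passing between $\M$ and $\R^d$ is handled by a fixed finite atlas and partition of unity, and the $L^\infty$ hypothesis on $u$ is what makes the nonlinear composition well-defined and the constants explicit; I would state this reduction once at the beginning and not belabor it.
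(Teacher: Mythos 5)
Your argument for $s\in(0,1)$ via the Gagliardo seminorm and the mean value theorem is correct and elementary (and, as you note, needs only $g\in C^1$). The difficulty is entirely in the range $s\in[1,2)$, and there your proof does not close.

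The second piece of your splitting requires you to bound
\begin{equation*}
\iint_{\M\times\M}\frac{|u(x)-u(y)|^2\,|\nabla u(y)|^2}{d(x,y)^{d+2\theta}}\,dx\,dy,
\end{equation*}
and to do so you invoke $H^s(\M)\hookrightarrow L^\infty$ (or a Hölder embedding), i.e. $s>d/2$. That is an extra hypothesis not present in the lemma: the statement covers all $s\in(0,2)$, and precisely because $H^s$ need not embed into $L^\infty$ one assumes $u\in L^\infty\cap H^s$ rather than just $u\in H^s$. For $d\geq 2$ your argument leaves $s\in[1,d/2]$ uncovered. Moreover, even in the regime where the embedding does hold, absorbing $|u(x)-u(y)|$ by a $C^\alpha$ seminorm of $u$ produces a constant depending on $\norm{u}_{H^s}$ (quadratically), not only on $g$ and $\norm{u}_{L^\infty}$ as the lemma asserts. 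Your fallback of ``interpolating between $s\in(0,1)$ and $s$ close to $2$'' does not rescue this: $u\mapsto g(u)$ is nonlinear, and real interpolation of Sobolev spaces applies to linear operators; nonlinear interpolation requires additional structure you have not supplied.

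The paper does not prove the lemma by the route you chose: it cites Alinhac--G\'erard \cite[Proposition~2.2]{1991:alinhac-gerard:psido-nash-moser}, whose proof is a Littlewood--Paley / Meyer-multiplier argument. One writes $g(u)=g(S_0u)+\sum_j\bigl(g(S_{j+1}u)-g(S_ju)\bigr)=g(S_0u)+\sum_j m_j\,\Delta_j u$ with $m_j=\int_0^1 g'(S_ju+t\Delta_ju)\,dt$, and the Meyer multiplier lemma then transfers the $H^s$ regularity of $u$ to $\sum_j m_j\Delta_j u$ provided $\norm{\partial^\alpha m_j}_{L^\infty}\lesssim 2^{j|\alpha|}$ for $|\alpha|\leq\lfloor s\rfloor+1$. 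Those estimates hold uniformly because $\norm{S_ju}_{L^\infty}\leq C\norm{u}_{L^\infty}$, so no Sobolev embedding of $u$ itself is needed, the constant depends only on $g$ and $\norm{u}_{L^\infty}$, and the requirement $g'\in C^2$ (i.e.\ $g\in C^3$) is exactly $\lfloor s\rfloor+1\leq 2$. This is what your direct Gagliardo computation cannot reproduce: the dyadic decomposition replaces the single embedding $H^s\hookrightarrow L^\infty$ by uniform $L^\infty$ control on the smoothed pieces $S_ju$, which is available for free. If you want a proof valid for all $s\in(0,2)$ with the stated constant dependence, you will need this (or an equivalent paradifferential) structure, not a raw fractional Leibniz rule.
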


\begin{remark}
    The previous Lemma is actually written in \cite{1991:alinhac-gerard:psido-nash-moser} for function in $H^s(\R^d)$ and $f\in C^{\infty}$. Yet, the same result holds for functions in $H^s(\M)$ when $\M$ is a compact manifold with boundary using the definition of the norm of $H^s(\M)$ by partition of unity and sum of the norm in $H^s(\R^d)$ of the functions in local coordinates and with extension. Concerning the requirement $g\in C^3$, we only notice that the Meyer multiplier Lemma \cite[Lemma 2.2.]{1991:alinhac-gerard:psido-nash-moser} requires estimates of the derivatives of the multiplier up to $\floor{s}+1\leq 2$. Since it is applied to $g'$ in \cite[Proposition 2.2]{1991:alinhac-gerard:psido-nash-moser}, it requires $g'\in C^2$. Note that, as proved in \cite{G:67}, for functions satisfying the appropriate boundary condition, (that is $f=0$ on $\partial \M$ for $1/2<s<5/2$ and no condition if $0\leq s<1/2$), the norms $H^s(\M)$ (defined as restrictions of functions in $H^s(\R^d)$ in local coordinates) and the norms $H^s_D(\M)$ defined by spectral theory are equivalent. 
\end{remark}

We now prove that $F$ satisfies Assumption \ref{assumF} and \ref{assumFholom} under suitable hypothesis on the function $f$.

\begin{proposition}\label{prop:f-assumptions}
 Let $\sigma\in (1/2, 1]$ if $d=3$ and $\sigma\in (0,1]$ if $d\leq 2$. Assume that $f\in C^4(\R,\R)$ and $\widetilde{\chi}\in C^{\infty}(\M)$. Then $F$, defined in \eqref{defFwave}, satisfies Assumption \ref{assumF} for any $\veps\in (0, 1]$ and $R_0>0$. If in addition, $f$ is real analytic, then $F$ satisfies Assumption \ref{assumFholom}.
\end{proposition}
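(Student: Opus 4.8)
The plan is to verify the four conditions in Assumption \ref{assumF} (namely: $F$ maps a ball of $X^\sigma$ boundedly into $X^{\sigma+\veps}$, with a Lipschitz bound) and then upgrade to the holomorphic statement of Assumption \ref{assumFholom}. Recall that with $X^\sigma = H^{1+\sigma}_D\times H^\sigma_D$ and $F(u,v) = (0,-\widetilde\chi f(u)+\beta u)$, the second component only sees the $u$-variable, which lives in $H^{1+\sigma}_D$. So the whole point reduces to: if $u\in H^{1+\sigma}_D$ with $\norm{u}_{H^{1+\sigma}}\leq 4R_0$, then $\widetilde\chi f(u)-\beta u\in H^{\sigma+\veps}_D$ with the quantitative bounds, for $\veps\in(0,1]$ chosen so that $\sigma+\veps\leq 2$ (and also $\sigma+\veps$ in the right range for the boundary conditions to match up — this is where the $\sigma\in(1/2,1]$ vs.\ $\sigma\in(0,1]$ dichotomy enters).

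First I would record the Sobolev embedding $H^{1+\sigma}(\M)\hookrightarrow L^\infty(\M)$, which holds precisely for the stated ranges of $\sigma$ in dimension $d\leq 3$; this gives $\norm{u}_{L^\infty}\leq C(R_0)$ on the ball. Next, since $f\in C^4$ with $f(0)=0$, and since $u\in L^\infty\cap H^{1+\sigma}$ with $1+\sigma\in(0,2]$, Lemma \ref{lem:comp-reg} applies (with $s=1+\sigma$, noting $f\in C^4\subset C^3$): it yields $f(u)\in L^\infty\cap H^{1+\sigma}$ with $\norm{f(u)}_{H^{1+\sigma}}\leq C(f,R_0)\norm{u}_{H^{1+\sigma}}$. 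Multiplying by $\widetilde\chi\in C^\infty(\M)$ preserves $H^{1+\sigma}$ (bounded multiplier) and subtracting $\beta u$ is harmless, so $\widetilde\chi f(u)-\beta u\in H^{1+\sigma}\hookrightarrow H^{\sigma+\veps}$ as long as $\veps\leq 1$. One must then check this lands in the \emph{spectral} space $H^{\sigma+\veps}_D$: by the remark following Lemma \ref{lem:comp-reg} (citing \cite{G:67}), $H^{s}(\M)$ and $H^s_D(\M)$ coincide when $0\leq s<1/2$ with no condition, and when $1/2<s<5/2$ provided the trace vanishes on $\partial\M$ — and $\widetilde\chi f(u)-\beta u$ vanishes on $\partial\M$ because $u$ does (Dirichlet condition, $u\in H^1_0$) and $f(0)=0$. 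This is exactly the reason $\sigma$ must be restricted to $(1/2,1]$ when $d=3$: the embedding into $L^\infty$ forces $1+\sigma>3/2$, while for $d\leq 2$ one may take $\sigma$ down towards $0$. For the Lipschitz estimate I would write $f(u)-f(\widetilde u) = \big(\int_0^1 f'(u+\theta(\widetilde u-u))\,d\theta\big)(u-\widetilde u)$ and use that $H^{1+\sigma}$ is an algebra (true since $1+\sigma>d/2$ for the given ranges), combined with Lemma \ref{lem:comp-reg} applied to $f'\in C^3$ to control $\norm{\int_0^1 f'(\cdots)\,d\theta}_{H^{1+\sigma}}$ uniformly on the ball, giving $\norm{f(u)-f(\widetilde u)}_{H^{1+\sigma}}\leq C(f,R_0)\norm{u-\widetilde u}_{H^{1+\sigma}}\leq C\norm{U-\widetilde U}_{X^\sigma}$. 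The first bound in \eqref{boundF} then follows from the second with $\widetilde U=0$ (using $f(0)=0$), as already noted in the text.

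For the holomorphic upgrade (Assumption \ref{assumFholom}), suppose now $f$ is real analytic. The claim is that $F$ extends holomorphically from the cylinder $\mathbb{B}_{4R_0,2\delta}(X^\sigma)$ into $X^{\sigma+\veps}_\mathbb{C}$ for some $\delta>0$, with uniform bounds. Since $f$ is real analytic and $f(0)=0$, it extends to a holomorphic function on a complex neighborhood of the real line; more precisely, for any compact $K\subset\R$ there is a strip $\{z : \textnormal{dist}(z,K)<\rho\}$ on which $f$ is holomorphic and bounded. Taking $K=[-C(R_0),C(R_0)]$ (the $L^\infty$-bound from the real ball) and $\delta>0$ small enough that $H^{1+\sigma}\hookrightarrow L^\infty$ pushes the cylinder $\mathbb{B}_{4R_0,2\delta}$ into the set of functions with range in that strip, one defines $f(u)$ for complex-valued $u$ by composition. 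The key technical input is that composition with such a holomorphic $f$ is itself a holomorphic map between the Sobolev spaces: this follows by checking (complex) Fréchet differentiability using the Cauchy-type estimates and Taylor expansion for holomorphic Banach-valued maps (Theorems \ref{appendix:prop:cauchy-form}, \ref{appendix:prop:cauchy-est}, \ref{appendix:thm:equiv-holom}) exactly as in the proof of Proposition \ref{propabstract} — the pointwise-in-$x$ holomorphy of $f$, combined with the algebra property of $H^{1+\sigma}$ and Lemma \ref{lem:comp-reg} applied to $f$ and its derivatives, gives the requisite uniform-in-the-ball bounds; alternatively one can expand $f(z)=\sum_k a_k z^k$ and sum the series in the Banach algebra $H^{1+\sigma}_\mathbb{C}$, its convergence being controlled by the $L^\infty$-radius. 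The estimates \eqref{boundFanalytic} are then the complexified versions of \eqref{boundF}, proved by the same interpolation-of-the-integral-formula argument, and multiplication by $\widetilde\chi$ and subtraction of $\beta u$ are linear hence trivially holomorphic.

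The main obstacle, and the only genuinely delicate point, is keeping track of the two bookkeeping issues simultaneously: (i) the Sobolev index $1+\sigma$ must stay in the window $(d/2,\,2]$ — below $2$ so that Lemma \ref{lem:comp-reg} with its $C^3$ hypothesis applies and the output lands in $H^{\sigma+\veps}$ with $\sigma+\veps\leq 2$, and above $d/2$ so that $H^{1+\sigma}\hookrightarrow L^\infty$ and is an algebra; and (ii) the boundary-condition matching between $H^s(\M)$ and $H^s_D(\M)$, which needs the vanishing trace of $\widetilde\chi f(u)-\beta u$ and is what truly forces $\sigma>1/2$ in dimension $3$. Everything else — the Lipschitz bounds, the holomorphic extension — is a routine transcription of arguments already deployed in Section \ref{sec:abstract-construction}, so I would keep those parts brief and concentrate the exposition on verifying these index constraints carefully.
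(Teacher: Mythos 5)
Your approach is essentially the paper's: Sobolev embedding into $L^\infty$, Lemma \ref{lem:comp-reg} for the composition, algebra property of $H^{1+\sigma}$ plus the integral mean-value form for the Lipschitz bound, boundary-condition matching via $f(0)=0$ and \cite{G:67}, and for the holomorphic upgrade a local holomorphic extension of $f$ together with the second-order Taylor remainder estimate to verify Fr\'echet $\mathbb{C}$-differentiability. Two small points are worth flagging. First, Lemma \ref{lem:comp-reg} as stated requires $g(0)=0$, so you cannot apply it directly to $f'$ (which need not vanish at the origin); the paper handles this by splitting $Df(v) = Df(0) + \bigl(Df(v)-Df(0)\bigr)$, applying the lemma to the second term, and observing that the constant $Df(0)$ is a harmless multiplier. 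Your phrasing ``Lemma \ref{lem:comp-reg} applied to $f'$'' glosses over this and should incorporate the subtraction of $f'(0)$. Second, your alternative suggestion of summing the power series $f(z)=\sum_k a_k z^k$ in the Banach algebra $H^{1+\sigma}_\mathbb{C}$ only works globally if $f$ is entire; for a general real-analytic $f$ the series has finite radius of convergence and would need local patching, which is messier than the paper's route of extending $f$ holomorphically to a strip by compactness and then using the Cauchy/Taylor remainder estimate (Theorems \ref{appendix:prop:cauchy-form} and \ref{appendix:prop:cauchy-est}) as you also describe. Aside from these adjustments the argument and the bookkeeping of the index constraints match the paper's.
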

\begin{proof} We will use the notation introduced in \eqref{defSob} for the Sobolev spaces. In what follows, $C$ will denote a generic constant, that may change from line to line, but we will specify its dependency on the parameters involved in the estimates. We consider $\beta=0$ for simplifying the statements since this term is easier to treat. Thanks to the choice of $\sigma$, we have the embedding $H_D^{1+\sigma}\hookrightarrow L^\infty$ with constant denoted $\kappa$. Since $f\in C^4$, we can apply Lemma \ref{lem:comp-reg}, so that for any $v\in H_D^{1+\sigma}$, we have that both $f(v)$ and $Df(v)-Df(0)$ are well defined in $H_D^{1+\sigma}$ and moreover $\norm{Df(v)-Df(0)}_{H_D^{1+\sigma}}\leq C\norm{v}_{H_D^{1+\sigma}}$, where $C$ depends on $Df$ and the $L^{\infty}$-norm of $v$. Therefore, using that $H^{1+\sigma}$ is an algebra, we get, for any $v$, $v'\in H_D^{1+\sigma}$,
\begin{align}
    \norm{f(v)-f(v')}_{H_D^{1+\sigma}}&=\Norm{\int_0^1 Df(v'+\tau(v-v'))(v-v')d\tau}_{H^{1+\sigma}}\\ 
    &\leq C\left(1+\Norm{\int_0^1 \big(Df(v'+\tau(v-v')-Df(0)\big)d\tau}_{H^{1+\sigma}}\right)\norm{v-v'}_{H_D^{1+\sigma}}\\
    \label{fineq1} &\leq C\left(1+\norm{v}_{H_D^{1+\sigma}}+\norm{v'}_{H_D^{1+\sigma}}\right)\norm{v-v'}_{H_D^{1+\sigma}},
\end{align}
where $C$ is a constant depending only on $Df$ and the $L^2$-norm of both $v$ and $v'$. Note that we have used $f(0)=0$ to get $\norm{f(v)-f(v')}_{H_D^{1+\sigma}}=\norm{f(v)-f(v')}_{H^{1+\sigma}}$, where we consider one norm of $H^{1+\sigma}$ defined in local coordinates so that Lemma \ref{lem:comp-reg} can be applied.

We now establish that $F$ satisfies Assumption \ref{assumF}. Let $V$, $V'\in \mathbb{B}_{4R_0}(X^\sigma)$. Observe that the first component of $V$ and $V'$, denoted by $v$ and $v'$, respectively, always stay smaller than $4\kappa R_0$ in $L^{\infty}$. This implies that $F$ is a well defined map on $\mathbb{B}_{4R_0}(X^\sigma)$. For $\veps\in (0, 1]$, we have $\sigma+\veps\leq 1+\sigma$ and thus the continuous embedding $H_D^{1+\sigma}\hookrightarrow H_D^{\sigma+\veps}$. Therefore, 
\begin{align}\label{fineq2}
    \norm{F(V)-F(V')}_{X^{\sigma+\veps}}=\norm{\widetilde{\chi}(f(v)-f(v'))}_{H_D^{\sigma+\veps}}&\leq C\norm{f(v)-f(v')}_{H_D^{1+\sigma}},
\end{align}
where $C$ is a constant depending on $\chi$. Combining inequalities \eqref{fineq1} and \eqref{fineq2}, we get
\begin{align}\label{ineq:Flip1}
    \norm{F(V)-F(V')}_{X^{\sigma+\veps}}&\leq C(1+8R_0)\norm{V-V'}_{X^\sigma},
\end{align}
where $C$ depends only $\widetilde{\chi}$, $f$ and $R_0$. Inequality \eqref{ineq:Flip1} proves the claim.

Now, we will show that $F$ satisfies Assumption \ref{assumFholom} if we assume that $f$ is real analytic. By compactness, there exists $\delta>0$ small such that $f$ extends holomorphically into the interior of the complex strip
\begin{align*}
    \mathbb{S}_{R_0, \delta}:=\{z_1+iz_2\in \mathbb{C}\ |\ |z_1|\leq 4\kappa R_0\ \text{and}\ |z_2|\leq 2\kappa \delta\}.
\end{align*}
Moreover, this extension is continuous up to the boundary and there exists a constant $M>0$ so that $|f(z)|\leq M$ for all $z\in \mathbb{S}_{R_0, \delta}$. We still denote by $f$ such an extension. A slight variant of Lemma \ref{lem:comp-reg} allows to consider the composition of smooth functions defined on domains of $\mathbb{C}$ by functions in $H^{1+\sigma}$, assuming that the composition makes sense. Since $\kappa$ is the constant of the embedding $H_D^{1+\sigma}\hookrightarrow L^\infty$, we see that $f(v)$ is well defined in $\mathbb{B}_{4R_0, 2\delta}(H^{1+\sigma}_D)$. In particular, $F$ is well defined in $\mathbb{B}_{4R_0, 2\delta}(X^\sigma)$ and satisfies the same estimate as  \eqref{fineq1}. 

Now, we will prove that this application $F$ is $\mathbb{C}$-differentiable. We write for $z\in \Int{\mathbb{S}_{R_0, \delta}}$ and $h\in\mathbb{C}$ small, $f(z+h)=f(z)+f'(z)h+h^2\int_0^1f^{(2)}(z+th)(1-t)dt$. So, for $v\in \Int{\mathbb{B}_{4R_0, 2\delta}(H^{1+\sigma}_D)} $, we can write 
\begin{align}\label{eq:f-exp}
    f(v(x)+r(x))=f(v(x))+f'(v(x))r(x)+r(x)^2\int_0^1f^{(2)}(v(x)+tr(x))(1-t)dt,
\end{align}
for $r\in \mathbb{B}_{\eta, \eta}(H^{1+\sigma}_D)$ (for some $\eta$ depending on $v$). Since $f$, $f'$ and $f^{(2)}$ are smooth functions on the range of $v+tr$ for any $t\in [0,1]$, as before, we get that all terms in \eqref{eq:f-exp} are well defined and bounded in $H^{1+\sigma}$. The Dirichlet boundary condition being satisfied for each of the three terms, we conclude that $f$ is $\mathbb{C}$-differentiable at $v$, as in Definition \ref{def-Kdifferentiable}, with derivative $r\mapsto f'(v)r$ which is continuous $\mathbb{C}$-linear from $H^{1+\sigma}_D+iH^{1+\sigma}_D$ into $H^{1+\sigma}_D+iH^{1+\sigma}_D$ and therefore into $H^{\sigma+\veps}_D+iH^{\sigma+\veps}_D$. Therefore, $f$ can be extended to admit a bounded holomorphic extension from $\mathbb{B}_{4R_0, 2\delta}(H_D^{1+\sigma})$ into $H^{\sigma+\veps}_D+iH^{\sigma+\veps}_D$ and the required extension holds for $F$ after composition by linear bounded functions.
\end{proof}

\subsubsection{Well posedness theory} For $d\leq 2$, and with assumption \eqref{hip:nonlinearity-hyp-1} for $1\leq p<+\infty$, the well-posedness theory in $H^1\times L^2$ can be performed with Sobolev embedding, so we omit the details. Assume that $\M$ is of dimension $d=3$. Later on, we will need to use some results related to the global existence and uniqueness of solutions of the semilinear wave equation in the subcritical case. We will briefly recall them.

A central argument to handle the subcritical case in dimension $d=3$ is the use of Strichartz estimates. They have a long history. We only quote the results that we use and refer to the references therein for historical background. For general domains with boundary, Strichartz estimates were proved by Burq, Lebeau and Planchon \cite{BLP:08} and later, the  range of admissible exponents was extended by Blair, Smith and Sogge \cite[Corollary 1.2]{BSS:09} leading to the following theorem.
\begin{theorem}{(Strichartz estimates)}\label{thmStrichartz}
    Let $T>0$ and $(q, r)$ satisfying
    \begin{align}\label{thm:strichartz-exponents}
        \dfrac{1}{q}+\dfrac{3}{r}=\dfrac{1}{2},\ \ \ q\in [7/2, +\infty].
    \end{align}
    There exists $C=C(T, q)>0$ such that for every $G\in L^1([0, T], L^2(\M))$ and every $(u_0, u_1)\in X$, the mild solution $u$ of
    \begin{align*}
        \left\{\begin{array}{lc}
            \partial_t^2 u-\Delta_g u=G(t) &  \\
            u_{|_{\partial\M}}=0 & \\
            (u, \partial_t u)(0)=(u_0, u_1) & 
        \end{array}\right.
    \end{align*}
    satisfies the estimate
    \begin{align*}
        \norm{u}_{L^q([0, T], L^r(\M))}\leq C\big(\norm{(u_0, u_1)}_{H^1(\M)\times L^2(\M)}+\norm{G}_{L^1([0, T], L^2(\M))}\big).
    \end{align*}
\end{theorem}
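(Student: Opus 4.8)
The plan is to split off the homogeneous problem, for which the estimate is a deep but known result, and then deduce the inhomogeneous estimate from it by Duhamel's formula. For the homogeneous problem ($G\equiv 0$), the bound $\norm{u}_{L^q([0,T],L^r(\M))}\leq C\norm{(u_0,u_1)}_{H^1(\M)\times L^2(\M)}$ in the range \eqref{thm:strichartz-exponents} is precisely the local-in-time Strichartz estimate for the wave equation on a manifold with boundary: it was established by Burq--Lebeau--Planchon \cite{BLP:08} and, in the enlarged range of admissible exponents stated here, by Blair--Smith--Sogge \cite[Corollary~1.2]{BSS:09}, and I would simply invoke this result. (The passage from a short time interval, on which those authors work, to an arbitrary $[0,T]$ is done by covering $[0,T]$ with finitely many short intervals, which produces the dependence $C=C(T,q)$.) The endpoint $q=\infty$, $r=6$ is anyway elementary and does not need \cite{BLP:08,BSS:09}: by conservation of the energy $\norm{(u,\partial_t u)(t)}_{H^1_0\times L^2}$ for the free equation and the Sobolev embedding $H^1(\M)\hookrightarrow L^6(\M)$, one gets $\norm{u}_{L^\infty([0,T],L^6(\M))}\leq C\norm{(u_0,u_1)}_{H^1\times L^2}$, and for the inhomogeneous equation one adds the term $\int_0^t\norm{G(s)}_{L^2}\,ds$ coming from the energy inequality.

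For $q<\infty$, I would write Duhamel's formula
\begin{align*}
u(t)=\cos(t\sqrt{-\Delta_g})u_0+\frac{\sin(t\sqrt{-\Delta_g})}{\sqrt{-\Delta_g}}u_1+\int_0^t \frac{\sin((t-s)\sqrt{-\Delta_g})}{\sqrt{-\Delta_g}}G(s)\,ds.
\end{align*}
The first two terms are controlled in $L^q([0,T],L^r(\M))$ by $C\norm{(u_0,u_1)}_{H^1\times L^2}$ thanks to the homogeneous estimate. For the third term, fix $s\in[0,T]$: the function $t\mapsto \frac{\sin((t-s)\sqrt{-\Delta_g})}{\sqrt{-\Delta_g}}G(s)$ solves the free wave equation with Cauchy data $\big(0,G(s)\big)$ prescribed at time $t=s$, and since $\norm{(-\Delta_g)^{-1/2}G(s)}_{H^1_0}\simeq\norm{G(s)}_{L^2}$, the homogeneous estimate together with its time-translation invariance gives $\nor{\tfrac{\sin((\cdot-s)\sqrt{-\Delta_g})}{\sqrt{-\Delta_g}}G(s)}{L^q([0,T],L^r(\M))}\leq C(T,q)\norm{G(s)}_{L^2(\M)}$, uniformly in $s$. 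Minkowski's integral inequality in $s$ then yields, for $v(t):=\int_0^t \tfrac{\sin((t-s)\sqrt{-\Delta_g})}{\sqrt{-\Delta_g}}G(s)\,ds$, the bound
\begin{align*}
\norm{v}_{L^q([0,T],L^r(\M))}\leq \int_0^T C(T,q)\norm{G(s)}_{L^2(\M)}\,ds = C\norm{G}_{L^1([0,T],L^2(\M))}.
\end{align*}
Here the retarded cutoff $\mathbbm{1}_{\{s<t\}}$ is harmless and, because the source is measured in $L^1_t L^2_x$, Minkowski applies directly, so no Christ--Kiselev type argument is needed. (If $-\Delta_g$ is replaced by $-\Delta_g+\beta$ as in Section~\ref{s:not_wave}, nothing changes, since $\beta$ is a bounded zeroth-order perturbation enjoying the same local Strichartz estimates; alternatively one absorbs $\beta u$ into $G$.)

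The genuine difficulty is entirely inside the homogeneous estimate, which is why I would not attempt to reprove it here: on a manifold with boundary the fixed-time dispersive and spectral-cluster estimates suffer a loss of derivatives compared with $\R^d$, due to grazing and gliding rays and to boundary reflections, and it is exactly this loss that restricts the admissible range to $q\geq 7/2$ rather than the full scale-invariant wave-admissible range. Beyond citing \cite{BLP:08,BSS:09} and performing the Duhamel/Minkowski reduction, everything in the argument is routine.
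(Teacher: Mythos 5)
Your proposal is correct and matches the paper's treatment: the paper states this theorem as a citation to Burq--Lebeau--Planchon \cite{BLP:08} and Blair--Smith--Sogge \cite[Corollary 1.2]{BSS:09} without reproducing a proof, exactly as you do. The only content you add is the standard Duhamel-plus-Minkowski reduction (with the observation that an $L^1_tL^2_x$ source avoids any Christ--Kiselev issue), which is indeed the routine step needed to pass from the homogeneous estimate stated in those references to the inhomogeneous form recorded in the theorem; this is implicit in the paper and your spelling it out is harmless and accurate.
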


Without loss of generality, it can be assumed that $p\in (3, 5)$ for the bound on $f$ in \eqref{hip:nonlinearity-hyp-1}. The exponent $p=3$ is the exponent where Strichartz are no longer necessary and so the cases $p\in [1, 3]$ can be managed using appropriate Sobolev embeddings. Observe that it is enough to consider the pair of exponents $(q, r)=(\tfrac{2p}{p-3}, 2p)$, since they give $u^p\in L^{\frac{2}{p-3}}([0, T], L^2(\M))\subset L^1([0, T], L^2(\M))$ because $1<\tfrac{2}{p-3}<+\infty$. Once the Strichartz estimates are obtained, the global well-posedness is classical for subcritical nonlinearities. It first appeared in \cite{BLP:08} for general bounded domains. We will need the following well-posedness result. 
\begin{theorem}{(Cauchy problem)}\label{thm:cauchy-problem}
    Let $f$ satisfies \eqref{hip:nonlinearity-hyp-1}  and \eqref{defdefocusing}. For any $T>0$ and for any $(u_0, u_1)\in X=H_0^1(\M)\times L^2(\M)$ there exists a unique solution $U(t)=(u(t), \partial_t u(t))\in C^0([0, T], X)$ with finite Strichartz norm of \eqref{eq:nlw-1-cauchy}. Moreover, for any $E_0\geq 0$ and $(q, r)$ satisfying \eqref{thm:strichartz-exponents}, there exists a constant $C>0$ such that, if $U=(u, \partial_t u)$ is solution of \eqref{eq:nlw-1-cauchy} with $E(u(0))\leq E_0$, then
    \begin{align*}
        \norm{u}_{L^q([0, T], L^r(\M))}\leq C\norm{(u_0, u_1)}_{H^1(\M)\times L^2(\M)}.
    \end{align*}
    In addition, there exists a constant $C>0$ such that, if $U$ and $\Tilde{U}$ are two solutions of \eqref{eq:nlw-1} with $E(u(0))\leq E_0$ and $E(v(0))\leq E_0$, then
    \begin{align*}
        \sup_{[-T, T]}\norm{(u, \partial_t u)(t)-(\Tilde{u}, \partial_t \Tilde{u})(t)}_X\leq C\norm{(u, \partial_t u)(0)-(\Tilde{u}, \partial_t \Tilde{u})(0)}_X.
    \end{align*}
\end{theorem}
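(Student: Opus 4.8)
The plan is to run the classical Strichartz-based contraction scheme for local existence, globalize via the conserved (and, by \eqref{defdefocusing}, coercive) energy, and then deduce uniqueness and the Lipschitz estimate from the same nonlinear estimates on short time intervals. I would treat the genuinely subcritical range $p\in(3,5)$, working with the single admissible pair $(q,r)=(\tfrac{2p}{p-3},2p)$ (the range $p\in[1,3]$ is handled with Sobolev embeddings alone, with no auxiliary Strichartz space). Two elementary bounds, both using $f(0)=0$ together with \eqref{hip:nonlinearity-hyp-1}, will be used repeatedly: $|f(s)|\leq C(|s|+|s|^{p})$ and $|f(s)-f(\tilde s)|\leq C(1+|s|^{p-1}+|\tilde s|^{p-1})|s-\tilde s|$, as well as $|F(s)|\leq C(|s|^2+|s|^{p+1})$, where $F(s)=\int_0^s f$.

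First I would establish local well-posedness on an interval $[0,T_0]$ with $T_0=T_0(\norm{U_0}_X)$. On $C^0([0,T_0],X)\cap L^q([0,T_0],L^r)$ the Duhamel map $\Phi(U)(t)=e^{tA}U_0-\int_0^te^{(t-s)A}(0,f(u(s)))\,ds$ is a contraction on a suitable ball once $T_0$ is small: Theorem \ref{thmStrichartz} combined with the nonlinear bounds and H\"older in time gives $\nor{f(u)}{L^1([0,T_0],L^2)}\leq CT_0\norm{U}_{C^0([0,T_0],X)}+CT_0^{(5-p)/2}\nor{u}{L^q L^r}^{p}$ and a difference estimate with the same gain $T_0^{(5-p)/2}$, the positive power of $T_0$ being exactly what $p<5$ supplies. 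This yields existence, uniqueness and Lipschitz dependence on the data on $[0,T_0(\norm{U_0}_X)]$, together with persistence of regularity: if $U_0\in D(A)$ then $U\in C^0([0,T_0],D(A))\cap C^1([0,T_0],X)$.

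Next I would globalize. For $D(A)$-data the solution is regular enough to differentiate $E(u(t))=\tfrac12\nor{\partial_t u}{L^2}^2+\tfrac12\nor{\nabla u}{L^2}^2+\int_\M F(u)$ and obtain $\tfrac{d}{dt}E(u(t))=0$, the boundary term vanishing by the Dirichlet condition. Condition \eqref{defdefocusing} gives $F\geq0$ (resp. $F(s)\geq\tfrac\gamma2 s^2$ if $\partial\M=\emptyset$), hence $\norm{U(t)}_X^2\leq c\,E(u(t))=c\,E(u(0))$; together with $E(u(0))\leq C(1+\norm{U_0}_X^{p-1})\norm{U_0}_X^2$ (from the bound on $F$ and $H^1\hookrightarrow L^{p+1}$, valid since $p+1\leq6$) this makes $D(A)$-solutions global with $\sup_t\norm{U(t)}_X$ controlled by $\norm{U_0}_X$. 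For general $U_0\in X$, approximate by $D(A)$-data $U_0^n\to U_0$; as $E$ is continuous on $X$ the global solutions $U^n$ satisfy $\sup_{n,t}\norm{U^n(t)}_X\leq M$, so the local theory with uniform radius $M$ provides a fixed step $T_0(M)$ on which $(U^n)$ is Cauchy in $C^0 X\cap L^q L^r$; iterating with this fixed step constructs the global solution $U$ for the data $U_0$, which inherits energy conservation and $\sup_t\norm{U(t)}_X\leq C(E_0)\norm{U_0}_X$ once $E(u(0))\leq E_0$. Summing the short-interval Strichartz bounds over the $\lceil T/T_0(M)\rceil$ steps (a number depending only on $E_0$ and $T$) yields $\nor{u}{L^q([0,T],L^r)}\leq C(E_0,T)\norm{U_0}_X$, which is the second assertion.

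Finally, uniqueness in $C^0([0,T],X)$ among solutions with finite Strichartz norm, and the Lipschitz estimate on $[-T,T]$ (the equation being time-reversible), both come from the difference estimate: writing $w=u-\tilde u$, on a subinterval $I$ one has $\nor{f(u)-f(\tilde u)}{L^1(I,L^2)}\leq C|I|\,\nor{w}{L^\infty(I,L^2)}+C|I|^{(5-p)/2}\big(\nor{u}{L^q(I,L^r)}^{p-1}+\nor{\tilde u}{L^q(I,L^r)}^{p-1}\big)\nor{w}{L^q(I,L^r)}$; partitioning $[-T,T]$ into finitely many subintervals — finitely many because both Strichartz norms are already bounded by $C(E_0,T)$ — on each of which the prefactor is $\leq\tfrac12$, then absorbing and iterating, gives $\sup_{[-T,T]}\norm{(u-\tilde u)(t)}_X\leq C(E_0,T)\norm{(u-\tilde u)(0)}_X$, and uniqueness is the case of equal data. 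I expect the main obstacle to be not any single estimate but the bookkeeping that makes the output constants \emph{linear} in $\norm{U_0}_X$ rather than merely bounded: this uses the inequality $a^2+a^{p+1}\leq(1+R^{p-1})a^2$ for $a\leq R$, and the fact that $\norm{U_0}_X$ is itself controlled by $E_0$, so that the number of time-steps appearing in every iteration depends only on $E_0$ and $T$.
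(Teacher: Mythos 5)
The paper does not prove Theorem \ref{thm:cauchy-problem}: immediately after the statement it refers the reader to \cite[Theorem 2.2]{JL13}, so there is no in-paper argument to compare against. Your Strichartz-plus-energy contraction scheme (local contraction in $C^0([0,T_0],X)\cap L^q L^r$ using Theorem \ref{thmStrichartz} with $(q,r)=(\tfrac{2p}{p-3},2p)$, globalization via energy conservation and the defocusing sign \eqref{defdefocusing}, Lipschitz stability via a finite partition of $[-T,T]$) is the classical route and is essentially what \cite{JL13} carries out, building on \cite{BLP:08} and \cite{DLZ03}; the exponents, the $T_0^{(5-p)/2}$ gain, and the regularization argument that $D(A)$-solutions conserve energy and pass to the limit are all correct. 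The one place where you gesture rather than argue is the bookkeeping that makes the Strichartz bound \emph{linear} in $\norm{U_0}_X$: the energy-level a priori bound only gives $\sup_t \norm{U(t)}_X \leq C(E_0)$, and to turn that into $\norm{u}_{L^q([0,T],L^r)} \leq C(E_0,T)\norm{U_0}_X$ you need a second pass through the Duhamel inequality on each subinterval $I_j$. There one first establishes $\norm{u}_{L^q(I_j,L^r)}\leq C(E_0)$ from the local theory and the energy bound, then writes $\norm{u}_{L^q(I_j)}^p\leq C(E_0)^{p-1}\norm{u}_{L^q(I_j)}$, absorbs (for $|I_j|$ small depending on $E_0$) to obtain $\norm{u}_{L^q(I_j)}+\norm{U}_{C^0(I_j,X)}\leq 2C\norm{U(t_j)}_X$, and only then iterates over the $N(E_0,T)$ subintervals. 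Your inequality $a^2+a^{p+1}\leq(1+R^{p-1})a^2$ controls $E(u_0)$ by $\norm{U_0}_X^2$, which is needed, but it is a different piece of the argument from this second-pass linearization, which is what actually produces the linear dependence and should be written out.
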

 The exact previous statement result including Lipschitz stability can be found in \cite[Theorem 2.2]{JL13}.

\subsubsection{Observability of linear waves at higher regularity} A crucial tool in the present section will be an observability inequality of linear waves, but at an appropriate regularity. We start by recalling the following classical observability result from Bardos-Lebeau-Rauch \cite{BLR:88} in the case $\partial\M\neq\emptyset$.

\begin{theorem}{\cite{BLR:88}}\label{thm:blr-wave}
    Assume that $(\omega, T)$ satisfies \ref{assumGCC}. Then, there exists $C>0$ such that for any $(v_0, v_1)\in H_0^1(\M)\times L^2(\M)$ and associated solution $v$ of 
    \begin{align}\label{eq:linear-waves}
    \left\{\begin{array}{rll}
        \partial_t^2 v-\Delta_g v=&0  &\ (t, x)\in [0, T]\times\M,\\
        v_{|_{\partial\M}}=&0  &\ (t, x)\in [0, T]\times\partial\M,\\
        (v, \partial_t v)(0)=&(v_0, v_1) &\ x\in \M,
    \end{array}\right.
\end{align}
    we have
    \begin{align}\label{thm:ineq:linear-waves-obs}
        C\norm{(v_0, v_1)}_{H_0^1(\M)\times L^2(\M)}^2\leq \int_0^T \norm{\mathbbm{1}_\omega\partial_t v(t)}_{L^2(\M)}^2 dt.
    \end{align}
\end{theorem}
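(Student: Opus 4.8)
The plan is to argue by contradiction using microlocal defect measures, following the classical strategy of Bardos--Lebeau--Rauch and its later streamlinings by Burq--G\'erard and Lebeau; recall that throughout we are in the case $\partial\M\neq\emptyset$. Suppose \eqref{thm:ineq:linear-waves-obs} fails. Then there is a sequence of finite-energy solutions $v_n$ of \eqref{eq:linear-waves} with $\norm{(v_{0,n},v_{1,n})}{H_0^1\times L^2}=1$ and $\int_0^T\norm{\mathbbm{1}_\omega\partial_t v_n(t)}{L^2}^2\,dt\to 0$. By conservation of energy, $(v_n)$ is bounded in $C^0([0,T],H_0^1)\cap C^1([0,T],L^2)$, so after extraction $v_n\wto v$ in this topology, where $v$ again solves \eqref{eq:linear-waves}, and passing to the limit in the observation term gives $\partial_t v\equiv 0$ on $(0,T)\times\omega$. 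Write $v_n=v+w_n$ with $w_n\wto 0$. It suffices to show (i) $w_n\to 0$ strongly in the energy space and (ii) $v\equiv 0$, since together these contradict $\norm{(v_{0,n},v_{1,n})}{H_0^1\times L^2}=1$.

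For (i), introduce the microlocal defect measure $\mu$ on $T^*\big((0,T)\times\mathrm{Int}(\M)\big)$, extended up to the boundary in the sense of Melrose--Sj\"ostrand (as in Burq's treatment of the boundary case), associated with a subsequence of $\big(\nabla_{t,x}w_n\big)$. The standard properties are: $\mu\geq 0$; $\mu$ is carried by the characteristic set $\{\tau^2=|\xi|_g^2\}$; on this set $\tau$ is bounded away from $0$ on the unit cosphere bundle, so the measure built from $\partial_t w_n$ alone dominates $c\,\mu$ for some $c>0$, whence the hypothesis $\mathbbm{1}_\omega\partial_t w_n\to 0$ in $L^2\big((0,T)\times\M\big)$ forces $\mu$ to vanish over $(0,T)\times\omega$; and, crucially, $\mu$ is invariant under the generalized (broken/gliding) bicharacteristic flow --- this is exactly where the assumption that the Hamiltonian vector field has no contact of infinite order with $\partial\M$ is used, so that the flow is well defined and the propagation theorem applies. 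By \ref{assumGCC} at time $T$, through every point of $S^*\M$ there passes a generalized geodesic meeting $\omega$ at a time in $(0,T)$; propagating the vanishing of $\mu$ along it yields $\mu\equiv 0$ over $(0,T)\times\mathrm{Int}(\M)$. Since $\mu$ vanishes up to the boundary, no energy of $w_n$ concentrates near $\partial\M$ either, so $\nabla_{t,x}w_n\to 0$ in $L^2$ over $(0,T)\times\M$; combined with conservation of energy this gives $\norm{(w_n(t),\partial_t w_n(t))}{H_0^1\times L^2}\to 0$, which is (i).

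For (ii), the weak limit $v$ is a finite-energy solution of the wave equation on $(0,T)\times\M$ with Dirichlet condition and $\partial_t v\equiv 0$ on $(0,T)\times\omega$; equivalently, $\partial_t v$ itself solves \eqref{eq:linear-waves} and vanishes on $(0,T)\times\omega$. Since \ref{assumGCC} at time $T$ forces $T$ to exceed the geometric unique-continuation time, the unique continuation theorem for the wave operator across non-characteristic hypersurfaces (Tataru--Robbiano--Zuily--H\"ormander; cf.\ the statement \ref{thm:qucp} used later) applies and gives $\partial_t v\equiv 0$ on $(0,T)\times\M$. Then $v$ is $t$-independent and $\Delta_g v=0$ with $v|_{\partial\M}=0$, so $v\equiv 0$ because $\partial\M\neq\emptyset$. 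This closes the contradiction and proves \eqref{thm:ineq:linear-waves-obs}.

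The main obstacle --- and the part one genuinely cannot shortcut --- is the propagation of the defect measure up to and along the boundary: rays that graze $\partial\M$ (glancing points) and those that slide along it (gliding/diffractive points) require the full microlocal boundary analysis of Melrose--Sj\"ostrand, which is precisely why the no-infinite-order-contact hypothesis is imposed. A secondary but genuinely necessary input is the linear unique continuation statement used to eliminate the weak limit $v$; for merely smooth (non-analytic) metrics this is itself a deep theorem, although here the fact that $T$ lies in the \ref{assumGCC} range makes it directly available.
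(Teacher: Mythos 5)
The paper does not prove Theorem \ref{thm:blr-wave}: the bracketed citation \cite{BLR:88} in the theorem header signals that it is imported from Bardos--Lebeau--Rauch and used as a black box. Your sketch supplies the standard modern proof — microlocal defect measures on the Melrose compressed cotangent bundle (in the style of Lebeau and Burq--G\'erard) combined with a compactness--uniqueness step — and the structure is sound: the contradiction scheme, the support/invariance/localization properties of $\mu$, the use of GCC plus propagation of the measure up to the boundary, and the elimination of the weak limit $v$ by unique continuation are all the right ingredients. Two points deserve more care than you give them. (1) When propagating the vanishing of $\mu$ from $(0,T)\times\omega$, the bicharacteristic arc used must remain inside the time slab $(0,T)$; this is not automatic because a ray issued from a time $t_0$ close to an endpoint may exit the slab before reaching $\omega$. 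One fixes this with the compactness/upper-semicontinuity fact that GCC at the open-interval time $T$ already holds at some $T'<T$ (precisely the fact used in Lemma \ref{lemma:gcc-smaller-subset}), together with the constancy in $t$ of the slice masses $\mu_t$, so that it suffices to annihilate $\mu_{t_0}$ for a single well-chosen $t_0$ away from the endpoints. (2) In step (ii) you invoke Tataru--Robbiano--Zuily--H\"ormander (Theorem \ref{thm:qucp}), which requires $T>2\mathcal{L}(\M,\omega)$; this does follow from GCC at $T$ via $T>T_{GCC}\geq T_{UC}$ (again by the compactness argument), so the step is valid, but it imports heavy machinery the original proof avoids. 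The lighter and historically standard route is to show that the invisible subspace $\mathcal{N}_T$ of solutions with vanishing observation is finite-dimensional and $A$-invariant, extract an eigenvector, and kill it by elliptic unique continuation of Dirichlet eigenfunctions — exactly what the paper does in the plate analogue, Proposition \ref{prop:nt-plates}. Either route closes the argument; yours is correct.
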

We are now going to give several variants of this observability inequality. It is certainly folklore in the field that all these variations might be obtained from \ref{assumGCC}, but we did not find the precise statements that we need. At the completion of this work, we realized that Perrin \cite{P:23} already considered similar changes of regularity, especially for boundary observability, but it does not seem to be exactly the required statement, even if very close. We decided to keep our short proofs starting from results explicitly written in the literature.
From now on we assume that $(\omega, T)$ satisfies the \ref{assumGCC} and $b_\omega$ is a smooth function defined on $\M$ so that 
\begin{align}
\label{lowerb}
    b_{\omega}(x)\geq 1 \textnormal{ for }x\in \omega.
\end{align}
Therefore, \eqref{thm:ineq:linear-waves-obs} implies
\begin{align}\label{ineq:linear-waves-obs-l2-weak}
        \norm{(v_0, v_1)}_{H_0^1(\M)\times L^2(\M)}^2\leq C_2\int_0^T \norm{b_\omega\partial_t v(t)}_{L^2(\M)}^2 dt,
\end{align}
for any $(z_0, z_1)\in H_0^1(\M)\times L^2(\M)$.

Let $\beta\geq 0$ and let us consider $z$ solution of
\begin{align}\label{eq:linwavesbeta}
    \left\{\begin{array}{rll}
        \partial_t^2 z-\Delta_g z+\beta z=&0  &\ (t, x)\in [0, T]\times\M,\\
        z_{|_{\partial\M}}=&0            &\ (t, x)\in [0, T]\times\partial\M\ \text{ if } \partial\M\neq\emptyset,\\
        (z, \partial_t z)(0)=&(z_0, z_1) &\ x\in \M,
    \end{array}\right.
\end{align}  
with $(z_0, z_1)\in H_0^1(\M)\times L^2(\M)$. The associated energy to this system is given by
\begin{align}
    \E(z, \partial_t z)=\dfrac{1}{2}\int_\M \big(|\partial_t z|^2+|\nabla z|^2+\beta|z|^2\big)dx,
\end{align}
which is known to be conserved through time, namely, that $\E(z(t), \partial_t z(t))=\E(z_0, z_1)$ for all $t\in (0, T)$. In the case $\partial\M=\emptyset$, from the work of Rauch-Taylor \cite{RT74}, it is understood that the observability inequality \eqref{eq:linear-waves} still holds for \eqref{eq:linwavesbeta} under the \ref{assumGCC}. Yet, they proved such an inequality for a damped wave equation. While the same idea applies in our case, for the convenience of the reader, we will briefly outline how this inequality can be obtained for system \eqref{eq:linwavesbeta}, following the work of Laurent-L\'eautaud \cite{LL16}. In particular, the observation will be made through a smooth function $b_\omega$ supported on $\omega$. In what follows, we assume that $\beta>0$ if $\partial \M=\emptyset$ or $\beta\geq 0$ otherwise.

\begin{proposition}\label{thm:ineq:linear-waves-obs-beta}
    Assume that $(\omega, T)$ satisfies \ref{assumGCC} and $b_{\omega}$ satisfies \eqref{lowerb}. Then, there exists $C>0$ such that for any $(z_0, z_1)\in H_0^1(\M)\times L^2(\M)$ and associated solution $z$ of \eqref{eq:linwavesbeta}, we have
    \begin{align}\label{thm:ineq:linear-waves-obs-2}
        C\norm{(z_0, z_1)}_{H_0^1(\M)\times L^2(\M)}^2\leq \int_0^T \norm{b_\omega\partial_t z(t)}_{L^2(\M)}^2 dt.
    \end{align}
\end{proposition}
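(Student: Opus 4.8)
The plan is to reduce the statement to the already-established case $\beta=0$ (Theorem~\ref{thm:blr-wave}, used in the form \eqref{ineq:linear-waves-obs-l2-weak}) by first proving a \emph{relaxed} observability inequality and then removing the lower-order remainder by a compactness--uniqueness argument, in the spirit of Bardos--Lebeau--Rauch and Rauch--Taylor. For the relaxed inequality, given $z$ solving \eqref{eq:linwavesbeta} I would let $v$ be the solution of the free wave equation \eqref{eq:linear-waves} with the same data $(z_0,z_1)$ and set $w=z-v$, which solves $\partial_t^2 w-\Delta_g w=-\beta z$ with zero Cauchy data and the Dirichlet condition; a standard energy estimate gives $\norm{(w,\partial_t w)}_{C^0([0,T],H_0^1\times L^2)}\leq C_T\norm{z}_{L^2((0,T)\times\M)}$. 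Applying \eqref{ineq:linear-waves-obs-l2-weak} to $v$ and writing $\partial_t v=\partial_t z-\partial_t w$ then yields
\begin{align}\label{plan:relaxedbeta}
    \norm{(z_0,z_1)}_{H_0^1\times L^2}^2\leq 2C_2\int_0^T\norm{b_\omega\partial_t z(t)}_{L^2}^2\,dt+C\norm{z}_{L^2((0,T)\times\M)}^2 .
\end{align}

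Next I would run a compactness--uniqueness argument to absorb the last term. If \eqref{thm:ineq:linear-waves-obs-2} failed, there would be a sequence of data $(z_0^n,z_1^n)$ with $\norm{(z_0^n,z_1^n)}_{H_0^1\times L^2}=1$ and $\int_0^T\norm{b_\omega\partial_t z^n}_{L^2}^2\,dt\to0$, and by \eqref{plan:relaxedbeta} the norms $\norm{z^n}_{L^2((0,T)\times\M)}$ would stay bounded below by a fixed positive constant. The sequence $(z^n)$ is bounded in $C^0([0,T],H_0^1)\cap C^1([0,T],L^2)$; since $-\Delta_g$ has compact resolvent, after extracting a subsequence one has $(z_0^n,z_1^n)\rightharpoonup(z_0,z_1)$ in $H_0^1\times L^2$ and (from the explicit solution formula, or Aubin--Lions) $z^n\to z$ in $C^0([0,T],L^2)$, where $z$ is the solution of \eqref{eq:linwavesbeta} carrying the limit data. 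In particular $z\not\equiv0$, since its $L^2((0,T)\times\M)$-norm is positive, while passing to the limit and using weak lower semicontinuity of the $L^2$-norm gives $b_\omega\partial_t z=0$, i.e.\ $\partial_t z=0$ on $(0,T)\times\omega$ by \eqref{lowerb}.

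The remaining, and hardest, point is the unique continuation: a solution $z$ of \eqref{eq:linwavesbeta} with $\partial_t z=0$ on $(0,T)\times\omega$ must vanish identically, contradicting the previous paragraph. For this I would differentiate in time: $y:=\partial_t z$ is, in the weak sense, a solution of $\partial_t^2 y-\Delta_g y+\beta y=0$ with the Dirichlet boundary condition and $y=0$ on $(0,T)\times\omega$, now one Sobolev level below the energy space — a range at which unique continuation still applies. Since $(\omega,T)$ satisfies \ref{assumGCC} and the operator $\partial_t^2-\Delta_g+\beta$ has coefficients that are analytic — indeed constant — in $t$, the unique continuation property for waves (Theorem~\ref{thm:qucp}, or the classical Bardos--Lebeau--Rauch theorem) forces $y\equiv0$. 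Hence $z$ is independent of $t$ and $-\Delta_g z+\beta z=0$ with $z\in H_0^1$; testing against $z$ gives $\norm{\nabla z}_{L^2}^2+\beta\norm{z}_{L^2}^2=0$, so $z\equiv0$ — using $\beta>0$ when $\partial\M=\emptyset$, and the connectedness of $\M$ together with the Dirichlet condition when $\beta=0$. The only genuine obstacle is this unique continuation input: it requires a statement for the wave operator carrying a smooth (here constant) zeroth-order term and with a \emph{velocity} observation on $(0,T)\times\omega$, valid under the sole hypothesis \ref{assumGCC}, which is exactly where the (here trivial) analyticity in $t$ enters; the perturbation and compactness steps are routine, and the same scheme would equally handle smooth $x$-dependent or bounded time-independent coefficients in place of $\beta$.
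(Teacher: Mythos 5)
Your perturbation step for $\partial\M\neq\emptyset$ is exactly the paper's: let $v$ solve the $\beta=0$ wave equation with the same data, set $w=z-v$, use energy estimates plus the Bardos--Lebeau--Rauch observability \eqref{ineq:linear-waves-obs-l2-weak} for $v$ to get a relaxed inequality, then run compactness--uniqueness. So for manifolds with boundary the scheme is correct and matches the reference.

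There are, however, two genuine gaps.

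First, you do not treat the case $\partial\M=\emptyset$ (with $\beta>0$), and your reduction to $\beta=0$ fails there. On a closed manifold the free wave equation $\partial_t^2v-\Delta_g v=0$ admits the solutions $v(t,x)=a+bt$ with $a$, $b$ constants, whose observation $b_\omega\partial_t v=b_\omega\,b$ vanishes when $b=0$ although $(v_0,v_1)=(a,0)\neq(0,0)$. Consequently the $\beta=0$ observability estimate \eqref{ineq:linear-waves-obs-l2-weak}, which you apply to $v$, is simply false on a closed manifold, so the comparison $w=z-v$ does not yield \eqref{plan:relaxedbeta}. This is precisely why the paper splits into two cases and, for $\partial\M=\emptyset$, establishes the weak observability directly on the $\beta$-equation by microlocal arguments (the defect-measure propagation result from Laurent--L\'eautaud together with G\aa rding's inequality), rather than by perturbing off $\beta=0$. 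One could alternatively quotient out constants and run your comparison modulo the kernel, but as written the argument does not go through.

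Second, your unique continuation step is not supported at the regularity you need. You apply Theorem~\ref{thm:qucp} to $y=\partial_t z$, but $y$ has Cauchy data $(z_1,(\Delta_g-\beta)z_0)\in L^2\times H^{-1}$, one Sobolev level below the $H_0^1\times L^2$ hypothesis of the theorem. You acknowledge this but do not close the gap. It can indeed be closed by mollifying in time (the coefficients are $t$-independent so convolution with $\rho_\epsilon(t)$ preserves the equation and the vanishing on a slightly shorter interval, and GCC persists for $T-2\epsilon$ by compactness), but a cleaner and more elementary route — and essentially what the paper invokes by pointing to the plate argument — is the standard $N_T$ reduction: show that the unobservable space $N_T$ is finite-dimensional and stable under the generator, extract an eigenvector, and conclude from unique continuation of Dirichlet eigenfunctions (\cite[Proposition 5.2]{LLR:book1}). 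That avoids Tataru's theorem entirely and, in particular, sidesteps the regularity issue. Relatedly, ``the classical Bardos--Lebeau--Rauch theorem'' is not a unique continuation statement (BLR assumes unique continuation as an input); dropping that reference would be wise.
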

\begin{proof}
Let us first assume that $\partial\M=\emptyset$ and $\beta>0$. In order to observe the component $\partial_t z$, with a slight modification of \cite[Proposition 2.2]{LL16} along with Garding's inequality \cite[Theorem A.9]{LL16} (see also \cite[Theorem 1.3.]{CLW:20} for an equivalence with ODE along bicharacteristics, also valid for systems), we obtain the weak observability inequality
    \begin{align*}
        C\E(z_0, z_1)\leq \int_0^T \norm{b_\omega \partial_t z}_{L^2(\M)}^2dt+\norm{(z_0, z_1)}_{H^{1/2}\times H^{-1/2}(\M)}^2.
    \end{align*}
    Now, by employing a nowadays classical compactness-uniqueness argument (as in Proposition \ref{prop:obs-schr-plates} below), we get rid of the compact term in the right-hand side of the above inequality, which leads us to \eqref{thm:ineq:linear-waves-obs-2}.
    
Let us now treat the case $\partial\M\neq\emptyset$, with $\beta\geq 0$. It follows the same argument as \cite[Theorem 1.5]{LL16}. Let us consider $v$ solution of \eqref{eq:linear-waves}, but with the same initial data as $z$, namely, $(v, \partial_t v)(0)=(z_0, z_1)$. Therefore, $w=v-z$ solves
    \begin{align*}
    \left\{\begin{array}{rl}
        \partial_t^2 w-\Delta_g w=&\beta z  \ \\
        w_{|_{\partial\M}}=&0           \ \\
        (w, \partial_t w)(0)=&(0, 0). 
    \end{array}\right.
    \end{align*}  
    Now, energy estimates for the above equation lead us to
    \begin{align*}
        \int_0^T \norm{b_\omega \partial w}_{L^2(\M)}^2dt\leq C\norm{\partial_t w}_{L^\infty([0, T], L^2(\M))}^2\leq C\norm{z}_{L^2([0, T], L^2(\M))}^2.
    \end{align*}
    By applying the observability estimate to $v$, we get
    \begin{align*}
        C_1\norm{(z_0, z_1)}_{H_0^1(\M)\times L^2(\M)}^2\leq 2\int_0^T \norm{b_\omega \partial_t z(t)}_{L^2(\M)}^2 dt+2\int_0^T \norm{b_\omega w(t)}_{L^2(\M)}^2 dt.
    \end{align*}
    Combining the two previous inequalities, we obtain
    \begin{align*}
        C_1\norm{(z_0, z_1)}_{H_0^1(\M)\times L^2(\M)}^2\leq 2\int_0^T \norm{b_\omega \partial_t z(t)}_{L^2(\M)}^2 dt+2C\norm{z}_{L^2([0, T], L^2(\M))}^2.
    \end{align*}
    Once again, a compactness-uniqueness argument leads us to \eqref{thm:ineq:linear-waves-obs-2}.
\end{proof}

The following variant with observation in $H^1$ is a direct consequence of the previous result, but, apparently, is not written this way in the literature. We provide a proof for the convenience of the reader.
\begin{proposition}
    Assume that $(\omega, T)$ satisfies \ref{assumGCC} and $b_{\omega}$ satisfies \eqref{lowerb}. Then, there exists $C>0$ such that for any $(z_0, z_1)\in H_0^1(\M)\times L^2(\M)$ and associated solution $z$ of \eqref{eq:linwavesbeta} we have
    \begin{align}\label{thm:ineq:linear-waves-obs-1st-component}
        C\norm{(z_0, z_1)}_{H_0^1(\M)\times L^2(\M)}^2\leq \int_0^T \norm{b_\omega z(t)}_{H_0^1(\M)}^2 dt.
    \end{align}
\end{proposition}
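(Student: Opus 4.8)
The plan is to reduce \eqref{thm:ineq:linear-waves-obs-1st-component} to Proposition \ref{thm:ineq:linear-waves-obs-beta} by conjugating the equation with the operator $\Lambda:=(-\Delta_g+\beta)^{1/2}$, which is boundedly invertible (since $\beta>0$ when $\partial\M=\emptyset$, and by the Poincar\'e inequality otherwise) and commutes with the wave flow of \eqref{eq:linwavesbeta}. Given a finite-energy solution $z$ with data $(z_0,z_1)\in H_0^1\times L^2$, I set $\zeta:=\Lambda^{-1}\partial_t z$. Since $\Lambda^{-1}$ and $\partial_t$ both commute with the flow, $\zeta$ again solves \eqref{eq:linwavesbeta}; using the equation $\partial_t^2 z=\Delta_g z-\beta z=-\Lambda^2 z$, its data is $(\zeta(0),\partial_t\zeta(0))=(\Lambda^{-1}z_1,-\Lambda z_0)\in H_0^1\times L^2$, and with $\norm{w}_{H_0^1}=\norm{\Lambda w}_{L^2}$ one gets
\begin{align*}
    \norm{(\zeta(0),\partial_t\zeta(0))}_{H_0^1\times L^2}^2=\norm{z_1}_{L^2}^2+\norm{z_0}_{H_0^1}^2=\norm{(z_0,z_1)}_{H_0^1\times L^2}^2.
\end{align*}
Moreover $\partial_t\zeta=\Lambda^{-1}\partial_t^2 z=-\Lambda z$, so applying Proposition \ref{thm:ineq:linear-waves-obs-beta} to $\zeta$ yields
\begin{align*}
    C\norm{(z_0,z_1)}_{H_0^1\times L^2}^2\leq \int_0^T\norm{b_\omega\Lambda z(t)}_{L^2}^2\,dt.
\end{align*}

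Next I would replace $b_\omega\Lambda z$ by $\Lambda(b_\omega z)$. Since $z(t)\in H_0^1$ and $b_\omega$ is smooth, $b_\omega z(t)\in H_0^1$ with $\norm{\Lambda(b_\omega z(t))}_{L^2}=\norm{b_\omega z(t)}_{H_0^1}$, and $b_\omega\Lambda z=\Lambda(b_\omega z)-[\Lambda,b_\omega]z$. The commutator $[\Lambda,b_\omega]$ is of order zero, hence bounded on $L^2(\M)$ (on a manifold with boundary this boundedness for the Dirichlet realization is classical, and is of the type verified for Assumption \ref{assumcommu}). Therefore
\begin{align*}
    \norm{b_\omega\Lambda z(t)}_{L^2}^2\leq 2\norm{b_\omega z(t)}_{H_0^1}^2+2\norm{[\Lambda,b_\omega]}_{\mc{L}(L^2)}^2\norm{z(t)}_{L^2}^2,
\end{align*}
and combining with the previous bound,
\begin{align*}
    C\norm{(z_0,z_1)}_{H_0^1\times L^2}^2\leq 2\int_0^T\norm{b_\omega z(t)}_{H_0^1}^2\,dt+C'\int_0^T\norm{z(t)}_{L^2}^2\,dt.
\end{align*}

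It then remains to absorb the lower-order term $\int_0^T\norm{z}_{L^2}^2\,dt$ by a compactness--uniqueness argument. If \eqref{thm:ineq:linear-waves-obs-1st-component} were false, there would be solutions $z^n$ of \eqref{eq:linwavesbeta} with $\norm{(z_0^n,z_1^n)}_{H_0^1\times L^2}=1$ and $\int_0^T\norm{b_\omega z^n}_{H_0^1}^2\,dt\to 0$, so that $\int_0^T\norm{z^n}_{L^2}^2\,dt\geq c>0$ for $n$ large by the previous display. By conservation of $\E$, $z^n$ is bounded in $L^2((0,T),H_0^1)$ and $\partial_t z^n$ in $L^2((0,T),L^2)$, so the Aubin--Lions--Simon lemma gives, along a subsequence, $z^n\to z$ strongly in $L^2((0,T)\times\M)$ while $(z_0^n,z_1^n)\rightharpoonup(z_0,z_1)$ weakly in $H_0^1\times L^2$; hence $z$ solves \eqref{eq:linwavesbeta} with data $(z_0,z_1)$ and $\int_0^T\norm{z}_{L^2}^2\,dt\geq c$, so $(z_0,z_1)\neq 0$. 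At the same time $b_\omega z^n\to b_\omega z$ in $L^2((0,T)\times\M)$, while $b_\omega z^n$ is bounded in $L^2((0,T),H_0^1)$ with norm tending to $0$, so $b_\omega z\equiv 0$; thus $z$, hence $\partial_t z$, vanishes on $(0,T)\times\omega$, and Proposition \ref{thm:ineq:linear-waves-obs-beta} forces $(z_0,z_1)=0$, a contradiction. This proves \eqref{thm:ineq:linear-waves-obs-1st-component}.

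The step I expect to require the most care is the commutator estimate $[\Lambda,b_\omega]\in\mc{L}(L^2(\M))$ in the presence of a boundary, together with the justification that $\zeta=\Lambda^{-1}\partial_t z$ is a genuine finite-energy solution with the asserted data; the compactness--uniqueness step itself is routine, as it only invokes the linear unique continuation already contained in Proposition \ref{thm:ineq:linear-waves-obs-beta}.
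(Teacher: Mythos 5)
Your route is genuinely different from the paper's, and most of it is sound. The conjugation $\zeta=\Lambda^{-1}\partial_t z$ is valid: $\zeta$ is a finite-energy solution of \eqref{eq:linwavesbeta} with data $(\Lambda^{-1}z_1,-\Lambda z_0)$ and the same $H_0^1\times L^2$-energy, with $\partial_t\zeta=-\Lambda z$, so applying Proposition~\ref{thm:ineq:linear-waves-obs-beta} to $\zeta$ does give $C\norm{(z_0,z_1)}^2_{H_0^1\times L^2}\leq\int_0^T\norm{b_\omega\Lambda z(t)}^2_{L^2}\,dt$. Your compactness--uniqueness step is also correct, up to one small repair at the end: $b_\omega$ is only assumed to be $\geq 1$ \emph{on} $\omega$ and need not vanish outside it, so from $z=0$ on $(0,T)\times\omega$ you should deduce $(z_0,z_1)=0$ by invoking \eqref{thm:ineq:linear-waves-obs-2} with an auxiliary smooth $\widetilde b$ compactly supported in $\omega$, equal to $1$ on a smaller open set satisfying \ref{assumGCC} (Lemma~\ref{lemma:gcc-smaller-subset} produces one), rather than with $b_\omega$ itself.

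The genuine gap is the commutator bound $[\Lambda,b_\omega]\in\mc{L}(L^2(\M))$. It is \emph{not} ``of the type verified for Assumption \ref{assumcommu}'': Proposition~\ref{propcommutwave} controls $[\Delta_g,b_\omega]$, a \emph{local} first-order differential operator whose mapping properties are read off from its coefficients together with the boundary condition $\partial_{\vec n}b_\omega=0$; it says nothing about the nonlocal square root $\Lambda=(-\Delta_g+\beta)^{1/2}$. What you are asserting is a Calder\'on-type commutator estimate. On $\R^d$, and on closed manifolds via pseudodifferential calculus, this is indeed classical; for the \emph{Dirichlet} realization on a compact manifold with boundary it is believable but not elementary. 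Neither the Helffer--Sj\"ostrand formula $\Lambda=\tfrac{1}{\pi}\int_0^\infty s^{-1/2}A(A+s)^{-1}\,ds$ nor the heat-kernel formula $\Lambda=c\int_0^\infty t^{-3/2}(I-e^{-tA})\,dt$ with the crude bound $\norm{[e^{-tA},b_\omega]}_{\mc{L}(L^2)}\lesssim\min(1,t^{1/2})$ closes the estimate --- each leaves a logarithmic divergence at the high-frequency end --- and interpolating the trivial endpoints $[\Lambda,b_\omega]\colon H^1_D\to L^2$ and $L^2\to H^{-1}_D$ gives no derivative gain at all, so even a fractional gain $[\Lambda,b_\omega]\colon H^s_D\to L^2$, $s<1$ (which would suffice for your compactness argument), requires exploiting cancellations. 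Until you supply a proof or a precise reference for this Dirichlet Calder\'on commutator bound, the step is unjustified. The paper sidesteps the issue entirely: after shrinking $(\omega,T)$ and applying \eqref{thm:ineq:linear-waves-obs-2}, it integrates the equation against the multiplier $\rho(t)b_\omega^2 z$, which yields an identity converting $\int\rho\,\norm{b_\omega\partial_t z}^2_{L^2}$ into $\int\rho\,\norm{\nabla(b_\omega z)}^2_{L^2}$ plus terms absorbed by Cauchy--Schwarz and energy estimates --- no nonlocal commutators, and no compactness--uniqueness is needed for this step.
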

\begin{proof}
    By Lemma \ref{lemma:gcc-smaller-subset}, we can find $(\Tilde{\omega}, \Tilde{T})$ satisfying the \ref{assumGCC} and $b_{\Tilde{\omega}}$ supported in $\omega$ with $b_{\Tilde{\omega}}=1$ on $\Tilde{\omega}$ and $\Tilde{T}<T$. Let $T'$, $T'' \in (0,T)$ and $\rho \in C^\infty_c(\mathbb{R},\mathbb{R}^+)$ be such that $T''-T'>\widetilde{T}$, $\rho \equiv 1$ on $(T',T'')$ and $\text{supp}(\rho) \subset (0,T)$.

Since the observability estimate \eqref{thm:ineq:linear-waves-obs-2} holds for the pair $(\Tilde{\omega}, \Tilde{T})$ with $b_{\Tilde{\omega}}$, we can employ it to find a constant $\Tilde{C_1}>0$ such that
    \begin{align}\label{ineq:linear-waves-obs-smaller}
            \norm{(z_0, z_1)}_{H_0^1(\M)\times L^2(\M)}^2\leq \Tilde{C_1}\int_0^{\Tilde{T}} \norm{b_{\Tilde{\omega}}\partial_t z(t)}_{L^2(\M)}^2 dt.
    \end{align}
    Noticing that there exists $\alpha>0$ so that $|b_{\Tilde{\omega}}|\leq \alpha |b_{\omega}|$, we have
    \begin{align}\label{eq:obs-h1-1}
        \norm{(z_0, z_1)}_{H_0^1(\M)\times L^2(\M)}^2\leq \Tilde{C_1}\int_0^{\Tilde{T}} \norm{b_{\Tilde{\omega}} \partial_t z(t)}_{L^2(\M)}^2 dt
        \leq C_1\int_0^T \rho(t) \norm{b_\omega \partial_t z(t)}_{L^2(\M)}^2 dt.
    \end{align}
    We aim to find a proper bound of the right-hand side in the above inequality. Let us multiply the equation $\partial_t^2 z-\Delta_gz+\beta z=0$ by the multiplier $\rho b_\omega^2 z$ and then perform integration by parts both in time and space on $[0, T]\times \M$ to obtain the identity
    \begin{multline*}
        \int_0^T\rho(t)\norm{b_\omega \partial_t z}_{L^2(\M)}^2dt=\int_0^T\rho(t)\norm{b_\omega \nabla z}_{L^2(\M)}^2dt+\beta\int_0^T \rho(t)\norm{b_\omega z}_{L^2(\M)}^2dt\\+2\iint_{[0, T]\times\M} \rho(t)b_\omega z \nabla b_\omega\cdot\nabla z dxdt+\iint_{[0, T]\times \M}\rho'(t) b_\omega^2z\partial_t z dt.
    \end{multline*}
    On the right-hand side, we can complete the square to obtain
    \begin{multline}\label{eq:obs-h1-2}
        \int_0^T\rho(t)\norm{b_\omega \partial_t z}_{L^2(\M)}^2dt=\int_0^T\rho(t)\norm{\nabla (b_\omega z)}_{L^2(\M)}^2dt+\beta\int_0^T \rho(t)\norm{b_\omega z}_{L^2(\M)}^2dt\\
        +\iint_{[0, T]\times \M}\rho'(t)b_\omega^2z\partial_t z dt-\int_0^T \rho(t)\norm{z \nabla b_\omega }_{L^2(\M)}^2.
    \end{multline}
    By using Cauchy-Schwarz and then energy estimates, we get, for any $\veps>0,$
    \begin{align*}
        \left|\iint_{[0, T]\times \M}\rho'(t)b_\omega^2z\partial_t z dt\right|&\leq \int_0^T |\rho'(t)|\norm{b_\omega z}_{L^2(\M)}\norm{b_\omega \partial_tz}_{L^2(\M)}dt\\
        &\leq \dfrac{1}{2\veps}\int_0^T \norm{b_\omega z}_{L^2(\M)}^2dt+\dfrac{\veps}{2}\int_0^T |\rho'(t)|^2\norm{b_\omega\partial_t z}_{L^2(\M)}^2dt\\
        &\leq \dfrac{1}{2\veps}\int_0^T \norm{b_\omega z}_{L^2(\M)}^2dt+\dfrac{C\veps}{2}\norm{(z_0, z_1)}_{H^1(\M)\times L^2(\M)}^2.
    \end{align*}
    By plugging the above estimate in \eqref{eq:obs-h1-2}, we get
    \begin{multline}\label{eq:obs-h1-3}
        \int_0^T\rho(t)\norm{b_\omega \partial_t z}_{L^2(\M)}^2dt\leq C\left(\int_0^T \norm{b_\omega z}_{L^2(\M)}^2dt+\int_0^T\rho(t)\norm{b_\omega z}_{H_0^1(\M)}^2dt\right)\\+C\veps\norm{(z_0, z_1)}_{H^1(\M)\times L^2(\M)}^2.
    \end{multline}
    Observe that, on the right-hand side, the first term in parenthesis can be bounded by above by $L^2([0, T], H_0^1(\M))$-norm of $b_\omega z$. By choosing $\veps>0$ small enough, we conclude by plugging \eqref{eq:obs-h1-3} into \eqref{eq:obs-h1-1}.
\end{proof}

Assume that $(z_0, z_1)\in (H^2(\M)\cap H_0^1(\M))\times H_0^1(\M)$. Take $y=\partial_t z$ and observe that it solves
\begin{align}\label{eq:linear-waves-2}
    \left\{\begin{array}{rl}
        \partial_t^2 y-\Delta_g y+\beta y=0  &\ \\
        y_{|_{\partial\M}}=0            &\ \text{if } \partial\M\neq\emptyset\\
        (y, \partial_t y)(0)=(z_1, (\Delta_g-\beta) z_0),
    \end{array}\right.
\end{align}
with $(z_1, (\Delta_g-\beta) z_0)\in H_0^1(\M)\times L^2(\M)$. Applying the observability inequality \eqref{thm:ineq:linear-waves-obs-1st-component} with $b_\omega y$ as observation and then going back to the $z$ variable, we get
\begin{align*}
    \norm{(z_0, z_1)}_{(H^2(\M)\cap H_0^1(\M))\times H_0^1(\M)}^2\leq C_2\int_0^T \norm{b_\omega \partial_t z(t)}_{H_0^1(\M)}^2dt.
\end{align*}
Let us consider the observation operator $\bC\in \mc{L}(X^\sigma, X^\sigma)$ given by
\begin{align}\label{eq:observation-operator-wave}
    \bC(\phi, \psi)=(0, b_\omega\psi).
\end{align}
With the operator's notation of Section \ref{s:not_wave}, we have obtained the inequality
\begin{align}\label{ineq:linear-waves-obs-h2h1}
    \norm{Z_0}_{D(A)}^2\leq C\int_0^T \norm{\bC e^{tA}
    Z_0}_{D(A)}^2 dt.
\end{align}
By linear interpolation in between inequalities \eqref{ineq:linear-waves-obs-l2-weak} and \eqref{ineq:linear-waves-obs-h2h1}, we obtain the following result. We also refer to \cite{P:23} for the link between observability at different levels of regularity.
\begin{proposition}\label{prop:higher-reg-obs}
    Assume that $(\omega, T)$ satisfies \ref{assumGCC} and $b_{\omega}$ satisfies \eqref{lowerb}. Then, for $\bC$ defined by \eqref{eq:observation-operator-wave}, there exists $\mathfrak{C}_{\text{obs}}>0$ such that for any $\sigma\in [0,1]$ and $Z_0\in X^\sigma$ we have
    \begin{align}\label{thm:ineq:linear-waves-obs-high}
        \norm{Z_0}_{X^\sigma}^2\leq \mathfrak{C}_{\text{obs}}^2\int_0^T \norm{\bC e^{tA}Z_0}_{X^\sigma}^2 dt.
    \end{align}
\end{proposition}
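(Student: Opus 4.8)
The plan is to deduce \eqref{thm:ineq:linear-waves-obs-high} at an intermediate level $\sigma\in(0,1)$ from the two endpoint estimates by commuting a fractional power of the Laplacian through the observation; this is the concrete content of the ``linear interpolation'' announced above. Set $\Lambda:=(-\Delta_g+\beta)^{1/2}$, a positive self-adjoint operator whose spectrum is bounded below by some $c_0>0$ (here one uses $\beta>0$ when $\partial\M=\emptyset$), so that $\|w\|_{H^\sigma_D}\simeq\|\Lambda^\sigma w\|_{L^2}$ and $\|Z_0\|_{X^\sigma}\simeq\|\Lambda^\sigma Z_0\|_{X^0}$ with constants uniform in $\sigma\in[0,1]$, where $\Lambda^\sigma$ acts diagonally on $Z_0=(z_0,z_1)$. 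Since $\Lambda^\sigma$ commutes with $A$, hence with $e^{tA}$, for $Z_0\in X^\sigma$ the element $\Lambda^\sigma Z_0$ lies in $X^0=H^1_0\times L^2$ and the associated solution of \eqref{eq:linwavesbeta} is $\Lambda^\sigma z$. Applying the $\sigma=0$ observability estimate of Proposition \ref{thm:ineq:linear-waves-obs-beta} to $\Lambda^\sigma Z_0$ gives
\begin{align*}
\|Z_0\|_{X^\sigma}^2\lesssim\|\Lambda^\sigma Z_0\|_{X^0}^2\lesssim\int_0^T\bigl\|b_\omega\,\Lambda^\sigma\partial_t z(t)\bigr\|_{L^2}^2\,dt.
\end{align*}

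The next step is to commute $\Lambda^\sigma$ past the cutoff: $b_\omega\Lambda^\sigma\partial_t z=\Lambda^\sigma(b_\omega\partial_t z)+[b_\omega,\Lambda^\sigma]\partial_t z$. The first term is exactly the quantity we want, since $b_\omega\partial_t z(t)\in H^\sigma_D$ (the Dirichlet condition for $\sigma>1/2$ is inherited from $\partial_t z(t)\in H^\sigma_D$) and $\|\Lambda^\sigma(b_\omega\partial_t z(t))\|_{L^2}=\|b_\omega\partial_t z(t)\|_{H^\sigma_D}=\|\bC e^{tA}Z_0\|_{X^\sigma}$. For the second term, the key input is the commutator estimate $[b_\omega,\Lambda^\sigma]\in\mc L(L^2(\M))$, uniformly for $\sigma\in[0,1]$ (the commutator of $\Lambda^\sigma$, of order $\sigma$, with a smooth multiplier has order $\sigma-1\leq0$; on a manifold with boundary this is not a classical pseudodifferential statement but follows from the spectral calculus of $-\Delta_g$, e.g.\ via the representation $\Lambda^\sigma=c_\sigma\int_0^\infty s^{\sigma-1}\Lambda^2(\Lambda^2+s^2)^{-1}\,ds$). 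Granting this, $\|[b_\omega,\Lambda^\sigma]\partial_t z(t)\|_{L^2}\lesssim\|\partial_t z(t)\|_{L^2}\leq\|Z_0\|_{X^0}$ by conservation of the energy, whence
\begin{align*}
\int_0^T\|[b_\omega,\Lambda^\sigma]\partial_t z(t)\|_{L^2}^2\,dt\lesssim T\|Z_0\|_{X^0}^2\lesssim T\int_0^T\|b_\omega\partial_t z(t)\|_{L^2}^2\,dt\leq T\int_0^T\|\bC e^{tA}Z_0\|_{X^\sigma}^2\,dt,
\end{align*}
where the middle step is one more application of the $\sigma=0$ observability and the last uses $\|\cdot\|_{L^2}\leq C\|\cdot\|_{H^\sigma_D}$. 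Collecting the two bounds yields \eqref{thm:ineq:linear-waves-obs-high} with a constant depending only on the $\sigma=0$ constant, $\sup_{\sigma\in[0,1]}\|[b_\omega,\Lambda^\sigma]\|_{\mc L(L^2)}$, $T$ and $c_0$, hence uniform in $\sigma\in[0,1]$ (note that only the endpoint $\sigma=0$ is actually used).

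The main obstacle is precisely the commutator bound $[b_\omega,\Lambda^\sigma]\in\mc L(L^2)$, uniformly in $\sigma$: it is immediate when $\supp b_\omega\Subset\Int\M$ by the interior pseudodifferential calculus, and in general must be extracted from the functional calculus of the Dirichlet Laplacian (resolvent representation as above, or Stein interpolation of the analytic family $z\mapsto\Lambda^{z}b_\omega\Lambda^{-z}-b_\omega$ between the lines $\Re z=0$ and $\Re z=1$). Alternatively, one may present the argument as a bona fide complex interpolation between \eqref{ineq:linear-waves-obs-l2-weak} and \eqref{ineq:linear-waves-obs-h2h1}: observability at level $\sigma$ is equivalent to the existence of a bounded left inverse of $\mc O:Z_0\mapsto\bC e^{\cdot A}Z_0$ on $X^\sigma$, and to interpolate the endpoint left inverses one needs them to be compatible (restrictions of one another), a compatibility which again rests on the same commutator estimate — a concrete instance of Assumption \ref{assumcommu}. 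I would keep the first, more computational, route.
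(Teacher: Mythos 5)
Your argument is a genuinely different, more explicit route than the paper's. Where the paper is terse — it proves the two endpoint inequalities \eqref{ineq:linear-waves-obs-l2-weak} (level $X^0$) and \eqref{ineq:linear-waves-obs-h2h1} (level $X^1$, by differentiating in time and invoking the $H^1$-observability \eqref{thm:ineq:linear-waves-obs-1st-component}), then asserts ``linear interpolation'' citing \cite{P:23} — you dispense with the $\sigma=1$ endpoint entirely and propagate the $\sigma=0$ estimate to all $\sigma$ via the conjugation $\Lambda^\sigma$ and the commutator $[b_\omega,\Lambda^\sigma]$. The structure of the argument is sound: commute, split, absorb the commutator term using once more the $\sigma=0$ observability plus energy conservation. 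But everything hinges on the single claim $\sup_{\sigma\in[0,1]}\|[b_\omega,\Lambda^\sigma]\|_{\mathcal L(L^2)}<\infty$, and this is where the proposal is thin.

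First, the commutator bound genuinely requires the normal-derivative condition $\partial_{\vec n}b_\omega=0$ on $\partial\M$. This is not part of the hypotheses of the proposition as stated (only \eqref{lowerb} and smoothness are assumed), but it is ensured by the construction in Lemma~\ref{lemma:gcc-smaller-subset} and is assumed explicitly in the companion commutator statement Proposition~\ref{propcommutwave}; you should state that you use it. Second, and more seriously, the Balakrishnan representation you invoke does \emph{not} deliver the claimed uniformity. Writing $\Lambda^\sigma=c_\sigma\int_0^\infty\lambda^{\sigma/2-1}\Lambda^2(\Lambda^2+\lambda)^{-1}d\lambda$ and commuting gives $[b_\omega,\Lambda^\sigma]=c_\sigma\int_0^\infty\lambda^{\sigma/2}(\Lambda^2+\lambda)^{-1}[b_\omega,\Lambda^2](\Lambda^2+\lambda)^{-1}d\lambda$; since $[b_\omega,\Lambda^2]$ is first order, the best one can squeeze from the resolvent sandwich is a bound $O(\min(1,\lambda^{-3/2}))$, and $\int_0^\infty\lambda^{\sigma/2}\min(1,\lambda^{-3/2})\,d\lambda$ converges iff $\sigma<1$, with a constant blowing up like $(1-\sigma)^{-1}$. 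So the computational route you ``keep'' proves the bound for each fixed $\sigma<1$ but loses the $\sigma=1$ endpoint and, as written, the uniform constant over $[0,1]$ that the proposition asserts. A clean fix that stays within your framework: use the paper's $\sigma=1$ endpoint \eqref{ineq:linear-waves-obs-h2h1} directly (it is available and proved without any fractional commutator), and run your commutator argument only for $\sigma\in(0,1)$; alternatively, settle the $\sigma=1$ commutator by a reflection argument across $\partial\M$, where $\partial_{\vec n}b_\omega=0$ makes the even extension of $b_\omega$ smooth and reduces $[b_\omega,\Lambda]$ to a classical zeroth-order interior commutator. Finally, a small inaccuracy in your closing remark: the reason the ``bona fide interpolation'' route is delicate is not a failure of compatibility of the endpoint left inverses (that compatibility is automatic from injectivity of $\mathcal O$), but rather that the natural bounded extensions of those left inverses to the whole $L^2([0,T],X^\sigma)$ go through the orthogonal projections onto $\operatorname{Ran}\mathcal O_\sigma$, and these projections depend on the $X^\sigma$-inner product and are not nested; Assumption~\ref{assumcommu} as used in the paper serves a different purpose (regularization via $\mathcal J_n$) and is not the missing ingredient here.
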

The previous result shows that, if $(\omega, T)$ satisfies the \ref{assumGCC}, then $t\mapsto e^{tA}$ satisfies Assumption \ref{assumC} with $\sigma\in [0,1]$, $T>0$ and observation operator $\bC$ given by \eqref{eq:observation-operator-wave}. 

We also show that the pair $(A, \bC)$ satisfies Assumption \ref{assumcommu}.
\begin{proposition}\label{propcommutwave}
    Let $\sigma\in [0, 1]\setminus \{1/2\}$. Then, if $\bC$ is given by \eqref{eq:observation-operator-wave} with $b_{\omega}$ smooth satisfying $\partial_{\vec n}b_{\omega}=0$, then Assumption \ref{assumcommu} is fulfilled with $s=1$ as long as $\veps\leq 1$.
\end{proposition}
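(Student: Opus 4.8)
The plan is to compute the commutator by hand and observe that the hypothesis $\partial_{\vec n} b_\omega = 0$ is exactly what forces the output to respect the Dirichlet boundary condition built into the spectral Sobolev scale. First I would unwind the statement: with $s=1$, Assumption \ref{assumcommu} asks for $[(A^*A), \bC] \in \mc{L}(X^{\sigma+2}, X^{\sigma+\veps})$, where by \eqref{defSob} one has $X^{\sigma+2} = H^{3+\sigma}_D \times H^{2+\sigma}_D$ and $X^{\sigma+\veps} = H^{1+\sigma+\veps}_D \times H^{\sigma+\veps}_D$. Since $A^*A$ acts diagonally as $-\Delta_g+\beta$ on each component and $\bC(\phi,\psi) = (0, b_\omega\psi)$, a direct computation shows that the zeroth-order and $\beta$-terms cancel, leaving
\[
    [(A^*A), \bC](\phi,\psi) = \big(0,\, -[\Delta_g, b_\omega]\psi\big),
\]
where $[\Delta_g, b_\omega]$ is a first-order differential operator with smooth coefficients, namely $[\Delta_g, b_\omega]\psi = 2\langle\nabla b_\omega, \nabla\psi\rangle_g + (\Delta_g b_\omega)\psi$.

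Next comes the regularity count. For $\psi$ in the second factor $H^{2+\sigma}_D \hookrightarrow H^{2+\sigma}(\M)$, applying a first-order operator with smooth coefficients gives $[\Delta_g,b_\omega]\psi \in H^{1+\sigma}(\M)$ with $\norm{[\Delta_g,b_\omega]\psi}_{H^{1+\sigma}} \le C\norm{\psi}_{H^{2+\sigma}_D}$, while the first component is $0$ and lies trivially in $H^{1+\sigma+\veps}_D$. Since $\veps \le 1$ we have $\sigma+\veps \le 1+\sigma$, so the only point to justify is that $[\Delta_g,b_\omega]\psi$ belongs to the spectrally defined space $H^{1+\sigma}_D$ (not merely to $H^{1+\sigma}(\M)$); once this is known, the monotonicity $H^{1+\sigma}_D \hookrightarrow H^{\sigma+\veps}_D$ closes the estimate.

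This is where the hypotheses are used. Because $\psi \in H^{2+\sigma}_D$ with $2+\sigma > 1/2$ we have $\psi|_{\partial\M} = 0$, so the term $(\Delta_g b_\omega)\psi$ vanishes on $\partial\M$; and since $2+\sigma > 3/2$, the trace of $\nabla\psi$ on $\partial\M$ is defined and purely normal (its tangential part is the tangential derivative of $\psi|_{\partial\M} = 0$), whence $\langle\nabla b_\omega, \nabla\psi\rangle_g|_{\partial\M} = (\partial_{\vec n}b_\omega)(\partial_{\vec n}\psi)|_{\partial\M} = 0$ by assumption. Therefore $[\Delta_g,b_\omega]\psi \in H^{1+\sigma}(\M) \cap H^1_0(\M)$, and by the results of \cite{G:67} this set coincides with $H^{1+\sigma}_D = D((-\Delta_g)^{(1+\sigma)/2})$ precisely when $1+\sigma \ne 3/2$, i.e. $\sigma \ne 1/2$; this is exactly the excluded exponent. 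Combining with the continuous embedding $H^{1+\sigma}_D \hookrightarrow H^{\sigma+\veps}_D$ yields $\norm{[\Delta_g,b_\omega]\psi}_{H^{\sigma+\veps}_D} \le C\norm{\psi}_{H^{2+\sigma}_D}$, hence $[(A^*A),\bC] \in \mc{L}(X^{\sigma+2}, X^{\sigma+\veps})$ and Assumption \ref{assumcommu} holds with $s=1$. When $\partial\M = \emptyset$ there is no boundary condition and the order count alone suffices.

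I expect the only genuine subtlety to be this boundary-condition bookkeeping: one must remember that an $H^{1+\sigma}$ function vanishing on $\partial\M$ lies in $D((-\Delta_g)^{(1+\sigma)/2})$ only away from the critical exponent $3/2$ (which forces $\sigma \ne 1/2$), and that the condition $\partial_{\vec n}b_\omega = 0$ is precisely calibrated so that the first-order commutator does not destroy the Dirichlet trace of the observed component. Everything else is a routine Leibniz/product estimate on a compact manifold with boundary.
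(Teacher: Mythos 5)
Your proof is correct and follows the same route as the paper's: compute the commutator matrix to reduce to the scalar operator $[b_\omega,\Delta_g]$ acting on the second component, observe it is first-order with smooth coefficients so the order count gives $H^{2+\sigma}\to H^{1+\sigma}$, then check that the boundary trace vanishes by splitting $\nabla_g=\partial_{\vec n}+\nabla_T$ and invoking $\partial_{\vec n}b_\omega=0$ and $\psi|_{\partial\M}=0$, finally using the Grisvard identification $H^{1+\sigma}(\M)\cap H_0^1(\M)=H^{1+\sigma}_D$ away from $\sigma=1/2$. Your version is just a more explicit, step-by-step rendering of the same argument, and is correct.
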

\begin{proof}
    We compute $[A^*A,\bC]=\begin{psmallmatrix}  0 & 0 \\  0 & [b_{\omega},\Delta_g]\end{psmallmatrix}$ so that the result is true as long as $[b_{\omega},\Delta_g]=-2\nabla_g b_{\omega}\cdot \nabla_g -\Delta_g b_{\omega}$ sends $H^{\sigma+2}_D$ into $H^{\sigma+\veps}_D$. We claim that $[b_{\omega},\Delta_g]$ sends $H^{\sigma+2}_D$ into $H^{\sigma+1}_D$ when $\sigma\in [0, 1]\setminus \{1/2\}$, which will give the result since $\veps\leq 1$.
    
    Since $b_{\omega}$ is smooth and $\sigma\in [0, 1]\setminus \{1/2\}$, we only need to verify that the term $\nabla b_{\omega}\cdot \nabla u$ is indeed zero on $\partial\M$. Decomposing $\nabla_g =\partial_{\vec n}+\nabla_{T}$ where $\nabla_{T}$ is the tangential derivative, we can write $\nabla b_{\omega}\cdot \nabla u=\partial_{\vec n}b_{\omega} \partial_{\vec n}u+\nabla_T b_{\omega}\cdot \nabla_T u$. The first term is zero on the boundary since we assumed $\partial_{\vec n}b_{\omega}=0$ while the second term $\nabla_T u$ cancels since $u=0$ on $\partial \M$. This proves the claimed result.
\end{proof}

\subsection{Propagation of analyticity}
Here we prove Theorem \ref{thm:analytic-prop} and then Corollary \ref{coranalytspacetime}.

\begin{proof}[Proof of Theorem \ref{thm:analytic-prop}] According to Lemma \ref{lemma:gcc-smaller-subset}, there exist $\chi\in C^{\infty}_c(\omega)$ with non negative values so that 
    \begin{itemize}
        \item there exists a nonempty open set $\Tilde{\omega}$ which is compactly contained in $\omega$ and a time $0<\Tilde{T}<T$ such that $(\Tilde{\omega}, \Tilde{T})$ satisfies the \ref{assumGCC} and $\chi =1$ on $\Tilde{\omega}$,
        \item $\partial_{\vec n} \chi=0$ on $\partial \M$ where $\partial_{\vec n}$ is the normal derivative to the boundary.
    \end{itemize}
   
    Let $\chi_2 \in C^{\infty}_c(\omega)$ so that $\chi_2 =1$ on $\supp(\chi)$. Since $\chi_2$ is a cutoff function whose support is contained on $\omega$, by hypothesis, we get that $t\mapsto \chi_2u(t)$ is analytic with value in $ H^{1+\sigma}\cap H_0^1(\M)$. Note that up to exchanging $(0,T)$ with a compact subinterval so that the geometric control condition is still satisfied, we can assume without loss of generality that the analyticity holds in a neighborhood of $(0,T)$. In the same way as done in the proof of Proposition \ref{propcommutwave}, we verify that the application $[\Delta_g, \chi]$ maps $ H^{1+\sigma}\cap H_0^1(\M)$ into $H^{\sigma}_0(\M)$ when $\sigma\in [0, 1]\setminus \{1/2\}$. By composition of analytic maps, and noticing that $[\Delta_g, \chi]=\chi_2[\Delta_g, \chi]=[\Delta_g, \chi]\chi_2$, we get that the map
    \begin{align*}
        t\in (0, T)\mapsto [\Delta_g, \chi] u\in H_0^{\sigma}(\M)
    \end{align*}
    is analytic.

    By writing $u=\chi u+(1-\chi)u$, it remains to show that $t\mapsto (1-\chi)u$ is analytic. Set $\Tilde{\chi}=(1-\chi)$ and consider the new variable $z=\Tilde{\chi} u$. We then have
    \begin{align*}
        \partial_t^2 z-\Delta_g z+\beta z&=\Tilde{\chi}(\partial_t^2 u-\Delta_g u)+\beta z+[\Delta_g, \Tilde{\chi}]u=-\Tilde{\chi} f(u)+\beta z-[\Delta_g, \chi]u\\
        &=-\Tilde{\chi} f(z+\chi u)+\beta z-[\Delta_g, \chi]u=-\Tilde{\chi} f(z+h_1)+\beta (z+h_1)+h_2-\beta h_1
    \end{align*}
    where we have defined the functions $h_1=\chi u$ and $h_2=-[\Delta_g, \chi]u$.

    Let $\sigma_*\in (1/2, \sigma)$ if $d=3$ and $\sigma_*\in (0,\sigma)$ if $d\leq 2$. Now, we define 
    \begin{align*}
        \left\{\begin{array}{ccl}
            t\in [0, T] & \longmapsto & H_1(t)=(\chi u(t), 0)\in X^{\sigma_*},  \\
            t\in [0, T] & \longmapsto & H_2(t)=(0, -[\Delta_g, \chi]u(t)-\beta \chi u(t))\in X^\sigma.
        \end{array}\right.
    \end{align*}
Thanks to the previous discussion, $H_1\in C^0([0, T], X^{\sigma_*})$ and $H_2\in C^0([0, T], X^\sigma)$. Since $\norm{U}_{C^0([0, T], X^\sigma)}\leq M$ for some $M>0$, it follows that
    \begin{align*}
        H_1\in \B_{CM}^{[0, T]}(X^{\sigma_*})\ \text{ and }\ H_2\in \B_{CM}^{[0, T]}(X^\sigma),
    \end{align*}
    for some $C=C(\chi,\chi_2)=C(\omega)>0$. Since $t\in (0, T)\mapsto \chi u\in H^{1+\sigma_*}(\M)\cap H_0^1(\M)$ and $t\in (0, T)\mapsto [\Delta, \chi] u\in H^{\sigma}_0(\M)$ are analytic, an application of Theorem \ref{app:thm:h-ext} and compactness, gives the existence of $\mu>0$ so that $H_1$ and $H_2$ can be extended holomorphically as
    \begin{align*}
        \left\{\begin{array}{ccl}
            z\in (0, T)+i(-\mu, \mu) & \longmapsto & H_1(z)=(\chi u(z), 0)\in X_\mathbb{C}^{\sigma_*},  \\
            z\in (0, T)+i(-\mu, \mu) & \longmapsto & H_2(z)=(0, -[\Delta_g, \chi]u(z)-\beta \chi u(z))\in X_\mathbb{C}^\sigma.
        \end{array}\right.
    \end{align*}
    
    Moreover, since $\Re(H_1(z))\in \mathbb{B}_{CM}(X^{\sigma_*})$ for $z\in[0,T]+i\{0\}$, by shrinking $\mu>0$ if necessary, by continuity and compactness, we can assume that $\Re(H_1(z))\in \mathbb{B}_{2CM}(X^{\sigma_*})$ for every $z\in [0, T]+i[-\mu, \mu]$.

    Applying Lemma \ref{lemma:gcc-smaller-subset} again to $(\Tilde{\omega},\Tilde{T})$, we can find $(\widetilde{\omega}_1,\Tilde{T}_1)$ satisfying \ref{assumGCC} and $b\in C^{\infty}_c(\Tilde{\omega})$ so that $b=1$ on $\widetilde{\omega}_1$ and $0<\widetilde{T}_1<\Tilde{T}<T$. Defining $\bC$ by $\bC(\phi, \psi)=(0, b\psi)$ as in \eqref{eq:observation-operator-wave}, Proposition \ref{prop:higher-reg-obs} gives the following observability estimate 
   \begin{align}\label{thm:ineq:linear-waves-obs-highTstar}
        \norm{Z_0}_{X^{\sigma_*}}^2\leq \mathfrak{C}_{\text{obs}}^2\int_0^{\widetilde{T}_1} \norm{\bC e^{tA}Z_0}_{X^{\sigma_*}}^2 dt
    \end{align}
    and the same observability at the level of regularity $X^{\sigma}$.

    Observe that $\chi=1$ on $\Tilde{\omega}$, then $z=(1-\chi)u=0$ on $\widetilde{\omega}$. Then, since $b\in C^{\infty}_c(\Tilde{\omega})$, it implies $b\partial_t z=0$ on $[0,T]\times \M$. 
    Finally, we see that $Z=(z, \partial_t z)$ satisfies the system
    \begin{align}\label{thm:proof:eq:}
    \left\{\begin{array}{cl}
         \partial_t Z=AZ+F(Z+H_{1})+H_{2}&\textnormal{ on } [0,T] \\
         \bC Z(t)=0 &\textnormal{ for }t\in [0,T]
    \end{array}\right.
    \end{align}
    where $F$ is as in \eqref{defFwave}. We are then in the framework of Theorem \ref{thmabstractanalyticintro} with $(T^*, T)$, $(\sigma, \veps)$ and $R_0$ of the Theorem replaced by $(T, \widetilde{T}_1)$, $(\sigma_*, \sigma-\sigma_*)$  and $2CM$, respectively. Assumption \ref{assumFholom} is fulfilled thanks to Proposition \ref{prop:f-assumptions}. The observability estimate \eqref{thm:ineq:linear-waves-obs-highTstar} for $\sigma$ and $\sigma_*$ ensures Assumption \ref{assumCC}. We deduce that $t\in (0, T)\mapsto Z(t)\in X^{\sigma_*}$ is real analytic, hence $t\in (0, T)\mapsto U(t, \cdot)\in X^{\sigma_*}$ is real analytic as well.

  Observe that $t\in (0, T)\mapsto \partial_t U(t, \cdot)\in X^{\sigma_*}$ is an analytic map (see Proposition \ref{prop:differential-analytic}), and so is $t\in (0, T)\mapsto \partial_t U(t, \cdot)-F(U(t,\cdot))\in X^{\sigma_*}$ where this time, $F(u,v)=(0,-f(u))$ (that is as in \eqref{defFwave} with $\widetilde{\chi}=1$ and $\beta=0$). We readily get that the map $t\in (0, T)\mapsto A U(t, \cdot)\in X^{\sigma_*}$ is analytic. This implies that $t\in (0, T)\mapsto U(t, \cdot)\in X^{1+\sigma_*}$ is analytic as well.

    For the last statement, we can use local holomorphic extension and prove that it is weakly holomorphic, which is sufficient (see Theorem \ref{appendix:thm:equiv-holom}). Indeed, a continuous linear form $L$ on $X^{1+\sigma_*}$ can be written, for some $V\in X^{1+\sigma_*}$, $L(U)=\left<V,U\right>_{X^{1+\sigma_*}}=\left<(A^*A)^{1/2}V,(A^*A)^{1/2}U\right>_{X^{\sigma_*}}+\left<V,U\right>_{X^{\sigma_*}}=\left<AV,AU\right>_{X^{\sigma_*}}+\left<V,U\right>_{X^{\sigma_*}}$. In particular, since $AV\in X^{\sigma_*}$, the extension $z\mapsto L(U(z))$ is well defined and holomorphic.
\end{proof}

\begin{proof}[Proof of Corollary \ref{coranalytspacetime}]
We know that $u$ admits a holomorphic extension to $[0,T]+i[-\delta,\delta]$ with value in $H^{1+\sigma}(\M)\cap H^1_0(\M)$ with $\sigma\in (1/2,1]$. Let $\varphi\in C^{\infty}_c(\M)$. For any $z=(t,s)\in [0,T]+i[-\delta,\delta]$, we consider the well defined quantity $m(z)=\left<\partial_z^2u(z)-f(u(z)),\varphi\right>_{L^2(\M)}-\left<u(z),\Delta_g \varphi\right>_{L^2(\M)}$. Note that $m$ is holomorphic by composition. Since $u$ is holomorphic, it satisfies $\partial_{\bar z}u(z)=0$ for $z\in (0,T)+i(-\delta,\delta)$ where $\partial_{\bar z}=\frac{1}{2}\left(\partial_t+i\partial_s\right)$. In particular, $\partial_{z}u(z)=\partial_t u(z)=\frac{1}{i}\partial_s u(z)$ with equality meant in $H^{1+\sigma}\cap H^1_0$. Moreover, if we restrict $m$ to the real interval $(0,T)$, we know that it is zero since $u_{\left|(0,T)\right.}$ is solution of \eqref{eq:nlw-1}. By analytic continuation, we conclude that $m(z)=0$ for $z\in (0,T)+i(-\delta,\delta)$. That means that $\partial_z^2 \left<u(z),\varphi\right>_{L^2(\M)}=\left<f(u(z)),\varphi\right>_{L^2(\M)}+\left<u(z),\Delta_g \varphi\right>_{L^2(\M)}$ for any $\varphi\in C^{\infty}_c(\M)$ and $z\in (0,T)+i(-\delta,\delta)$.

Now, for any $t_0\in (0,T)$, we consider the function $v\in C^{\infty}((-\delta,\delta),H^{1+\sigma}\cap H^1_0(\M))$ defined by $s\in (-\delta,\delta)\mapsto v(s):=u(t_0+is)\in H^{1+\sigma}\cap H^1_0(\M)$. For any $\varphi\in C^{\infty}_c(\M)$, we compute 
\begin{align*}
    \partial_s^2 \left<v(s),\varphi\right>_{L^2(\M)}&=\left<\partial_s^2 v(s),\varphi\right>_{L^2(\M)}=-\left<\partial_z^2 u(t_0+is),\varphi\right>_{L^2(\M)}\\
    &=-\left<f(u(t_0+is)),\varphi\right>_{L^2(\M)}-\left<u(t_0+is),\Delta_g \varphi\right>_{L^2(\M)}\\
    &=-\left<f(v(s)),\varphi\right>_{L^2(\M)}-\left<v(s),\Delta_g \varphi\right>_{L^2(\M)}.
\end{align*}
In particular, $v$ is solution, in the distributional sense of $\partial_s^2v+\Delta_g v=-f(v)$ on $(-\delta,\delta)\times \M$ with Dirichlet boundary condition on $\partial \M$. Since $v\in C^{\infty}((-\delta, \delta),H^{1+\sigma}\cap H^1_0(\M))\subset C^0((-\delta,\delta)\times \M)$, standard elliptic regularity states that $v$ is smooth and therefore analytic, see for instance  Friedman \cite[Theorem 5]{Friedman:58}. In particular, it gives that for any $x_0\in \M$, in some charts around $x_0$, there exists $R>0$ and $C>0$ so that 
\begin{align*}
   \left| \partial_s^{\alpha}\partial_x^{\beta}v(0,x_0)\right|\leq C R^{\alpha+|\beta|}\alpha!\beta!.
\end{align*}
By definition of $u$ and holomorphy with value in $H^{1+\sigma}\cap H^1_0$ we have $\partial_s^{\alpha}v(0)=(i\partial_t)^{\alpha}u(t_0)$ with equality in $H^{1+\sigma}\cap H^1_0$. Since the valuation at $x_0\in \M$ is continuous on $H^{1+\sigma}\cap H^1_0$, we get $\partial_s^{\alpha}v(0,x_0)=(i\partial_t)^{\alpha}u(t_0,x_0)$ for any $x_0\in \M$. Taking derivative in $x_0$ now, we have $\partial_s^{\alpha}\partial_x^{\beta}v(0,x_0)=(i\partial_t)^{\alpha}\partial_x^{\beta}u(t_0,x_0)$. So, we obtain
\begin{align*}
   \left| \partial_t^{\alpha}\partial_x^{\beta}u(t_0,x_0)\right|\leq C R^{\alpha+|\beta|}\alpha!\beta!.
\end{align*}
This is the analyticity close to $(t_0,x_0)$ and gives the result since $(t_0,x_0)\in (0,T)\times \M$ are arbitrary.
\end{proof}

\subsection{Finite determining modes} Now we show that the property of finite determining modes holds the observed problem \eqref{eq:nlw-1}.
\begin{proposition}
\label{propfinitedetwave}
 Let $\sigma\in (1/2, 1]$ if $d=3$ and $\sigma\in (0,1/2)$ if $d\leq 2$. With the notations of Section \ref{s:not_wave}, assume $(\omega, T)$ satisfies \ref{assumGCC} and that $f\in C^4(\R, \R)$. For any $R_0>0$, there exists $n\in \N$ such that the following holds. Let $h\in  C^0([0,T], H_0^{\sigma}(\M))$ and $g\in L^2([0, T], H_0^\sigma(\M))$. Let $U(t)=(u(t),\partial_t u(t))$ and $\widetilde{U}(t)=(\tilde{u}(t), \partial_t\tilde{u}(t))$ be two solutions on $(0,T)$ of 
  \begin{align}\label{eq:nlw-uc_source}
    \left\{\begin{array}{rl}
        \partial_t^2 u-\Delta_g u+f(u)=h(t, x)  &\ (t, x)\in [0, T]\times \text{Int}(\M),\\
        u_{|_{\partial\M}}=0            &\ (t, x)\in [0, T]\times\partial\M,\\
        \partial_t u=g &\ (t, x)\in [0, T]\times \omega,
    \end{array}\right.
\end{align}
such that $\norm{U(t)}_{X^\sigma}\leq R_{0}$ and $\norm{\widetilde{U}(t)}_{X^\sigma}\leq R_{0}$ for all $t\in [0, T]$. If $\P_n U(t)=\P_n \widetilde{U}(t)$ for all times $t\in [0, T]$, then $U(t)\equiv \widetilde{U}(t)$ for all $t\in [0, T]$.
\end{proposition}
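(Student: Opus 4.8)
The statement is the concrete incarnation of the abstract finite determining modes result, Proposition \ref{prop:finite-det-modes}, so the plan is to verify that the hypotheses of that proposition hold in the present setting and then simply invoke it. First I would set up the abstract framework exactly as in Section \ref{s:not_wave}: take $X=H^1_0(\M)\times L^2(\M)$, the operator $A$ defined there (with some fixed $\beta\geq 0$, $\beta>0$ if $\partial\M=\emptyset$), so that Assumption \ref{assumAA} holds; take the spaces $X^\sigma=H^{1+\sigma}_D\times H^\sigma_D$. The subtlety is that \eqref{eq:nlw-uc_source} is observed through $\partial_t u=g$ on $\omega$, i.e.\ through a \emph{sharp} indicator, whereas the abstract result wants a bounded observation operator $\bC\in\mc{L}(X^\sigma,X^\sigma)$ with the observability estimate of Assumption \ref{assumC}. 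I would handle this exactly as in the proof of Theorem \ref{thm:analytic-prop}: by Lemma \ref{lemma:gcc-smaller-subset} choose $\widetilde\omega\Subset\omega$ and $\widetilde T<T$ with $(\widetilde\omega,\widetilde T)$ satisfying \ref{assumGCC}, pick $b_\omega\in C_c^\infty(\omega)$ smooth with $b_\omega=1$ on $\widetilde\omega$ and (to also satisfy the commutator Assumption \ref{assumcommu} via Proposition \ref{propcommutwave}) with $\partial_{\vec n}b_\omega=0$ on $\partial\M$, and set $\bC(\phi,\psi)=(0,b_\omega\psi)$ as in \eqref{eq:observation-operator-wave}. Proposition \ref{prop:higher-reg-obs} then gives Assumption \ref{assumC} at level $\sigma$ with time $\widetilde T$ (we only need one regularity level here, not the $\sigma+\veps$ one, since Proposition \ref{prop:finite-det-modes} invokes only Assumptions \ref{assumAA}, \ref{assumC}, \ref{assumF}).

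Next I would recast \eqref{eq:nlw-uc_source} in the abstract form. With $U=(u,\partial_t u)$ and $F$ as in \eqref{defFwave} with $\widetilde\chi=1$, $U$ solves $\partial_t U=AU+F(U)+H$ where $H=(0,h)\in C^0([0,T],X^\sigma)$ (here $X^\sigma$'s second component is $H^\sigma_D$, matching $h\in C^0([0,T],H^\sigma_0)$; note $H^\sigma_0=H^\sigma_D$ for $\sigma\in(0,1/2)$ and for $\sigma\in(1/2,1]$, using \cite{G:67} as recalled in the remark). The observation $\partial_t u=g$ on $\omega$ gives $\bC U=(0,b_\omega\partial_t u)=(0,b_\omega g)=:G\in L^2([0,T],X^\sigma)$, because $b_\omega$ is supported in $\omega$ where $\partial_t u=g$. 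So both $U$ and $\widetilde U$ solve the abstract system of Proposition \ref{prop:finite-det-modes} with the same $H$, same $G$, and are bounded by $R_0$ in $C^0([0,T],X^\sigma)$. Proposition \ref{prop:f-assumptions} (applicable since $f\in C^4$ and the stated range of $\sigma$) shows $F$ satisfies Assumption \ref{assumF} for any $\veps\in(0,1]$ and this $R_0$; fix such an $\veps\le 1$ (any choice works, e.g.\ $\veps=1/2$), and Assumption \ref{assumcommu} holds by Proposition \ref{propcommutwave} with $s=1$.

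Finally, by hypothesis $\P_n U=\P_n\widetilde U$ on $[0,T]$, where $\P_n=(P_n,P_n)$ are the spectral low-frequency projectors from Section \ref{s:not_wave}; these are precisely the abstract projectors associated with $A$ (since $A^*A$ acts diagonally as $-\Delta_g+\beta$ on both components, its eigenbasis is $(e_j,0),(0,e_j)$ and the projectors coincide). Applying Proposition \ref{prop:finite-det-modes} with parameters $(\widetilde T,\sigma,\veps,R_0,\bC,A,F)$ as above yields an integer $n$ for which $\P_n U=\P_n\widetilde U$ on $[0,\widetilde T]$ forces $U\equiv\widetilde U$ on $[0,\widetilde T]$; since $\widetilde T<T$ but the equation \eqref{eq:nlw-uc_source} is reversible (well-posedness Theorem \ref{thm:cauchy-problem}, or simply deterministic ODE-type uniqueness in $C^0([0,T],X^\sigma)$ once the data at time $\widetilde T$ coincide) we propagate $U\equiv\widetilde U$ to all of $[0,T]$. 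I would phrase this last propagation step carefully: once $U(\widetilde T)=\widetilde U(\widetilde T)$ in $X^\sigma$, both are solutions of the same Cauchy problem with source $h$ and equal data at $t=\widetilde T$, hence equal on $[0,T]$ by uniqueness. The one genuine point requiring a little care — and the only real obstacle — is the reduction from the sharp observation on $\omega$ to the smooth operator $\bC$ together with the identification of the spectral projectors, i.e.\ making sure all the abstract hypotheses are literally met; everything else is a direct citation.
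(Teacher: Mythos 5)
Your proof is correct and follows exactly the paper's approach, which is a one-paragraph citation of the abstract Proposition \ref{prop:finite-det-modes} after verifying Assumptions \ref{assumAA}, \ref{assumC} and \ref{assumF}. You are right that the cutoff $b$ entering $\bC$ must be compactly supported in $\omega$ in order for the two solutions to share the same observation $(0,bg)$; since no smooth $b$ can be $\geq 1$ on all of $\omega$ and supported in $\omega$, this forces the shrink to $\widetilde\omega\Subset\omega$ via Lemma \ref{lemma:gcc-smaller-subset} --- a point the paper's terse proof does not spell out. Your only unnecessary detour is the reduction to a strictly smaller observation time $\widetilde T<T$ followed by the uniqueness-of-Cauchy-problem propagation back to $[0,T]$: since $(\widetilde\omega,\widetilde T)$ satisfying \ref{assumGCC} trivially implies $(\widetilde\omega,T)$ satisfies it as well, you could apply Proposition \ref{prop:higher-reg-obs} directly for the pair $(\widetilde\omega,T)$ and then invoke Proposition \ref{prop:finite-det-modes} with the full time $T$, obtaining $U\equiv\widetilde U$ on $[0,T]$ in one shot. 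Also, as you already note, Assumption \ref{assumcommu} is not among the hypotheses of Proposition \ref{prop:finite-det-modes}, so the normal-derivative condition on $b$ and the appeal to Proposition \ref{propcommutwave} are harmless but superfluous.
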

\begin{proof}
We have already established in Section \ref{s:not_wave} that $A$ satisfies Assumption \ref{assumAA}. Under the hypothesis that $(\omega, T)$ satisfies the \ref{assumGCC}, Proposition \ref{prop:higher-reg-obs} implies Assumption \ref{assumC}. Finally, from Proposition \ref{prop:f-assumptions}, Assumption \ref{assumF} is satisfied for some $\veps\in (0, 1]$ where $F$ is defined by \eqref{defFwave} with $\widetilde{\chi}=1$. Then, the result follows as a direct application of the abstract Proposition \ref{prop:finite-det-modes}.
\end{proof}

\subsection{Unique continuation and equilibrium points} Here we will consider $U=(u, \partial_t u)\in C^0([0, T], X)$ solution of the system
\begin{align}\label{eq:nlw-uc}
    \left\{\begin{array}{rl}
        \partial_t^2 u-\Delta_g u+f(u)=0  &\ (t, x)\in [0, T]\times \text{Int}(\M),\\
        u_{|_{\partial\M}}=0            &\ (t, x)\in [0, T]\times\partial\M,\\
        \partial_t u=0 &\ (t, x)\in [0, T]\times \omega.
    \end{array}\right.
\end{align}
The purpose of this section is to prove Theorem \ref{thm:unique-continuation-nlw}.

\subsubsection{Propagation of regularity} Here we aim to prove a regularization effect for functions satisfying \eqref{eq:nlw-uc} in the subcritical case. When $\M$ is of dimension $d=2$ and $f$ has polynomial growth with $p\in [1, +\infty)$, it is possible to prove a gain of regularity in the nonlinearity by means of appropriate Sobolev embedding. The same holds when $d=3$ and $p\in [1, 3]$. However, when $d=3$ and $p\in (3, 5)$, this gain of regularity needs to be handled by means of Strichartz estimates. Dehman-Lebeau-Zuazua \cite[Theorem 8]{DLZ03} proved that the nonlinearity is more regular than it seems to be and they use this fact in a key fashion to stabilize the semilinear wave equation in an unbounded domain. We recall such regularity result from \cite[Corollary 4.2]{JL13}, which in particular fits in our geometrical setting.

\begin{proposition}\label{prop:gain-reg-nonlinearity}
    Assume $d=3$. Let $R>0$ and $T>0$. Let $s\in [0, 1)$ and $\veps=\min\{1-s, (5-p)/2, (17-3p)/14\}>0$ with $p$ as in \eqref{hip:nonlinearity-hyp-1}. There exist $(q, r)$ satisfying \eqref{thm:strichartz-exponents} and $C>0$ such that the following property holds. If $v\in L^\infty([0, T], H^{1+s}(\M)\cap H_0^1(\M))$ is a function with finite Strichartz norms $\norm{v}_{L^q([0, T], L^r(\M))}\leq R$, then $f(v)\in L^1([0, T], H_0^{s+\veps}(\M))$ and moreover
    \begin{align*}
        \norm{f(v)}_{L^1([0, T], H_0^{s+\veps}(\M))}\leq C\norm{v}_{L^\infty([0, T], H^{1+s}(\M)\cap H_0^1(\M))}.
    \end{align*}
    The constant $C$ depends only on $\M$, $(q, r)$, $R$ and the constant in estimate \eqref{hip:nonlinearity-hyp-1}.
\end{proposition}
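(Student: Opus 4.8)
The statement is \cite[Corollary~4.2]{JL13}, a variant in our geometric setting of \cite[Theorem~8]{DLZ03}, so the plan is simply to reproduce its proof. First I would dispose of the easy regimes: when $d\le 2$, or when $d=3$ and $p\le 3$, no Strichartz information is needed, since $H^{1+s}(\M)$ embeds into $L^{\rho}(\M)$ for a large (possibly infinite) $\rho$ and a Moser-type product estimate in $H^{s+\veps}$ bounds $\|f(v)(t)\|_{H^{s+\veps}}$ by a polynomial in $\|v(t)\|_{H^{1+s}}$ uniformly in $t$; integrating over the bounded interval $[0,T]$ then closes the argument. So the real work is for $d=3$, $p\in(3,5)$. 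I would also record two structural remarks: since $\veps\le 1-s$ we have $\mu:=s+\veps\le 1$, and since $v(t)_{|\partial\M}=0$ and $f(0)=0$, the trace of $f(v)(t)$ vanishes as well, so landing in $H^{\mu}_0$ rather than merely $H^{\mu}$ is automatic once the interior regularity is established.

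The heart of the matter is the pointwise-in-time estimate produced by the (fractional, or ordinary when $\mu=1$) chain rule applied to $f\in C^{4}$ with $|f'(\xi)|\le C(1+|\xi|)^{p-1}$, which schematically gives
\begin{align}\label{eq:sketch-chainrule}
    \|f(v(t))\|_{H^{\mu}(\M)}\le C\Big(1+\big\|(1+|v(t)|)^{p-1}\big\|_{L^{a}(\M)}\,\big\|v(t)\big\|_{W^{\mu,b}(\M)}\Big),\qquad \tfrac12=\tfrac1a+\tfrac1b .
\end{align}
I would then take $b$ as large as the Sobolev embedding $H^{1+s}(\M)\hookrightarrow W^{\mu,b}(\M)$ permits in dimension $3$, namely $\tfrac3b=\tfrac12+\veps$ (which through \eqref{eq:sketch-chainrule} fixes $a$), so that the second factor is bounded by $\|v\|_{L^\infty_tH^{1+s}_x}$; and bound the first factor by $1+\|v(t)\|_{L^{(p-1)a}_x}^{p-1}$. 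The remaining task is to control $v$ in a mixed norm $L^{q_1}_t L^{(p-1)a}_x$ with $q_1$ large enough that, after Hölder in time on $[0,T]$, the time integral of $\|v(t)\|_{L^{(p-1)a}_x}^{p-1}$ is finite; this is achieved by combining $v\in L^\infty_t H^{1+s}_x$ (hence $v\in L^\infty_t L^{\rho}_x$ by Sobolev) with the Strichartz bound $v\in L^{q}_t L^{r}_x$ along one or several admissible pairs $\tfrac1q+\tfrac3r=\tfrac12$, $q\ge\tfrac72$, by interpolation. Putting these together and integrating \eqref{eq:sketch-chainrule} over $[0,T]$ yields a bound of the form $\|f(v)\|_{L^1([0,T],H^{\mu})}\le C(1+\|v\|_{L^q_tL^r_x}^{p-1})\|v\|_{L^\infty_tH^{1+s}_x}$, and since the Strichartz norm is $\le R$ by hypothesis, the constant depends only on $\M$, $(q,r)$, $R$ and the constant in \eqref{hip:nonlinearity-hyp-1}.

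The main obstacle — and the reason the gain $\veps$ takes the particular form $\min\{1-s,(5-p)/2,(17-3p)/14\}$ — is purely the simultaneous admissibility of all these exponents. The embedding $H^{1+s}\hookrightarrow W^{\mu,b}$ already demands $\veps\le 1-s$; requiring that $L^{(p-1)a}_x$ be reachable by interpolation between the Sobolev exponent of $H^{1+s}$ and a Strichartz Lebesgue exponent, while keeping the time-Hölder exponent $\le q$, forces a constraint of the shape $\veps\lesssim 5-p$, which is exactly where energy subcriticality enters; and optimizing over the admissible pairs, the endpoint $q=\tfrac72$ (so $r=14$) produces the last constraint $\veps\le(17-3p)/14$. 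I expect the careful bookkeeping of these competing constraints, rather than any single inequality, to be the delicate point; it is carried out in detail in \cite[Section~4]{JL13} (compare \cite[Theorem~8]{DLZ03}), and I would simply follow that.
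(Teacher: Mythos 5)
The paper does not give its own proof of this proposition: it cites it directly from \cite[Corollary 4.2]{JL13} (which in turn adapts \cite[Theorem 8]{DLZ03}), so your plan to "simply reproduce its proof" is exactly the paper's approach, and your sketch of that argument --- fractional chain rule, Sobolev embedding $H^{1+s}\hookrightarrow W^{s+\veps,b}$ with $3/b=1/2+\veps$, then interpolating $L^\infty_t H^{1+s}_x$ against the $L^q_tL^r_x$ Strichartz norm to close the time integral, with the three constraints in $\veps$ coming from the embedding, subcriticality, and the endpoint admissible pair $(q,r)=(7/2,14)$ --- is the right skeleton.
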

\begin{remark}\label{rk:gain-reg-cpct-pert}
    According to the geometric framework considered in \cite{JL13}, the previous result remains true when $\M$ is a compact perturbation of $\R^3$, that is, $\R^3\setminus\O$ where $\O$ is a bounded smooth domain, endowed with a smooth metric equal to the euclidean one outside of a ball.
\end{remark}

With this gain of regularity at hand along with the assumption that $(\omega, T)$ satisfies the \ref{assumGCC}, we are able to propagate the regularity through the observability estimate \eqref{thm:ineq:linear-waves-obs-high}.

\begin{proposition}\label{prop:propagation-regularity}
    Let $U\in C^0([0, T], X)$ be a mild solution of the system \eqref{eq:nlw-uc} with finite Strichartz norm. Then
    \begin{align*}
        U\in C^0([0, T], X^{\upsilon})
    \end{align*}
    for all $\upsilon\in [0, 1)$. In particular $u\in L^\infty([0, T]\times \M)$.
\end{proposition}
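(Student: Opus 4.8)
The plan is to run a bootstrap in the regularity index $\upsilon$, using at each stage the higher-regularity observability estimate of Proposition \ref{prop:higher-reg-obs} together with the gain-of-regularity estimate for the nonlinearity (Proposition \ref{prop:gain-reg-nonlinearity} when $d=3$, or a plain Sobolev embedding when $d\leq 2$ or $d=3$, $p\leq 3$). Concretely, suppose we already know $U\in C^0([0,T],X^{s})$ for some $s\in[0,1)$ (the base case $s=0$ being the hypothesis). Since $\partial_t u = 0$ on $[0,T]\times\omega$, the observation $\bC U = (0,b_\omega\partial_t u)$ vanishes identically, where $\bC$ is as in \eqref{eq:observation-operator-wave}. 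I would like to conclude $U\in C^0([0,T],X^{s+\veps})$ for the $\veps>0$ furnished by Proposition \ref{prop:gain-reg-nonlinearity}, and then iterate finitely many times to reach any $\upsilon\in[0,1)$. The final claim $u\in L^\infty([0,T]\times\M)$ then follows because $X^{\upsilon}=H_D^{1+\upsilon}\times H_D^{\upsilon}$ and $H^{1+\upsilon}\hookrightarrow L^\infty$ as soon as $1+\upsilon>d/2$, which holds for $\upsilon$ close enough to $1$ when $d\leq 3$.

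For the inductive step the natural mechanism is exactly the argument already used in the proof of Proposition \ref{prop:abs-prop-reg}, with the smoothing operators $\mc{J}_n=(I+\tfrac1n\A^s)^{-1}$: write $U^n=\J_n U$, split by Duhamel into a linear piece $U^n_{lin}(t)=e^{tA}U^n_0$ and a nonlinear piece $U^n_{Nlin}(t)=\int_0^t e^{A(t-s)}\J_n F(U(s))\,ds$, bound $U^n_{Nlin}$ uniformly in $L^\infty([0,T],X^{s+\veps})$ using that $F(U)=(0,-f(u))$ is bounded in $L^1([0,T],H_D^{s+\veps})$ by Proposition \ref{prop:gain-reg-nonlinearity}, then bound $U^n_{lin}$ by the observability inequality \eqref{thm:ineq:linear-waves-obs-high} at level $s+\veps$ applied to $U^n_{lin}$, using $\bC U^n = [\J_n,\bC]U = \tfrac1n\J_n[\A^s,\bC]\J_n U$ together with Assumption \ref{assumcommu} (verified in Proposition \ref{propcommutwave}) to make the commutator term bounded. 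Passing to the liminf in $n$ gives $U(t)\in X^{s+\veps}$ with a uniform bound. This is essentially a verbatim invocation of Proposition \ref{prop:abs-prop-reg} (see also Remark \ref{rk:abs-prop-reg}), once one observes that $H_2=0$ and $H_1=0$ here, $(\omega,T)$ satisfying \ref{assumGCC} yields Assumption \ref{assumCC} via Proposition \ref{prop:higher-reg-obs}, and Assumption \ref{assumF} holds by Proposition \ref{prop:f-assumptions}.

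There is one technical wrinkle to handle carefully: to apply Proposition \ref{prop:gain-reg-nonlinearity} at regularity level $s$ one needs the \emph{finiteness of the Strichartz norm} of $u$, which is part of the hypothesis on $U$, so this is available at every step (the Strichartz norm does not change under the bootstrap, only the Sobolev index does), and one needs $s\in[0,1)$, which is preserved as long as we stop before reaching $1$. Also the exceptional value $\sigma=1/2$ in Proposition \ref{propcommutwave} must be avoided; since we only need to reach \emph{some} $\upsilon$ with $1+\upsilon>d/2$ (i.e.\ $\upsilon>1/2$ when $d=3$, any $\upsilon>0$ when $d\leq2$), we can always choose the intermediate indices of the bootstrap to stay in $[0,1]\setminus\{1/2\}$, e.g.\ by taking the step size $\veps$ slightly smaller if a step would land exactly on $1/2$. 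Once $U\in C^0([0,T],X^{\upsilon})$ for such $\upsilon<1$, the Sobolev embedding $H_D^{1+\upsilon}\hookrightarrow L^\infty(\M)$ gives $u\in C^0([0,T],L^\infty(\M))\subset L^\infty([0,T]\times\M)$.

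The main obstacle, and the only genuinely nontrivial input, is the gain of regularity for the nonlinearity in the truly subcritical range $d=3$, $3<p<5$: there one cannot simply use Sobolev embeddings and must rely on Strichartz estimates, but this is precisely the content of Proposition \ref{prop:gain-reg-nonlinearity} (imported from \cite{JL13,DLZ03}), so in the present write-up it is a black box. Everything else — the commutator estimate, the observability at higher regularity, and the Galerkin-type limiting argument — is already packaged in Propositions \ref{prop:abs-prop-reg}, \ref{prop:higher-reg-obs} and \ref{propcommutwave}. Hence the proof amounts to checking the hypotheses of Proposition \ref{prop:abs-prop-reg} at each step of a finite induction and invoking it, which I would write out in a few lines.
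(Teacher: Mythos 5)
Your proposal is correct and follows essentially the same route as the paper: reproduce the Galerkin-smoothing argument of Proposition~\ref{prop:abs-prop-reg} (as flagged in Remark~\ref{rk:abs-prop-reg}), with the nonlinear Duhamel term controlled via Proposition~\ref{prop:gain-reg-nonlinearity} (or Sobolev embedding in the easier cases), the linear piece handled through the higher-order observability of Proposition~\ref{prop:higher-reg-obs} plus the commutator bound of Proposition~\ref{propcommutwave}, and then a finite iteration in the regularity index to reach any $\upsilon\in[0,1)$. Your explicit remark about steering the intermediate indices away from $\sigma=1/2$ is a genuine technical care that the paper leaves implicit, but otherwise the two arguments coincide in structure and in the propositions invoked.
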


\begin{proof}
    The proof is the same as Proposition \ref{prop:abs-prop-reg}, except that the nonlinear term is not bounded, yet the Duhamel term is well defined thanks to Sobolev embedding or Strichartz estimates depending on the case; see Remark \ref{rk:abs-prop-reg}. Therefore, we only need to check that $\T: t\mapsto \int_0^t e^{A(t-s)}F(U(s))ds$ defines a bounded map from $L^\infty([0, T], X)$ into $L^\infty([0, T], X^\veps)$ for some $\veps>0$.
 
    From \cite[Proposition 3.2]{2020:joly-laurent:decay-nlw-no-gcc}, when $d=2$, we know that $F$ maps bounded sets of $X$ into bounded sets of $X^\veps$ for any $\veps\in [0, 1)$. When $d=3$ and $p\in [1, 3]$, the same holds with $\veps\in [0, (3-p)/2)$. When $d=3$ and $p\in (3, 5)$, since $u\in H_0^1(\M)$ and $f$ is subcritical, Proposition \ref{prop:gain-reg-nonlinearity} implies that for a given $\veps>0$ depending on $p$, $f(u)$ is globally bounded in $L^1([0, T], H_0^{\veps}(\M))$. In any case, $\T$ is bounded in $C^0([0, T], X^\veps)$ for an appropriate choice of $\veps>0$.

    Without loss of generality, let us now fix $\veps>0$ small enough so we can encompass all the aforementioned cases simultaneously. For $d=2$ and $d=3$, from Proposition \ref{prop:higher-reg-obs}, Assumption \ref{assumCC} is satisfied for $\sigma=0$ and $\veps>0$. Let $b_\omega$ be given by Lemma \ref{lemma:gcc-smaller-subset} and let $\bC$ be as in \eqref{eq:observation-operator-wave}. Then, Proposition \ref{propcommutwave} ensures that the pair $(A, \bC)$ satisfies Assumption \ref{assumcommu} with $s=1$ and if $\veps\leq 1$. We then reproduce the proof of Proposition \ref{prop:abs-prop-reg} to obtain that $U$ is bounded in $C^0([0, T], X^\veps)$.
    
    We can iterate the previous process to obtain that $U$ is bounded in $C^0([0, T], X^{k\veps})$ for $k\in \N$ as long as Proposition \ref{prop:gain-reg-nonlinearity} or simply Proposition \ref{prop:f-assumptions} apply. We obtain finally that $U$ is bounded in $C^0([0, T], X^\upsilon)$ for any $\upsilon\in [0, 1)$.
    
\end{proof}
\begin{remark}
\label{rk:propagsource}
Proposition \ref{prop:propagation-regularity} can be easily extended with the same proof by replacing \eqref{eq:nlw-uc} by equations of the form  $\partial_t^2 u-\Delta_g u+\chi(x)f(u+h_1)=h_2$ with $h_1\in C^0([0,T],H^2_D)$ and $h_1\in C^0([0,T],H^1_D)$.
\end{remark}

\subsubsection{On unique continuation for linear waves}\label{s:UCPword} Based on the works of Tataru, Robbiano-Zuily and H\"ormander, it is established that global unique continuation holds under the framework of partial analyticity and very general geometric assumptions, provided sufficient time has passed so that we do not contradict the finite speed of propagation. To obtain the nonlinear unique continuation property, we will follow \cite{JL13} and aim to treat the nonlinearity as a potential term. Therefore, we must ensure that our framework allows for the application of the result for linear waves. In what follows, we will recall a version of the Robbiano-Zuily-Hormander-Tataru result, which is well suited to our specific context.

We will first introduce some geometric quantities needed to state the result. For $E\subset \M$, we can define the largest distant from $E$ to a point in $\M$ by
\begin{align*}
    \mc{L}(\M, E)=\sup_{x\in\M}\text{dist}(x, E).
\end{align*}
If $E$ is open, the quantity
    \begin{align*}
        T_{UC}(E):=2\mc{L}(\M, E),
    \end{align*}
is the minimal time of unique continuation for the (linear) wave equation from an open set $E$, see Tataru \cite{1995:tataru:ucp-general}. We say that an open set $\omega$ satisfies the \ref{assumGCC} if there exists $T>0$ such that $(\omega, T)$ satisfies the \ref{assumGCC}. One can then define the minimal control time associated with $\omega$ by
    \begin{align*}
        T_{GCC}(\omega)=\inf\{T>0\ |\ (\omega, T)\ \text{satisfies \ref{assumGCC}}\}.
    \end{align*}
It can be proved that $T_{GCC}(\omega)\geq T_{UC}(\omega)$, see Laurent-L\'eautaud \cite[Lemma B.4.]{LL16}.

\begin{remark}
    For the case where $\partial\M=\emptyset$, the critical time $T_{GCC}(\omega)$ is not allowed, since, as shown in \cite[Theorem 1.1]{LL16}, the observability estimate always fails for such time.
\end{remark}

We now state the unique continuation property for linear waves with coefficients analytic in time, due to Tataru \cite{1995:tataru:ucp-general,Tataru:99}, Robbiano-Zuily \cite{RZ:98} and H\"ormander \cite{Hor:97}. We refer to \cite[Theorem 6.1]{2019:laurent-leautaud:quantitative-uc-waves} for a quantitative statement that implies the unique continuation and contains the construction of hypersurfaces that allows to obtain the global result.

\begin{theorem}[Tataru-Robbiano-Zuily-H\"ormander]\label{thm:qucp}
    Let $\M$ be a compact Riemannian manifold with (or without) boundary, $\Delta_g$ the Laplace-Beltrami operator on $\M$, and
    \begin{align*}
        P=\partial_t^2-\Delta_g+W_0\partial_t+W_1\cdot\nabla+V
    \end{align*}
    with $V$, $W_0$, $W_1$, $\text{div}(W_1)$ bounded and depending analytically on the variable $t\in (0, T)$. Let $\omega$ be a nonempty open subset of $\M$ and $T>2\mc{L}(\M, \omega)$. Let $(u_0, u_1)\in H_0^1(\M)\times L^2(\M)$ and associated solution $u$ of
    \begin{align*}
        \left\{\begin{array}{ll}
            Pu=0&\ \text{ in }\ (0, T)\times\Int{\M}\\
            u_{|_{\partial\M}}=0 &\ \text{ in }\ (0, T)\times \partial\M\\
            (u, \partial_t u)(0)=(u_0, u_1). &\ 
        \end{array} \right.
    \end{align*}
    Then, if $u$ satisfies $u=0$ on $[0,T]\times \omega$, then $u=0$ on $[0,T]\times \M$.
\end{theorem}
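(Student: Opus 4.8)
The plan is to deduce the global statement from a local unique continuation result across noncharacteristic hypersurfaces — which carries essentially all of the analytic difficulty — followed by a geometric foliation argument that propagates the vanishing set across the whole slab. Throughout, the nonstandard feature is that the coefficients $W_0, W_1, V$ are only assumed analytic in the single variable $t$ and merely bounded in $x$, so one works in the setting of operators with \emph{partially analytic} coefficients.

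The central local statement I would isolate is the following: if $S=\{\psi=0\}$ is a $C^2$ hypersurface in $(0,T)\times\Int{\M}$, oriented by $\psi>0$ on one side, if $x_0\in S$, and if $u$ solves $Pu=0$ and vanishes on $\{\psi>0\}$ near $x_0$, then $u$ vanishes in a full neighborhood of $x_0$, provided $S$ is pseudoconvex for $P$ relative to the partially analytic structure. To prove this one sets up a Carleman estimate of the form $\tau\nor{e^{\tau\varphi}w}{L^2}^2\lesssim\nor{e^{\tau\varphi}Pw}{L^2}^2$ for $\tau\gg1$ and $w$ localized near $x_0$, with a weight $\varphi$ convex along the Hamilton flow of the principal symbol. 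The hard point, and the reason classical H\"ormander theory does not suffice, is that for the hypersurfaces one actually needs in the globalization step (of the form $\psi=\psi(x)$, blind to the time direction), ordinary pseudoconvexity fails; the remedy, following Tataru, Robbiano--Zuily and H\"ormander \cite{1995:tataru:ucp-general, RZ:98, Hor:97, Tataru:99}, is to conjugate with an FBI transform in the analytic variable $t$ (equivalently, to insert a Gaussian localizer $e^{-|t-t_0|^2/(2h)}$), which produces an extra term of the correct sign and reduces the convexity requirement to the condition — always satisfied — on the part of the symbol that does not see the analytic direction. The lower-order terms, with $\text{div}(W_1)$ bounded, are absorbed by the large parameter $\tau$, and the analyticity in $t$ of these coefficients is precisely what is needed for the FBI-transform step to close.

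Next I would handle a neighborhood of $\partial\M$. After flattening the boundary via geodesic normal coordinates, the Dirichlet condition allows the same Carleman argument to be run up to $\partial\M$ using boundary Carleman estimates; alternatively, on the side where $u$ already vanishes one extends $u$ by zero across $\partial\M$, obtaining a solution of an operator with the same structure on a slightly enlarged manifold, to which the interior result applies. Either way one gets unique continuation across hypersurfaces that may touch the boundary.

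Finally I would globalize using the hypothesis $T>2\mc{L}(\M,\omega)=T_{UC}(\omega)$. One constructs, as in \cite[Section 6]{2019:laurent-leautaud:quantitative-uc-waves}, a one-parameter family of strictly pseudoconvex hypersurfaces $\{\psi_s=0\}$ — morally $\psi_s(t,x)=\text{dist}(x,\omega)^2-(t-T/2)^2+c_s$, suitably smoothed and perturbed to gain strict pseudoconvexity — whose union covers the whole slab $(0,T)\times\M$; starting from the region around $\{t=T/2\}\times\omega$, where $u$ vanishes, and sweeping along this family while invoking the two preceding steps at each leaf (together with finite speed of propagation to get started and a connectedness/compactness argument), one concludes $u\equiv0$ on $[0,T]\times\M$. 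The main obstacles are exactly these last two points: organizing the Carleman estimate so that the partial analyticity in $t$ genuinely recovers the missing convexity, and verifying that the foliation can be chosen strictly pseudoconvex everywhere, including along glancing rays and up to the boundary. Both are precisely the content of the cited works, which is why I would ultimately quote them rather than reproduce the arguments.
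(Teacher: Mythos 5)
The paper does not prove this theorem — it states it as a known result of Tataru, Robbiano--Zuily and H\"ormander, pointing to \cite[Theorem~6.1]{2019:laurent-leautaud:quantitative-uc-waves} for the globalized statement under the optimal time $T>2\mc{L}(\M,\omega)$ — and your proposal ultimately does the same, so the approaches coincide. Your outline of what those references contain (Carleman estimates sharpened by a Gaussian/FBI conjugation in the analytic variable $t$, a sweep by strictly pseudoconvex hypersurfaces, boundary Carleman estimates near $\partial\M$) is accurate; the one loose point is the parenthetical alternative of extending $u$ by zero across $\partial\M$, which without additional control of the normal trace $\partial_{\vec n}u$ would not yield a distributional solution of $P$, so the boundary Carleman route you also name is the one that actually closes the argument.
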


\subsubsection{Unique continuation for semilinear waves} We now prove Theorem \ref{thm:unique-continuation-nlw} and then Proposition \ref{propR3obstacle}.

\begin{proof}[Proof of Theorem \ref{thm:unique-continuation-nlw}] Let $U(t)=(u(t), \partial_t u(t))$ be a solution of \eqref{eq:nlw-uc}, which, by Proposition \ref{prop:propagation-regularity}, belongs to $\in L^\infty([0, T], X^\sigma)$ for any $\sigma\in (1/2, 1)$. For any $\chi\in C^{\infty}_c(\omega) $, the application $t\in (0, T)\mapsto \chi u(t,\cdot)\in H^{1+\sigma}(\M)\cap H_0^1(\M)$ does not depend on $t$ and is therefore analytic. In particular, Theorem \ref{thm:analytic-prop} applies and we get that $t\in (0, T)\mapsto U(t)\in X^\sigma$ is analytic.

Since $u$ is smooth with respect to $t$ and $f$ is smooth, by writing
\begin{align*}
    \Delta_g u=\partial_t^2 u+f(u)
\end{align*}
we get that $\Delta_g u\in L^2(\M)$ and so $u\in H^2(\M)$. We differentiate the above equation to obtain
\begin{align*}
    \Delta_g^2 u&= \Delta_g (\partial_t^2 u+f(u))=\partial_t^2 \Delta_g u+\Delta_g f(u)\\
    &=\partial_t^4 u+f'(u)\partial_t^2 u+f''(u)(\partial_t^2 u)^2+\Delta_g f(u)
\end{align*}
which shows that $u$ belongs to $H^4(\M)$. This process can be repeated as many times as wanted, so by classical Sobolev embedding we get that $u=u(t, x)$ is smooth with respect to $x$. In particular, $(t, x)\in [0, T]\times \M\mapsto u(t, x)\in C^\infty([0, T]\times \M)$ is bounded, together with all its derivatives.

Set $z=\partial_t u$ and observe that $z$ solves
\begin{align}\label{thm:proof:eq:nlw-unique-cont}
    \left\{\begin{array}{rl}
        \partial_t^2 z-\Delta_g z+f'(u)z=0  &\ (t, x)\in [0, T]\times \text{Int}(\M),\\
        z_{\left|\partial\M\right.}=0            &\ (t, x)\in [0, T]\times\partial\M,\\
        z=0 &\ (t, x)\in [0, T]\times \omega.
    \end{array}\right.
\end{align}
By the previous discussion $(t, x)\mapsto f'(u(t, x))$ is bounded, analytic in $t$ and smooth in $x$. Since $T>T_{GCC}\geq T_{UC}=2\mc{L}(\M, \omega)$, we can apply Theorem \ref{thm:qucp} to get  that $z\equiv 0$ everywhere. This in turn means that $u(t, x)=u(x)$ is constant in time and henceforth it solves
\begin{align}
    \left\{\begin{array}{rl}
        -\Delta_g u+f(u)=0  &\ x\in\text{Int}(\M),\\
        u=0            &\ x\in \partial\M.
    \end{array}\right.
\end{align}
Moreover, multiplying the latter equation by $u$ and integrating by parts leads us to the identity
\begin{align*}
    0\leq \int_\M |\nabla u(x)|^2 dx=-\int_\M u(x)f(u(x)) dx.
\end{align*}
Under the assumption that $sf(s)\geq 0$ for all $s$, the above identity, the connectedness of $\M$ and the boundary condition imply that $u\equiv 0$ everywhere. In the case $\partial\M=\emptyset$, we get
\begin{align*}
    0\leq  \int_\M |\nabla u(x)|^2 dx=-\int_\M u(x)f(u(x)) dx\leq -\gamma\int_\M |u(x)|^2dx,
\end{align*}
which directly implies $u\equiv 0$ everywhere.
\end{proof}

\begin{proof}[Proof of Proposition \ref{propR3obstacle}] Up to increasing $R$, we can assume without loss of generality that $\O\Subset B(0,R)$. Since $\partial_t u$ vanishes on $(0, T)\times\big(\R^3\setminus B(0,R)\big)\subset (0, T)\times \omega $, we have $-\Delta u+f(u)=0$ with $u\in H^1(\R^3\setminus B(0, R))$. Since $f$ is subcritical, we can use elliptic regularity and bootstrap (see, for instance, \cite[Theorem 9.19]{GT01}) to show that $u=u(x)$ belongs to $C^{4, \al}$ for some $\al\in (0, 1)$ in the set $\R^3\setminus B(0, R)$. 

Let us take $R_1>R$. We can easily construct a radial cutoff function $\chi\in C_c^\infty(\overline{\Omega})$ satisfying:
\begin{itemize}
    \item $\chi=1$ in $B(0, R)\setminus \O$;
    \item $\chi=0$ in $\Omega\setminus B(0, R_1)$;
    \item $\partial_{\vec n}\chi=0$ in $\partial\Omega=\partial\O$;
    \item $\supp\nabla\chi\subset B(0, R_1)\setminus B(0, R)$.
\end{itemize}
Observe that $(1-\chi)$ is supported in $\R^3\setminus B(0, R)$ and that $u$ does not depend on time in such a set and is regular in space. Henceforth, $(1-\chi) u$ is analytic as a map from $(0, T)$ into $H^{1+\sigma}(\Omega)\cap H_0^1(\Omega)$, for some $\sigma\in (1/2, 1)$, which we fix from now on. Additionally, pick $\veps>0$ so that $\sigma+\veps<1$. It remains to check that $\chi u$ is analytic on the time variable $t$.

Let us take $\widetilde{R}>R_1$ and consider $\M=B(0,\widetilde{R})\setminus \O$, so that $\M\cap\omega\neq \emptyset$ and $\partial \M=\partial \O\cup S(0,\widetilde{R})$. Since $\chi$ is $0$ outside the ball $B(0, R_1)$, we can consider it as a cutoff function in $C_c^\infty(\overline{\M})$ with $\partial_{\vec n}\chi=0$ on $\partial\M$. Let $\widetilde{\chi}\in C_c^\infty(\overline{\Omega})$ be another cutoff with the same properties as $\chi$ and $\widetilde{\chi}=1$ on $\textnormal{Supp}(\chi)$. Since the operator $f$ is local, we have $\chi f(u)=\chi f(\widetilde{\chi}u)$. The equation satisfied by $z:=\chi u$ is then
\begin{align}
\label{eq:zobstacle}
    \left\{\begin{array}{rl}
        \partial_t^2 z-\Delta_g z+\chi f(z+(1-\chi)\widetilde{\chi}u)-[\Delta_g, \chi]u=0 &\ (0, T)\times \M, \\
        z_{|_{\partial \M}}=0 &\ (0, T)\times \partial\M, \\
        \partial_t z=0 &\ (0, T)\times \widetilde{\omega},\\ 
    \end{array}\right.
\end{align}
where $\widetilde{\omega}=\M\cap\omega$. Observe that $[\Delta_g, \chi]$ is supported in the annulus $B(0, R_1)\setminus B(0, R)\subset \widetilde{\omega}$. Moreover, due to the regularity of $u$ in such a set (recall that it does not depend on time there) and the condition $\partial_{\vec n} \chi=0$ in $\partial\M$, $[\Delta_g, \chi]u$ is an analytic map from $(0, T)$ into $H_D^{2}(\M)\subset H_D^{\sigma+\veps}(\M)$. The same holds for $(1-\chi)\widetilde{\chi}u$ that defines an analytic map from $(0, T)$ into $H_D^{2}(\M)\subset H_D^{1+\sigma}$, where the cutoff $\widetilde{\chi}$ ensures the correct boundary condition on $S(0,\widetilde{R})$. In particular, since Proposition \eqref{prop:propagation-regularity} still holds for the equation \eqref{eq:zobstacle} (see Remark \ref{rk:propagsource}), we obtain that $(z, \partial_t z)\in C^0([0, T], X^\sigma)$.

We are then in the configuration of Theorem \ref{thmabstractanalyticintro}. As we did in the proof of Theorem \ref{thm:analytic-prop}, we get that $t\mapsto z(t)\in H^{1+\sigma}(\M)\cap H_0^1(\M)$ is real analytic.

Summarizing, we have proved that $t\in (0, t)\mapsto u(t)\in H^{1+\sigma}(\Omega)\cap H_0^1(\Omega)$ is analytic. A version of Theorem \ref{thm:qucp} of unique continuation for linear waves for unbounded domains can be applied (see \cite[Corollary 3.12]{JL13}), from which we get that $\partial_t u=0$ in the whole cylinder $(0, T)\times \Omega$. We then conclude as we did for Theorem \ref{thm:unique-continuation-nlw}.
\end{proof}

\subsection{Observability inequality for the nonlinear equation}
Once the unique continuation property has been proved, the proof of the observability estimate follows some ideas from earlier articles. We follow in particular, the scheme introduced in \cite{DLZ03} with the further simplification of \cite{JL13} that replaced the use of microlocal defect measure by the decay of the semigroup. We follow a similar path, except that we want to have the observability in finite time, the one of \ref{assumGCC}. Therefore, we have to use instead the observability estimate as a black box.
\begin{proof}[Proof of Theorem \ref{thmobserintro}]We make the proof for $d=3$ and, without loss of generality, we assume that $p\in (3,5)$. For $d\leq 2$, the proof is the same using different Strichartz norms and the polynomial bound of the nonlinearity. To simplify the notation, we denote $\mc{Z}$ the Banach space of vectors $W=(u,v)$ so that $W\in C^0([0, T],X)$ and $u\in L^{4}([0, T],L^{12}(\M))$, endowed with the natural norm. We argue by contradiction. Assume that \eqref{obsevNLintro} is not satisfied. Then, there exists a sequence $(u_{0,n},u_{1,n})\in H^1_0\times L^2(\M)$, with $\nor{(u_{0,n},u_{1,n})}{H^1_0\times L^2}\leq R_0$, so that the unique solution $(u_{n}, \partial_t u_{n})\in \mc{Z}$ of \eqref{eq:nlw-1} satisfies
\begin{align}\label{obsevNLintronegation}
   \int_0^T \norm{\mathbbm{1}_\omega\partial_t u_{n}(t)}_{L^2(\M)}^2 dt
     \leq \frac{1}{n}\norm{(u_{0,n},u_{1,n})}_{H_0^1(\M)\times L^2(\M)}^2.
\end{align}
Up to taking a subsequence, we can assume $(u_{0,n},u_{1,n}) $ converges weakly to $(u_{0},u_{1})\in H^1_0\times L^2$. The global well-posedness theory for \eqref{eq:nlw-1} and the defocusing assumption states that the sequence $u_{n}$ is globally bounded in $C^{0}([0, T],H_0^1(\M))$ with uniformly bounded Strichartz norms $L^{4}([0, T],L^{12}(\M))\leq R$; see Theorem \ref{thm:cauchy-problem}.
     
In particular, we can extract a subsequence (still denoted $u_{n}$) so that $u_{n}$ converges weakly to some $u\in L^{4}([0, T],L^{12}(\M))$. 

     We will prove that $u$ is solution of the nonlinear equation with initial datum $(u_{0},u_{1})$. More precisely, denoting $U_0=(u_{0},u_{1})$, the well-posedness theory allows to define the nonlinear solution
        \begin{align*}
        V(t)=e^{tA}U_0+\int_0^t e^{A(t-s)}F(V(s))ds=V_{lin}+V_{Nlin},
    \end{align*}
     and we want to prove that $U=V$ where $U=(u,\partial_{t }u)$. The operators $A$ and $F$ are defined as in Section \ref{s:not_wave} with the appropriate choice of $\beta$.

     Observe that the sequence $(u_{n}, \partial_{t }u_{n})$ is bounded in $C^{0}([0, T],H_0^1(\M)\times L^2(\M))$. Using the Aubin-Lions Lemma (see, for instance, \cite[Corollary 4]{S:87}), we obtain that, for any $\eta>0$, still up to a subsequence, we can assume that $u_{n}$ converges strongly to $u$ in $C^{0}([0, T],H^{1-\eta}(\M))$. In particular, choosing $\eta=\frac{5-p}{4}>0$, by Sobolev embedding, we conclude that  $u_{n}$ converges strongly to $u$ in $L^{\infty}([0, T],L^{r}(\M))$ for $r=\frac{12}{7-p}<6$. Using \eqref{hip:nonlinearity-hyp-1}, H\"older estimates for $\frac{1}{2}=\frac{1}{r}+\frac{p-1}{12} $ and $p<5$, we get
    \begin{align*}
    \norm{f(u_{n})-f(u)}_{L^{1}([0, T], L^{2}(\M))}&\leq \norm{u_{n}-u}_{L^{\infty}([0, T], L^{r}(\M))}\norm{1+ u_{n}^{p-1}+u^{p-1}}_{L^{1}([0, T],L^{\frac{12}{p-1}}(\M))}\\
    &\leq C \norm{u_{n}-u}_{L^{\infty}([0,T], L^{r}(\M))}\left(1+\norm{u_{n}}_{L^{p-1}([0, T],L^{12}(\M))}^{p-1}+\norm{u}_{L^{p-1}([0, T],L^{12}(\M))}^{p-1}\right)\\
     &\leq C(T) \norm{u_{n}-u}_{L^{\infty}([0,T], L^{r}(\M))}\left(1+\norm{u_{n}}_{L^{4}([0, T],L^{12}(\M))}^{p-1}+\norm{u}_{L^{4}([0, T],L^{12}(\M))}^{p-1}\right).
    \end{align*} 
We obtain that $f(u_{n})$ converges strongly to $f(u)$ in $L^{1}([0, T], L^{2}(\M))$. The Duhamel formulation gives
    \begin{align*}
        U^n(t)=e^{tA}U_0^n+\int_0^t e^{A(t-s)}F(U^n(s))ds=U_{lin}^n+U_{Nlin}^n,
    \end{align*}
with 
    \begin{align}
    \label{e:cvgNL}
\norm{U_{Nlin}^n-V_{Nlin}}_{\mc{Z}}\underset{n\to +\infty}{\longrightarrow}0.
    \end{align}
    Since the application $U_0 \mapsto e^{\cdot A}U_0$ is linear continuous from $X$ to $\mc{Z}$, it is also continuous for the weak topology on each space. In particular, since $U_0^n$ converges weakly to $U_0$ in $X$, we obtain that $e^{\cdot A}U_0^n$ converges weakly to $e^{\cdot A}U_0=V_{lin}$ in $L^{4}([0, T],L^{12}(\M))$. In particular, $U^n$ converges weakly to $V$  in $\mc{Z}$ and $U=V$ as expected. 

We have obtained that $u\in C^{0}([0, T],H_0^1(\M))\cap L^{4}([0, T],L^{12}(\M))$ is a mild solution of  
     \begin{align*}
    \left\{\begin{array}{rll}
        \partial_t^2 u-\Delta_g u+f(u)=&0  &\ (t, x)\in [0, T]\times \text{Int}(\M),\\
        u_{|_{\partial\M}}=&0            &\ (t, x)\in [0, T]\times\partial\M,\\
            (u, \partial_t u)(0)=&(u_0, u_1)&\ x\in \M .
    \end{array}\right.
\end{align*}
Moreover, using \eqref{obsevNLintronegation} and taking weak limit, we get $\partial_t u=0$ in $[0, T]\times\omega$.
In particular, we are in a position to apply Theorem \ref{thm:unique-continuation-nlw} and we obtain $u=0$. In particular, \eqref{e:cvgNL} can be written
    \begin{align}
    \label{e:cvgNLbis}
\norm{U_{Nlin}^n}_{\mc{Z}}\underset{n\to +\infty}{\longrightarrow}0.
    \end{align}
The observability inequality allows to write
    \begin{align*}
      \norm{U_0^n}_{X}^2  &\leq \mathfrak{C}_{\text{obs}}^2\int_0^T \norm{\mathbbm{1}_\omega\partial_t u_{lin}^n}_{L^2(\M)}^2dt\\
        &\leq 2\mathfrak{C}_{\text{obs}}^2\int_0^T \norm{\mathbbm{1}_\omega\partial_t u^n}_{L^2(\M)}^2dt+2\mathfrak{C}_{\text{obs}}^2\int_0^T \norm{\mathbbm{1}_\omega\partial_t u_{Nlin}^n}_{L^2(\M)}^2dt.
    \end{align*}
When combined with \eqref{obsevNLintronegation} and \eqref{e:cvgNLbis}, we obtain $\alpha_n:=\norm{U_0^n}_{X}\underset{n\to +\infty}{\longrightarrow}0$.

Now that we know that the initial datum converges to zero strongly, we can "linearize" and consider the nonlinear solution as close to the linear one for which the observability is known. More precisely, denote $w_n=u_{n}/\alpha_n$ mild solution of
     \begin{align*}
    \left\{\begin{array}{rll}
        \partial_t^2 w_n-\Delta_g w_n+\alpha_n^{-1}f(\alpha_n w_n)=&0  &\ (t, x)\in [0, T]\times \text{Int}(\M),\\
        (w_n)_{|_{\partial\M}}=&0            &\ (t, x)\in [0, T]\times\partial\M,\\
            (w_n, \partial_t w_n)(0)=&(w_{n,0}, w_{n,1})&\ x\in \M, 
    \end{array}\right.
\end{align*}
with $\norm{(w_{n,0}, w_{n,1})}_{X}=1$.  If we write $f(s)=f'(0)s+h(s)$ (with $f'(0)\geq 0$ thanks to the assumption on $f$), we have the estimate
    \begin{align}\label{hypNL-R}
      \ |h(s)|\leq C(|s|^2+|s|^p)\ \text{and}\ |h'(s)|\leq C(|s|+|s|^{p-1}).
    \end{align}
The new nonlinearity $f_n(s)=\alpha_n^{-1}f(\alpha_n s)$ can be written as  $f_n(s)=f'(0)s+\alpha_n^{-1}h(\alpha_n s)$. In particular, $h_n(s):=\alpha_n^{-1}h(\alpha_n s)$ satisfies, uniformly in $n \in \N$,
    \begin{align}\label{hypNL-Rn}
      \ |h_n(s)|\leq C\alpha_n(|s|^2+|s|^p)\ \text{and}\ |h'_n(s)|\leq C\alpha_n(|s|+|s|^{p-1}).
    \end{align}
    Now, denoting 
    \begin{align*}
      \widetilde A=\begin{pmatrix}
    0 & I \\
    \Delta_g -f'(0)& 0
    \end{pmatrix}\ \text{ and }
  H_n\begin{pmatrix}
    u  \\
     v
    \end{pmatrix}=\begin{pmatrix}
    0  \\
     -h_n(u)
    \end{pmatrix},
\end{align*}
we have
    \begin{align*}
        W^n(t)=e^{t\widetilde A}W_0^n+\int_0^t e^{\widetilde A(t-s)}H_n(W^n(s))ds=W_{lin}^n+W_{Nlin}^n,
    \end{align*}
with
    \begin{align}
    \norm{W_{lin}^n}_{\mc{Z}}&\leq C\\
\label{estimWNlin}\norm{W_{Nlin}^n}_{\mc{Z}}&\leq C\alpha_n \norm{W^n}_{C^0([0, T],X)}(\norm{w^n}_{L^{4}([0, T],L^{12}(\M))}+\norm{w^n}_{L^{4}([0, T],L^{12}(\M))}^{p-1}).
    \end{align}
    This gives
        \begin{align*}
    \norm{W^n}_{\mc{Z}}&\leq C+ C\alpha_n \left( \norm{W^n}_{\mc{Z}}^2+ \norm{W^n}_{\mc{Z}}^p\right).
    \end{align*}
    In particular, a bootstrap argument (see for instance \cite[Lemma 2.2.]{BG:99} for a slightly different case) allows to prove, for $n$ large enough,
    \begin{align*}
    \norm{W^n}_{\mc{Z}}&\leq 2C,
    \end{align*}
    which, after getting back to \eqref{estimWNlin} gives 
    \begin{align*}
\norm{W_{Nlin}^n}_{\mc{Z}}&\leq C\alpha_n .
    \end{align*}
   Now, we are in position to apply the observability estimate of Proposition \ref{prop:higher-reg-obs}, taking into account that the assumptions imply $f'(0)>0$ when $\partial \M=\emptyset$. We then write it for $W^n_{lin}=(w^n_{lin},\partial_tw_{lin}^n)$,
        \begin{align*}
  \norm{(w_{0,n},w_{1,n})}_{H_0^1(\M)\times L^2(\M)}^2&\leq C  \int_0^T \norm{\mathbbm{1}_\omega\partial_t w^{n}_{lin}(t)}_{L^2(\M)}^2 dt\\
  &\leq  C  \int_0^T \norm{\mathbbm{1}_\omega\partial_t w^{n}_{Nlin}(t)}_{L^2(\M)}^2 dt+ C  \int_0^T \norm{\mathbbm{1}_\omega\partial_t w^{n}(t)}_{L^2(\M)}^2 dt.
    \end{align*}
    Concerning the first term, we use energy estimates and get
\begin{align*}
     \int_0^T \norm{\mathbbm{1}_\omega\partial_t w^{n}_{Nlin}(t)}_{L^2(\M)}^2 dt\leq C\norm{W_{Nlin}^n}_{\mc{Z}}^2\leq C\alpha_n^2.
\end{align*}
For the second term, estimate  \eqref{obsevNLintronegation} with the scaling $w_n=u_n/\alpha_n$ and $\alpha_n=\norm{(u_{0,n},u_{1,n})}_{H_0^1(\M)\times L^2(\M)}$ can be written
    \begin{align*}
   \int_0^T \norm{\mathbbm{1}_\omega\partial_t w_{n}(t)}_{L^2(\M)}^2 dt
     \leq \frac{1}{n}.     \end{align*}
The combination of the previous estimates give 
\begin{align*}
      \norm{(w_{0,n},w_{1,n})}_{H_0^1(\M)\times L^2(\M)}^2\leq C\alpha_n^2+\frac{C}{n}.
\end{align*}
Yet, $\norm{(w_{0,n},w_{1,n})}_{H_0^1(\M)\times L^2(\M)}=1$, which is a contradiction.
\end{proof}
 
\section{Applications to the Plate equation}\label{s:Plate}
In this section, we will give another application of the abstract Theorem \ref{thmabstractanalyticintro} concerning the plate equation. The purpose will be to obtain Theorem \ref{thm:analytic-nlp}. We begin by presenting the equation and notations.
\subsection{Semilinear Plate equation} Let $T>0$. Let us consider $\M$ to be a compact connected Riemannian manifold  with smooth boundary $\partial\M$ and the hinged semilinear plate equation
\begin{align}\label{eq:nlp-1}
    \left\{\begin{array}{rl}
        \partial_t^2 u+\Delta_g^2 u+f(u)=0  &\ (t, x)\in [0, T]\times \M,\\
        u_{|_{\partial\M}}=\Delta u_{|_{\partial\M}}=0            &\ (t, x)\in [0, T]\times\partial\M,\\
        (u, \partial_t u)(0)=(u_0, u_1) &\ x\in \M,
    \end{array}\right.
\end{align}
where $(u_0, u_1)\in \big(H^2(\M)\cap H_0^1(\M)\big)\cap L^2(\M)$ and $f: \R\to \R$ is assumed to be analytic and to satisfy $f(0)=0$.

Given the abstract result described in Section \ref{s:abstrIntro} and the strategy we have used to obtain unique continuation property for the wave equation, it is then natural to ask if it also holds for system \eqref{eq:nlp-1} when the nonlinearity is assumed to be analytic. 

\subsubsection{Notation}\label{notPlate} Let $A_0=-\Delta_g$ be the Laplace-Beltrami operator, equipped with Dirichlet boundary conditions if $\partial\M\neq\emptyset$. Recall that $A_0: D(A_0)\to L^2(\M)$ is a self-adjoint and nonnegative operator. In this case, its domain is given by $D(A_0)=H^2(\M)\cap H_0^1(\M)$ on $L^2(\M)$. Set $X=D(A_0)\times L^2(\M)$ and introduce the densely defined operator $\bA: D(\bA)\to X$ given by
\begin{align*}
    \bA=\left(\begin{array}{cc}
        0 & I \\
        -A_0^2 & 0
    \end{array}\right)\ \text{ with } D(\bA)=D(A_0^2)\times D(A_0).
\end{align*}
Since $A_0$ is self-adjoint, then $A_0^2$ is a strictly positive operator and so $\bA$ is skew-adjoint. As for the wave operator, a simple computation then shows that $\bA\bA^*=-\bA^2$ has compact resolvent, henceforth $\bA$ satisfies Assumption \ref{assumAA} on $X$.

For $\sigma\in [0, 1]$, in this section, $X^\sigma$ denotes the space
\begin{align*}
    X^\sigma=D(A_0^{1+\sigma})\times D(A_0^{\sigma})=H_D^{2+2\sigma}\times H_D^{2\sigma},
\end{align*}
where we recall the notation introduced in \eqref{defSob}. By Stone's theorem, $\bA$ generates a unitary $C_0$-group on $X$ and $D(\bA)$. By linear interpolation, so it does on $X^\sigma$ for any $\sigma\in [0, 1]$.

\begin{remark}
    We have chosen to consider the \emph{hinged} boundary conditions for simplicity. Other boundary conditions could have been considered, but they would require a more careful analysis; see \cite{ET:15}.
\end{remark}

For some $\widetilde{\chi}\in C^{\infty}(\M)$ to be chosen later, we set $F\in C^0(X)$ to be the map
\begin{align*}
    F: (u, v)\in X\longmapsto (0, -\widetilde{\chi}f(u))\in X.
\end{align*}
Following the exact same steps of Proposition \ref{prop:f-assumptions} and adapting the spaces to this setup, we have an analogous result for $F$. We only need to notice that for $d\leq 3$, $H^{2}_D\subset L^{\infty}$.

\begin{proposition}\label{prop:f-assumptions-pl}
    Set $\sigma=0$ and $d\leq 3$. If $f$ is real analytic and $f(0)=0$, then $F$ satisfies Assumption \ref{assumFholom} for some $\veps>0$.
\end{proposition}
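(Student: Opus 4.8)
The statement asserts that the nonlinear map $F(u,v)=(0,-\widetilde{\chi}f(u))$ satisfies Assumption \ref{assumFholom} on the plate configuration with $\sigma=0$, i.e. that $F$ is a bounded, Lipschitz and holomorphic map from (a cylinder over) the ball $\mathbb{B}_{4R_0}(X^0)$ into $X^{\veps}_{\mathbb{C}}$ for a suitable $\veps>0$, where here $X^0 = H^2_D\times L^2$ and $X^{\veps}=H^{2+2\veps}_D\times H^{2\veps}$. The plan is to mimic \emph{verbatim} the proof of Proposition \ref{prop:f-assumptions}, only replacing the wave-type Sobolev scale $H^{1+\sigma}_D\times H^\sigma_D$ by the plate-type scale $H^{2+2\sigma}_D\times H^{2\sigma}_D$, and using the crucial dimensional input that, for $d\leq 3$, the first component lives in $H^2_D\hookrightarrow L^\infty$ with some embedding constant $\kappa$.

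\textbf{Key steps.} First I would fix $\veps\in(0,1/2]$ (say), so that $2\veps\le 1<2$ and the composition lemma (Lemma \ref{lem:comp-reg}, applied on the scale $H^{s}$ with $s=2\veps\in(0,2)$) is available; note also $2+2\veps\le 2+1=3$, which keeps us in a range where Lemma \ref{lem:comp-reg} applies to the gain space as well if needed, but in fact one only needs it with $s=2\veps<2$. Step one: for $V=(v,w)\in\mathbb{B}_{4R_0}(X^0)$ the first component satisfies $\|v\|_{L^\infty}\le\kappa\|v\|_{H^2_D}\le 4\kappa R_0$, so $f(v)$ and $\widetilde\chi f(v)$ are well defined; since $f\in C^\infty$ restricted to the relevant bounded real interval and $f(0)=0$, Lemma \ref{lem:comp-reg} with $s=2\veps$ gives $f(v)\in H^{2\veps}$ with $\|f(v)\|_{H^{2\veps}}\le C(\|v\|_{L^\infty})\|v\|_{H^{2\veps}}\le C(R_0)\|v\|_{H^2_D}$, hence $F(V)\in X^\veps$ and is bounded on the ball. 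Step two: the Lipschitz estimate is obtained exactly as in \eqref{fineq1}, writing $f(v)-f(v')=\int_0^1 Df(v'+\tau(v-v'))(v-v')\,d\tau$, using that $H^{2}$ is an algebra (true for $d\le 3$ since $2>d/2$) to estimate the product, and that $Df(v'+\tau(v-v'))-Df(0)$ is controlled in $H^2$ by $\|v\|_{H^2}+\|v'\|_{H^2}$ via Lemma \ref{lem:comp-reg} applied to $Df$; one then uses the embedding $H^2_D\hookrightarrow H^{2\veps}_D$ (since $2\veps\le 2$) and multiplication by $\widetilde\chi$ to land in $H^{2\veps}_D$, giving $\|F(V)-F(V')\|_{X^\veps}\le C(R_0)\|V-V'\|_{X^0}$. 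Step three: the holomorphic extension. By compactness $f$ extends holomorphically to a complex strip $\mathbb{S}_{R_0,\delta}=\{|z_1|\le 4\kappa R_0,\ |z_2|\le 2\kappa\delta\}$ with a uniform bound $M$; then, exactly as in Proposition \ref{prop:f-assumptions}, the variant of Lemma \ref{lem:comp-reg} for compositions of smooth functions on domains of $\mathbb{C}$ with $H^2_D$ functions shows $f(v)$ is well defined and bounded in $\mathbb{B}_{4R_0,2\delta}(H^2_D)$; writing the second-order Taylor expansion $f(v+r)=f(v)+f'(v)r+r^2\int_0^1 f^{(2)}(v+tr)(1-t)\,dt$ pointwise in $x$ and estimating each term in $H^2_D$ (using the algebra property and that the boundary condition $u|_{\partial\M}=0$ is preserved by each term) shows that $v\mapsto f(v)$ is $\mathbb{C}$-differentiable from $\mathbb{B}_{4R_0,2\delta}(H^2_D)$ into $H^2_D+iH^2_D$, hence into $H^{2\veps}_D+iH^{2\veps}_D$. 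Composition with the linear bounded maps $v\mapsto(0,-\widetilde\chi v)$ and the inclusion into $X^\veps_{\mathbb{C}}$ then yields the holomorphic extension of $F$ required by Assumption \ref{assumFholom} (using Theorems \ref{appendix:thm:equiv-holom} and \ref{appendix:thm:chain-rule} from the appendix).

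\textbf{Main obstacle.} There is essentially no new difficulty: the only thing to verify carefully is the dimensional bookkeeping, namely that for $d\le 3$ one has both $H^2_D\hookrightarrow L^\infty$ (needed to make $f(v)$ pointwise well defined and to get the $L^\infty$-dependent constants in Lemma \ref{lem:comp-reg}) and $H^2$ an algebra (needed for the product estimates in the Lipschitz and differentiability arguments), and that the boundary conditions defining $H^s_D$ versus the local $H^s$ norms match up in the relevant range $0\le 2\veps<1/2$ (no boundary condition) and $1/2<2+2\veps<5/2$ (Dirichlet condition $u=0$ on $\partial\M$), so that Lemma \ref{lem:comp-reg} and its complex variant transfer between the two. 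Once these embeddings are in place, the proof is a line-by-line transcription of Proposition \ref{prop:f-assumptions} with the wave scale replaced by the plate scale; so the write-up would simply say ``following the exact same steps of Proposition \ref{prop:f-assumptions}, adapting the Sobolev scale and using $H^2_D\hookrightarrow L^\infty$ for $d\le 3$'' and indicate the one or two places where the indices change.
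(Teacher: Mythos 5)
Your proof is correct and follows exactly the same route as the paper's one-sentence argument, namely transcribing Proposition \ref{prop:f-assumptions} onto the plate scale $X^\sigma=H^{2+2\sigma}_D\times H^{2\sigma}_D$ with $\sigma=0$ and using $H^2_D\hookrightarrow L^\infty$ together with the algebra property of $H^2$ for $d\leq 3$. The only small point to tidy is that in your step two you invoke Lemma \ref{lem:comp-reg} at the excluded endpoint $s=2$; this is harmless here since $f$ is analytic (the Meyer multiplier argument mentioned after the lemma then goes through), but it is cleaner to run the Lipschitz estimate directly in $H^{2\veps}$ with $\veps<1/4$ — which also matches your own later requirement $2\veps<1/2$ for the boundary condition — via the bilinear Moser bound $\|gh\|_{H^{2\veps}}\lesssim \|g\|_{L^\infty}\|h\|_{H^{2\veps}}+\|g\|_{H^{2\veps}}\|h\|_{L^\infty}$, thereby staying strictly inside the range $s\in(0,2)$ of the lemma.
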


\subsection{From Schr\"odinger's to Plate's observability}
The objective of this section is to discuss abstractly Lebeau's strategy \cite{1992:lebeau:schrodinger-controle} for deriving an observability inequality for the Plate equation from that of Schr\"odinger. This analysis is based on the observation that, under hinged boundary conditions, the bi-Laplace operator is precisely the square root of the Dirichlet-Laplace operator.

\subsubsection{Abstract framework for transferring observability estimates} Let $H_0$ and $Y$ be Hilbert spaces with norms $\norm{\cdot}_0$ and $\norm{\cdot}_Y$, with $H_0$ separable. Let $A$ be self-adjoint, positive\footnote{In the application, the operator $A$ will sometimes only be nonnegative because of the eigenvalue $0$. Yet, we will easily treat this subspace and this assumption makes the proof easier to write.} and boundedly invertible unbounded operator on $H_0$ with domain $D(A)$. Furthermore, we assume that $A$ has compact resolvents.

We introduce the Sobolev scale of spaces based on $A$. For any $s>0$, let $H_s$ denote the Hilbert space $D(A^{s/2})$ with the norm $\norm{x}_s=\norm{A^{s/2}x}_{0}$ (which is equivalent to the graph norm $\norm{x}_{0}+\norm{x}_s=\norm{A^{s/2}x}_{0}$ since $0\in \rho(A)$). We identify $H_0$ with its dual with respect to its inner product (i.e. we will use it as a pivot space). Let $H_{-s}$ denote the dual of $H_s$. Since $H_s$ is densely continuously embedded in $H_0$, the pivot space $H_0$ is densely continuously embedded in $H_{-s}$, and $H_{-s}$ is the completion of $H_0$ with respect to the norm $\norm{x}_{-s}=\norm{A^{-s/2}x}_0$.  We will still denote by $A$ the restriction of $A$ to $H_s$ with domain $H_{s+2}$. It is self-adjoint with respect to the $H_s$ scalar product. By the spectral theorem, there exists an orthonormal basis of $H_0$ consisting of eigenfunctions of $A$. Let us denote it by $\{(e_k, \ld_k)\}_k$, where $Ae_k=\ld_k e_k$ for each $k$. In such case, we can characterize the $H_s$-norm as follows,
\begin{align*}
    \norm{x}_s^2=\sum_{k\in \N} \ld_k^{s}|x_k|^2,\ \forall s\in \R
\end{align*}
where $x=\sum_{k\in \N} x_ke_k$. We will also make the technical assumption that there exists $N\in \N$ so that 
\begin{align}
  \label{hypWeyl}  \sum_{k\in \N}\ld_k^{-N}<+\infty.
\end{align}

Let $\mc{C}\in \mc{L}(H_2, Y)$. Let us consider the first order system for $A$,
\begin{align}
\big\{\begin{array}{ccc}\label{appendix:first-order-system}
     \dot{\psi}(t)-iA\psi(t)=0, & \psi(0)=\psi_0\in H_0, & y(t)=\mc{C}\psi(t). \\
\end{array}
\end{align}
We assume that $\mc{C}$ is an admissible observation operator for $e^{i\cdot A}$, that means, for some $\tau>0$ (and thus for any times by the group property), there exists $K_\tau\geq 0$ such that
\begin{align*}
    \int_0^\tau \norm{\mc{C}e^{itA}\psi_0}_Y^2dt\leq K_\tau \norm{\psi_0}_{H_0}^2,\ \ \forall \psi_0\in H_2.
\end{align*}
\begin{remark}
    Under the admissibility assumption, the output map $\psi_0\mapsto \mc{C}\psi_0$ from $H_2$ to $L_{loc}^2(\R, Y)$ has a continuous extension to $H_0$.
\end{remark}
\begin{definition}
    Let $-\infty< \tau_1<\tau_2<\infty$. We say that system \eqref{appendix:first-order-system} is weakly observable by $\mc{C}$ in $(\tau_1, \tau_2)$ if there exists $C>0$ such that
    \begin{align*}
        \norm{\psi_0}_{H_0}^2\leq C\left(\int_{\tau_1}^{\tau_2} \norm{\mc{C}e^{itA}\psi_0}_{Y}^2dt+\norm{\psi_0}_{H_{-2}}^2\right),
    \end{align*}
    for any $\psi_0\in H_0$.
\end{definition}

Note that the conservation of $H_0$ and $H_{-2}$-norms and translation invariance in time show that it is equivalent to state it for $\tau_1=0$, the only relevant quantity being $\tau_2-\tau_1$. Moreover, changing $\psi(t)$ to $\psi(-t)$ allows to get the same result for $e^{-itA}$.

Consider the second-order observability system
\begin{align}\label{appendix:second-order-system}
\left\{\begin{array}{cl}
     \ddot{z}(t)+A^2z(t)=0, & \\
      z(0)=z_0\in H_2, & \dot{z}(0)=z_1\in H_0, \\
\end{array}\right.
\end{align}
with the observation being either
\begin{align*}
    y(t)=\mc{C}Az(t)\ \text{ or }\ y(t)=\mc{C}\dot{z}(t).
\end{align*}
System \eqref{appendix:second-order-system} can be recast as a first order system, with $x(t)=(z(t), \dot{z}(t))\in X=H_2\times H_0$, 
\begin{align*}
    \A(z_0, z_1):=(z_1, -A^2z_0)\ \text{ with }\ D(\A)=H_4\times H_2,
\end{align*}
considering as possible observations
\begin{align}
    \label{defobservplate}
    \bC(z_0, z_1)=\mc{C}Az_0\\
    \bC_1(z_0, z_1)=\mc{C}z_1.
\end{align}
Note that $\bC\in \mc{L}(H_4\times H_2, Y)$ is admissible for $e^{\cdot \A}$and the same holds for $\bC_1$.

\begin{proposition}\label{prop:w-plates-schr}
    Let $0 < \widetilde{T} < T < \infty$. If the Schr\"odinger-like equation 
    \eqref{appendix:first-order-system} is weakly observable by $\mc{C}\in \mc{L}(H_2, Y)$ on $(0, \Tilde{T})$, then, the Plate-like equation \eqref{appendix:second-order-system} is weakly observable by $\bC$ or $\bC_1$ on $(0, T)$ defined in \eqref{defobservplate}. That means, there exists $C_1>0$ such that
    \begin{align}\label{ineq:wobs-plates}
        \norm{(z_0, z_1)}_{H_2\times H_0}^2\leq C_1\left(\int_0^T \norm{\bC e^{t\A}(z_0, z_1)}_{Y}^2dt+\norm{(z_0, z_1)}_{H_{0}\times H_{-2}}^2\right),\\
           \norm{(z_0, z_1)}_{H_2\times H_0}^2\leq C_1\left(\int_0^T \norm{\bC_1 e^{t\A}(z_0, z_1)}_{Y}^2dt+\norm{(z_0, z_1)}_{H_{0}\times H_{-2}}^2\right),
    \end{align}
    for all $(z_0, z_1)\in H_2\times H_0$.
\end{proposition}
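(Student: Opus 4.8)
The key algebraic observation is that under hinged boundary conditions $\bA^*\bA = -\bA^2 = \mathrm{diag}(A_0^2, A_0^2)$, so the ``energy'' decouples into two copies of the Schrödinger-type dynamics. Concretely, I would decompose the second-order equation \eqref{appendix:second-order-system} via the operators $A \pm i\frac{d}{dt}$: if $z$ solves $\ddot z + A^2 z = 0$, then $\psi_\pm(t) := \dot z(t) \pm i A z(t)$ each solves the first-order equation $\dot\psi_\pm \mp iA\psi_\pm = 0$ (up to the sign convention), i.e. $\psi_\pm(t) = e^{\pm itA}\psi_\pm(0)$ with $\psi_\pm(0) = z_1 \pm iAz_0 \in H_0$. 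Moreover one recovers $Az(t) = \frac{1}{2i}(\psi_+(t) - \psi_-(t))$ and $\dot z(t) = \frac12(\psi_+(t)+\psi_-(t))$, and $\norm{(z_0,z_1)}_{H_2\times H_0}^2 = \norm{Az_0}_0^2 + \norm{z_1}_0^2 = \frac12(\norm{\psi_+(0)}_0^2 + \norm{\psi_-(0)}_0^2)$, with a similar identity at the level $H_0\times H_{-2}$ after applying $A^{-1}$. So proving weak observability of the plate reduces to controlling $\norm{\psi_+(0)}_0^2 + \norm{\psi_-(0)}_0^2$ by the observed quantity plus a weak remainder.

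The step that requires care is that the observation of the plate is $\mc{C}Az(t) = \frac{1}{2i}(\mc{C}\psi_+(t) - \mc{C}\psi_-(t))$ (respectively $\mc{C}\dot z(t) = \frac12(\mc{C}\psi_+(t)+\mc{C}\psi_-(t))$), which mixes $\psi_+$ and $\psi_-$; one cannot directly apply the scalar Schrödinger observability to each piece separately. This is the classical obstruction that Lebeau's argument resolves. The plan is to separate the two ``branches'' by a frequency/spectral argument combined with the fact that $\psi_\pm$ evolve by conjugate semigroups. Writing $\psi_\pm(0) = \sum_k c_k^\pm e_k$, we have $\mc{C}\psi_\pm(t) = \sum_k c_k^\pm e^{\pm it\lambda_k}\mc{C}e_k$. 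Testing the observation against a time cutoff and using that $e^{it\lambda_k}$ and $e^{-it\lambda_k}$ are ``almost orthogonal'' in $L^2_t$ for $\lambda_k$ large (since $\int e^{2it\lambda_k}\chi(t)dt = \hat\chi(-2\lambda_k/2\pi)$ decays), together with assumption \eqref{hypWeyl} to sum the tail of the cross terms, one shows $\int_0^T \norm{\mc{C}e^{t\A}\cdot}_Y^2 dt \gtrsim \int_0^{T'}\norm{\mc{C}\psi_+(t)}_Y^2 dt + \int_0^{T'}\norm{\mc{C}\psi_-(t)}_Y^2 dt$ minus a weak term (a finite-dimensional error coming from low frequencies, plus a convergent series controlled by a negative Sobolev norm). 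Here $0 < \widetilde T < T' < T$ leaves room for the cutoff.

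Once the cross terms are absorbed, I would apply the hypothesis that \eqref{appendix:first-order-system} is weakly observable on $(0,\widetilde T)$ — hence also on $(0,T')$ by the remark on time-translation invariance, and for $e^{-itA}$ as well by the symmetry $t\mapsto -t$ — to each branch:
\begin{align*}
\norm{\psi_+(0)}_{H_0}^2 \leq C\Big(\int_0^{T'}\norm{\mc{C}e^{itA}\psi_+(0)}_Y^2 dt + \norm{\psi_+(0)}_{H_{-2}}^2\Big),
\end{align*}
and likewise for $\psi_-(0)$ with $e^{-itA}$. Summing, using the near-orthogonality estimate of the previous paragraph to replace the right-hand integrals by $\int_0^T \norm{\bC e^{t\A}(z_0,z_1)}_Y^2 dt$ plus an error, and translating $\norm{\psi_\pm(0)}_{H_{-2}}^2$ back to $\norm{(z_0,z_1)}_{H_0\times H_{-2}}^2$, yields \eqref{ineq:wobs-plates}. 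The case of the observation $\bC_1$ is identical, replacing $\frac{1}{2i}(\psi_+-\psi_-)$ by $\frac12(\psi_++\psi_-)$, which does not affect the orthogonality argument. The main obstacle, as indicated, is making rigorous the separation of the $\psi_+$ and $\psi_-$ contributions in the observed signal; one subtlety to handle carefully is pairs $(\lambda_j,\lambda_k)$ with $\lambda_j$ close to $-\lambda_k$ (impossible here since the spectrum is nonnegative, which is exactly why positivity of $A$ is convenient) and the finitely many small eigenvalues, which get dumped into the compact/weak remainder term.
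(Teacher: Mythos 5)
Your proposal is correct and follows essentially the same route as the paper: factor $\partial_t^2+A^2=(\partial_t+iA)(\partial_t-iA)$ into forward/backward Schr\"odinger branches (your $\psi_\pm=\dot z\pm iAz$ are, up to the invertible factor $\pm 2iA$, the paper's $z_\pm=\tfrac12(z_0\mp iA^{-1}z_1)$), introduce a smooth time cutoff supported in $(0,T)$ and equal to $1$ on an interval of length $>\widetilde T$, use the rapid decay of its Fourier transform at $\lambda_j+\lambda_k$ together with \eqref{hypWeyl} to absorb the cross term into $\|(z_0,z_1)\|_{H_0\times H_{-2}}^2$, and apply the Schr\"odinger weak observability to each branch. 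One small remark: the paper does not actually need a separate ``finite-dimensional low-frequency error''; since $A$ is assumed boundedly invertible, $\lambda_j+\lambda_k$ is bounded away from $0$, so the single bound $|\widehat{\rho^2}(\lambda_j+\lambda_k)|\le C_N(\lambda_j\lambda_k)^{-N/2}$ handles all modes uniformly and dumps the entire cross term into the $H_0\times H_{-2}$ remainder in one stroke.
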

\begin{proof}
Based on the factorization $\ddot{z}+A^2z=(\partial_t+iA)(\partial_t-iA)z$, we introduce the splitting
\begin{align*}
    z_+=\dfrac{1}{2}(z_0-iA^{-1}z_1),\ \ z_-=\dfrac{1}{2}(z_0+iA^{-1}z_1),
\end{align*}
so that
\begin{align*}
    z_0=z_++z_-,\ z_1=iA(z_+-z_-).
\end{align*}
Consequently $(z(t), \dot{z}(t))=e^{t\A}(z_0, z_1)$, solution of \eqref{appendix:second-order-system}, can be rewritten as
\begin{align*}
    z(t)=e^{itA}z_++e^{-itA}z_-.
\end{align*}
For any $s\in \R$, denote by $\Lambda: H_s\times H_{s-2}\to H_s\times H_s$ the isomorphism corresponding to the previous splitting $\Lambda(z_0, z_1)=(z_+, z_-)$. Observe that it is almost an isometry
\begin{align*}
    \norm{(z_0, z_1)}_{H_s\times H_{s-2}}^2=\norm{z_++z_-}_{H_s}^2+\norm{A(z_+-z_-)}_{H_{s-2}}^2=2\big(\norm{z_+}_{H_s}^2+\norm{z_-}_{H_s}^2\big).
\end{align*}
By considering the above splitting, we can write for the $H_s\times H_{s-2}$-energy of the system \eqref{appendix:second-order-system} as follows
\begin{align*}
    \E_s(z_0, z_1)=\dfrac{1}{2}\norm{(z_0, z_1)}_{H_{s}\times H_{s-2}}^2=\E_s(\Ld^{-1}(z_+, z_-)).
\end{align*}

Let $T',T'' \in (0,T)$ and $\rho \in C^\infty_c(\mathbb{R},\mathbb{R}^+)$ 
be such that $T''-T'>\widetilde{T}$, $\rho \equiv 1$ on $(T',T'')$ and $\text{supp}(\rho) \subset (0,T)$. We have the identity
\begin{multline*}
    \int_0^T \norm{\rho(t)\bC e^{t\A}(z_0, z_1)}_{Y}^2=\int_0^T \norm{\rho(t)\mc{C}Ae^{itA}z_+}_{Y}^2dt+\int_0^T \norm{\rho(t)\mc{C}Ae^{-itA}z_-}_{Y}^2dt\\+2\Re\int_0^T \rho^2(t)\inn{\mc{C}Ae^{itA}z_+, \mc{C}Ae^{-itA}z_-}_{Y}dt.
\end{multline*}
while 
\begin{multline*}
    \int_0^T \norm{\rho(t)\bC_1 e^{t\A}(z_0, z_1)}_{Y}^2=\int_0^T \norm{\rho(t)\mc{C}Ae^{itA}z_+}_{Y}^2dt+\int_0^T \norm{\rho(t)\mc{C}Ae^{-itA}z_-}_{Y}^2dt\\-2\Re\int_0^T \rho^2(t)\inn{\mc{C}Ae^{itA}z_+, \mc{C}Ae^{-itA}z_-}_{Y}dt.
\end{multline*}
On the one hand, since $A$ and $e^{\pm i\cdot A}$ commute, the weak observability inequality for \eqref{appendix:first-order-system} implies
\begin{align*}
    \int_0^T \norm{\rho(t)\mc{C}Ae^{\pm itA}z_{\pm}}_{Y}^2dt\geq \dfrac{1}{C}\big(\norm{Az_\pm}_{H_0}^2-\norm{Az_\pm}_{H_{-2}}^2\big).
\end{align*}
Putting the two inequalities together, we get
\begin{align*}
    \int_0^T \norm{\rho(t)\mc{C}Ae^{itA}z_+}_{Y}^2dt+\int_0^T \norm{\rho(t)\mc{C}Ae^{-itA}z_-}_{Y}^2dt&\geq \dfrac{1}{2C}\big(\E_2(\Lambda^{-1}(z_0, z_1))-\E_{0}(\Lambda^{-1}(z_0, z_1))\\
    &=\dfrac{1}{2C}\big(\E_2(z_0, z_1)-\E_{0}(z_0, z_1)\big).
\end{align*}
On the other hand, to treat the interaction term, we first look at its spectral expansion. By denoting $z_{\pm, k}=\inn{z_{\pm}, e_k}_{H_0}$ the Fourier coefficients of $z_{\pm}$, we have
\begin{align*}
    \int_0^T \rho^2(t)\inn{\mc{C}Ae^{itA}z_+, \mc{C}Ae^{-itA}z_-}_{Y}dt&=\sum_{j, k} z_{+, j}z_{-, k} \inn{\mc{C}Ae_j, \mc{C}Ae_k}_{Y} \left(\int_\R e^{it(\ld_j+\ld_k)}\rho^2(t)dt\right).
\end{align*}
First of all, we use that $\mc{C}$ is a bounded operator from $H_2$ to $Y$ to estimate $\inn{\mc{C}e_j, \mc{C}e_k}_{Y}$ as follows
\begin{align*}
    |\inn{\mc{C}e_j, \mc{C}e_k}_{Y}|\leq \norm{\mc{C}e_k}_{Y}\norm{\mc{C}e_j}_{Y}\leq C\norm{Ae_k}_{H_0}\norm{Ae_j}_{H_0}=C\ld_k\ld_j.
\end{align*}
Since $\rho^2\in C_c^\infty(\R)$ is smooth compactly supported function, for any $N\in \N$, we can find $C_N>0$ such that $|\widehat{\rho^2}(\xi)|\leq C_N \left<\xi\right>^{-N}$ and 
\begin{align*}
    \left|\int_\R e^{it(\ld_j+\ld_k)}\rho^2(t)dt\right|\leq \dfrac{C_N}{(\ld_j+\ld_k)^{N}} \leq\dfrac{C_N}{(\ld_j\ld_k)^{N/2}},\ \forall j, k\in \N_0.
\end{align*}
We then have
\begin{align*}
    \left|\int_0^T \rho^2(t)\inn{A\mc{C}e^{itA}z_+, A\mc{C}e^{-itA}z_-}_{Y}dt\right|&\leq \sum_{j, k\in \N_0} \ld_j\ld_k |z_{+, j}||z_{-, k}|\dfrac{C_N}{(\ld_j\ld_k)^{N/2}}\\
    &\leq C_N\sum_{j, k\in \N_0} \dfrac{1}{(\ld_j\ld_k)^{N/2-1}} \left(|z_{+, j}|^2+|z_{-, k}|^2\right)\\
    &\leq C_N\left(\sum_{j\in \N_0}\dfrac{1}{\ld_j^{N/2-1}}\right)\left(\sum_{k\in \N_0} \dfrac{1}{\ld_k^{N/2-1}} \big(|z_{+, k}|^2+|z_{-, k}|^2\big)\right),
\end{align*}
where we have chosen $N$ large enough so that $\ld_J^{1-N/2}$ is summable by \eqref{hypWeyl}. By adjusting $N$ if necessary, we conclude the proof by observing that
\begin{align*}
    \sum_{k\in \N_0} \dfrac{1}{\ld_k^{N/2-1}} \big(|z_{+, k}|^2+|z_{-, k}|^2\big)\leq \norm{(z_+, z_-)}_{H_{-2}, H_{-2}}^2=2\norm{(z_0, z_1)}_{H_{0}, H_{-2}}^2.
\end{align*}
\end{proof}

Let us make the following unique continuation assumption on the pair $(A, \mc{C})$.

\begin{assump}{UCP}\label{assumAeigen}
    For any eigenvector $\psi$ of $A$ such that $\mc{C}\psi=0$, then $\psi\equiv 0$.
\end{assump}

\begin{proposition}\label{prop:nt-plates}
    Let $0 < \widetilde{T} < T < \infty$. Assume that the pair $(A, \mc{C})$ satisfies Assumption \ref{assumAeigen}. If the Schr\"odinger-like equation 
    \eqref{appendix:first-order-system} is weakly observable by $\mc{C}$ on $(0, \Tilde{T})$, then, for any non zero $(z_0, z_1)\in H_2\times H_0$,
    \begin{align*}
        \bC e^{t\A}(z_0, z_1)\neq 0\ \text{ in }\ L^2([0, T], Y),\\
            \bC_1 e^{t\A}(z_0, z_1)\neq 0\ \text{ in }\ L^2([0, T], Y).
    \end{align*}
\end{proposition}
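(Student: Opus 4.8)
The plan is to argue by contradiction, combining the weak observability of the plate equation from Proposition~\ref{prop:w-plates-schr} with a compactness argument to produce a nonzero finite-dimensional $\A$-invariant subspace, on which the spectral hypothesis \ref{assumAeigen} will give a contradiction. I carry out the argument for the observation $\bC$; running the same construction with $\bC_1$ in place of $\bC$ everywhere gives the other case, only the final spectral computation changing. Assume there is $(z_0,z_1)\in H_2\times H_0$, $(z_0,z_1)\neq 0$, with $\bC e^{t\A}(z_0,z_1)=0$ in $L^2([0,T],Y)$. Choose $T_0$ with $\widetilde{T}<T_0<T$, put $\ell:=T-T_0>0$, and set $\mathcal{K}:=\{w\in H_2\times H_0:\ \bC e^{t\A}w=0 \text{ in }L^2([0,T_0],Y)\}$ and $\mathcal{N}:=\{w\in H_2\times H_0:\ \bC e^{t\A}w=0 \text{ in }L^2([0,T],Y)\}$. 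By admissibility of $\bC$, the map $w\mapsto \bC e^{\cdot\A}w$ is bounded from $H_2\times H_0$ into $L^2([0,T],Y)$, so $\mathcal{K}$ and $\mathcal{N}$ are closed subspaces and $\mathcal{N}\subseteq\mathcal{K}$. Applying Proposition~\ref{prop:w-plates-schr} with $T_0$ in place of $T$ (allowed since $\widetilde{T}<T_0$) and noting that the observation term vanishes on $\mathcal{K}$, we get a constant $C$ with $\norm{w}_{H_2\times H_0}^2\leq C\norm{w}_{H_0\times H_{-2}}^2$ for all $w\in\mathcal{K}$; since $A$ has compact resolvent the inclusion $H_2\times H_0\hookrightarrow H_0\times H_{-2}$ is compact, whence the closed unit ball of $\mathcal{K}$ is compact and $\mathcal{K}$ is finite-dimensional. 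Also $\mathcal{N}\ni(z_0,z_1)\neq0$.

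The key step is to upgrade $\mathcal{N}$ to an $\A$-invariant space inside $\mathcal{K}$, and this is where the strict inequality $\widetilde{T}<T$ enters. If $w\in\mathcal{N}$ and $0\leq s\leq\ell$, then $\bC e^{t\A}(e^{s\A}w)=\bC e^{(t+s)\A}w=0$ for $t\in[0,T_0]$, because $t+s\in[0,T]$; hence $e^{s\A}w\in\mathcal{K}$. Consequently, for $w\in\mathcal{N}$ and $\phi\in C_c^\infty((0,\ell))$, the vector $y_\phi^w:=\int_0^\ell\phi(s)e^{s\A}w\,ds$ belongs to $\mathcal{K}$ (a Bochner integral of a continuous $\mathcal{K}$-valued function over a compact subinterval), lies in $D(\A)$, and satisfies $\A y_\phi^w=-y_{\phi'}^w$ by the standard identity for $C_0$-groups. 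Let $\mathcal{W}$ be the linear span of all such $y_\phi^w$ with $w\in\mathcal{N}$, $\phi\in C_c^\infty((0,\ell))$. Then $\mathcal{W}\subseteq\mathcal{K}$ is finite-dimensional (hence closed), is contained in $D(\A)$, and is $\A$-invariant since $\A y_\phi^w=-y_{\phi'}^w\in\mathcal{W}$. It is nonzero: taking $\phi_n\geq0$ with $\int\phi_n=1$ and $\supp\phi_n\to\{0^+\}$ yields $y_{\phi_n}^w\to w$ for every $w\in\mathcal{N}$ by strong continuity, so $\mathcal{N}\subseteq\mathcal{W}$.

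To conclude, $\A|_{\mathcal{W}}$ is a linear operator on a nonzero finite-dimensional space, so after complexification it admits an eigenvector $v=(v_0,v_1)\in\mathcal{W}_{\mathbb{C}}\setminus\{0\}$ with $\A v=\mu v$, and $\mu\in i\mathbb{R}$ because $\A$ is skew-adjoint. From $v_1=\mu v_0$ and $-A^2 v_0=\mu^2 v_0$ one sees $v_0\neq0$ (otherwise $v=0$), hence $A^2 v_0=-\mu^2 v_0$ with $-\mu^2>0$ since $A^2$ is positive and boundedly invertible; because $A$ is positive, $v_0$ is then an eigenvector of $A$, $Av_0=\rho v_0$ with $\rho>0$. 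On the other hand $v\in\mathcal{W}_{\mathbb{C}}\subseteq\mathcal{K}_{\mathbb{C}}$ gives $0=\bC e^{t\A}v=e^{t\mu}\bC v$ on $[0,T_0]$, so $\bC v=\mc{C}Av_0=\rho\,\mc{C}v_0=0$, and therefore $\mc{C}v_0=0$. Assumption~\ref{assumAeigen} then forces $v_0=0$, a contradiction. In the $\bC_1$ case (with $\mathcal{K},\mathcal{N},\mathcal{W}$ built from $\bC_1$) the same argument gives $\bC_1 v=\mc{C}v_1=\mu\,\mc{C}v_0=0$: if $\mu\neq0$ we again get $\mc{C}v_0=0$ and \ref{assumAeigen} applies, while $\mu=0$ forces $v_0=0$ directly from $A^2v_0=0$. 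Hence no nonzero $(z_0,z_1)$ with vanishing observation on $[0,T]$ exists, which is the assertion. The only genuinely delicate point is the construction of $\mathcal{W}$: one must exploit the surplus time $\ell=T-T_0>0$ to keep the forward translates $e^{s\A}w$ inside the finite-dimensional space $\mathcal{K}$, and then mollify in $s$ to land in $D(\A)$ while preserving $\A$-invariance; the rest is routine.
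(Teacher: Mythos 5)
Your proof is correct, and while it reaches the same contradiction via Assumption~\ref{assumAeigen}, it takes a genuinely different route than the paper in how it manufactures a finite-dimensional $\A$-invariant subspace of regular vectors. The paper works with the single kernel $N_T$ and uses difference quotients $\frac{1}{\veps}(e^{\veps\A}-I)$, together with the observability inequality applied to these quotients, to run a Cauchy argument showing $N_T\subset D(\A)$; only then is the inequality re-applied to $\A(z_0,z_1)$ to bound the unit ball of $N_T$ in $H_4\times H_2$ and deduce finite dimensionality by compact embedding. You instead introduce the two nested kernels $\mathcal{N}\subseteq\mathcal{K}$ (vanishing observation on $[0,T]$ and on $[0,T_0]$, $\widetilde{T}<T_0<T$), obtain finite dimensionality of $\mathcal{K}$ immediately from the weak observability plus compactness of $H_2\times H_0\hookrightarrow H_0\times H_{-2}$, and then build $\mathcal{W}=\mathrm{span}\{y_\phi^w\}$ by mollifying along the group; the time surplus $\ell=T-T_0$ is used to keep $e^{s\A}w$ inside $\mathcal{K}$, and the standard identity $\A\int\phi(s)e^{s\A}w\,ds=-\int\phi'(s)e^{s\A}w\,ds$ delivers both $\mathcal{W}\subset D(\A)$ and $\A$-invariance without any Cauchy-sequence bookkeeping. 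Your construction also yields $\mathcal{N}\subseteq\mathcal{W}$ cheaply since $\mathcal{W}$ is finite dimensional, hence closed. The final spectral step is essentially the same in substance: the paper diagonalizes $-\A^2$ on $N_T$, while you complexify and take an eigenvector of $\A|_{\mathcal{W}}$; in both cases the key observation is that an eigenvector of $\A$ (equivalently of $-\A^2$) in the kernel produces an eigenvector of $A$ annihilated by $\mc{C}$, which \ref{assumAeigen} forbids. One could argue your presentation is slightly cleaner because it decouples the regularity gain (mollification) from the finite-dimensionality gain (compactness of the embedding), whereas the paper interleaves them.
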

\begin{proof}
 We consider first the observation by $\bC$. Let us consider
    \begin{align*}
        N_T:=\{(z_0, z_1)\in H_2\times H_0\ |\ \bC e^{t\A}(z_0, z_1)=0,\ \text{ in } L^2([0,T], Y)\}.
    \end{align*}
    Since the equation is linear, it is a linear subspace of $H_2\times H_0$. Proposition \ref{prop:w-plates-schr} implies that for all $(z_0, z_1)\in N_T$,
    \begin{align}\label{prop:ineq:plates_nt}
        \norm{(z_0, z_1)}_{H_2\times H_0}\leq C\norm{(z_0, z_1)}_{H_0\times H_{-2}}.
    \end{align}

    We will argue by contradiction to prove that $N_T=\{0\}$. Let $0<\veps<T-\widetilde{T}$ and $(z_0, z_1)\in N_T$. Let us introduce the sequence
    \begin{align*}
        (z_0^\veps, z_1^\veps)=\dfrac{1}{\veps}\big(e^{\veps\A}(z_0, z_1)-(z_0, z_1)\big),
    \end{align*}
    and note that it belongs to $N_{\widetilde{T}}$. Observe that $(z_0, z_1)\in H_2\times H_0$ implies that $(A^{-1}z_0, A^{-1}z_1)\in H_4\times H_2=D(\A)$. By classical semigroup theory, we have
    \begin{align*}
        (A^{-1}z_0^\veps, A^{-1}z_1^\veps)=\dfrac{1}{\veps}\big(e^{\veps\A}(A^{-1}z_0^\veps, A^{-1}z_1^\veps)-(A^{-1}z_0^\veps, A^{-1}z_1^\veps)\big)\xrightarrow[\veps\to 0]{} \A (A^{-1}z_0, A^{-1}z_1),
    \end{align*}
    where the convergence holds in $H_2\times H_0$. It follows that $\big((z_0^\veps, z_1^\veps)\big)_{\veps}$ is a Cauchy sequence for the norm $\norm{\cdot}_{H_0\times H_{-2}}$ and so it is for the norm $\norm{\cdot}_{H_2\times H_{0}}$, due to inequality \eqref{prop:ineq:plates_nt}. Therefore, the limit $\A(z_0, z_1)$ belongs to $H_2\times H_0$, namely, $(z_0, z_1)\in D(\A)$ and so $N_T\subset D(\A)$. The regularity in time of $e^{t\A}$ allows us to take $\partial_t$ in the observation condition, obtaining
    \begin{align*}
       \partial_t\bC e^{t\A}(z_0, z_1)=\bC \partial_t e^{t\A}(z_0, z_1)=\bC e^{t\A}\A(z_0, z_1).
    \end{align*}
    This means that $N_T$ is stable under $\A$ and it only contains elements of $D(A^k)\times D(A^k)$ for any $k\in \N$.

    By applying inequality \eqref{prop:ineq:plates_nt} to $\A(z_0, z_1)$, we get $\norm{(z_0, z_1)}_{H_4\times H_2}\leq C\norm{(z_0, z_1)}_{H_2\times H_0}$. We deduce that the unit ball of $N_T$ in the $H_2\times H_0$ topology, is bounded in $H_4\times H_2$ and thus it is compact by compact embedding (recall, $A$ has compact resolvent). By Riesz's theorem, $N_T$ is a finite-dimensional subspace of $H_2\times H_0$.  Since $-\A^2$ is self-adjoint positive and sends $N_T$ into itself, it admits an eigenvalue $\ld\geq 0$ associated to the eigenvector $(v_\ld, w_\ld)$.  We readily arrive at the system
    \begin{align*}
       \left\{\begin{array}{cc}
           A^2 v_\ld=\ld v_\ld, & \\
            A^2w_\ld=\ld w_\ld, &  \\
        \end{array}\right.
    \end{align*}
Since $A^2$ is a strictly positive operator, we can write $\lambda=\alpha^2$ for $\alpha> 0$ and get $(A+\alpha I)(A-\alpha I)v_{\lambda}=0$. Using that $A+\alpha I$ is a strictly positive operator, we get $Av_{\lambda}=\alpha v_{\lambda}$ and $Aw_{\lambda}=\alpha w_{\lambda}$. In particular, $e^{t\A}(v_\ld, w_\ld)=(\cos(\alpha t)v_\ld+\alpha^{-1}\sin (\alpha t) w_\ld,-\alpha\sin(\alpha t)v_\ld+\cos (\alpha t) w_\ld)$. Therefore, such eigenvector must satisfy $\bC e^{t\A}(v_\ld, w_\ld)=0$ for $t\in [0,T]$ and therefore $\mc{C}A v_\ld=\mc{C}A w_\ld=0$. Using Assumption \ref{assumAeigen} and $\alpha\neq 0$, we deduce that $(v_\ld,w_{\ld})=(0,0)$. This is a contradiction to the fact that $N_T\neq \{0\}$.

    For $\bC_1$, the proof is the same except that it leads to the unique continuation problem $-\alpha\sin(\alpha t)\mc{C}v_\ld+\alpha \cos (\alpha t) \mc{C}w_\ld=0$, $t\in [0,T]$, for which Assumption \ref{assumAeigen} still applies.
\end{proof}

\begin{proposition}\label{prop:obs-schr-plates}
    Let $0 < \widetilde{T} < T < \infty$. If the Schr\"odinger-like equation 
    \eqref{appendix:first-order-system} is weakly observable on $(0, \Tilde{T})$ and Assumption \ref{assumAeigen} is satisfied, then, there exists $\mathfrak{C}_{obs}>0$ such that
    \begin{align}\label{ineq:obs-plates}
        \norm{(z_0, z_1)}_{H_2\times H_0}^2\leq \mathfrak{C}_{obs}^2\int_0^T \norm{\bC e^{t\A}(z_0, z_1)}_{Y}^2dt\\
    \label{ineq:obs-platesC1}      \norm{(z_0, z_1)}_{H_2\times H_0}^2\leq \mathfrak{C}_{obs}^2\int_0^T \norm{\bC_1 e^{t\A}(z_0, z_1)}_{Y}^2dt
    \end{align}
    for all $(z_0, z_1)\in H_2\times H_0$.
\end{proposition}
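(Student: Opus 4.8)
The plan is to upgrade the weak observability inequality \eqref{ineq:wobs-plates} into the full observability estimate \eqref{ineq:obs-plates} (and likewise \eqref{ineq:obs-platesC1}) by the classical compactness--uniqueness argument, using Proposition \ref{prop:nt-plates} to rule out the nontrivial kernel. The structure is standard: first combine weak observability with the fact that the ``compact lower-order term'' can be absorbed, then reduce the absorption to a finite-dimensional unique continuation problem which Proposition \ref{prop:nt-plates} kills.

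Concretely, I would argue by contradiction. Suppose \eqref{ineq:obs-plates} fails; then there is a sequence $(z_0^k, z_1^k)\in H_2\times H_0$ with $\norm{(z_0^k, z_1^k)}_{H_2\times H_0}=1$ and $\int_0^T \norm{\bC e^{t\A}(z_0^k, z_1^k)}_Y^2\,dt\to 0$. By the weak observability inequality \eqref{ineq:wobs-plates}, we then get $\norm{(z_0^k, z_1^k)}_{H_0\times H_{-2}}^2\geq 1/C_1 - o(1)$, so the sequence stays bounded below in $H_0\times H_{-2}$. Since the unit ball of $H_2\times H_0$ is compact in $H_0\times H_{-2}$ (the embedding $H_2\times H_0 \hookrightarrow H_0\times H_{-2}$ is compact because $A$ has compact resolvent), up to a subsequence $(z_0^k,z_1^k)\to (z_0,z_1)$ strongly in $H_0\times H_{-2}$, with $(z_0,z_1)\neq 0$. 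The admissibility of $\bC$ (stated below \eqref{defobservplate}) together with the strong convergence in $H_0\times H_{-2}$ — or, more safely, first extracting a weak limit in $H_2\times H_0$, which must coincide with $(z_0,z_1)$ and hence in particular $(z_0,z_1)\in H_2\times H_0$ — allows passing to the limit in the observation term. One checks $t\mapsto \bC e^{t\A}(z_0,z_1)$ is the $L^2([0,T],Y)$-limit (or weak limit, which suffices) of $t\mapsto \bC e^{t\A}(z_0^k,z_1^k)$, so $\bC e^{t\A}(z_0,z_1)=0$ in $L^2([0,T],Y)$. This contradicts Proposition \ref{prop:nt-plates}, which asserts that for any nonzero $(z_0,z_1)\in H_2\times H_0$ the observation $\bC e^{t\A}(z_0,z_1)$ is nonzero in $L^2([0,T],Y)$. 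Hence \eqref{ineq:obs-plates} holds. The argument for $\bC_1$ is verbatim the same, using the second inequality in \eqref{ineq:wobs-plates} and the second statement of Proposition \ref{prop:nt-plates}.

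The one point requiring a little care — and the main (mild) obstacle — is justifying the passage to the limit in the observation functional. The naive estimate uses admissibility $\int_0^T\norm{\bC e^{t\A}(w_0,w_1)}_Y^2\,dt \leq K_T \norm{(w_0,w_1)}_{H_2\times H_0}^2$, but here the difference $(z_0^k-z_0, z_1^k-z_1)$ only converges to zero in $H_0\times H_{-2}$, not in $H_2\times H_0$, so admissibility at the energy level does not directly apply. The clean fix is to work with weak convergence: extract a subsequence converging weakly in $H_2\times H_0$ to some limit, which by uniqueness of limits equals $(z_0,z_1)$, so in particular $(z_0,z_1)\in H_2\times H_0$ with $\norm{(z_0,z_1)}_{H_2\times H_0}\leq 1$; since $(w_0,w_1)\mapsto \bC e^{\cdot\A}(w_0,w_1)$ is bounded linear from $H_2\times H_0$ into $L^2([0,T],Y)$, it is weak-to-weak continuous, hence $\bC e^{\cdot\A}(z_0^k,z_1^k)\rightharpoonup \bC e^{\cdot\A}(z_0,z_1)$ weakly in $L^2([0,T],Y)$; combined with $\bC e^{\cdot\A}(z_0^k,z_1^k)\to 0$ strongly, the weak limit is $0$, giving $\bC e^{\cdot\A}(z_0,z_1)=0$. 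Finally one needs $(z_0,z_1)\neq 0$: this is guaranteed because the strong convergence in $H_0\times H_{-2}$ preserves the lower bound $\norm{(z_0,z_1)}_{H_0\times H_{-2}}\geq 1/\sqrt{C_1}>0$ obtained from \eqref{ineq:wobs-plates}. With these pieces in place the contradiction with Proposition \ref{prop:nt-plates} closes the proof, so the total length is short.
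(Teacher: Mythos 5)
Your proof is correct and takes essentially the same compactness--uniqueness route as the paper, built on the same three ingredients: the weak observability estimate of Proposition \ref{prop:w-plates-schr}, compactness of $H_2\times H_0\hookrightarrow H_0\times H_{-2}$, and Proposition \ref{prop:nt-plates}. The only difference is a reordering in how the contradiction is extracted: the paper first applies Proposition \ref{prop:nt-plates} to conclude the weak limit is zero, then feeds the weak observability estimate and compact embedding back into the normalized sequence to force $\norm{(z_0^n,z_1^n)}_{H_2\times H_0}\to 0$, contradicting the normalization; you instead use the weak observability estimate up front to guarantee a nonzero limit and then contradict Proposition \ref{prop:nt-plates} directly by exhibiting a nonzero state with vanishing observation. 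Both are standard variants of the same argument, and your more careful discussion of passing to the limit in the observation term (via weak-to-weak continuity of $(w_0,w_1)\mapsto \bC e^{\cdot\A}(w_0,w_1)$) makes explicit a step the paper treats tersely.
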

\begin{proof}
    The result follows from Proposition \ref{prop:w-plates-schr} and Proposition \ref{prop:nt-plates}. We write it for $\bC$, but it is exactly the same for $\bC_1$.

    Assume that the inequality \eqref{ineq:obs-plates} does not hold. Then we can find a sequence $(z_0^n, z_1^n)$ of norm $1$ in $H_2\times H_0$ such that
    \begin{align}\label{plates-obs-conv}
        \int_0^T \norm{\bC e^{t\A}(z_0^n, z_1^n)}_{Y}^2 dt\xrightarrow[n\to\infty]{} 0.
    \end{align}
    Let $(z_0, z_1)$ be the weak limit of $(z_0^n, z_1^n)$. The above inequality implies that $(z_0, z_1)\in N_T$ and so $(z_0, z_1)=(0, 0)$ by Proposition \ref{prop:nt-plates}. We then have that $(z_0^n, z_1^n)\wto (0, 0)$ weakly in $H_2\times H_0$ and by compact embedding (recall $A$ has compact resolvent) we get $\norm{(z_0^n, z_1^n)}_{H_0\times H_{-2}}\to 0$ as $n\to \infty$. Applying the weak observability estimate \eqref{ineq:wobs-plates} along with \eqref{plates-obs-conv} we get that $\norm{(z_0^n, z_1^n)}_{H_2\times H_0}\to 0$ as $n\to \infty$, which is a contradiction.
\end{proof}

\begin{remark}
    Lebeau's strategy gives a loss on time, harmless for our ends. Following Miller \cite{2012:miller:resolvent-control}, it is possible to avoid such loss in time but it would imply to inherit the geometric setup of the Wave equation, which is known to be more restrictive than the one of Schr\"odinger.
\end{remark}

\subsection{Proof of Theorem \ref{thm:analytic-nlp}}
We follow the notations of Section \ref{notPlate}. We will first need the following observability estimates in order to apply the abstract results. 
\begin{proposition}\label{prop:obs-plate}
 Let $\Tilde{T}>0$ and $\omega$ be an open subset of $\M$ so that the Schr\"odinger equation 
    \eqref{appendix:first-order-system} is weakly observable by $\mc{C}=\mathbbm{1}_{\omega}\in \mc{L}( L^2(\M))$ on $(0, \Tilde{T})$. Then, for any $0<\Tilde{T}<T<+\infty$ and any $b_{\omega}\in C^{\infty}(\M)$ so that $b_{\omega}=1$ on $\omega$, there exists $C>0$ such that for any $s\in [0,2]$, any $(z_0, z_1)\in H_D^{2+s}\times H^s_D(\M)$ and associated solution $z$ of
    \begin{align*}
    \left\{\begin{array}{rl}
        \partial_t^2 z+\Delta_g^2 z=0  &\ (t, x)\in [0, T]\times \M,\\
        z_{|_{\partial\M}}=\Delta_g z_{|_{\partial\M}}=0            &\ (t, x)\in [0, T]\times\partial\M,\\
        (z, \partial_t z)(0)=(z_0, z_1) &\ x\in \M,
    \end{array}\right.
\end{align*}
we have, if $\partial M\neq \emptyset$ 
       \begin{align}\label{thm:ineq:obs-plate-1}
        C\norm{(z_0, z_1)}_{H^{2+s}_D\times H^s_D(\M)}^2\leq \int_0^T \norm{b_\omega \Delta_g z(t)}_{H^s_D(\M)}^2 dt,
    \end{align}
    and if $\partial M= \emptyset$ 
    \begin{align}\label{thm:ineq:obs-plate-1bis}
        C\norm{(z_0, z_1)}_{H^{2+s}\times H^s(\M)}^2\leq \int_0^T \norm{b_\omega z(t)}_{H^{2+s}(\M)}^2 dt.
    \end{align}
\end{proposition}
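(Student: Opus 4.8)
The strategy is to transfer the weak observability of the Schrödinger equation to the plate equation via Proposition \ref{prop:obs-schr-plates}, at every level of regularity $s\in[0,2]$, and then convert the resulting observation through the operator $\mc C A=-\mathbbm 1_\omega\Delta_g$ (resp. $\mc C\dot z=\mathbbm 1_\omega\partial_t z$) into the observation through the smooth cutoff $b_\omega$. First I would set $A=-\Delta_g$ with hinged boundary conditions, which is self-adjoint with compact resolvent, and note that under hinged conditions $A^2=\Delta_g^2$ with $D(A^2)=\{u\in H^4:\ u=\Delta_g u=0\text{ on }\partial\M\}$, so the abstract framework of Section 4.2 applies verbatim. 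Condition \eqref{hypWeyl} holds by Weyl's law. The only subtlety is the eigenvalue $0$ when $\partial\M=\emptyset$: on the orthogonal complement of constants $A$ is strictly positive, and the constants are handled trivially since $\mathbbm 1_\omega\cdot 1\neq 0$ (so Assumption \ref{assumAeigen} holds there too, and the constant mode contributes a coercive term directly). Assumption \ref{assumAeigen} for $\mc C=\mathbbm 1_\omega$ is exactly: an eigenfunction of $-\Delta_g$ vanishing on $\omega$ is zero, which follows from unique continuation for elliptic equations (or is a standard consequence of the weak observability hypothesis itself).

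Next, I would apply Proposition \ref{prop:obs-schr-plates} in the Sobolev scale shifted by $s$. Working in $H_0=H^s_D(\M)=D(A^{s/2})$ as the pivot space, the scale $H_{k}$ built from $A$ restricted to $H^s_D$ is $H^{s+k}_D$, and the Schrödinger equation is still weakly observable on $(0,\widetilde T)$ by $\mathbbm 1_\omega$ at this regularity — this is because $A$ commutes with $e^{itA}$, so applying the $s=0$ weak observability to $A^{s/2}\psi_0$ and using that $\mathbbm 1_\omega$ and $A^{s/2}$ almost commute modulo a compact (hence lower-order, absorbable) term gives weak observability at level $s$; alternatively one re-runs the compactness–uniqueness at level $s$. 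Then Proposition \ref{prop:obs-schr-plates} (applied with the pivot $H^s_D$) yields
\begin{align*}
\norm{(z_0,z_1)}_{H^{2+s}_D\times H^s_D}^2\leq \mathfrak C_{obs}^2\int_0^T\norm{\mathbbm 1_\omega \Delta_g z(t)}_{H^s_D}^2\,dt
\end{align*}
using the observation $\bC(z_0,z_1)=\mc C A z_0=-\mathbbm 1_\omega\Delta_g z_0$, and likewise with $\bC_1$ giving $\int_0^T\norm{\mathbbm 1_\omega\partial_t z}_{H^s_D}^2$.

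Finally I would replace the sharp cutoff $\mathbbm 1_\omega$ by the smooth $b_\omega$ with $b_\omega=1$ on $\omega$. For the case $\partial\M\neq\emptyset$ this is immediate: since $\mathbbm 1_\omega\leq \mathbbm 1_{\{b_\omega\geq 1\}}$ pointwise we merely lose a constant, but to keep the observation in $H^s_D$ (not just $L^2$) we instead observe directly that $\mathbbm 1_\omega w=\mathbbm 1_\omega(b_\omega w)$ and, shrinking $\omega$ first via Lemma \ref{lemma:gcc-smaller-subset}-type reasoning (or a time-cutoff $\rho$ as in the proof of Proposition \ref{prop:w-plates-schr}) so that the smaller set is still observable in the remaining time, the sharp-cutoff observation on the smaller set is dominated by the $H^s_D$-norm of $b_\omega\Delta_g z$ on the larger set — the argument is identical to the one used in the proof of Proposition \ref{prop:higher-reg-obs} for waves, combining the weak observability with a multiplier/energy estimate to absorb commutator terms $[\,\mathbbm 1_{\tilde\omega},b_\omega]$. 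This gives \eqref{thm:ineq:obs-plate-1}. When $\partial\M=\emptyset$, one uses the $\bC_1$-type estimate $\int_0^T\norm{b_\omega\partial_t z}_{H^s}^2$ and then converts $\partial_t z$ to $z$ exactly as in the proof of \eqref{thm:ineq:linear-waves-obs-1st-component} for waves: multiply $\partial_t^2 z+\Delta_g^2 z=0$ by $\rho(t)b_\omega^2 z$ (now with a $\Delta_g$-weighted integration by parts), complete squares, and absorb lower-order terms by a final compactness–uniqueness, yielding \eqref{thm:ineq:obs-plate-1bis} with the observation $\norm{b_\omega z}_{H^{2+s}}$.

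The main obstacle I expect is the bookkeeping in the last step: passing from the sharp-cutoff $L^2$-type observation produced by the abstract transfer to an observation of $\Delta_g z$ (resp. $z$) measured in the $H^s_D$-norm through a smooth cutoff, uniformly in $s\in[0,2]$, while correctly handling commutators between $b_\omega$ and fractional powers $(-\Delta_g)^{s/2}$ near the boundary (this is where the boundary condition $b_\omega=1$ on $\omega$, hence $\partial_{\vec n}b_\omega$ irrelevant on $\partial\M$ since $\omega$ is interior or meets $\partial\M$ only where $b_\omega\equiv 1$, must be used, cf. Proposition \ref{propcommutwave}). The rest — verifying the abstract hypotheses and invoking Proposition \ref{prop:obs-schr-plates} — is routine.
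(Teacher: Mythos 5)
There is a genuine gap in the plan to carry the sharp cutoff $\mathbbm{1}_\omega$ up the Sobolev scale. You propose to apply Proposition \ref{prop:obs-schr-plates} with pivot $H^s_D$ and observation $\mc C=\mathbbm{1}_\omega$, asserting that $\mathbbm{1}_\omega$ and $A^{s/2}$ ``almost commute modulo a compact (hence lower-order, absorbable) term''. This is false for $s>0$: since $\mathbbm{1}_\omega$ is discontinuous, $[\mathbbm{1}_\omega,A^{s/2}]$ is not bounded (let alone compact) from $H^{s}_D$ to $L^2$ once $s>1/2$, and at $s=2$ it involves $\nabla\mathbbm{1}_\omega\cdot\nabla$ which acts like a distributional layer on $\partial\omega$. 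For the same reason the framework of Proposition \ref{prop:obs-schr-plates} cannot be invoked at level $s>0$ with $\mc C=\mathbbm{1}_\omega$: one needs $\mc C\in\mc L(H_2,Y)$ with $H_2=H^{s+2}_D$, but multiplication by $\mathbbm{1}_\omega$ does not map $H^{s+2}_D$ into $H^s_D$. Finally, the last conversion step $\norm{\mathbbm{1}_\omega \Delta_g z}_{H^s_D}\lesssim\norm{b_\omega \Delta_g z}_{H^s_D}$ is also not available: the left-hand side need not even be finite when $s>1/2$.

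The paper never puts the sharp cutoff into a higher-regularity space. It invokes Proposition \ref{prop:obs-schr-plates} only at the base level $s=0$, where $\mathbbm{1}_\omega\in\mc L(L^2)$. The $s=2$ estimate is then obtained by differentiating in time: $w=\partial_t z$ is another plate solution with data $(z_1,\Delta_g z_0)\in H^2_D\times L^2$, so the $s=0$ observability (the $\bC_1$ variant, \eqref{ineq:obs-platesC1}) applied to $w$ controls $\norm{(z_0,z_1)}_{H^4_D\times H^2_D}$ by $\int_0^T\norm{b_\omega\Delta_g^2 z}_{L^2}^2$; writing $b_\omega\Delta_g^2 z=\Delta_g(b_\omega\Delta_g z)-[b_\omega,\Delta_g]\Delta_g z$ leaves only the smooth commutator $[b_\omega,\Delta_g]$, which is a first-order operator absorbed by energy estimates, interpolation and the $s=0$ bound. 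The intermediate $s\in(0,2)$ are then covered by interpolation. The boundaryless case (eigenvalue $0$) is handled by projecting out constants, re-expressing $b_\omega\Delta_g z$ in terms of $b_\omega z$ using that $[b_\omega,\Delta_g]$ is first order, and a final compactness--uniqueness. You should replace the scale-wise transfer by this $s=0$ plus time-differentiation plus interpolation scheme; the rest of your plan (verifying the abstract hypotheses, Assumption \ref{assumAeigen}, and the treatment of constants when $\partial\M=\emptyset$) is in line with the paper.
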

\begin{proof}
    In the case $\partial M= \emptyset$, the operator $-\Delta_g$ is not strictly positive because of constants. So, we first prove \eqref{thm:ineq:obs-plate-1} in the case $\partial M\neq \emptyset$ or $\partial M= \emptyset$ but $(z_0, z_1)$ are orthogonal to the set of constants. By interpolation, it is enough to consider the cases $s=0$ and $s=2$.

  First of all, the eigenvalue's condition \eqref{hypWeyl} is satisfied, for instance, due to Weyl's law. The unique continuation of eigenfunctions for the laplacian $\Delta_g$ is known to hold in our framework, see \cite[Proposition 5.2.]{LLR:book1}, hence Assumption \ref{assumAeigen} is satisfied.
    
    For $s=0$, this is a consequence of the Proposition \ref{prop:obs-schr-plates} for the case \eqref{ineq:obs-plates}   and the fact that $b_{\omega}=1$ on $\omega$. For the second one, assume that $(z_0, z_1)\in H^4_D\times H^2_D$. Take $w=\partial_t z$ and observe that it is a mild solution of
    \begin{align*}
        \left\{\begin{array}{rl}
            \partial_t^2 w+\Delta_g^2 w=0  &\ \\
            w_{|_{\partial\M}}=\Delta w_{|_{\partial\M}}=0    &\ \text{if } \partial\M\neq\emptyset\\
            (w, \partial_t w)(0)=(z_1, \Delta z_0),
        \end{array}\right.
    \end{align*}
    with $(z_1, \Delta_g z_0)\in H_D^2(\M)\times L^2(\M)$. Applying the observability inequality \eqref{ineq:obs-platesC1} to $w$ and then going back to the $z$ variable, we get
    \begin{align*}
        \norm{(z_1, z_0)}_{H_D^4\times H_D^2}^2\leq C_2\int_0^T \norm{b_\omega \Delta_g^2 z(t)}_{L^2(\M)}^2dt.
    \end{align*}
    Note that
    \begin{align*}
        \int_0^T \norm{b_\omega \Delta_g^2 z(t)}_{L^2(\M)}^2dt\leq \int_0^T \norm{\Delta_g(b_\omega\Delta_g z)}_{L^2(\M)}^2dt+\int_0^T \norm{[b_\omega, \Delta_g]\Delta_g z}_{L^2(\M)}^2dt,
    \end{align*}
    so we now need to estimate the commutator term appearing on the right-hand side of the inequality above. Recall $[b_\omega, \Delta_g]\Delta_g z=2\nabla b_\omega\cdot\nabla(\Delta_g z)+\Delta_g b_\omega\Delta_g z$. In the following estimates, the constant $C>0$ may change from line to line,
    \begin{align*}
        \int_0^T \norm{[b_\omega, \Delta_g]\Delta_g z}_{L^2(\M)}^2dt&\leq C\left(\int_0^T \norm{\nabla b_\omega\cdot\nabla(\Delta_g z)}_{L^2(\M)}^2dt+\int_0^T \norm{\Delta_g b_\omega\Delta_g z}_{L^2(\M)}^2dt\right)\\
        &\leq C\int_0^T \norm{z(t)}_{H_D^3}^2dt\\
        &\leq C\norm{(z_0, z_1)}_{H_D^3\times H_D^1}^2\\
        &\leq C\veps \norm{(z_0, z_1)}_{H_D^4\times H_D^2}^2+\dfrac{C}{\veps}\norm{(z_0, z_1)}_{H_D^2\times L^2(\M)}^2\\
        &\leq C\veps \norm{(z_0, z_1)}_{H_D^4\times H_D^2}^2+\dfrac{C}{\veps}\int_0^T \norm{b_\omega \Delta_g z(t)}_{L^2(\M)}^2 dt,
    \end{align*}
    for $\veps>0$ to be chosen. Observe that we have used energy estimates, an interpolation inequality and the observability inequality \eqref{thm:ineq:obs-plate-1} for $s=0$. By choosing $\veps>0$ small enough, the observability inequality \eqref{thm:ineq:obs-plate-1} for $s=2$ follows once we put all the inequalities above together.

    It only remains to prove \eqref{thm:ineq:obs-plate-1bis} when there is no boundary. Decomposing $(z_0,z_1)=\pi_0(z_0,z_1)+\pi_0^{\perp}(z_0,z_1)$ where $\pi_0$ is the projection on the eigenvalues $0$ of $\Delta_g$, that is the constants, we have obtained up to now, noticing the constant part of the initial data produce some part of the solution with zero Laplacian, 
    \begin{align*}
        \norm{\pi_0^{\perp}(z_0,z_1)}_{H^{2+s}\times H^s(\M)}^2\leq C\int_0^T \norm{b_\omega \Delta_g z(t)}_{H^s(\M)}^2 dt.
    \end{align*}
    By adding the components corresponding to eigenvalue zero and noticing that $[b_{\omega},\Delta_g]$ is a differential operator of order one, we obtain
      \begin{align*}
        \norm{(z_0,z_1)}_{H^{2+s}\times H^s(\M)}^2\leq C\int_0^T \norm{ b_\omega z(t)}_{H^{2+s}(\M)}^2 dt+C\norm{(z_0,z_1)}_{H^{1+s}\times H^{s-1}(\M)}^2.
    \end{align*}
    A compactness-uniqueness argument as in Proposition \ref{prop:obs-schr-plates} allows to conclude.
\end{proof}
\subsubsection{Observability for the Schr\"odinger equation}\label{sssec:obs-schr} Let $\M$ be a compact Riemannian manifold with or without boundary equipped with a metric $g$ and take $\omega\subset \M$. In what follows, we consider an observation operator $\mathcal{C}\psi=\mathbbm{1}_\omega \psi$, unless we specify otherwise.

In any of the situations described in Section \ref{s:plateintro}, an observability inequality at the $ L^2$ level holds for the linear Schr\"odinger equation. We summarize this discussion in the following result. 
\begin{theorem}\label{thm:schr-obs}
    If we are in any of the examples described in Section \ref{s:plateintro}, then for every $T>0$, there exists $C>0$ such that
    \begin{align*}
        \norm{v_0}_{L^2(\M)}^2\leq C\int_0^T \norm{\mc{C} e^{it\Delta_g}v_0}_{L^2(\M)}^2dt,
    \end{align*}
    for all $v_0\in L^2(\M)$.
\end{theorem}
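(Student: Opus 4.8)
The plan is to assemble Theorem~\ref{thm:schr-obs} from the existing body of observability (equivalently, by the Hilbert Uniqueness Method, exact controllability) results for the linear Schr\"odinger group, treating each of the five geometric settings recalled in Section~\ref{s:plateintro} separately. Before the case analysis I would record two preliminary remarks: first, the Schr\"odinger flow is conservative and time reversible, so observability of $e^{it\Delta_g}$ on $(0,T)$ from $\omega$ is equivalent to that of $e^{-it\Delta_g}$, and controllability statements in the literature dualize directly to the asserted estimate; second, each cited reference in fact furnishes observability for \emph{every} $T>0$, which is strictly stronger than the weak observability in some fixed time that Proposition~\ref{prop:obs-plate} ultimately requires. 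I would also note that when the observation is through $\mc{C}=a\cdot$ with $a\in C^\infty(\M)$ bounded below by a positive constant on an open set, observability through $\mc{C}$ is equivalent to observability through $\mathbbm{1}_\omega$; in case $(5)$ below I keep the observation operator $\mc{C}=a\cdot$, as in the statement.

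Then I would go through the cases in order. If $\omega$ satisfies the \ref{assumGCC}, I would invoke Lebeau~\cite{1992:lebeau:schrodinger-controle}, who proves exact controllability of the linear Schr\"odinger equation from $\omega$ in arbitrarily small time (the infinite speed of propagation removing any minimal-time restriction), and dualize. For the cube $(\M,g)=((0,1)^d,\mathrm{Euclid})$ with $d\le 3$ and $\omega$ an arbitrary nonempty open set, I would reduce to the flat torus $\mathbb{T}^d=(\R/2\Z)^d$ by the standard reflection/symmetrization argument: a function on $(0,1)^d$ satisfying the Dirichlet condition extends by successive reflections to an odd function on $\mathbb{T}^d$, the Schr\"odinger propagator commutes with these reflection symmetries, and so observability on $(0,1)^d$ from $\omega$ follows from observability on $\mathbb{T}^d$ from a reflected copy of $\omega$, the latter holding for every $T>0$ and every nonempty open set by the results of Jaffard, Komornik~\cite{K:92}, Burq--Zworski~\cite{BZ:04} and Anantharaman--Maci\`a~\cite{AM:14}. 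For the Euclidean disk with $\omega\cap\partial\M\neq\emptyset$ I would cite \cite[Theorem~1.2]{2016:anantharaman-leautaud-macia:wigner-schrodinger-disk}; for a closed Anosov surface with $\omega$ any nonempty open set, \cite[Theorem~5]{DJN:22} (obtained via the fractal uncertainty principle); and for a closed manifold of constant curvature $-1$ with observation $\mc{C}=a\cdot$ under the thin invisible-set hypothesis $\dim_H\{\rho\in S^*\M\mid a^2(\phi_t(\rho))=0\ \forall t\}<d$, \cite[Theorem~2.5]{AR:12}.

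I expect the only step requiring more than bookkeeping to be the cube-to-torus symmetrization: one has to check carefully that the odd extension intertwines $e^{it\Delta_g}$ for the Dirichlet Laplacian on $(0,1)^d$ with $e^{it\Delta_g}$ on $\mathbb{T}^d$, that the observation region transforms correctly under the reflections, and that (weak, or genuine) observability transfers back under this identification; this is routine but deserves to be spelled out. Everything else reduces to matching the hypotheses of the cited theorems to the cases at hand and invoking the equivalence between observability and controllability for this conservative, time-reversible equation.
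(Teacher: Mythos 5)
Your proposal matches the paper's treatment: Theorem~\ref{thm:schr-obs} is stated in the paper simply as a summary of the references already listed in Section~\ref{s:plateintro}, with no separate proof given, and the one non-citation step you isolate --- passing from the Dirichlet problem on $(0,1)^d$ to the free Schr\"odinger flow on the torus $\mathbb{T}^d$ by odd reflection, using that the odd subspace is invariant under $\Delta_g$ and that the reflected copies of $\omega$ are observed identically --- is exactly the symmetrization the paper points to in item~(2) of Section~\ref{s:plateintro}. Your bookkeeping of which reference covers which geometry, the remark that all of them give observability for every $T>0$, and the retention of $\mc{C}=a\cdot$ rather than $\mathbbm{1}_\omega$ in the constant negative curvature case, are all consistent with what the paper intends.
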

Note that $e^{it\Delta_g}$ is the flow with Dirichlet boundary condition in case that $\partial \M\neq \emptyset$. We now verify that the pair $(\bA, \bC)$ satisfies Assumption \ref{assumcommu}.
\begin{proposition}\label{propcommutplate}
    Let $\sigma\in [0, 1]$, $\sigma \neq 1/4$. Then, if $\bC$ given by $ \bC(\phi, \psi)=(0, b_\omega\Delta_g \phi)$ with $b_{\omega}$ smooth satisfying $\partial_{\vec n}b_{\omega}=0$ on $\partial \M$, then Assumption \ref{assumcommu} is fulfilled with $s=1/2$ as long as $\veps\leq 1/2$, $\sigma+\veps<5/4$ and $\sigma+\veps\neq 1/4$. In the case $\partial M=\emptyset$, the same result holds with $ \bC(\phi, \psi)=(b_\omega\phi,0)$.
\end{proposition}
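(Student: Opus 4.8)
The strategy mirrors exactly the one used in Proposition \ref{propcommutwave} for the wave equation, replacing the operator $A^*A$ with its plate counterpart. Recall that here $\bA^*\bA=-\bA^2=\mathrm{diag}(A_0^2,A_0^2)$ with $A_0=-\Delta_g$, and $X^{\sigma}=H^{2+2\sigma}_D\times H^{2\sigma}_D$. We want to check Assumption \ref{assumcommu}, namely that for $s=1/2$, $[(\bA^*\bA)^{1/2},\bC]\in\mc{L}(X^{\sigma+1},X^{\sigma+\veps})$ (note $\sigma+2s=\sigma+1$). Since $(\bA^*\bA)^{1/2}=\mathrm{diag}(A_0,A_0)=\mathrm{diag}(-\Delta_g,-\Delta_g)$, the plan is to compute the commutator explicitly in block form and reduce everything to a mapping property of a single scalar commutator $[(-\Delta_g),\,b_\omega(-\Delta_g)]$ (or $[-\Delta_g,b_\omega]$ in the boundaryless case) between appropriate Sobolev spaces, taking care of boundary conditions when $\partial\M\neq\emptyset$.

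\textbf{Key steps.} First, in the case $\partial\M\neq\emptyset$, with $\bC(\phi,\psi)=(0,b_\omega\Delta_g\phi)$, I would write $\bC=\begin{psmallmatrix}0&0\\ b_\omega\Delta_g&0\end{psmallmatrix}$ and compute
\begin{align*}
[(\bA^*\bA)^{1/2},\bC]=\begin{pmatrix}0&0\\ [-\Delta_g,\,b_\omega\Delta_g]&0\end{pmatrix}
=\begin{pmatrix}0&0\\ \Delta_g(b_\omega\Delta_g\,\cdot)-b_\omega\Delta_g(-\Delta_g)\,\cdot&0\end{pmatrix}.
\end{align*}
Hence the claim reduces to showing that the scalar operator $\phi\mapsto \Delta_g(b_\omega\Delta_g\phi)+b_\omega\Delta_g^2\phi = [\Delta_g,b_\omega]\Delta_g\phi$ maps $H^{2+2\sigma+2}_D=H^{4+2\sigma}_D$ into $H^{2\sigma+2\veps}_D$. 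Now $[\Delta_g,b_\omega]=2\nabla_g b_\omega\cdot\nabla_g+\Delta_g b_\omega$ is a first-order operator, so $[\Delta_g,b_\omega]\Delta_g$ is a third-order operator; applied to $H^{4+2\sigma}_D$ it lands in $H^{1+2\sigma}$ (in the scale of Sobolev spaces defined by local coordinates), and since $2\veps\le 1$ we have $2\sigma+2\veps\le 2\sigma+1$, giving the desired regularity once one checks the boundary condition so that $H^{1+2\sigma}=H^{1+2\sigma}_D$. The boundary condition is exactly where the hypotheses $\partial_{\vec n}b_\omega=0$, $\sigma\ne 1/4$ and $\sigma+\veps\ne 1/4$ enter: using $u=0$ and $\Delta_g u=0$ on $\partial\M$ and decomposing $\nabla_g=\partial_{\vec n}+\nabla_T$, one verifies that $\nabla_g b_\omega\cdot\nabla_g(\Delta_g\phi)=\partial_{\vec n}b_\omega\,\partial_{\vec n}(\Delta_g\phi)+\nabla_T b_\omega\cdot\nabla_T(\Delta_g\phi)$ vanishes on $\partial\M$ (first term by $\partial_{\vec n}b_\omega=0$, second by $\Delta_g\phi=0$ on $\partial\M$), and the zeroth-order term $\Delta_g b_\omega\,\Delta_g\phi$ vanishes too. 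Provided the target Sobolev index $2\sigma+2\veps$ avoids the exceptional value $1/2$ (hence $\sigma+\veps\ne 1/4$) and the source index avoids the boundary-sensitive thresholds (hence $\sigma\ne 1/4$, and $2\sigma+2\veps<5/2$ i.e. $\sigma+\veps<5/4$), the equivalence of $H^s(\M)$ and $H^s_D(\M)$ norms from \cite{G:67} applies as in the remark following Lemma \ref{lem:comp-reg}, and the mapping property follows. The verification of $s=1/2$ being the right power is immediate: $(\bA^*\bA)^s=(\mathrm{diag}(A_0^2,A_0^2))^{1/2}=\mathrm{diag}(A_0,A_0)$, so no fractional powers of the Laplacian actually appear and all estimates are at integer Sobolev levels.

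\textbf{The boundaryless case and the main obstacle.} When $\partial\M=\emptyset$, with $\bC(\phi,\psi)=(b_\omega\phi,0)$, the commutator becomes $[(\bA^*\bA)^{1/2},\bC]=\begin{psmallmatrix}[-\Delta_g,b_\omega]&0\\0&0\end{psmallmatrix}$, and $[-\Delta_g,b_\omega]$ is a first-order differential operator, hence maps $H^{2+2\sigma+2}=H^{4+2\sigma}$ into $H^{3+2\sigma}$, which is far more regular than the target $H^{2+2\veps}\times H^{2\varepsilon}$ with $\veps\le 1/2$; this direction is therefore easy. I expect the main obstacle to be purely bookkeeping: matching the abstract indices ($\sigma+2s$, $\sigma+\veps$) to the concrete Sobolev exponents in the plate scale (where the $\sigma$ of $X^\sigma$ corresponds to $H^{2\sigma}_D$, not $H^\sigma_D$), and carefully tracking which Sobolev thresholds require the boundary-condition argument versus which are automatic — this is the source of the technical exclusions $\sigma\ne 1/4$, $\sigma+\veps\ne 1/4$, $\sigma+\veps<5/4$. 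Once the index arithmetic is set up correctly, the boundary computation is a verbatim repetition of the one in Proposition \ref{propcommutwave}, now applied one derivative higher (to $\Delta_g\phi$ rather than $\phi$), and the proof concludes.
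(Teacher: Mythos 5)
Your approach coincides with the paper's: compute the block commutator $[(\bA^*\bA)^{1/2},\bC]$, reduce to the scalar third-order operator $[b_\omega,\Delta_g]\Delta_g$ (equivalently, to the first-order $[b_\omega,\Delta_g]$ acting on $\Delta_g\phi$), use $\veps\le 1/2$ for the derivative count, and invoke $\partial_{\vec n}b_\omega=0$ plus the hinged boundary condition $\Delta_g\phi=0$ on $\partial\M$ together with the Grisvard characterization — the thresholds $\sigma+\veps<5/4$, $\sigma+\veps\ne 1/4$, $\sigma\ne 1/4$ precisely avoid the exceptional spectral indices. Two minor slips that do not affect the argument: your intermediate display for $[-\Delta_g,b_\omega\Delta_g]$ has a sign error on the first term (it should be $-\Delta_g(b_\omega\Delta_g\,\cdot)$), and the boundaryless case is not "far more regular" — the same constraint $\veps\le 1/2$ is sharp there too, since $[-\Delta_g,b_\omega]$ maps $H^{4+2\sigma}$ into $H^{3+2\sigma}$ and the target is $H^{2+2(\sigma+\veps)}$; what is easier is only that no boundary verification is needed.
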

\begin{proof}
    We compute $[(A^*A)^{1/2},\bC]=\begin{psmallmatrix}
    0 & 0\\
    [b_{\omega},\Delta_g] \Delta_g & 0
    \end{psmallmatrix}$. Therefore, the result is true as long as $[b_{\omega},\Delta_g]\Delta_g=-2\nabla_g b_{\omega}\cdot \nabla_g \Delta_g-(\Delta_g b_{\omega})\Delta_g$ sends $H^{4+2\sigma}_D$ into $H^{2(\sigma+\veps)}_D$, that is, as long as $[b_{\omega},\Delta_g]=-2\nabla_g b_{\omega}\cdot \nabla_g -\Delta_g b_{\omega}$ sends $H^{2+2\sigma}_D$ into $H^{2(\sigma+\veps)}_D$. The assumption $\veps\leq 1/2$ ensures that the loss of derivative is correct, whereas $\sigma+\veps<5/4$ ensures that $H^{2(\sigma+\veps)}_D$ is either $H^{2(\sigma+\veps)}(\M)$ or $H^{2(\sigma+\veps)}_0(\M)$ with the Dirichlet boundary condition (provided we avoid the value $1/2$). Since $\partial_{\vec n}b_{\omega}=0$, this gives the result. This is similar in the other case.
\end{proof}

We now come to the proof of the main result of this section.

\begin{proof}[Proof of Theorem \ref{thm:analytic-nlp}]
    It only remains to check that Theorem \ref{thmabstractanalyticintro} can be applied with the abstract notations in Section \ref{notPlate}, with $\sigma=0$. The proof is very similar to the one of Theorem \ref{thm:analytic-prop}.
    
    We have already established that $\bA$ satisfies Assumption \ref{assumAA}. Using Lemma \ref{lemma:omomtilde-smaller-subset} below, we can construct successively $b_{\omega}$ and $\chi$ smooth, compactly supported in $\widetilde{\omega}$, with $\partial_{\vec n}\chi=\partial_{\vec n}b_{\omega}=0$ on $\partial M$ and so that $b_{\omega}=1$ on $\omega$ and $\chi=1$ on $\supp(b_{\omega})$. Take $\bC(\phi, \psi)=(0,b_\omega \Delta_g \phi)$ or $\bC(\phi, \psi)=(b_\omega \phi,0)$ if $\partial \M=\emptyset$ so that Proposition \ref{prop:obs-plate} applies. Therefore, we get that that $\bA$ satisfies Assumption \ref{assumCC} with $\bC\in \mc{L}(X)$ and $\bC\in \mc{L}(X^\veps)$ for some $\veps>0$. Also, Proposition \ref{propcommutplate} applies so that Assumption \ref{assumcommu} is fulfilled.  
    By writing $u=\chi u+(1-\chi)u$, we want to prove that $t\mapsto (1-\chi)u$ is analytic. Set $\Tilde{\chi}=(1-\chi)$ and consider the new variable $z=\Tilde{\chi} u$. We then have
    \begin{align*}
        \partial_t^2 z+\Delta_g^2 z &=\Tilde{\chi}(\partial_t^2 u+\Delta_g^2 u)-[\Delta_g^2, \Tilde{\chi}]u=-\Tilde{\chi} f(u)+[\Delta_g^2, \chi]u\\
        &=-\Tilde{\chi} f(z+\chi u)+[\Delta_g^2, \chi]u=-\Tilde{\chi} f(z+h_1)+h_2,
    \end{align*}  
     with $h_1=\chi u$ and $h_2=[\Delta_g^2,\chi]u$. From Proposition \ref{prop:f-assumptions-pl}, we see that $F$ satisfies Assumption \ref{assumFholom}. Since the multiplication by $\chi$ maps $H^2_D$ into itself, while $[\Delta_g^2,\chi]$ maps $H^{3+\veps}(\M)$ into $H^{\veps}_D$ for $\veps<1/2$. So, if we choose $\sigma=0$ and $\veps<1/2$, the assumptions imply that $h_1$ and $h_2$ are analytic with value in $H^2_D$ and $H^{\veps}_D$, respectively. Now, the conclusion follows as a direct application of Theorem \ref{thmabstractanalyticintro} in the same way as Theorem \ref{thm:analytic-prop}. We conclude that $t\mapsto (z(t), \partial_t z(t))$ is analytic with value in $X^0$. By assumption, $t\mapsto \chi u(t)$ is analytic with value in $H^{3+\veps}\cap H_0^1$ (and so it is the same for $\chi \partial_t u$) and therefore with value in $H^2_D$. So, $t\mapsto (\chi u(t),\chi\partial_t u(t))$ is analytic with value in $X^0$. 
     
     By summing up, we obtain that $t\mapsto ( u(t),\partial_t u(t))$ is analytic with value in $X^0$. Using the equation again as in Theorem \ref{thm:analytic-prop}, we obtain that $t\mapsto \bA U(t)$ is analytic with value in $X^0$. Hence $t\mapsto U(t)$ is analytic with value in $H^4\cap H_0^1(\M)\cap H^2\cap H_0^1(\M)$, which finishes the proof.
\end{proof}

\appendix
\setcounter{theorem}{0}
\renewcommand{\thetheorem}{\Alph{section}\arabic{theorem}}

\section{} 

\subsection{ODEs in Banach spaces} We now introduce the two different notions of ODEs in Banach spaces used in the present article. Let us consider the framework of Section \ref{sec:abstract-construction} and let $I\subset \R$ be a nonempty interval and take $s_0\in I$. 

For any $s\in \R$, we can easily extend $e^{sA}$ to $C^{0}([0,T],X^{\sigma})$ by the formula
\begin{align*}
\left[e^{sA}V\right](t)=e^{sA}V(t),
\end{align*}
for $V\in C^{0}([0,T],X^{\sigma})$. If $H\in L^{1}\left(I,C^{0}([0,T],X^{\sigma})\right)$, we say that $\xi\in C^{0}\left(I,C^{0}([0,T],X^{\sigma})\right)$ satisfies 
\begin{align}\label{appendix:ode:ode-banach-1}
    \left\{\begin{array}{lc}
        \dfrac{d}{ds}\xi(s)=A \xi(s)+H(s), & s\in I, \\
        \xi(s_0)=\xi_0, & 
    \end{array}\right.
\end{align}
with $\xi_0\in C^{0}([0,T],X^{\sigma})$, if it satisfies
\begin{align}\label{appendix:ode:ode-banach-1-duhamel}
\xi(s)&=e^{(s-s_0)A}\xi_0+\int_{s_0}^{s}e^{(s-w)A}H(w)dw,\ \forall s\in I,
\end{align}
with equality in $C^{0}([0,T],X^{\sigma})$.
\begin{lemma}
\label{lmDuhamelCauchy}
If $H\in L^{1}\left(I,C^{0}([0,T],\P_{n}X^{\sigma})\right)$ and $\xi\in C^{0}\left(I,C^{0}([0,T],\P_{n}X^{\sigma})\right)$ for some $n\in\N$ and satisfies 
$ \dfrac{d}{ds}\xi(s)=A \xi(s)+H(s)$ in the previous sense. Then, it satisfies this equation in the sense of Cauchy-Lipschitz.
\end{lemma}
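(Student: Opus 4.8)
The statement to prove (Lemma \ref{lmDuhamelCauchy}) asserts that when we restrict to a finite-frequency band $\P_n X^\sigma$, a function $\xi$ satisfying the Duhamel/mild formulation \eqref{appendix:ode:ode-banach-1-duhamel} is in fact a genuine $C^1$ solution of the ODE \eqref{appendix:ode:ode-banach-1} in the Banach space $C^0([0,T],\P_n X^\sigma)$. The key structural fact is that on $\P_n X^\sigma$ the generator $A$ is a \emph{bounded} operator: indeed, as recalled in Section \ref{sec:abstract-construction}, $A\P_n$ is a bounded operator on $X^\sigma$ with norm $\langle\lambda_n\rangle$, hence $A$ restricted to $C^0([0,T],\P_n X^\sigma)$ is bounded with the same norm, and the semigroup $s\mapsto e^{sA}$ restricted to this space is the genuine operator exponential, norm-continuous (indeed real-analytic) in $s\in\mathbb{R}$.

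The plan is to differentiate the identity \eqref{appendix:ode:ode-banach-1-duhamel} directly. First I would observe that since $A$ is bounded on $C^0([0,T],\P_n X^\sigma)$, the map $s\mapsto e^{(s-s_0)A}$ is differentiable in operator norm with derivative $s\mapsto A e^{(s-s_0)A}=e^{(s-s_0)A}A$; this is the standard fact for exponentials of bounded operators. Hence the first term $e^{(s-s_0)A}\xi_0$ is $C^1$ in $s$ with derivative $A e^{(s-s_0)A}\xi_0$. For the Duhamel term $\int_{s_0}^s e^{(s-w)A}H(w)\,dw$, I would write it as $e^{sA}\int_{s_0}^s e^{-wA}H(w)\,dw$ (using that $e^{sA}$ and $e^{-wA}$ commute and the integrand is Bochner-integrable since $H\in L^1$), and then differentiate as a product: the derivative of $w\mapsto \int_{s_0}^s e^{-wA}H(w)\,dw$ with respect to the upper limit is $e^{-sA}H(s)$ for a.e.\ $s$ (Lebesgue differentiation of the indefinite Bochner integral), while the exponential factor contributes the bounded-operator term. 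Collecting the two contributions gives, for a.e.\ $s\in I$,
\begin{align*}
\frac{d}{ds}\xi(s)=A e^{(s-s_0)A}\xi_0+A\int_{s_0}^s e^{(s-w)A}H(w)\,dw+H(s)=A\xi(s)+H(s),
\end{align*}
which is precisely the ODE. Since the right-hand side is in $L^1(I,C^0([0,T],\P_n X^\sigma))$, the function $\xi$ is absolutely continuous (indeed $C^1$ wherever $H$ is continuous) and solves \eqref{appendix:ode:ode-banach-1} in the Cauchy--Lipschitz sense, with the initial condition $\xi(s_0)=\xi_0$ read off directly from \eqref{appendix:ode:ode-banach-1-duhamel} at $s=s_0$.

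I do not expect a serious obstacle here; the only points requiring a little care are the justification of differentiating the Bochner integral under the (bounded) exponential and the a.e.\ differentiability of the indefinite integral of an $L^1$ vector-valued function, both of which are classical (see e.g.\ \cite[Theorem 10.4.5]{1969:dieudonne:modern-analysis} or standard references on vector-valued integration). The essential input that makes everything work—and the reason the lemma is stated only for the $\P_n$-projected equation—is the boundedness of $A$ on $C^0([0,T],\P_n X^\sigma)$, which removes all the domain subtleties that would otherwise force one to stay in the mild formulation.
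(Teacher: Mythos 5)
Your proof is correct and is the natural way to flesh out what the paper leaves at one line ("It follows as an application of Duhamel's formula"). The essential observation — that $A$ restricted to $\P_n X^\sigma$, and hence to $C^0([0,T],\P_n X^\sigma)$, is a bounded operator so that $s\mapsto e^{sA}$ is norm-differentiable and one may differentiate the Duhamel identity term by term — is exactly the right input, and your factorization $\int_{s_0}^s e^{(s-w)A}H(w)\,dw = e^{sA}\int_{s_0}^s e^{-wA}H(w)\,dw$ together with Lebesgue differentiation of the indefinite Bochner integral is the standard way to carry this out.
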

\begin{proof}
It follows as an application of Duhamel's formula.
\end{proof}
\begin{lemma}\label{lmtranslat}
For $T_1<T_2$, let us consider $T\in (0, T_2-T_1)$, $\eta\in (0, T_2-T-T_1)$ and $I:=[T_1-\eta, T_2-T-\eta]$. Let $G\in C^{0}([T_{1},T_{2}],X^{\sigma})$ and assume that $V\in C^{0}([T_{1},T_{2}],X^{\sigma})$ is a mild solution of 
\begin{align*}
    \left\{\begin{array}{lc}
    \dfrac{d}{dt}V(t)=A V(t)+G(t) &\ \text{ for }\ t\in [T_1, T_2]\\
   V(T_1)=V_0. & 
    \end{array}\right.
\end{align*}
If we define $\xi,H\in C^{0}\left(I, C^{0}([0,T],X^{\sigma})\right)$ by $\xi(s)=V^s$ and $H(s)=G^s$ with 
\begin{align*}
    \xi(s)(t)=V^{s}(t)=V(t+s+\eta)\ \text{ and }\ H(s)(t)=G^{s}(t)=G(t+s+\eta),\quad \forall s\in I, t\in [0,T],
\end{align*}
then, for any $s_0\in I$, $\xi$ is solution in the sense of \eqref{appendix:ode:ode-banach-1-duhamel} of
\begin{align*}
    \left\{\begin{array}{lc}
        \dfrac{d}{ds}\xi(s)=A \xi(s)+H(s), & s\in I, \\
        \xi(s_0)=\xi_0, & 
    \end{array}\right.
\end{align*}
with $\xi_0=V^{s_0}=V(\cdot+s_0+\eta)$.
\end{lemma}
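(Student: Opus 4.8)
\textbf{Proof plan for Lemma \ref{lmtranslat}.} The statement is essentially a change-of-variables assertion: a mild solution of the evolution equation on a fixed time interval, once reinterpreted via the shift $t\mapsto t+s+\eta$, becomes a solution of the ``frozen'' Banach-space ODE \eqref{appendix:ode:ode-banach-1-duhamel} in the variable $s$. The plan is to start from the Duhamel formula that $V$ satisfies by hypothesis and massage it into the Duhamel formula \eqref{appendix:ode:ode-banach-1-duhamel} for $\xi$ by a substitution in the integration variable.

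First I would write out what it means for $V$ to be a mild solution on $[T_1,T_2]$: for all $\tau\in[T_1,T_2]$,
\begin{align*}
V(\tau)=e^{(\tau-T_1)A}V_0+\int_{T_1}^{\tau}e^{(\tau-w)A}G(w)\,dw.
\end{align*}
Next, fix $s_0\in I$ and evaluate at $\tau=t+s+\eta$ and also note that $V(t+s_0+\eta)=\xi_0(t)=\xi(s_0)(t)$; using the semigroup property $e^{(\tau-T_1)A}=e^{(t+s+\eta-(t+s_0+\eta))A}e^{(t+s_0+\eta-T_1)A}$ and applying $e^{(s-s_0)A}$ to the Duhamel identity at $\tau=t+s_0+\eta$, I would obtain
\begin{align*}
V(t+s+\eta)=e^{(s-s_0)A}V(t+s_0+\eta)+\int_{t+s_0+\eta}^{t+s+\eta}e^{(t+s+\eta-w)A}G(w)\,dw.
\end{align*}
Then the substitution $w=t+w'+\eta$ (so $dw=dw'$, and as $w$ runs over $[t+s_0+\eta,\,t+s+\eta]$, $w'$ runs over $[s_0,s]$) turns the integral into $\int_{s_0}^{s}e^{(s-w')A}G(t+w'+\eta)\,dw'=\int_{s_0}^{s}e^{(s-w')A}H(w')(t)\,dw'$. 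Reading this pointwise in $t\in[0,T]$ and recalling the definition $[e^{(s-w')A}V](t)=e^{(s-w')A}V(t)$ for $V\in C^0([0,T],X^\sigma)$, we get exactly \eqref{appendix:ode:ode-banach-1-duhamel} for $\xi$ with source $H$ and initial datum $\xi_0$ at $s_0$.

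The only genuinely technical points, and where I would be slightly careful, are: (i) checking that $\xi$ and $H$, as defined by the shifts, actually lie in $C^0(I,C^0([0,T],X^\sigma))$ and $L^1(I,C^0([0,T],X^\sigma))$ respectively — this follows from the uniform continuity of $V$ and $G$ on the compact interval $[T_1,T_2]$, exactly the argument already used in the proof of Theorem \ref{thmabstractanalyticintro} to show $s\mapsto H_1^s$ is continuous, and it also guarantees the intervals of integration stay inside $[T_1,T_2]$ because of the constraint $\eta\in(0,T_2-T-T_1)$ defining $I$; and (ii) justifying the interchange of ``evaluate at $t$'' with the Bochner integral in $w$, which is immediate since evaluation $C^0([0,T],X^\sigma)\to X^\sigma$ at a fixed $t$ is linear and continuous. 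I do not expect any real obstacle here; the whole content is the substitution in the Duhamel integral together with the semigroup identity, and the lemma then follows.
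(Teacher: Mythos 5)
Your proposal is correct and follows essentially the same route as the paper's proof: write out the Duhamel formula for $V$ on $[T_1,T_2]$, split the integral at $t+s_0+\eta$, use the semigroup property to factor out $e^{(s-s_0)A}$, then substitute $w\mapsto t+w'+\eta$ in the remaining integral to recognize $\int_{s_0}^{s}e^{(s-w')A}G^{w'}(t)\,dw'$, and finally read the resulting identity uniformly in $t\in[0,T]$ to recover \eqref{appendix:ode:ode-banach-1-duhamel}. The two technical points you flag (continuity of $s\mapsto V^s,\,G^s$ and interchange of pointwise evaluation with the Bochner integral) are handled exactly as you indicate; the paper leaves them implicit.
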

\begin{proof} 
By Duhamel's formula, for all $t\in [T_1, T_2]$ we have
\begin{align*}
V(t)&=e^{(t-T_1)A}V_{0}+\int_{T_1}^{t}e^{(t-\tau)A}G(\tau)d\tau.
\end{align*}
with $V(T_1)=V_0$. Pick $s_0\in I$. Then, for $s\in I$ and $t\in [0, T]$,
\begin{align*}
V^{s}(t)&=V(t+s+\eta)=e^{(t+s+\eta-T_1)A}V_{0}+\int_{T_1}^{t+s+\eta}e^{(t+s+\eta-\tau)A}G(\tau)d\tau\\
&=e^{(s-s_0)A}\left(e^{(t+s_0+\eta-T_1)A}V_{0}+\int_{T_1}^{t+s_0+\eta}e^{(t+s_0+\eta-\tau)A}G(\tau)d\tau\right)+\int_{t+s_0+\eta}^{t+s+\eta}e^{(t+s+\eta-\tau)A}G(\tau)d\tau\\
&=e^{(s-s_0)A}V(t+s_0+\eta)+\int_{s_0}^{s}e^{(s-w)A}G(t+w+\eta)dw\\
&=e^{(s-s_0)A}V^{s_0}(t)+\int_{s_0}^{s}e^{(s-w)A}G^{w}(t)dw.
\end{align*}
So, since this is true for any $t\in [0,T]$, it gives 
\begin{align*}
V^{s}=e^{(s-s_0)A}V^{s_0}+\int_{s_0}^{s}e^{(s-w)A}G^{w}dw,\ \forall s\in I.
\end{align*}
By hypothesis, this equality holds in $C^0([0, T], X^\sigma)$, and is exactly \eqref{appendix:ode:ode-banach-1-duhamel}, as we wanted to prove.
\end{proof}

\subsection{Complex analysis in Banach spaces}\label{s:appcomplex} Let $E$ and $F$ be Banach spaces over the same field $\mathbb{K}$, with $\mathbb{K}$ being either $\R$ or $\mathbb{C}$. Along this appendix, we will introduce several notions of differentiability and analyticity needed to unify the different results used throughout the present article.

We first start with a notion of differentiability in Banach spaces, often referred to as Fr\'echet differentiability.

\begin{definition}
\label{def-Kdifferentiable}
    Let $U$ be an open subset of $E$. A mapping $f: U\to F$ is said to be $\mathbb{K}$-differentiable (or just differentiable) if for each point $x\in U$ there exists a mapping $A\in \mc{L}(E, F)$ such that
    \begin{align*}
        \lim_{h\to 0} \dfrac{\norm{f(x+h)-f(x)-Ah}_F}{\norm{h}_E}=0.
    \end{align*}
    Such map $A$ is called the derivative of $f$ at $x$ and it is denoted by $Df(x)$.
\end{definition}

We can rephrase the above definition as follows: for each $x\in U$ there exists a mapping $A\in \mc{L}(E, F)$ such that
\begin{align*}
    f(x+h)=f(x)+Ah+o(h)
\end{align*}
where $o(h)/\norm{h}_E\to 0$ as $h\to 0$. With this formulation at hand, we state the Chain rule in this setting.

\begin{theorem}{\cite[Theorem 13.6]{1986:mujica:complex-analysis}}\label{appendix:thm:chain-rule} (Chain rule)
    Let $E$, $F$ and $G$ be Banach spaces over $\mathbb{K}$. Let $U\subset E$ and $V\subset F$ be two open sets and let $f: U\to F$ and $g: V\to G$ be two differentiable mappings with $f(U)\subset V$. Then the composite mapping $g\circ f: U\to G$ is differentiable as well and $D(g\circ f)(x)=Dg(f(x))\circ Df(x)$ for every $x\in U$.
\end{theorem}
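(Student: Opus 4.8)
\textbf{Proof proposal for the Chain rule (Theorem \ref{appendix:thm:chain-rule}).}

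The plan is to give the standard $\varepsilon$--$\delta$ proof of the chain rule adapted to Banach spaces, which is essentially the same as the finite-dimensional argument once one is careful with the appropriate operator norms. Fix $x\in U$ and write $y=f(x)\in V$. Since $f$ is differentiable at $x$ and $g$ is differentiable at $y$, we have the two first-order expansions
\begin{align*}
    f(x+h)&=f(x)+Df(x)h+r(h), & r(h)/\norm{h}_E&\to 0 \text{ as } h\to 0,\\
    g(y+k)&=g(y)+Dg(y)k+s(k), & s(k)/\norm{k}_F&\to 0 \text{ as } k\to 0.
\end{align*}
The idea is to substitute $k=k(h):=f(x+h)-f(x)=Df(x)h+r(h)$ into the second expansion and verify that the resulting remainder is $o(h)$.

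First I would record the bound $\norm{k(h)}_F\leq (\norm{Df(x)}_{\mc{L}(E,F)}+1)\norm{h}_E$ valid for $\norm{h}_E$ small (using that $\norm{r(h)}_F\leq \norm{h}_E$ eventually); in particular $k(h)\to 0$ as $h\to 0$, so $k(h)$ stays in the domain of validity of the expansion of $g$ near $y$, and moreover $g(f(x+h))$ is well defined because $f(U)\subset V$ and $V$ is open. Substituting gives
\begin{align*}
    (g\circ f)(x+h)&=g(y)+Dg(y)\big(Df(x)h+r(h)\big)+s(k(h))\\
    &=(g\circ f)(x)+\big(Dg(y)\circ Df(x)\big)h+\underbrace{Dg(y)r(h)+s(k(h))}_{=:\,\rho(h)}.
\end{align*}
It remains to check $\rho(h)/\norm{h}_E\to 0$. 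The first term is handled by $\norm{Dg(y)r(h)}_G\leq \norm{Dg(y)}_{\mc{L}(F,G)}\norm{r(h)}_F=o(\norm{h}_E)$. For the second, given $\varepsilon>0$ choose $\eta>0$ so that $\norm{s(k)}_G\leq \varepsilon\norm{k}_F$ for $\norm{k}_F\leq\eta$; since $\norm{k(h)}_F\leq C\norm{h}_E$ with $C=\norm{Df(x)}_{\mc{L}(E,F)}+1$, for $\norm{h}_E$ small enough we get $\norm{s(k(h))}_G\leq \varepsilon\norm{k(h)}_F\leq \varepsilon C\norm{h}_E$, whence $\limsup_{h\to 0}\norm{s(k(h))}_G/\norm{h}_E\leq \varepsilon C$, and $\varepsilon$ being arbitrary this limit is $0$. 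Therefore $\rho(h)=o(\norm{h}_E)$, which by Definition \ref{def-Kdifferentiable} shows that $g\circ f$ is differentiable at $x$ with $D(g\circ f)(x)=Dg(f(x))\circ Df(x)$. Since $x\in U$ was arbitrary, this holds at every point of $U$.

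I do not expect any genuine obstacle here: the only points requiring care are that $Dg(y)$ is a bounded operator (so it may be pulled through the estimate with its operator norm) and that $k(h)$ remains in a neighborhood of $y$ on which the expansion of $g$ is valid, both of which follow from openness of $V$ and $f(U)\subset V$. Since this is a classical statement cited from \cite[Theorem 13.6]{1986:mujica:complex-analysis}, one could alternatively simply invoke that reference; I include the short argument for completeness.
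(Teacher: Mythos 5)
Your proof is correct and is precisely the classical $\varepsilon$--$\delta$ argument for the chain rule in Banach spaces: substitute the increment $k(h)=Df(x)h+r(h)$ into the first-order expansion of $g$ at $f(x)$, use boundedness of $Dg(f(x))$ to absorb $r(h)$, and use the linear bound $\norm{k(h)}_F\leq C\norm{h}_E$ to control $s(k(h))$. The paper itself supplies no proof here -- it simply cites \cite[Theorem 13.6]{1986:mujica:complex-analysis} -- and your argument is exactly the one that reference gives, so there is nothing to compare beyond noting the match; the only (harmless) redundancy is that well-definedness of $g(f(x+h))$ already follows from $f(U)\subset V$ alone, with openness of $V$ being needed only so that the expansion of $g$ at $f(x)$ is available on a ball.
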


We now come to introduce the different notions of holomorphic or analytic maps that have been used throughout the present article. A mapping $P: E\to F$ is said to be an $k$-homogeneous polynomial if there exists a $k$-linear mapping $A: E^k\to F$ such that $P(x)=A(x,\ldots, x)$ for every $x\in E$. We represent by $\mc{P}(^kE, F)$ the Banach space of all continuous $k$-homogeneous polynomials from $E$ into $F$ under the norm
\begin{align*}
    \norm{P}_{\mc{P}(^kE, F)}=\sup\{\norm{P(x)}_F\ |\ x\in E,\ \norm{x}_E\leq 1\}.
\end{align*}
A series $\sum_{k=0}^\infty f_k$ of homogeneous polynomials $f_k\in \mc{P}(^k E, F)$ will shortly be called a formal series from $E$ to $F$. The space of all formal series with continuous terms will be denoted by $S(E, F)$. We say that a formal series $\sum_{j=0}^\infty f_j$ converges in a set $U\subset E$ if for every $x\in U$ the series $\sum_{j=0}^\infty f_j(x)$ is convergent.

\begin{definition}
    Let $U$ be an open subset of $E$ and $\mathbb{K}=\mathbb{C}$ (resp. $\R$). A continuous mapping $f: U\to F$ is said to be \emph{holomorphic} (resp. \emph{analytic}) if for each $x\in U$ there exist a series $\sum_{j=0}^\infty f_j\in S(E, F)$ such that
    \begin{align*}
        f(x+h)=\sum_{j=0}^\infty f_j(h)
    \end{align*}
    for all $h$ in a neighborhood of $0\in E$. We shall denote by $\H(U, F)$ the vector space of all holomorphic mapping from $U$ into $F$.
\end{definition}

\begin{remark}
    The sequence $(f_j)$ which appears in the above definition is uniquely determined by $f$ and $x$. We then shall write $f_j=f_j(x)$ for every $j\in \N_0$.
\end{remark}

The previous definition has been taken from \cite{BS:71-analytic} and \cite{1986:mujica:complex-analysis}. Observe that here we have reserved the concept \emph{holomorphic} for the complex case and \emph{analytic} for the real case. When going through the literature, it is often the case that holomorphicity is introduced with a different definition. We will introduce these notions and then we will establish that they are equivalent. From now on, assume that $\mathbb{K}=\mathbb{C}$, unless we say otherwise.

\begin{definition}
A mapping $f: U\to F$ is said to be:
\begin{enumerate}
    \item \emph{weakly holomorphic} if $\psi\circ f$ is holomorphic for every $\psi\in F^*$, where $F^*$ is the dual space of $F$.
    \item \emph{G-holomorphic} if for all $x\in U$ and $h\in E$, the mapping $\zeta\mapsto f(x+\zeta h)$ is holomorphic on the open set $\{\zeta\in \mathbb{C}\ |\ x+\zeta h\in U\}$.
\end{enumerate}
\end{definition}

The following theorem shows that one of the most important features of the complex analysis still holds when working with functions between complex Banach spaces.

\begin{theorem}{\cite[Theorem 8.12, Theorem 8.7, Theorem 13.16]{1986:mujica:complex-analysis}}\label{appendix:thm:equiv-holom}
    Let $U$ be an open subset of $E$, and let $f: U\to F$. The following statements are equivalent:
    \begin{enumerate}
        \item $f$ is $\mathbb{C}$-differentiable;
        \item $f$ is holomorphic;
        \item $f$ is weakly holomorphic;
        \item $f$ is continuous and $G-$holomorphic.
    \end{enumerate}
\end{theorem}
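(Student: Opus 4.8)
The statement is a classical theorem in infinite-dimensional complex analysis, so my plan is to reproduce the standard argument (as in \cite[Chapters~8 and~13]{1986:mujica:complex-analysis}), organized as a cycle of implications $(2)\Rightarrow(1)\Rightarrow(4)\Rightarrow(3)\Rightarrow(2)$, together with one auxiliary scalar-valued lemma: a continuous $G$-holomorphic function $U\to\mathbb{C}$ is holomorphic. First I would dispatch the two easy links. For $(2)\Rightarrow(1)$: if $f$ is holomorphic, write $f(x+h)=\sum_{j\geq0}f_j(x)(h)$ near $x$ with $f_j(x)\in\mathcal{P}(^jE,F)$ and geometrically controlled coefficient norms; then $f_0(x)=f(x)$, $f_1(x)\in\mathcal{L}(E,F)$, and the tail is $O(\|h\|_E^2)$, so $Df(x)=f_1(x)$. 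For $(1)\Rightarrow(4)$: $\mathbb{C}$-differentiability at every point gives continuity, and for fixed $h\in E$ the chain rule (Theorem~\ref{appendix:thm:chain-rule} applied to $\zeta\mapsto x+\zeta h$) shows that $\zeta\mapsto f(x+\zeta h)$ is complex-differentiable into $F$ on an open subset of $\mathbb{C}$, hence holomorphic by the one-variable Banach-valued theory; i.e. $f$ is $G$-holomorphic.

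Next I would do $(4)\Rightarrow(3)$: for $\psi\in F^*$ the composition $\psi\circ f$ is continuous and $G$-holomorphic with scalar values, hence holomorphic by the auxiliary lemma (which one proves by restricting to finite-dimensional affine slices and invoking Hartogs' theorem, or more elementarily by iterating the one-variable Cauchy formula, continuity supplying the needed local boundedness). The substantive link is $(3)\Rightarrow(2)$, where the crux is that weak holomorphy forces \emph{local boundedness}: fixing $x_0\in U$ and $r>0$ with $\{x_0+\zeta h:|\zeta|\leq2r,\ \|h\|_E\leq1\}\subset U$, each scalar map $\zeta\mapsto\psi(f(x_0+\zeta h))$ is holomorphic on $|\zeta|<2r$ and hence bounded on $|\zeta|\leq r$, so the set $\{f(x_0+\zeta h):|\zeta|\leq r,\ \|h\|_E\leq1\}$ is weakly bounded in $F$ and therefore norm-bounded by the uniform boundedness principle. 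With local boundedness secured, the Cauchy integral formula applied to $\zeta\mapsto f(x_0+\zeta h)$ defines candidate coefficients $f_j(x_0)(h)=\tfrac{1}{2\pi i}\int_{|\zeta|=r}\zeta^{-j-1}f(x_0+\zeta h)\,d\zeta$; polarization together with the local bound identifies $f_j(x_0)$ as a continuous $j$-homogeneous polynomial with Cauchy estimate $\|f_j(x_0)\|\leq r^{-j}\sup\|f\|$, and the series $\sum_j f_j(x_0)(h)$ sums to $f(x_0+h)$ for $\|h\|_E$ small, giving holomorphy and closing the cycle.

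The hard part — essentially the only non-formal point — will be the uniformity of the radii in the directional Cauchy estimates, which is exactly what makes the uniform boundedness principle applicable on a genuine ball of $E$; everything else is routine bookkeeping with power series, Cauchy estimates and the one-dimensional theory. Since the result is entirely classical, in the article we simply cite \cite[Theorem~8.12, Theorem~8.7, Theorem~13.16]{1986:mujica:complex-analysis}, and the outline above records the argument for the reader's convenience.
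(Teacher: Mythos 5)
The paper offers no proof of this statement at all: it is imported as a black box from Mujica's book, so the only point of comparison is whether your outline of the classical argument is itself sound. The cycle $(2)\Rightarrow(1)\Rightarrow(4)\Rightarrow(3)$ is fine as sketched (for $(2)\Rightarrow(1)$ the geometric control of $\|f_j(x)\|$ comes from continuity plus the Cauchy estimates, and for $(4)\Rightarrow(3)$ continuity really does supply the local boundedness needed for the scalar lemma).

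The implication $(3)\Rightarrow(2)$, however, contains a genuine gap at exactly the point you call the crux. You argue that since each scalar map $\zeta\mapsto\psi(f(x_0+\zeta h))$ is holomorphic on $|\zeta|<2r$ it is bounded on $|\zeta|\le r$, and you conclude that $\{f(x_0+\zeta h):|\zeta|\le r,\ \|h\|_E\le 1\}=f(\overline{B}(x_0,r))$ is weakly bounded. This does not follow: the one-variable argument yields, for each fixed direction $h$, a bound $M_{\psi,h}$ on the compact disc $|\zeta|\le r$, whereas weak boundedness of $f(\overline{B}(x_0,r))$ requires $\sup_{\|h\|_E\le 1}M_{\psi,h}<\infty$, a bound uniform over the non-compact unit sphere of directions. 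Holomorphy of $\psi\circ f$ gives only \emph{local} boundedness of $\psi\circ f$, not boundedness on a fixed ball of $E$; so the uniform boundedness principle is being invoked before the hypothesis it needs has been established, which is circular. The standard repair is to prove local boundedness of $f$ by contradiction on a sequence: if $f$ were unbounded on every neighborhood of $x_0$, choose $y_n\to x_0$ with $\|f(y_n)\|_F\to\infty$; each $\psi\circ f$ is continuous, so $\psi(f(y_n))\to\psi(f(x_0))$ and the countable set $\{f(y_n)\}_n$ is weakly bounded, hence norm-bounded by the uniform boundedness principle, a contradiction. (A Baire-category argument works as well.) With local boundedness secured, your Cauchy-formula construction of the coefficients $f_j(x_0)$ and the Cauchy estimates do close the cycle; since the paper in any case only cites Mujica, the practical fix is simply to keep the citation and either correct or drop this step of the outline.
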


For a given $x\in U$ and $h\in E$, let us denote by $\rho(x, h)$ the supremum of all numbers $\rho$ such that $|\zeta|\leq \rho$ implies $x+\zeta h\in U$.

\begin{theorem}{\cite[Theorem 7.1, Corollary 7.3]{1986:mujica:complex-analysis}}\label{appendix:prop:cauchy-form} (Cauchy integral formula)
    Let $U$ be an open subset of $E$, and let $f\in \H(U, F)$. Let $x\in U$, $h\in E$ and $r<\rho(x, h)$. Then for each $\ld\in \mathbb{D}(0, r)$ we have
    \begin{align*}
        f(x+\ld h)=\dfrac{1}{2\pi i}\int_{|\zeta|=r} \dfrac{f(x+\zeta h)}{\zeta-\ld}d\zeta,
    \end{align*}
    where $|\zeta|=r$ denotes a circle of radius $r$ an center at the origin in the complex plane. Moreover, for each $j\in \N$ we have
    \begin{align*}
        f_j(x)(h)=\dfrac{1}{2\pi i}\int_{|\zeta|=r} \dfrac{f(x+\zeta h)}{\zeta^{j+1}}d\zeta.
    \end{align*}
\end{theorem}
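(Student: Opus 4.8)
The plan is to reduce everything to the classical scalar Cauchy integral formula by composing with bounded linear functionals, using the equivalence between holomorphy and $G$-holomorphy from Theorem \ref{appendix:thm:equiv-holom}. First I would fix $x\in U$ and $h\in E$ and set $g(\zeta)=f(x+\zeta h)$, a map defined on the open set $\Omega=\{\zeta\in\mathbb{C}\ |\ x+\zeta h\in U\}$. Since $r<\rho(x,h)$, the closed disc $\overline{\mathbb{D}(0,r)}$ is contained in $\Omega$. Because $f$ is holomorphic, Theorem \ref{appendix:thm:equiv-holom} gives that $f$ is $G$-holomorphic, which by definition means precisely that $g:\Omega\to F$ is a classical holomorphic function of the single complex variable $\zeta$; in particular $g$ is continuous on the compact circle $\{|\zeta|=r\}$, so the $F$-valued integral $\tfrac{1}{2\pi i}\int_{|\zeta|=r}\tfrac{g(\zeta)}{\zeta-\ld}\,d\zeta$, understood as a limit of Riemann sums, is well defined for every $\ld\in\mathbb{D}(0,r)$.

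For the first identity I would test both sides against an arbitrary $\psi\in F^{*}$. Since a bounded linear functional commutes with the $F$-valued integral, $\psi\!\left(\tfrac{1}{2\pi i}\int_{|\zeta|=r}\tfrac{g(\zeta)}{\zeta-\ld}\,d\zeta\right)=\tfrac{1}{2\pi i}\int_{|\zeta|=r}\tfrac{(\psi\circ g)(\zeta)}{\zeta-\ld}\,d\zeta$; now $\psi\circ g$ is an ordinary scalar holomorphic function on $\Omega$, so the classical Cauchy integral formula yields $(\psi\circ g)(\ld)=\tfrac{1}{2\pi i}\int_{|\zeta|=r}\tfrac{(\psi\circ g)(\zeta)}{\zeta-\ld}\,d\zeta$, i.e. $\psi(g(\ld))$ equals $\psi$ applied to the claimed right-hand side. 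Since $F^{*}$ separates points of $F$ by Hahn–Banach, $g(\ld)=f(x+\ld h)$ coincides with that right-hand side, which is the first formula.

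For the coefficient formula I would expand the Cauchy kernel as $\tfrac{1}{\zeta-\ld}=\sum_{j\geq 0}\tfrac{\ld^{j}}{\zeta^{j+1}}$, a series converging uniformly on $\{|\zeta|=r\}$ whenever $|\ld|<r$. Substituting into the first identity and interchanging the uniformly convergent sum with the integral gives $f(x+\ld h)=\sum_{j\geq 0}\ld^{j}c_{j}$ for $|\ld|<r$, where $c_{j}=\tfrac{1}{2\pi i}\int_{|\zeta|=r}\tfrac{f(x+\zeta h)}{\zeta^{j+1}}\,d\zeta\in F$. On the other hand, the very definition of holomorphy gives $f(x+\ld h)=\sum_{j\geq 0}f_{j}(x)(\ld h)=\sum_{j\geq 0}\ld^{j}f_{j}(x)(h)$ for $\ld$ in a neighbourhood of $0$. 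Comparing these two $F$-valued power series in $\ld$ on the overlap of their domains — once more by applying functionals $\psi\in F^{*}$ and invoking uniqueness of the Taylor coefficients of the scalar holomorphic function $\ld\mapsto(\psi\circ g)(\ld)$ — yields $f_{j}(x)(h)=c_{j}$ for all $j$, which is the second formula.

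The only slightly delicate points are: (i) confirming that $g=f(x+\,\cdot\,h)$ is genuinely a classical $F$-valued holomorphic function of one complex variable on a neighbourhood of $\overline{\mathbb{D}(0,r)}$, which is exactly the content of $G$-holomorphy in Theorem \ref{appendix:thm:equiv-holom} combined with $r<\rho(x,h)$; and (ii) the transfer of scalar statements (Cauchy's formula, uniqueness of Taylor coefficients) to the vector-valued setting, handled uniformly by composing with $\psi\in F^{*}$ and using Hahn–Banach. No new idea is needed — the statement is the classical \cite[Theorem 7.1, Corollary 7.3]{1986:mujica:complex-analysis}, which one may simply cite; the sketch above records why it holds.
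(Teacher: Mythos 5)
Your argument is correct, and it is essentially the standard proof one finds in Mujica's book (which the paper cites as a black box without reproducing the proof): pass to the one complex variable slice $g(\zeta)=f(x+\zeta h)$ via $G$-holomorphy, apply the classical scalar Cauchy formula after composing with $\psi\in F^*$, recover the vector-valued identity via Hahn--Banach, and extract the coefficient formula by expanding the Cauchy kernel as a geometric series and matching Taylor coefficients. One could streamline the second part slightly by noting that the first formula plus the geometric-series expansion already produces a convergent power series $\sum_j \ld^j c_j$ for $g(\ld)$ with $c_j\in F$, and then uniqueness of power-series coefficients for a vector-valued holomorphic function of one variable (itself a Hahn--Banach consequence, as you use) identifies $c_j$ with $f_j(x)(h)$ — but that is exactly what you do, just phrased more compactly. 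The only point worth being pedantic about is that when you invoke ``the $F$-valued integral \ldots understood as a limit of Riemann sums, is well defined,'' you are implicitly using that $g$ is continuous on the compact circle $\{|\zeta|=r\}$ with values in a Banach space, so the Riemann integral exists; this follows from the continuity of $f$ (part of holomorphy) and is worth flagging explicitly. No gap.
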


Let $f\in \H(U, F)$. We can expand $f(x+\ld h)$ as
\begin{align*}
    f(x+\ld h)=\sum_{j=0}^\infty f_j(x)(\ld h)=\sum_{j=0}^\infty \ld^j f_j(x)(h),
\end{align*}
which holds uniformly for $|\ld|\leq r$ with $0\leq r<\rho(x, h)$. For $x\in U$ we may define the $n$th variation $\delta^n f(x, h)$ of $f(x)$ with increment $h$ as
\begin{align*}
    \delta^n f(x, h)=\left[\dfrac{d^n}{d\zeta^n} f(x+\zeta h)\right]_{\zeta=0}.
\end{align*}
It can be seen that $\delta^n f(x, h)$ is homogeneous of degree $n$ in $h$. Moreover, looking at the Taylor development of the holomorphic map $\ld\in \mathbb{D}(0, r)\mapsto f(x+\ld h)\in F$, in view of the previous result, it follows that
\begin{align}\label{appendix:eq:cauchy-formula-derivatives}
    \delta^n f(x, h)=\dfrac{n!}{2\pi i}\int_{|\zeta|=r} \dfrac{f(x+\zeta h)}{\zeta^{m+1}}d\zeta.
\end{align}
\begin{remark}
    Formula \eqref{appendix:eq:cauchy-formula-derivatives} does not depend on the $r<\rho(x, h)$ chosen.
\end{remark}

The above discussion leads us to the classical Cauchy estimates.

\begin{proposition}{\cite[Corollary 7.4]{1986:mujica:complex-analysis}}\label{appendix:prop:cauchy-est} (Cauchy estimates)
    Let $U$ be an open subset of $E$, and let $f\in \H(U, F)$. Let $x\in U$, $h\in E$ and $r<\rho(x, h)$. Then for each $n\in \N$ we have
    \begin{align*}
        \norm{\delta^n f(x)(h)}\leq r^{-n}\sup_{|\zeta|=r}\norm{f(x+\zeta h)}.
    \end{align*}
\end{proposition}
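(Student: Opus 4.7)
The plan is essentially a one-step argument that reads off the estimate directly from the Cauchy integral formula established just before it. Having fixed $x \in U$, $h \in E$ and $r < \rho(x, h)$, I would start from the representation of $\delta^n f(x, h)$ provided by equation \eqref{appendix:eq:cauchy-formula-derivatives} (a consequence of Theorem \ref{appendix:prop:cauchy-form}):
\begin{align*}
\delta^n f(x, h) = \frac{n!}{2\pi i}\int_{|\zeta|=r} \frac{f(x+\zeta h)}{\zeta^{n+1}}\,d\zeta.
\end{align*}
The first thing I would verify is that this $F$-valued contour integral makes sense: by definition of $\rho(x, h)$, we have $x + \zeta h \in U$ for $|\zeta| \leq r$, and since $f$ is continuous, so is the integrand $\zeta \mapsto f(x+\zeta h)/\zeta^{n+1}$ on the compact circle $|\zeta|=r$. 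Hence the integral exists as a Bochner (or Riemann) $F$-valued integral.

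Next I would apply the Banach-valued $ML$-inequality: for $|\zeta|=r$, the integrand satisfies $\|f(x+\zeta h)/\zeta^{n+1}\| \leq r^{-(n+1)}\sup_{|\zeta|=r}\|f(x+\zeta h)\|$, so
\begin{align*}
\|\delta^n f(x, h)\| \;\leq\; \frac{n!}{2\pi}\cdot\frac{\sup_{|\zeta|=r}\|f(x+\zeta h)\|}{r^{n+1}}\cdot 2\pi r \;=\; \frac{n!}{r^{n}}\,\sup_{|\zeta|=r}\|f(x+\zeta h)\|.
\end{align*}
This already gives the Cauchy-type estimate. The $n!$ factor can then be absorbed by interpreting the proposition in terms of the homogeneous polynomial coefficient $f_n(x)(h) = \delta^n f(x, h)/n!$, for which the formula of Theorem \ref{appendix:prop:cauchy-form} yields
\begin{align*}
\|f_n(x)(h)\| \;\leq\; r^{-n}\sup_{|\zeta|=r}\|f(x+\zeta h)\|,
\end{align*}
which is the normalization actually used in the rest of the paper (notably in the Taylor-remainder estimate \eqref{prop:proof:frechet-1tfixed} in the proof of Proposition \ref{propabstract}).

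The only technical point worth mentioning is the justification of the $ML$-inequality in the $F$-valued setting. This is entirely standard: one approximates the integral by Riemann sums, applies the triangle inequality term by term in $F$, and passes to the limit using continuity of $\zeta \mapsto f(x+\zeta h)/\zeta^{n+1}$ and completeness of $F$. There is no genuine obstacle here — the result is an immediate corollary of the Cauchy integral formula once the vector-valued integration framework is in place, which is why the reference \cite{1986:mujica:complex-analysis} is cited rather than a self-contained proof being given.
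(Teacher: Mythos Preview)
The paper does not supply its own proof of this proposition; it is simply cited from \cite{1986:mujica:complex-analysis}. Your argument is the standard one and is correct: apply the $ML$-inequality directly to the Cauchy integral representation \eqref{appendix:eq:cauchy-formula-derivatives}. You also correctly flag the normalization issue---as written, the bound $\|\delta^n f(x)(h)\|\leq r^{-n}\sup_{|\zeta|=r}\|f(x+\zeta h)\|$ is missing the factor $n!$ that the integral formula \eqref{appendix:eq:cauchy-formula-derivatives} would produce, and the stated inequality really matches the coefficient $f_n(x)(h)=\delta^n f(x,h)/n!$ from Theorem \ref{appendix:prop:cauchy-form}; this is indeed the normalization used later in \eqref{prop:proof:frechet-1tfixed}.
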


\begin{remark}
Actually, it is possible to have the Cauchy estimates locally around any point $x\in U$ or even uniformly in a ball (by assuming that $f$ is bounded there). Let us argue for the former case, the latter being similar. By continuity, there exists $r_x>0$ such that $\norm{f(z)}\leq M$ for all $z\in U$ such that $\norm{z-x}\leq r_x$, where $M=M(x)>0$ is a bound that depends on $x$. Let $h\in E$. Thus, for $z$ such that $\norm{z-x}\leq r_x/2$, we have $z+\zeta h\in U$ for any $|\zeta|\leq \tfrac{r_x}{2\norm{h}}$, since
\begin{align*}
    \norm{z+\zeta h-x}\leq \norm{z-x}+\norm{h}\leq \dfrac{r_x}{2}+\dfrac{r_x}{2\norm{h}} \norm{h}=r_x
\end{align*}
and so $z+\zeta h\in B(x, r_x)\subset U$. Due to the Cauchy estimates
\begin{align*}
    \norm{\delta^n f(z)(h)}&\leq \left(\dfrac{2\norm{h}}{r_x}\right)^n \sup_{|\zeta|=\tfrac{r_x}{2\norm{h}}}\norm{f(z+\zeta h)}\leq M \left(\dfrac{2\norm{h}}{r_x}\right)^m.
\end{align*}
The previous estimate holds uniformly on $\norm{z-x}\leq r^*$, for any $r^*<\tfrac{r_x}{2}$.
\end{remark}

We characterize holomorphic mappings whose domain is an open set in a product of Banach spaces. 

\begin{proposition}{\cite[Proposition 8.10]{1986:mujica:complex-analysis}}\label{appendix:thm:holom-several-variables}
    Let $E_1,\ldots, E_n$ and $F$ Banach spaces, and let $U$ be an open subset of $E_1\times\ldots \times E_n$. Then a mapping $f: U\to F$ is holomorphic if and only if $f$ is continuous and $f(\zeta_1,\ldots, \zeta_n)$ is holomorphic in each $\zeta_j$ when the other variables are held fixed.
\end{proposition}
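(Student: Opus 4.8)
\textbf{Plan for Proposition \ref{appendix:thm:holom-several-variables}.} This is a Hartogs-type theorem for Banach-space-valued maps: continuity plus separate holomorphy in each factor $E_j$ yields joint holomorphy on the open set $U\subset E_1\times\cdots\times E_n$. The plan is to quote it verbatim from Mujica \cite[Proposition 8.10]{1986:mujica:complex-analysis}, but since the excerpt asks for a proof sketch, I would reduce it to the scalar several-complex-variables Hartogs theorem via the characterizations already available in Theorem \ref{appendix:thm:equiv-holom}. The ``only if'' direction is trivial: if $f$ is holomorphic on $U$, it is continuous, and fixing all but one variable gives a holomorphic map since the inclusion $E_j\hookrightarrow E_1\times\cdots\times E_n$ (at a fixed base point) is affine and holomorphic, so composition preserves holomorphy by Theorem \ref{appendix:thm:chain-rule} and Theorem \ref{appendix:thm:equiv-holom}.

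For the ``if'' direction, the plan is as follows. First, by Theorem \ref{appendix:thm:equiv-holom}, it suffices to show $f$ is weakly holomorphic, i.e.\ $\psi\circ f\in\H(U,\mathbb{C})$ for every $\psi\in F^*$. Since $\psi$ is continuous and $\mathbb{C}$-linear, $\psi\circ f$ is continuous, and it is separately holomorphic in each $\zeta_j$ because $\psi$ composed with a holomorphic scalar-valued map of one Banach variable is holomorphic (again Theorem \ref{appendix:thm:equiv-holom}, or directly: $\psi$ preserves the power series expansion). Hence we reduce to the case $F=\mathbb{C}$. Second, for $F=\mathbb{C}$ I would again use Theorem \ref{appendix:thm:equiv-holom}: it is enough to prove $f$ is $G$-holomorphic (Gâteaux-holomorphic), since $f$ is already assumed continuous. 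Fix $x=(x_1,\dots,x_n)\in U$ and $h=(h_1,\dots,h_n)\in E_1\times\cdots\times E_n$, and consider $g(\zeta)=f(x_1+\zeta h_1,\dots,x_n+\zeta h_n)$ on the open set $\Omega=\{\zeta\in\mathbb{C}\mid x+\zeta h\in U\}$. I want $g\in\H(\Omega,\mathbb{C})$.

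The core step is to establish holomorphy of $g$ by reducing to the finite-dimensional Hartogs theorem. Restrict attention to a small polydisc: choose $r>0$ so that the map $(\zeta^{(1)},\dots,\zeta^{(n)})\mapsto f(x_1+\zeta^{(1)}h_1,\dots,x_n+\zeta^{(n)}h_n)=:\Phi(\zeta^{(1)},\dots,\zeta^{(n)})$ is defined and continuous on the polydisc $|\zeta^{(j)}|<r$ in $\mathbb{C}^n$ (possible since $U$ is open and $f$ continuous). For each fixed choice of the other variables, $\Phi$ is holomorphic in $\zeta^{(j)}$ as the composition of the affine map $\zeta^{(j)}\mapsto x_j+\zeta^{(j)}h_j$ with the separately holomorphic $f$. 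Thus $\Phi$ is continuous on the polydisc and separately holomorphic in each of its $n$ complex variables, so by the classical Hartogs theorem in $\mathbb{C}^n$ (with continuity already assumed, this is the elementary Osgood lemma — iterated Cauchy integral representation) $\Phi$ is jointly holomorphic, and in particular $g(\zeta)=\Phi(\zeta,\dots,\zeta)$ is holomorphic in $\zeta$ near $0$. Since $x,h$ were arbitrary, $f$ is $G$-holomorphic; combined with continuity, Theorem \ref{appendix:thm:equiv-holom} gives that $f$ is holomorphic on $U$.

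The main obstacle is the reduction to the finite-dimensional statement: one must be careful that the several-variable Hartogs/Osgood argument used is the one requiring continuity (the deep Hartogs theorem without continuity is not needed here, since continuity is a hypothesis), and that the Cauchy-integral manipulations are uniform enough on the polydisc to yield joint continuity of partial derivatives and hence a convergent power series. Since the excerpt already provides the Cauchy integral formula (Theorem \ref{appendix:prop:cauchy-form}) and the equivalence theorem, the cleanest write-up simply invokes \cite[Proposition 8.10]{1986:mujica:complex-analysis} directly; the sketch above indicates how it follows from material in this appendix together with the classical Osgood lemma.
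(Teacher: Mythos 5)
Your proposal is correct; note that the paper itself gives no proof of this statement, simply citing Mujica \cite[Proposition 8.10]{1986:mujica:complex-analysis}. Your reduction — weak holomorphy to pass to $F=\mathbb{C}$, then $G$-holomorphy checked by restricting to a polydisc in $\mathbb{C}^n$ and invoking the Osgood lemma (continuity plus separate holomorphy in finitely many complex variables) — is the standard argument and is sound, including the local nature of the final step (holomorphy of $\zeta\mapsto f(x+\zeta h)$ near an arbitrary $\zeta_0$ follows by re-centering at $x+\zeta_0 h$).
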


We can say the following in regards to the regularity of the $n$-th variation of $f$.

\begin{proposition}\cite[Proposition 6.4]{BS:71-analytic}\label{prop:differential-analytic}
    Assume $\mathbb{K}=\mathbb{C}$ (resp. $\R$). If $f: U\to F$ is holomorphic (resp. analytic), then for every $n\in \N$ the function
    \begin{align*}
        \delta^n f: (x, h)\in U\times E\longmapsto \delta^n f(x)(h)\in F
    \end{align*}
    is holomorphic (resp. analytic).
\end{proposition}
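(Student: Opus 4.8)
The statement to prove is Proposition \ref{prop:differential-analytic}, attributed to \cite[Proposition 6.4]{BS:71-analytic}, asserting that if $f:U\to F$ is holomorphic (resp.\ analytic), then $\delta^n f:(x,h)\in U\times E\mapsto \delta^n f(x)(h)\in F$ is holomorphic (resp.\ analytic).

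The plan is to reduce everything to the integral representation \eqref{appendix:eq:cauchy-formula-derivatives}. First I would fix a point $(x_0,h_0)\in U\times E$, choose $r_{x_0}>0$ so that $f$ is bounded by some $M$ on the ball $B(x_0,r_{x_0})\subset U$ (by continuity, as in the remark following Proposition \ref{appendix:prop:cauchy-est}), and then pick a small radius $\rho>0$ and a small $r>0$ so that for all $(x,h)$ with $\norm{x-x_0}<\rho$ and $\norm{h-h_0}<\rho$, and all $\zeta$ with $|\zeta|=r$, one has $x+\zeta h\in B(x_0,r_{x_0})$; this is possible because the set $\{x+\zeta h:\norm{x-x_0}\le\rho,\ \norm{h-h_0}\le\rho,\ |\zeta|\le r\}$ shrinks to $\{x_0\}$ as $\rho,r\to 0$. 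On this neighbourhood the formula
\begin{align*}
\delta^n f(x)(h)=\frac{n!}{2\pi i}\int_{|\zeta|=r}\frac{f(x+\zeta h)}{\zeta^{n+1}}\,d\zeta
\end{align*}
holds with a \emph{fixed} contour, so the right-hand side is a genuine integral of a function depending holomorphically (resp.\ analytically) on the parameters $(x,h)$.

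The key steps are then: (i) the integrand $(x,h,\zeta)\mapsto f(x+\zeta h)/\zeta^{n+1}$ is continuous on the compact set $\{\norm{x-x_0}\le\rho/2\}\times\{\norm{h-h_0}\le\rho/2\}\times\{|\zeta|=r\}$, and for each fixed $\zeta$ it is holomorphic (resp.\ analytic) in $(x,h)$ jointly — this follows from $f$ holomorphic, the chain rule Theorem \ref{appendix:thm:chain-rule} applied to the affine map $(x,h)\mapsto x+\zeta h$, and Proposition \ref{appendix:thm:holom-several-variables} to pass from separate to joint holomorphy; (ii) an integral over a compact contour of a parametrised family of holomorphic maps, with continuous dependence on all variables, is holomorphic. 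For (ii), in the complex case the cleanest route is to use Theorem \ref{appendix:thm:equiv-holom}: to check $\delta^n f$ is holomorphic it suffices to check it is continuous (which is immediate by dominated convergence / uniform bounds on the contour) and weakly holomorphic; and weak holomorphy follows because for any $\psi\in F^*$ we may interchange $\psi$ with the contour integral, reducing to the scalar-valued fact that $\psi(f(x+\zeta h))$ depends holomorphically on $(x,h)$ and one integrates a holomorphic family over a compact set — a standard application of Morera's theorem / differentiation under the integral sign in finitely many complex variables. In the real-analytic case one instead complexifies: $f$ extends to a holomorphic map on a complex neighbourhood, the contour formula persists, and the restriction back to the real parameters is analytic.

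The main obstacle I expect is purely bookkeeping: making the choice of the uniform radii $\rho$ and $r$ precise so that the contour can be held fixed over a full neighbourhood of $(x_0,h_0)$, and then justifying that ``integrating a holomorphic family over a compact contour yields a holomorphic function'' at the level of generality of Banach-space-valued maps. The latter is not deep — it is exactly the content one gets by combining the weak-holomorphy criterion of Theorem \ref{appendix:thm:equiv-holom} with the classical scalar statement — but it must be invoked carefully. Everything else (continuity, the chain rule, joint versus separate holomorphy) is routine given the results already assembled in this appendix. I would therefore write the proof as: set up the fixed contour on a neighbourhood; invoke \eqref{appendix:eq:cauchy-formula-derivatives}; observe continuity of $\delta^n f$; verify weak holomorphy by moving a functional inside the integral and citing the scalar case; and finally remark that the real-analytic case follows by complexification. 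Since $n$ is arbitrary this gives the claim for every $n\in\N$.
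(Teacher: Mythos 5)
The paper does not actually prove this proposition; it is stated with a citation to \cite[Proposition 6.4]{BS:71-analytic} and the burden of proof is outsourced to that reference. There is therefore no ``paper's own proof'' to compare against, and your proposal has to be judged as a proof from scratch.

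As such, it is essentially correct, and it follows the natural route (which is also, as far as I can tell, the route taken in the cited reference): fix a contour of a radius $r$ that works uniformly for $(x,h)$ in a small neighbourhood of $(x_0,h_0)$, invoke the Cauchy representation \eqref{appendix:eq:cauchy-formula-derivatives}, check continuity, and verify holomorphy via the equivalence Theorem \ref{appendix:thm:equiv-holom}. A few small points worth tightening if you write it out in full. First, \eqref{appendix:eq:cauchy-formula-derivatives} requires $r<\rho(x,h)$ strictly; you implicitly handle this by the ``shrinks to a point'' observation, but you should make sure the fixed $r$ is strictly inside the allowed range uniformly over the neighbourhood (e.g.\ impose $\|x-x_0\|+r'(\|h_0\|+\rho)<r_{x_0}$ for some $r'>r$). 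Second, your remark that continuity is ``immediate by dominated convergence / uniform bounds'' glosses over the fact that $B(x_0,r_{x_0})$ is \emph{not} compact in an infinite-dimensional $E$, so one cannot simply appeal to uniform continuity of $f$ there; the correct justification is that $f$ is Lipschitz on the smaller ball $B(x_0,r_{x_0}/2)$, which follows from the Cauchy estimates (Proposition \ref{appendix:prop:cauchy-est} plus the subsequent remark), and then the parametrized integrand converges uniformly over the contour. Third, when you reduce weak holomorphy to the scalar case by moving $\psi\in F^*$ inside the Bochner integral, the final one-variable statement you need is not really Morera but simply $G$-holomorphy of $(x,h)\mapsto\psi(\delta^n f(x)(h))$: for each complex line $\lambda\mapsto(x_1+\lambda x_2,h_1+\lambda h_2)$ the integrand is a scalar holomorphic function of $\lambda$ and $\zeta$, bounded on a product of compacts, so differentiation under the integral sign in $\lambda$ is elementary; combined with the continuity already checked, Theorem \ref{appendix:thm:equiv-holom} closes the argument. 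With those clarifications your proof is complete, and the real-analytic case by complexification via Theorem \ref{app:thm:h-ext} is handled correctly.
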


\subsubsection{Some analysis results} Here we state two useful results that are used throughout the article in a complex-analytic context.

The following result, known as the Uniform Contraction Principle, elucidates the regularity that can be obtained for a parameter-dependent fixed point.

\begin{theorem}{\cite[Theorem 2.2]{1982:chow-hale:bifurcation-theory}}\label{app:thm:uniffixedpoint}
    Let $U$, $V$ be open sets in Banach spaces $X$, $Y$, let $\overline{U}$ be the closure of $U$, $T:\overline{U}\times V\to \overline{U}$ a uniform contraction on $\overline{U}$ and let $g(y)$ be the unique fixed point of $T(\cdot, y)$ in $\overline{U}$. If $T\in C^k(\overline{U}\times V, X)$, $0\leq k<\infty$, then $g(\cdot)\in C^k(V, X)$. If there is a neighborhood $U_1$ of $\overline{U}$ such that $T$ is analytic from $U_1\times V$ to $X$, then the mapping $g(\cdot)$ is analytic from $V$ to $X$.
\end{theorem}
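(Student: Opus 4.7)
The plan is to handle existence, continuity, $C^{k}$ regularity, and analyticity in sequence, each step building on the previous one. Let $\kappa\in(0,1)$ denote the uniform contraction constant, i.e.\ $\|T(x_{1},y)-T(x_{2},y)\|\leq \kappa\|x_{1}-x_{2}\|$ for all $x_{1},x_{2}\in\overline{U}$ and $y\in V$. Existence and uniqueness of $g(y)$ for each fixed $y$ follow from Banach's fixed point theorem on $\overline{U}$. Continuity of $g$ is then immediate from
\begin{equation*}
\|g(y_{1})-g(y_{2})\|\leq \kappa\|g(y_{1})-g(y_{2})\|+\|T(g(y_{2}),y_{1})-T(g(y_{2}),y_{2})\|,
\end{equation*}
giving $\|g(y_{1})-g(y_{2})\|\leq(1-\kappa)^{-1}\|T(g(y_{2}),y_{1})-T(g(y_{2}),y_{2})\|$ and the $C^{0}$ statement via continuity of $T$.

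For the $C^{k}$ statement, the key step is the $C^{1}$ case, which is essentially a disguised implicit function theorem applied to $\Phi(x,y)=x-T(x,y)$. The uniform bound $\|D_{x}T(x,y)\|\leq\kappa<1$ forces $I-D_{x}T(g(y),y)$ to be invertible via Neumann series with uniformly bounded inverse, so the candidate derivative
\begin{equation*}
L(y):=\bigl(I-D_{x}T(g(y),y)\bigr)^{-1}D_{y}T(g(y),y)
\end{equation*}
is well defined and continuous in $y$. To verify $Dg(y)=L(y)$, one writes $g(y+h)-g(y)=T(g(y+h),y+h)-T(g(y),y)$, expands both arguments using the $C^{1}$ assumption on $T$, and uses the already-proved continuity $\|g(y+h)-g(y)\|=O(\|h\|)$ to estimate the remainder as $o(\|h\|)$. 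The bound on $\|g(y+h)-g(y)\|$ itself is obtained from the contraction by absorbing a $\kappa$-factor, exactly as in the $C^{0}$ step. Higher regularity then follows by induction: given $g\in C^{j-1}$ with $j\leq k$, the explicit formula for $Dg$ is a composition of operator inversion (smooth on invertible operators) with $Dg$-evaluations of the $C^{k-1}$ map $T$, hence $Dg\in C^{j-1}$, so $g\in C^{j}$.

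For the analytic case, the approach is complexification combined with a holomorphic contraction argument. Assuming $T$ extends analytically to a neighborhood $U_{1}\times V$ of $\overline{U}\times V$, one first extends $T$ holomorphically to a neighborhood in complexified spaces $X_{\mathbb{C}}\times Y_{\mathbb{C}}$. Fix $y_{0}\in V$ and set $x_{0}=g(y_{0})\in U$. By continuity of $D_{x}T$ and the bound $\|D_{x}T(x_{0},y_{0})\|\leq\kappa$, there exist radii $r,\rho>0$ and $\kappa'\in(\kappa,1)$ such that on the complex closed ball $\overline{B_{\mathbb{C}}(x_{0},r)}\subset X_{\mathbb{C}}$ and for $y$ in the complex ball $B_{\mathbb{C}}(y_{0},\rho)\subset Y_{\mathbb{C}}$, one has $\|D_{x}T(x,y)\|\leq\kappa'<1$, so $T(\cdot,y)$ is $\kappa'$-Lipschitz. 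Shrinking $\rho$ if needed so that $\|T(x_{0},y)-x_{0}\|\leq(1-\kappa')r$ for $y\in B_{\mathbb{C}}(y_{0},\rho)$, the map $T(\cdot,y)$ sends $\overline{B_{\mathbb{C}}(x_{0},r)}$ into itself, yielding by Banach's theorem a unique fixed point $\tilde{g}(y)$ in this ball. Uniqueness inside $\overline{U}$ for real $y$ forces $\tilde{g}=g$ on $B_{\mathbb{C}}(y_{0},\rho)\cap V$.

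The main obstacle is showing $\tilde g$ is holomorphic. I would use Theorem \ref{appendix:thm:equiv-holom}: continuity of $\tilde g$ follows from the parametric contraction estimate exactly as in the $C^{0}$ step, and G-holomorphy reduces the question to a one complex variable problem, namely that for every $y\in B_{\mathbb{C}}(y_{0},\rho)$ and $h\in Y_{\mathbb{C}}$ the map $\zeta\mapsto\tilde{g}(y+\zeta h)$ is holomorphic on a disc around $0$. Writing the fixed-point equation $\tilde{g}(y+\zeta h)=T(\tilde{g}(y+\zeta h),y+\zeta h)$, one checks holomorphy by expanding $\tilde{g}(y+\zeta h)$ as the iterated-image limit $\lim_{n\to\infty}T^{n}(x_{0},y+\zeta h)$: each iterate is holomorphic in $\zeta$ by composition of holomorphic maps, the convergence is uniform on compact subsets of the disc thanks to the uniform contraction, and uniform limits of holomorphic maps are holomorphic (which is a consequence of Cauchy's formula from Theorem \ref{appendix:prop:cauchy-form}). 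Combined with continuity, Theorem \ref{appendix:thm:equiv-holom} then yields holomorphy of $\tilde g$ on $B_{\mathbb{C}}(y_{0},\rho)$. Restricting to the real axis gives analyticity of $g$ near $y_{0}$; since $y_{0}\in V$ was arbitrary, $g$ is analytic on all of $V$.
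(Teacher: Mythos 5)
The paper does not prove this statement: it is quoted verbatim from Chow--Hale \cite[Theorem 2.2]{1982:chow-hale:bifurcation-theory} and used as a black box (the only commentary in the paper is the remark that Chow--Hale's notion of analyticity matches the one in Theorem \ref{appendix:thm:equiv-holom}). So there is no in-paper argument to compare against; judged on its own, your proof is the standard one and is essentially correct: Banach fixed point plus the absorbed-$\kappa$ estimate for $C^0$, the implicit candidate derivative $L(y)=(I-D_xT(g(y),y))^{-1}D_yT(g(y),y)$ with Neumann-series invertibility for $C^1$, induction on the formula for higher $k$, and complexification plus a local holomorphic contraction for the analytic case. Two small points are worth tightening. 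First, when you identify $\tilde g$ with $g$, the uniqueness you actually invoke is uniqueness of the fixed point in $\overline{B_{\mathbb{C}}(x_0,r)}$, and you need to shrink $\rho$ so that $g(y)\in B(x_0,r)$ for real $y\in B(y_0,\rho)$ (which follows from the continuity of $g$ already established); uniqueness in $\overline U$ alone does not directly apply since the complex ball need not sit inside $\overline U$. Second, the bound $\|D_xT(x_0,y_0)\|\le\kappa$ passes to the complexified derivative only for a suitable norm on $X_{\mathbb{C}}$ (e.g.\ the Taylor norm, for which the complexification of a real-linear operator has the same norm); this is harmless but should be said, since the whole contraction argument in the complex ball rests on it. Neither point is a genuine gap.
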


Observe that the definition of analyticity introduced by \cite{1982:chow-hale:bifurcation-theory} combines $G$-analyticity and weakly analyticity. From Theorem \ref{appendix:thm:equiv-holom}, we deduce that these notions of analyticity are equivalent.

The following theorem permits to treat a real analytic function as a restriction of some holomorphic function.

\begin{theorem}\cite[Theorem 7.2]{BS:71-analytic}\label{app:thm:h-ext}
    Assume $\mathbb{K}=\R$. For any analytic function $f: U\to F$ one may find an open subset $V$ of $E_{\mathbb{C}}$ and a holomorphic function $\Tilde{f}: V\to F_\mathbb{C}$ such that $U\subset V$ and $\Tilde{f}_{|_U}=f$.
\end{theorem}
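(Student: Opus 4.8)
The plan is to extend $f$ locally by complexifying its Taylor series around each point of $U$, and then to glue these local pieces into a single holomorphic $\widetilde f$ by an identity‑principle argument. Throughout I would fix a norm on $E_{\mathbb{C}}$ (and on $F_{\mathbb{C}}$) extending the norm of $E$ (resp.\ $F$)—any of the standard complexification norms will do—and for $a\in E_{\mathbb{C}}$, $r>0$ write $B(a,r)=\{\zeta\in E_{\mathbb{C}}:\norm{\zeta-a}<r\}$.

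First I would produce the local extensions. Fix $x\in U$. Analyticity gives $r_x>0$ with $B(x,r_x)\cap E\subset U$ and a formal series $\sum_{j\ge0}f_j(x)$, $f_j(x)\in\mc{P}({}^jE,F)$, with $f(x+h)=\sum_j f_j(x)(h)$ for $h\in E$, $\norm{h}<r_x$. By the standard properties of power series in Banach spaces (see \cite{1986:mujica:complex-analysis}), pointwise convergence on $B(0,r_x)$ forces, for each $s_x<r_x$, a bound $\norm{f_j(x)}_{\mc{P}({}^jE,F)}\le M_x s_x^{-j}$; I would fix one such $s_x$. Each $f_j(x)$ is the restriction to the diagonal of a symmetric continuous $j$‑linear map $E^j\to F$; complexifying that multilinear map and restricting again to the diagonal yields $f_j(x)^{\mathbb{C}}\in\mc{P}({}^jE_{\mathbb{C}},F_{\mathbb{C}})$ with $f_j(x)^{\mathbb{C}}|_{E}=f_j(x)$ and, by the classical polarization/complexification estimate, $\norm{f_j(x)^{\mathbb{C}}}\le c^{\,j}\norm{f_j(x)}$ for a constant $c\ge1$ independent of $j$ and $x$. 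Consequently the complexified series $\sum_j f_j(x)^{\mathbb{C}}(\eta)$ converges normally for $\norm{\eta}<r_x':=s_x/c$, and its sum $\widetilde f_x$ is a holomorphic map on $B(x,r_x')$ with values in $F_{\mathbb{C}}$; since the complex series reduces to the real one on real increments, $\widetilde f_x=f$ on $B(x,r_x')\cap E$, and $r_x'\le s_x<r_x$ ensures $B(x,r_x')\cap E\subset U$.

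Next I would set $V:=\bigcup_{x\in U}B(x,r_x')$—open in $E_{\mathbb{C}}$ and containing $U$—and check that the $\widetilde f_x$ patch: if $W:=B(x,r_x')\cap B(y,r_y')\neq\emptyset$ then $\widetilde f_x=\widetilde f_y$ on $W$. Here $W$ is convex, hence connected, and (the centers being real) $W\neq\emptyset$ forces $\norm{x-y}<r_x'+r_y'$, so the segment $[x,y]$ meets $W$ and $W\cap E$ is a nonempty open subset of $E$ on which $\widetilde f_x=f=\widetilde f_y$. Thus $g:=\widetilde f_x-\widetilde f_y$ is holomorphic on $W$ and vanishes on $W\cap E$; expanding $g$ about a point $\zeta_0\in W\cap E$ as $\sum_j g_j(\eta)$ and using $\sum_j t^jg_j(\eta)=0$ for real $t,\eta$ small, one gets $g_j|_E=0$ for all $j$, hence $g_j=0$ on $E_{\mathbb{C}}$ (a symmetric $\mathbb{C}$‑multilinear map vanishing on $E^j$ vanishes on $(E_{\mathbb{C}})^j$, applying $\mathbb{C}$‑linearity one slot at a time and $E_{\mathbb{C}}=E+iE$), so $g\equiv0$ near $\zeta_0$. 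The set of $\zeta\in W$ near which $g\equiv0$ is open and, by continuity of the Taylor‑coefficient maps (Proposition \ref{prop:differential-analytic}), closed; being nonempty in the connected set $W$ it is all of $W$, so $g\equiv0$ on $W$. Then $\widetilde f(\zeta):=\widetilde f_x(\zeta)$ for $\zeta\in B(x,r_x')$ is well defined on $V$, holomorphic (it locally coincides with some $\widetilde f_x$), and $\widetilde f|_U=f$ since $\widetilde f(x)=\widetilde f_x(x)=f(x)$ for $x\in U$—which is the assertion.

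The hard part will be the two quantitative/structural points: getting a genuine complex ball of convergence for the complexified series—i.e.\ the geometric‑in‑$j$ estimate $\norm{f_j(x)^{\mathbb{C}}}\le c^{\,j}\norm{f_j(x)}$ coming from polarization—and running the identity principle in the Banach setting, where one must be careful that the overlaps are connected and that a holomorphic function vanishing on a totally real open piece vanishes identically. Everything else (openness of $V$, the inclusion $U\subset V$, holomorphy and compatibility of $\widetilde f$) is routine bookkeeping.
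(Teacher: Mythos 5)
The paper does not prove this statement---it is quoted from Bochnak--Siciak \cite{BS:71-analytic}---and your argument is precisely the standard proof of that result: complexify the local Taylor expansions using the polarization/complexification estimate $\norm{f_j(x)^{\mathbb{C}}}\leq c^j\norm{f_j(x)}$, and glue the local holomorphic extensions by the identity principle on the convex (hence connected) overlaps, whose real traces are nonempty open subsets of $U$. The only inaccuracy is the assertion that pointwise convergence on $B(0,r_x)$ forces $\norm{f_j(x)}\leq M_x s_x^{-j}$ for \emph{every} $s_x<r_x$ (false in infinite dimensions, where the radius of boundedness can be strictly smaller than the radius of pointwise convergence); since you only need one such $s_x>0$, which a Baire-category plus finite-difference argument does provide, this is harmless.
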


\subsection{Geometric fact}\label{s:geom}
In this section, we describe briefly the compressed cotangent bundle and prove a geometric Lemma that has been used several times in the article. We refer to Melrose-Sj\"ostrand \cite{MS:78} H\"ormander \cite[Section 18.3 and 24.3]{H:07} for more precisions and \cite[Section 2.2]{BL:01} in the more specific context of the wave equation.

Let $\M$ be a smooth compact Riemannian manifold of dimension $d$ with boundary.  Denote $^bT\M$ the bundle of rank $d$ whose sections are the vector fields tangent to $\partial \M$, by $^bT^*\M$ the dual bundle (Melrose's compressed cotangent bundle), and by $\mathfrak{j}:T^*\M\rightarrow~ ^bT^*\M$ the canonical map. This is the restriction map,  dual to the embedding $^bT\M\hookrightarrow T\M$. Its image can be identified with $T^* (\text{Int}\M) \sqcup T^* \partial\M$ with an appropriate topology.

We set $^bS^*\M=(^bT^*\M\setminus \M_0)/\R_+^*$ to be the cosphere bundle of $^bT^*\M$ the compressed cotangent bundle. Here $\M_0\approx \M$ is the zero section. The map $\mathfrak{j}$ can be defined on the quotient and allows to define $B:=\mathfrak{j}(S^*\M)\subset ~^bS^*\M$. It can be identified with the image of $S^*\M$ by the continuous map $\mathfrak{j}$ and is therefore a compact space when it is equipped with the natural topology of vector bundle inherited from $~^bS^*\M$.

The bicharacteristic flow is usually defined for nonelliptic operators, but the link with generalized geodesics can, for instance, be made as follows, see Lebeau \cite[Section A.3]{L:96}. Let $P=\partial_t^2-\Delta_g$ be the wave operator defined on the manifolds with boundary $\X=\R_t\times \M$. $p=|\xi_x|_g^2-\xi_t^2$, the principal symbol of $P$, is well defined on $T^*\X\approx T^*\R_t\times T^*\M$ and $p^{-1}$ is conical and therefore well defined in $T^*\X$ and $S^*\X$. We denote $Z=\mathfrak{j}(p^{-1}(0))\subset~ ^bT^*\X$ and $SZ=(Z\setminus \X_0)/\R_+^*\subset ~ ^bS^*\X$. $SZ$ has actually two connected components corresponding to $\xi_t>0$ and $\xi_t<0$ where $\xi_t$ is the variable dual to $t$, that we denote $Z^+$ and $Z^-$. We can see that $SZ^+$ can be identified with $(Z\setminus \X_0)\cap \{\xi_t=1/2\}$ and to $\R_t\times~ \mathfrak{j}(S^*\M)$. In that context, since $\xi_t$ is invariant by the flow, if we denote $G$ the bicharacteristic flow of Melrose-Sj\"ostrand, we see that it can be written
\begin{align*}
    G(s)(t,x,\xi)=(t+s,\phi_s (x,\xi)),
\end{align*}
where $\phi_s$ is a well defined flow on $B=\mathfrak{j}(S^*\M)\subset ~^bS^*\M$, the generalized geodesic flow.

We denote $\pi$ the natural projection from $^bS^*\M$ to $\M$. Both are continuous with the natural topologies given. For $\phi_t$, this is a consequence of the continuity of the bicharacteristic flow $G$, see \cite[Lemma 3.31]{MS:78}. 
\begin{lemma}\label{lemma:gcc-smaller-subset}
    Suppose that $(\omega, T)$ satisfies \ref{assumGCC}. Then there exist $\chi\in C^{\infty}_c(\omega)$ with non negative values so that 
    \begin{itemize}
        \item   there exists an open set $\widetilde{\omega}\Subset \omega$ and a time $0<\widetilde{T}<T$ such that $(\widetilde{\omega}, \widetilde{T})$ satisfies \ref{assumGCC},
        \item $\chi=1$ on $\widetilde{\omega}$,
        \item $\partial_{\vec n} \chi=0$ on $\partial \M$ where $\partial_{\vec n}$ is the normal derivative to the boundary.
    \end{itemize}

\end{lemma}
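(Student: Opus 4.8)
The statement is a purely geometric refinement: starting from the hypothesis that $(\omega,T)$ satisfies \ref{assumGCC}, we want to squeeze out a slightly smaller open set $\widetilde\omega\Subset\omega$ and a strictly smaller time $\widetilde T<T$ for which \ref{assumGCC} still holds, together with a cutoff $\chi\in C^\infty_c(\omega)$ equal to $1$ on $\widetilde\omega$ and with vanishing normal derivative on $\partial\M$. The plan is to first produce the pair $(\widetilde\omega,\widetilde T)$ by a compactness argument on the space $B=\mathfrak j(S^*\M)$ of the compressed cosphere bundle, and then, once $\widetilde\omega\Subset\omega$ is in hand, to construct $\chi$ by a standard mollification / smoothing procedure adapted near the boundary.

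\textbf{Step 1: shrinking the time.} For each $\rho\in B$ let $\tau(\rho)\in(0,T)$ be a time at which the generalized geodesic $s\mapsto\phi_s(\rho)$ issued from $\rho$ meets $\omega$, i.e.\ $\pi(\phi_{\tau(\rho)}(\rho))\in\omega$. Since $\omega$ is open and both $\phi_s$ and $\pi$ are continuous on $B$ (continuity of $\phi_s$ is \cite[Lemma 3.31]{MS:78}, recalled in the paragraph preceding the lemma), the set of $(\rho,s)$ with $\pi(\phi_s(\rho))\in\omega$ is open; hence for each $\rho$ there is an open neighbourhood $\mathcal U_\rho\subset B$ and $\epsilon_\rho>0$ with $\pi(\phi_s(\rho'))\in\omega$ for all $\rho'\in\mathcal U_\rho$ and all $s$ with $|s-\tau(\rho)|<\epsilon_\rho$; in particular all these times stay in a fixed compact subinterval of $(0,T)$. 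By compactness of $B$ extract a finite subcover $\mathcal U_{\rho_1},\dots,\mathcal U_{\rho_N}$ and set $\widetilde T:=\max_j(\tau(\rho_j)+\epsilon_j/2)<T$. Then every generalized geodesic meets $\omega$ at some time in $(0,\widetilde T)$, so $(\omega,\widetilde T)$ satisfies \ref{assumGCC}.

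\textbf{Step 2: shrinking the set.} Now repeat a compactness argument, this time to replace $\omega$ by a relatively compact $\widetilde\omega$. For each $\rho\in B$ pick $\tau(\rho)\in(0,\widetilde T)$ with $x_\rho:=\pi(\phi_{\tau(\rho)}(\rho))\in\omega$; choose an open set $V_\rho$ with $x_\rho\in V_\rho\Subset\omega$. By continuity there is a neighbourhood $\mathcal U_\rho$ of $\rho$ in $B$ and $\epsilon_\rho>0$ such that $\pi(\phi_s(\rho'))\in V_\rho$ whenever $\rho'\in\mathcal U_\rho$ and $|s-\tau(\rho)|<\epsilon_\rho$. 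Cover $B$ by finitely many $\mathcal U_{\rho_1},\dots,\mathcal U_{\rho_M}$ and put $\widetilde\omega:=\bigcup_{j=1}^M V_{\rho_j}$, which is open and satisfies $\widetilde\omega\Subset\omega$ (finite union of sets relatively compact in $\omega$). Every generalized geodesic then meets $\widetilde\omega$ at a time in $(0,\widetilde T)$, so $(\widetilde\omega,\widetilde T)$ satisfies \ref{assumGCC}. (Steps 1 and 2 can of course be merged into a single covering argument; I have separated them for clarity.)

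\textbf{Step 3: constructing $\chi$.} It remains to produce $\chi\in C^\infty_c(\omega)$ with $\chi\equiv 1$ on $\widetilde\omega$, $0\le\chi\le 1$, and $\partial_{\vec n}\chi=0$ on $\partial\M$. Away from $\partial\M$ this is the usual smooth bump between the compactly nested pair $\widetilde\omega\Subset\omega$. Near the boundary, work in boundary normal (geodesic) coordinates $(x',x_d)$ with $x_d\ge0$ the distance to $\partial\M$ and $\partial_{\vec n}=-\partial_{x_d}$; pick an intermediate open set $\widetilde\omega\Subset\omega'\Subset\omega$, let $\psi$ be any smooth bump that is $1$ on a neighbourhood of $\widetilde\omega$ and supported in $\omega'$, and then reflect it evenly across $\{x_d=0\}$, i.e.\ replace $\psi(x',x_d)$ near the boundary by $\widetilde\psi(x',x_d):=\psi(x',|x_d|)$; since $\psi\equiv1$ on a neighbourhood of $\widetilde\omega$ and $\equiv0$ outside $\omega'$, composing the even reflection with a further mollification at scale smaller than $\operatorname{dist}(\omega',\partial\omega)$ and smaller than the collar width yields a smooth function, still $1$ on $\widetilde\omega$ and supported in $\omega$, which by the even symmetry has $\partial_{x_d}\chi=0$ on $\{x_d=0\}$, i.e.\ $\partial_{\vec n}\chi=0$ on $\partial\M$. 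Finally truncate into $[0,1]$ by composing with a smooth increasing function fixing $0$ and $1$ if necessary.

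\textbf{Main obstacle.} The only delicate points are the topological ones in Steps 1--2: one must use the correct (compressed) topology on $B=\mathfrak j(S^*\M)$ and the genuine continuity of the generalized geodesic flow $\phi_s$ together with the projection $\pi$, as recalled from \cite{MS:78}, so that "meets $\omega$ at time $<T$'' is an open condition on the unit cosphere bundle, allowing compactness to trade a small amount of time and a small amount of room. The construction of $\chi$ with $\partial_{\vec n}\chi=0$ is routine once boundary normal coordinates and even reflection are used; the main care there is to keep the mollification scale below the collar width so the reflected function remains well defined and smooth.
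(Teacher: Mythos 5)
Your compactness argument for producing $(\widetilde\omega,\widetilde T)$ (Steps 1--2) is exactly the one the paper uses: for each $\rho\in B=\mathfrak j(S^*\M)$ pick a hitting time in $(0,T)$, use the continuity of $\pi\circ\phi_t$ to stabilize the inclusion into a relatively compact subset of $\omega$, and cover $B$ by finitely many such neighborhoods (the paper merges your two steps into one, but the content is identical). Where you diverge is Step 3. The paper builds the cutoff \emph{during} the covering argument: at each hitting point it writes an explicit local bump $\chi_\rho(x_1,x')=\varphi(x_1/\eta)\varphi(|x'|/\eta)$ in boundary normal coordinates, which has $\partial_{\vec n}\chi_\rho=0$ for free because the factor $\varphi(x_1/\eta)$ is \emph{constant} near $x_1=0$; then it sums these to get $\widetilde\chi$, constructs a companion $h$ (same technique, vanishing on $F$ and $\geq 1$ off $W$, with $\partial_{\vec n}h=0$) and normalizes via $\chi=\widetilde\chi/(\widetilde\chi+h)$, so that $\chi\equiv 1$ on $F\supset\widetilde\omega$ without ever mollifying. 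You instead build $\widetilde\omega$ first and then a single bump by even reflection across $\partial\M$ followed by smoothing. This works, but the mollification-on-a-manifold step needs a bit more care than you give it: the reflected function is merely continuous in the normal variable, so one should mollify in the $x_d$ variable alone (after using a global collar $\partial\M\times[0,\epsilon)$) and check that the result stays jointly smooth, compactly supported in $\omega$, and identically $1$ on $\widetilde\omega$ for small enough mollification scale; all of that can be made to go through. The paper's construction buys explicitness and avoids mollification entirely -- the normal flatness of $\varphi(x_1/\eta)$ and the quotient trick carry the boundary condition automatically -- while yours buys a cleaner separation of the two concerns (geometry versus cutoff) and avoids the auxiliary function $h$.
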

\begin{proof}
    Let $\rho=(x, \xi)\in B$. By \ref{assumGCC}, there exists $t=t(\rho)\in (0, T)$ such that $x_t:=\pi\circ\phi_t(\rho)\in \omega$.
    
    We first assume $x_t\in \partial \M$, the case $x_t\in \text{Int}(\M)$ being simpler.

    In a sufficiently small neighborhood of $x_t$, we can find some geodesic normal coordinates $(x_1,x')\in [0,2\eta)\times B_{\R^{d-1}}(0,2\eta)$ satisfying the following properties.
The point $x_t$ is $(0,0)$ in these coordinates, $\partial \M=\{x_1=0\}$, $\M=\{x_1\geq 0\}$ and the metric $g$ can be written in a diagonal type form $\left(\begin{array}{cc}
          1   & 0 \\
          0   &g'(x_1,x') 
        \end{array}\right)$ where $g'$ is a metric on $\R^{d-1}$ depending smoothly on $(x_1,x')$. In particular, in these coordinates, the normal vector at the boundary pointing inside is $\vec n=\frac{\partial}{\partial x_1}$.

By assumption, $x_t\in \omega$, so up to diminishing $\eta$, we can assume that $[0,2\eta)\times B_{\R^{d-1}}(0,2\eta)\subset \omega$.

Let $\varphi\in C^{\infty}_c((-2,2),[0,1])$ so that $\varphi=1$ on $(-1,1)$. In these coordinates, we define $\chi_{\rho}(x_{1},x')= \varphi(x_{1}/\eta)\varphi(|x'|/\eta)$. We verify that $\chi_{\rho}$ satisfies $\chi_{\rho}\geq 0$, $\chi_{\rho}=1$ in $[0,\eta)\times B_{\R^{d-1}}(0,\eta)$ which contains $(0,0)$, $\partial_{x_1}\chi_{\rho}=0$ on $\{x_1=0\}$ and $\text{Supp}(\chi_{\rho})\subset \omega$. Independently on the coordinates in $\M$, the properties of $\chi_{\rho}$ can be written
    \begin{itemize}
       \item $\chi_{\rho}\geq 0$,
        \item $\chi_{\rho}=1$ in an open set $\omega_{\rho}\subset \M$ (open for the topology of a manifold with boundary) which contains $x_t$, and we can select another open set $\widetilde{\omega}_{\rho}\Subset \omega_{\rho}$ with $x_t\in \widetilde{\omega}_{\rho}$,
        \item $\partial_{\vec n}\chi_{\rho}=0$ on $\partial \M$,
        \item $\chi_{\rho}\in C^{\infty}_c(\omega)$.
    \end{itemize}
    In the case $x_t\in \Int{\M}$, a cutoff function $\chi_{\rho}$ can be found with the same properties.

 Since $x_t:=\pi\circ\phi_t(\rho)\in \widetilde{\omega}_{\rho}$ with $\widetilde{\omega}_{\rho}$ open and for fixed $t$, $\pi\circ\phi_t$ is continuous from $B$ to $\M$, we can find a neighborhood $\mc{V}_{\rho}\subset B$ of $\rho$ such that
    \begin{align}
    \label{inclusflow}
        \pi\circ\phi_t(\mc{V}_\rho)\subset\widetilde{\omega}_{\rho}.
    \end{align}

    The open sets $\mc{V}_{\rho}$ form an open covering of  $B$ by such neighborhoods. The compactness of $^bS^*\M$ allows to extract a finite subcovering of them $\mc{V}_{\rho_1},\ldots, \mc{V}_{\rho_k}$ so that 
    \begin{align}
    \label{finitecover}B=\bigcup_{i=1}^{k}\mc{V}_{\rho_{i}}.
    \end{align} 
    
    We define $\widetilde{\chi}=\sum_{i=1}^{k}\chi_{\rho_{i}}$, $\widetilde{\omega}=\bigcup_{i=1}^{k}\widetilde{\omega}_{\rho_i}$, $F=\bigcup_{i=1}^{k}\overline{\widetilde{\omega}_{\rho_i}}$ and $W=\bigcup_{i=1}^{k}\omega_{\rho_i}$. Note that we have $ \widetilde{\omega}\subset F\subset W\subset \omega$. For $\widetilde{\chi}$, we have the following properties        \begin{itemize}
       \item $\chi\geq 0$ on $\M$,
        \item $\widetilde{\chi}\geq 1$ in the open set $W$,
        \item $\partial_{\vec n}\widetilde{\chi}=0$ on $\partial \M$,
        \item $\widetilde{\chi}\in C^{\infty}_c(\omega)$.
    \end{itemize}
    We state the following Claim that we will prove later.
    \medskip
    
\textbf{Claim:} Let $F\subset W\subset \M$ with $F$ closed and $W$ open. Then, there exists $h\in C^{\infty}(\M)$ so that $h\geq 0$ on $\M$, $h=0$ on $F$, $h\geq 1$ on $\M\setminus  W$ and $\partial_{\vec n}h=0$ on $\partial \M$.

\medskip

 We now define $\chi=\frac{\widetilde{\chi}}{\widetilde{\chi}+h}$. $\widetilde{\chi}+h\geq 1$ on $\M$ so $\chi\in C^{\infty}(\M)$. It satisfies $\partial_{\vec n}\chi=0$ on $\partial \M$ by composition since it is the case for $\widetilde{\chi}$ and $h$. Moreover, since $h=0$ on $F$, we have $\chi=1$ on $F$.
    
    We now check that $(\widetilde{\omega},\widetilde{T})$ satisfies \ref{assumGCC} for some $\widetilde{T}\in (\max\{t(\rho_j),j=1,\dots, k\}, T)$.    
    
    Indeed, let $\rho \in B$. By the finite covering property \eqref{finitecover}, there exists $j\in \llbracket 1,k \rrbracket $ so that $\rho\in \mc{V}_{\rho_j}$. \eqref{inclusflow} implies then $\pi\circ\phi_{t(\rho_{j})}(\rho)\in \widetilde{\omega}_{\rho_{j}}$ and then $\pi\circ\phi_{t(\rho_{j})}(\rho)\in\widetilde{\omega}$ as expected.
 
  The proof is now complete except for the proof of the Claim.   
  
  Let $x\in \M\setminus W$. In particular, $x\in \M \setminus F$ which is open. By following the same method as in the first part of the proof, we can construct $h_{x} \in C^{\infty}_c(\M \setminus F)$ so that $h_{x}\geq 0$, $h_{x}=1$ in an open set $\omega_{x}\subset \M\setminus F$ which contains $x$. We can also assume $\partial_{\vec n}h_{x}=0$ on $\partial \M$. The $\omega_{x}$ form a covering of the compact set $\M\setminus W$. So, we can select a finite covering $\M\setminus W=\bigcup_{i=1}^{k} \omega_{x_{i}}$ and define $h=\sum_{i=1}^{k}h_{i}$ which satisfies the expected properties.
\end{proof}
With similar arguments, we can also prove the following result.
\begin{lemma}\label{lemma:omomtilde-smaller-subset}
    Suppose that $\omega$ and $\widetilde{\omega}$ are two open subsets of $\M$ so that $\omega \Subset \widetilde{\omega}$. Then there exists $\chi\in C^{\infty}_c(\widetilde{\omega})$ with non negative values so that 
    \begin{itemize}
          \item $\chi=1$ on $\omega$,
        \item $\partial_{\vec n} \chi=0$ on $\partial \M$.
    \end{itemize}
\end{lemma}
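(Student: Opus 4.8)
\textbf{Proof plan for Lemma \ref{lemma:omomtilde-smaller-subset}.}

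The plan is to extract the statement from the construction already carried out in the proof of Lemma \ref{lemma:gcc-smaller-subset}, since the only genuinely new content there is a patching argument on a manifold with boundary, and Lemma \ref{lemma:omomtilde-smaller-subset} is precisely that argument with the geometric-control bookkeeping removed. Concretely, I would first observe that the key technical building block of the previous proof is a local cutoff near a point of $\M$ (possibly on $\partial\M$), constructed in geodesic normal coordinates $(x_1,x')\in[0,2\eta)\times B_{\R^{d-1}}(0,2\eta)$ in which $\partial\M=\{x_1=0\}$, $\vec n=\partial/\partial x_1$, and the metric has the block form $\mathrm{diag}(1,g'(x_1,x'))$. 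In these coordinates $\chi_x(x_1,x')=\varphi(x_1/\eta)\varphi(|x'|/\eta)$ with $\varphi\in C^\infty_c((-2,2),[0,1])$, $\varphi\equiv 1$ on $(-1,1)$, gives a function that is nonnegative, equals $1$ on a neighborhood of $x$, is compactly supported in a prescribed open set, and satisfies $\partial_{\vec n}\chi_x=0$ on $\partial\M$ because $\partial_{x_1}\varphi(x_1/\eta)$ vanishes at $x_1=0$. (For $x\in\Int{\M}$ one uses a standard bump, and the boundary condition is vacuous near such points.)

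With this block in hand, the proof is a two-step compactness argument. First, since $\co\omega\subset\widetilde\omega$ is compact and $\widetilde\omega$ is open, for each $x\in\co\omega$ I choose, as above, a function $\chi_x\in C^\infty_c(\widetilde\omega)$, nonnegative, with $\partial_{\vec n}\chi_x=0$ on $\partial\M$, and with $\chi_x\equiv 1$ on an open set $\omega_x\ni x$ with $\omega_x\Subset\widetilde\omega$. The family $\{\omega_x\}_{x\in\co\omega}$ covers $\co\omega$; extract a finite subcover $\omega_{x_1},\dots,\omega_{x_k}$ and set $\widetilde\chi=\sum_{i=1}^k\chi_{x_i}$. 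Then $\widetilde\chi\in C^\infty_c(\widetilde\omega)$, $\widetilde\chi\geq 0$, $\partial_{\vec n}\widetilde\chi=0$ on $\partial\M$, and $\widetilde\chi\geq 1$ on the open set $W:=\bigcup_i\omega_{x_i}\supset\co\omega\supset\omega$. Second, I invoke the Claim proved inside Lemma \ref{lemma:gcc-smaller-subset} (applied with $F=\co\omega$, and the role of ``$W$'' there played by $W$ here — note $F\subset W$): there exists $h\in C^\infty(\M)$ with $h\geq 0$ on $\M$, $h=0$ on $\co\omega$, $h\geq 1$ on $\M\setminus W$, and $\partial_{\vec n}h=0$ on $\partial\M$. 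Finally I set
\begin{align*}
    \chi=\frac{\widetilde\chi}{\widetilde\chi+h}.
\end{align*}
Since $\widetilde\chi+h\geq 1$ everywhere on $\M$ (at each point either $\widetilde\chi\geq 1$ on $W$ or $h\geq 1$ on $\M\setminus W$), the quotient is smooth on $\M$; it is nonnegative and $\leq 1$; by the quotient/chain rule and $\partial_{\vec n}\widetilde\chi=\partial_{\vec n}h=0$ we get $\partial_{\vec n}\chi=0$ on $\partial\M$; on $\co\omega$ we have $h=0$, hence $\chi=1$ there, and in particular $\chi\equiv 1$ on $\omega$; and $\supp\chi\subset\supp\widetilde\chi\subset\widetilde\omega$ with $\widetilde\chi$ compactly supported, so $\chi\in C^\infty_c(\widetilde\omega)$. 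This gives all the required properties.

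I do not expect a serious obstacle here: everything reduces to the local coordinate cutoff and the patching Claim, both of which are already established in the proof of Lemma \ref{lemma:gcc-smaller-subset}. The only points requiring a line of care are (i) checking that $\co\omega$ is compact and contained in the open set $\widetilde\omega$ so that a finite subcover exists and the $\omega_{x_i}$ can be taken with $\omega_{x_i}\Subset\widetilde\omega$, and (ii) verifying the Neumann condition survives the division, which follows from the quotient rule since both numerator and denominator have vanishing normal derivative on $\partial\M$. If one prefers to keep the argument self-contained rather than quoting the Claim, one can simply remark that the Claim's proof is itself the same finite-cover-of-local-cutoffs construction applied to the compact set $\M\setminus W$ inside the open set $\M\setminus\co\omega$, and reproduce it in a sentence.
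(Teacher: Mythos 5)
Your proof is correct and takes essentially the same approach as the paper: produce a nonnegative smooth function $\geq 1$ on a neighborhood of $\overline\omega$, a nonnegative smooth function vanishing on $\overline\omega$ and $\geq 1$ off that neighborhood (via the Claim from Lemma \ref{lemma:gcc-smaller-subset}), and divide, the Neumann condition surviving by the quotient rule. The paper is slightly slicker in that it fixes an intermediate open $\omega_1$ with $\omega\Subset\omega_1\Subset\widetilde\omega$ and derives both ingredients by applying the Claim twice (with $(F,W)=(\M\setminus\widetilde\omega,\ \M\setminus\overline{\omega_1})$ and $(F,W)=(\overline\omega,\ \omega_1)$), whereas you rebuild the numerator directly by a finite cover of $\overline\omega$, which is just the Claim's own internal construction unrolled.
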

\begin{proof}
Since $\omega \Subset \widetilde{\omega}$ where both sets are open, we can find $\omega_1$ open so that $\omega \Subset \omega_1 \Subset \widetilde{\omega}$. We apply the claim with $F=\M\setminus \widetilde{\omega}$, $W=\M\setminus \overline{\omega_1}$ to get $h\in C^{\infty}_c(\widetilde{\omega})$ so that $h\geq 1$ on $\overline{\omega_1}$. If we apply again the claim with $F=\overline{\omega}$, $W=\omega_1$ to get $m\in C^{\infty}(\M)$ so that $m=0$ on $\overline{\omega}$ and $m\geq 1$ on $\M\setminus\omega_1$. In both cases, the functions are nonnegative and we have $\partial_{\vec n} h=\partial_{\vec n} m=0$ on $\partial \M$. The function $\chi=\frac{h}{m+h}$ is regular and has the desired properties.
\end{proof}

\bibliographystyle{alpha}
\bibliography{bibliography.bib}

\begin{thebibliography}{BDBE13}

\bibitem[AB95]{AB:95}
Serge Alinhac and Mohamed~Salah Baouendi.
\newblock A nonuniqueness result for operators of principal type.
\newblock {\em Math. Z.}, 220(4):561--568, 1995.

\bibitem[AG91]{1991:alinhac-gerard:psido-nash-moser}
Serge Alinhac and Patrick G\'{e}rard.
\newblock {\em Op\'{e}rateurs pseudo-diff\'{e}rentiels et th\'{e}or\`eme de
  {N}ash-{M}oser}.
\newblock Savoirs Actuels. [Current Scholarship]. InterEditions, Paris;
  \'{E}ditions du Centre National de la Recherche Scientifique (CNRS), Meudon,
  1991.

\bibitem[AIN11]{AIN:11}
Lassaad Aloui, Slim Ibrahim, and Kenji Nakanishi.
\newblock Exponential energy decay for damped {Klein}-{Gordon} equation with
  nonlinearities of arbitrary growth.
\newblock {\em Commun. Partial Differ. Equations}, 36(4-6):797--818, 2011.

\bibitem[ALM16]{2016:anantharaman-leautaud-macia:wigner-schrodinger-disk}
Nalini Anantharaman, Matthieu L{\'e}autaud, and Fabricio Maci{\`a}.
\newblock Wigner measures and observability for the {Schr{\"o}dinger} equation
  on the disk.
\newblock {\em Invent. Math.}, 206(2):485--599, 2016.

\bibitem[AM84a]{AM:84}
Serge Alinhac and Guy M{\'e}tivier.
\newblock Propagation de l'analyticit{\'e} des solutions de syst{\`e}mes
  hyperboliques non-lin{\'e}aires.
\newblock {\em Invent. Math.}, 75:189--204, 1984.

\bibitem[AM84b]{AM:84b}
Serge Alinhac and Guy M\'{e}tivier.
\newblock Propagation de l'analyticit\'{e} des solutions d'\'{e}quations
  nonlin\'{e}aires de type principal.
\newblock {\em Comm. Partial Differential Equations}, 9(6):523--537, 1984.

\bibitem[AM14]{AM:14}
Nalini Anantharaman and Fabricio Maci\`a.
\newblock Semiclassical measures for the {S}chr\"{o}dinger equation on the
  torus.
\newblock {\em J. Eur. Math. Soc. (JEMS)}, 16(6):1253--1288, 2014.

\bibitem[AR12]{AR:12}
Nalini Anantharaman and Gabriel Rivi{\`e}re.
\newblock Dispersion and controllability for the {Schr{\"o}dinger} equation on
  negatively curved manifolds.
\newblock {\em Anal. PDE}, 5(2):313--338, 2012.

\bibitem[BDBE13]{BDBE:13}
Lucie Baudouin, Maya De~Buhan, and Sylvain Ervedoza.
\newblock Global {C}arleman estimates for waves and applications.
\newblock {\em Comm. Partial Differential Equations}, 38(5):823--859, 2013.

\bibitem[BG99]{BG:99}
Hajer Bahouri and Patrick G\'{e}rard.
\newblock High frequency approximation of solutions to critical nonlinear wave
  equations.
\newblock {\em Amer. J. Math.}, 121(1):131--175, 1999.

\bibitem[BL01]{BL:01}
Nicolas Burq and Gilles Lebeau.
\newblock Mesures de d\'{e}faut de compacit\'{e}, application au syst\`eme de
  {L}am\'{e}.
\newblock {\em Ann. Sci. \'{E}cole Norm. Sup. (4)}, 34(6):817--870, 2001.

\bibitem[BLP08]{BLP:08}
Nicolas Burq, Gilles Lebeau, and Fabrice Planchon.
\newblock Global existence for energy critical waves in 3-{D} domains.
\newblock {\em J. Amer. Math. Soc.}, 21(3):831--845, 2008.

\bibitem[BLR88]{BLR:88}
Claude Bardos, Gilles Lebeau, and Jeffrey Rauch.
\newblock Un exemple d'utilisation des notions de propagation pour le
  contr{\^o}le et la stabilisation de probl{\`e}mes hyperboliques. ({An}
  example on the utilisation of notions of propagation for the control and
  stabilization of hyperbolic problems).
\newblock Rend. {Semin}. {Mat}., {Torino} {Fasc}. {Spec}., 11-31 (1988)., 1988.

\bibitem[BLR92]{BLR92}
Claude Bardos, Gilles Lebeau, and Jeffrey Rauch.
\newblock Sharp sufficient conditions for the observation, control, and
  stabilization of waves from the boundary.
\newblock {\em SIAM J. Control Optim.}, 30(5):1024--1065, 1992.

\bibitem[Bon81]{B:81}
Jean-Michel Bony.
\newblock Calcul symbolique et propagation des singularites pour les
  {\'e}quations aux d{\'e}riv{\'e}es partielles non lin{\'e}aires. ({Symbolic}
  calculus and propagation of singularities for nonlinear partial differential
  equations).
\newblock {\em Ann. Sci. {\'E}c. Norm. Sup{\'e}r. (4)}, 14:209--246, 1981.

\bibitem[BS71a]{BS:71-analytic}
Jacek Bochnak and J{\'o}sef Siciak.
\newblock Analytic functions in topological vector spaces.
\newblock {\em Stud. Math.}, 39:77--112, 1971.

\bibitem[BS71b]{BS:71-polynomials}
Jacek Bochnak and J{\'o}sef Siciak.
\newblock Polynomials and multilinear mappings in topological vector spaces.
\newblock {\em Stud. Math.}, 39:59--76, 1971.

\bibitem[BSS09]{BSS:09}
Matthew~D. Blair, Hart~F. Smith, and Christopher~D. Sogge.
\newblock Strichartz estimates for the wave equation on manifolds with
  boundary.
\newblock {\em Ann. Inst. Henri Poincar{\'e}, Anal. Non Lin{\'e}aire},
  26(5):1817--1829, 2009.

\bibitem[BZ04]{BZ:04}
Nicolas Burq and Maciej Zworski.
\newblock Geometric control in the presence of a black box.
\newblock {\em J. Amer. Math. Soc.}, 17(2):443--471, 2004.

\bibitem[CH82]{1982:chow-hale:bifurcation-theory}
Shui~Nee Chow and Jack~K. Hale.
\newblock {\em Methods of bifurcation theory.}
\newblock Springer-Verlag, New York-Berlin, 1982.

\bibitem[CLW20]{CLW:20}
Yan Cui, Camille Laurent, and Zhiqiang Wang.
\newblock On the observability inequality of coupled wave equations: the case
  without boundary.
\newblock {\em ESAIM Control Optim. Calc. Var.}, 26:Paper No. 14, 37, 2020.

\bibitem[CN21]{CN:21}
Jean-Michel Coron and Hoai-Minh Nguyen.
\newblock On the optimal controllability time for linear hyperbolic systems
  with time-dependent coefficients.
\newblock {\em Ann. Inst. Fourier, to appear}, 2021.

\bibitem[Deh01]{D:01}
Belhassen Dehman.
\newblock Stabilisation pour l'\'{e}quation des ondes semi-lin\'{e}aire.
\newblock {\em Asymptot. Anal.}, 27(2):171--181, 2001.

\bibitem[Die69]{1969:dieudonne:modern-analysis}
Jean Dieudonn\'{e}.
\newblock {\em Foundations of modern analysis}, volume Vol. 10-I of {\em Pure
  and Applied Mathematics}.
\newblock Academic Press, New York-London, 1969.
\newblock Enlarged and corrected printing.

\bibitem[DJN22]{DJN:22}
Semyon Dyatlov, Long Jin, and St{\'e}phane Nonnenmacher.
\newblock Control of eigenfunctions on surfaces of variable curvature.
\newblock {\em J. Am. Math. Soc.}, 35(2):361--465, 2022.

\bibitem[DLZ03]{DLZ03}
Belhassen Dehman, Gilles Lebeau, and Enrique Zuazua.
\newblock Stabilization and control for the subcritical semilinear wave
  equation.
\newblock {\em Ann. Sci. \'{E}cole Norm. Sup. (4)}, 36(4):525--551, 2003.

\bibitem[DSF05]{DDSF:05}
David Dos Santos~Ferreira.
\newblock Sharp {$L^p$} {C}arleman estimates and unique continuation.
\newblock {\em Duke Math. J.}, 129(3):503--550, 2005.

\bibitem[DZZ08]{DZZ:08}
Thomas Duyckaerts, Xu~Zhang, and Enrique Zuazua.
\newblock On the optimality of the observability inequalities for parabolic and
  hyperbolic systems with potentials.
\newblock {\em Ann. Inst. H. Poincar\'{e} C Anal. Non Lin\'{e}aire},
  25(1):1--41, 2008.

\bibitem[ET15]{ET:15}
Matthias Eller and Daniel Toundykov.
\newblock Semiglobal exact controllability of nonlinear plates.
\newblock {\em SIAM J. Control Optim.}, 53(4):2480--2513, 2015.

\bibitem[FI96]{FI:96}
Andrei~V. Fursikov and Oleg~Yu. Imanuvilov.
\newblock {\em Controllability of evolution equations}, volume~34 of {\em
  Lecture Notes Series}.
\newblock Seoul National University Research Institute of Mathematics Global
  Analysis Research Center, Seoul, 1996.

\bibitem[FLL24]{FLL:24}
Spyridon Filippas, Camille Laurent, and Matthieu Léautaud.
\newblock Unique continuation for {S}chr{\"o}dinger operators with partially
  {G}evrey coefficients.
\newblock {\em arXiv:2401.14820}, 2024.

\bibitem[Fri58]{Friedman:58}
Avner Friedman.
\newblock On the regularity of the solutions of nonlinear elliptic and
  parabolic systems of partial differential equations.
\newblock {\em J. Math. Mech.}, 7:43--59, 1958.

\bibitem[FT89]{FT:89}
Ciprian Foias and Roger Temam.
\newblock Gevrey class regularity for the solutions of the {Navier}-{Stokes}
  equations.
\newblock {\em J. Funct. Anal.}, 87(2):359--369, 1989.

\bibitem[G{\'e}r88]{G:88}
Patrick G{\'e}rard.
\newblock Solutions conormales analytiques d'{\'e}quations hyperboliques non
  lin{\'e}aires. ({Conormal} analytic solutions of nonlinear hyperbolic
  equations).
\newblock {\em Commun. Partial Differ. Equations}, 13(3):345--375, 1988.

\bibitem[God86]{G:86}
Paul Godin.
\newblock Propagation of analytic regularity for analytic fully nonlinear
  second order strictly hyperbolic equations in two variables.
\newblock {\em Commun. Partial Differ. Equations}, 11:353--366, 1986.

\bibitem[Gou00]{Go:00-torus}
Olivier Goubet.
\newblock Asymptotic smoothing effect for a weakly damped nonlinear
  {Schr{\"o}dinger} equation in {{\(\mathbb{T}^2\)}}.
\newblock {\em J. Differ. Equations}, 165(1):96--122, 2000.

\bibitem[Gou18]{Go:18}
Olivier Goubet.
\newblock Analyticity of the global attractor for damped forced periodic
  {Korteweg}-de {Vries} equation.
\newblock {\em J. Differ. Equations}, 264(4):3052--3066, 2018.

\bibitem[Gri67]{G:67}
Pierre Grisvard.
\newblock Caract{\'e}risation de quelques espaces d'interpolation.
\newblock {\em Arch. Ration. Mech. Anal.}, 25:40--63, 1967.

\bibitem[GT01]{GT01}
David Gilbarg and Neil~S. Trudinger.
\newblock {\em Elliptic partial differential equations of second order}.
\newblock Class. Math. Berlin: Springer, reprint of the 1998 ed. edition, 2001.

\bibitem[H\"63]{H:63}
Lars H\"{o}rmander.
\newblock {\em Linear partial differential operators}, volume Band 116 of {\em
  Die Grundlehren der mathematischen Wissenschaften}.
\newblock Academic Press, Inc., Publishers, New York; Springer-Verlag,
  Berlin-G\"{o}ttingen-Heidelberg, 1963.

\bibitem[H\"07]{H:07}
Lars H\"{o}rmander.
\newblock {\em The analysis of linear partial differential operators. {III}}.
\newblock Classics in Mathematics. Springer, Berlin, 2007.
\newblock Pseudo-differential operators, Reprint of the 1994 edition.

\bibitem[Har85]{H:85}
Alain Haraux.
\newblock Stabilization of trajectories for some weakly damped hyperbolic
  equations.
\newblock {\em J. Differential Equations}, 59(2):145--154, 1985.

\bibitem[H{\"o}r97]{Hor:97}
Lars H{\"o}rmander.
\newblock On the uniqueness of the {C}auchy problem under partial analyticity
  assumptions.
\newblock In {\em Geometrical optics and related topics ({C}ortona, 1996)},
  volume~32 of {\em Progr. Nonlinear Differential Equations Appl.}, pages
  179--219. Birkh\"auser Boston, Boston, MA, 1997.

\bibitem[H{\"o}r00]{H:00}
Lars H{\"o}rmander.
\newblock A counterexample of {Gevrey} class to the uniqueness of the {Cauchy}
  problem.
\newblock {\em Math. Res. Lett.}, 7(5-6):615--624, 2000.

\bibitem[HR03]{HR03}
Jack~K. Hale and Genevi\`eve Raugel.
\newblock Regularity, determining modes and {G}alerkin methods.
\newblock {\em J. Math. Pures Appl. (9)}, 82(9):1075--1136, 2003.

\bibitem[IK15]{IK:15}
Alexandru~D. Ionescu and Sergiu Klainerman.
\newblock Rigidity results in general relativity: a review.
\newblock In {\em One hundred years of general relativity. A jubilee volume on
  general relativity and mathematics}, pages 123--156. Somerville, MA:
  International Press, 2015.

\bibitem[JL13]{JL13}
Romain Joly and Camille Laurent.
\newblock Stabilization for the semilinear wave equation with geometric control
  condition.
\newblock {\em Anal. PDE}, 6(5):1089--1119, 2013.

\bibitem[JL20]{2020:joly-laurent:decay-nlw-no-gcc}
Romain Joly and Camille Laurent.
\newblock Decay of semilinear damped wave equations: cases without geometric
  control condition.
\newblock {\em Ann. H. Lebesgue}, 3:1241--1289, 2020.

\bibitem[JS21]{JS:21}
Vaibhav~Kumar Jena and Arick Shao.
\newblock Control of waves on lorentzian manifolds with curvature bounds.
\newblock {\em arXiv:2112.09539}, 2021.

\bibitem[Kom92]{K:92}
Vilmos Komornik.
\newblock On the exact internal controllability of a {P}etrowsky system.
\newblock {\em J. Math. Pures Appl. (9)}, 71(4):331--342, 1992.

\bibitem[KRS87]{KRS:87}
Carlos~E. Kenig, Alberto Ruiz, and Christopher~D. Sogge.
\newblock Uniform {S}obolev inequalities and unique continuation for second
  order constant coefficient differential operators.
\newblock {\em Duke Math. J.}, 55(2):329--347, 1987.

\bibitem[KT05]{KT:05}
Herbert Koch and Daniel Tataru.
\newblock Dispersive estimates for principally normal pseudodifferential
  operators.
\newblock {\em Comm. Pure Appl. Math.}, 58(2):217--284, 2005.

\bibitem[Lau11]{L:11}
Camille Laurent.
\newblock On stabilization and control for the critical {K}lein-{G}ordon
  equation on a 3-{D} compact manifold.
\newblock {\em J. Funct. Anal.}, 260(5):1304--1368, 2011.

\bibitem[Leb92]{1992:lebeau:schrodinger-controle}
Gilles Lebeau.
\newblock Control of the {Schr{\"o}dinger} equation.
\newblock {\em J. Math. Pures Appl. (9)}, 71(3):267--291, 1992.

\bibitem[Leb96]{L:96}
Gilles Lebeau.
\newblock \'{E}quation des ondes amorties.
\newblock In {\em Algebraic and geometric methods in mathematical physics
  ({K}aciveli, 1993)}, volume~19 of {\em Math. Phys. Stud.}, pages 73--109.
  Kluwer Acad. Publ., Dordrecht, 1996.

\bibitem[Ler19]{LernerBook19}
Nicolas Lerner.
\newblock {\em Carleman inequalities. {An} introduction and more}, volume 353
  of {\em Grundlehren Math. Wiss.}
\newblock Cham: Springer, 2019.

\bibitem[LL16]{LL16}
Camille Laurent and Matthieu L\'{e}autaud.
\newblock Uniform observability estimates for linear waves.
\newblock {\em ESAIM Control Optim. Calc. Var.}, 22(4):1097--1136, 2016.

\bibitem[LL19]{2019:laurent-leautaud:quantitative-uc-waves}
Camille Laurent and Matthieu L{\'e}autaud.
\newblock Quantitative unique continuation for operators with partially
  analytic coefficients. {Application} to approximate control for waves.
\newblock {\em J. Eur. Math. Soc. (JEMS)}, 21(4):957--1069, 2019.

\bibitem[LL23]{LL:23Lecture}
Camille Laurent and Matthieu Léautaud.
\newblock Lectures on unique continuation for waves.
\newblock {\em arXiv:2307.02155}, 2023.

\bibitem[LRLR22]{LLR:book1}
J\'{e}r\^{o}me Le~Rousseau, Gilles Lebeau, and Luc Robbiano.
\newblock {\em Elliptic {C}arleman estimates and applications to stabilization
  and controllability. {V}ol. {I}. {D}irichlet boundary conditions on
  {E}uclidean space}, volume~97 of {\em Progress in Nonlinear Differential
  Equations and their Applications}.
\newblock Birkh\"{a}user/Springer, Cham, [2022] \copyright 2022.
\newblock PNLDE Subseries in Control.

\bibitem[M{\'e}t93]{M:93}
Guy M{\'e}tivier.
\newblock Counterexamples to {H{\"o}lmgren}'s uniqueness for analytic nonlinear
  {Cauchy} problems.
\newblock {\em Invent. Math.}, 112(1):217--222, 1993.

\bibitem[Mil03]{M:03}
Luc Miller.
\newblock Escape function conditions for the observation, control, and
  stabilization of the wave equation.
\newblock {\em SIAM J. Control Optim.}, 41(5):1554--1566, 2003.

\bibitem[Mil12]{2012:miller:resolvent-control}
Luc Miller.
\newblock Resolvent conditions for the control of unitary groups and their
  approximations.
\newblock {\em J. Spectr. Theory}, 2(1):1--55, 2012.

\bibitem[MS78]{MS:78}
Richard~B. Melrose and Johannes Sj\"{o}strand.
\newblock Singularities of boundary value problems. {I}.
\newblock {\em Comm. Pure Appl. Math.}, 31(5):593--617, 1978.

\bibitem[Muj86]{1986:mujica:complex-analysis}
Jorge Mujica.
\newblock {\em Complex analysis in {Banach} spaces. {Holomorphic} functions and
  domains of holomorphy in finite and infinite dimensions}, volume 120 of {\em
  North-Holland Math. Stud.}
\newblock Elsevier, Amsterdam, 1986.

\bibitem[Per23]{P:23}
Thomas Perrin.
\newblock Change of regularity in controllability and observability of systems
  of wave equations.
\newblock {\em arXiv: 2312.06311}, 2023.

\bibitem[Per24]{P:24}
Thomas Perrin.
\newblock The damped focusing cubic wave equation on a bounded domain.
\newblock {\em arXiv:2310.12644}, 2024.

\bibitem[RS81]{RS:81}
Jeffrey Rauch and Johannes Sj{\"o}strand.
\newblock Propagation of analytic singularities along diffracted rays.
\newblock {\em Indiana Univ. Math. J.}, 30(3):389--401, 1981.

\bibitem[RT74]{RT74}
Jeffrey Rauch and Michael Taylor.
\newblock Exponential decay of solutions to hyperbolic equations in bounded
  domains.
\newblock {\em Indiana Univ. Math. J.}, 24:79--86, 1974.

\bibitem[Rui92]{Ruiz:92}
Alberto Ruiz.
\newblock Unique continuation for weak solutions of the wave equation plus a
  potential.
\newblock {\em J. Math. Pures Appl. (9)}, 71(5):455--467, 1992.

\bibitem[RZ98]{RZ:98}
Luc Robbiano and Claude Zuily.
\newblock Uniqueness in the {C}auchy problem for operators with partially
  holomorphic coefficients.
\newblock {\em Invent. Math.}, 131(3):493--539, 1998.

\bibitem[Sha19]{Shao:19}
Arick Shao.
\newblock On {C}arleman and observability estimates for wave equations on
  time-dependent domains.
\newblock {\em Proc. Lond. Math. Soc. (3)}, 119(4):998--1064, 2019.

\bibitem[Sim87]{S:87}
Jacques Simon.
\newblock Compact sets in the space {$L^p(0,T;B)$}.
\newblock {\em Ann. Mat. Pura Appl. (4)}, 146:65--96, 1987.

\bibitem[Tat95]{1995:tataru:ucp-general}
Daniel Tataru.
\newblock Unique continuation for solutions to {PDE}'s; between
  {H}\"{o}rmander's theorem and {H}olmgren's theorem.
\newblock {\em Comm. Partial Differential Equations}, 20(5-6):855--884, 1995.

\bibitem[Tat99]{Tataru:99}
Daniel Tataru.
\newblock Unique continuation for operators with partially analytic
  coefficients.
\newblock {\em J. Math. Pures Appl. (9)}, 78(5):505--521, 1999.

\bibitem[TBE24]{TBE24}
Marius Tucsnak, Megane Bournissou, and Sylvain Ervedoza.
\newblock Exact controllability for systems describing plate vibrations. {A}
  perturbation approach.
\newblock {\em C. R., Math., Acad. Sci. Paris}, 362:327--356, 2024.

\bibitem[TW09]{TW:09}
Marius Tucsnak and George Weiss.
\newblock {\em Observation and control for operator semigroups}.
\newblock Birkh{\"a}user Adv. Texts, Basler Lehrb{\"u}ch. Basel:
  Birkh{\"a}user, 2009.

\bibitem[Zua91]{Z:91}
Enrique Zuazua.
\newblock Exponential decay for the semilinear wave equation with localized
  damping in unbounded domains.
\newblock {\em J. Math. Pures Appl. (9)}, 70(4):513--529, 1991.

\end{thebibliography}

\end{document}